\DeclareSymbolFontAlphabet{\mathbb}{AMSb}
\DeclareSymbolFontAlphabet{\mathbbl}{bbold}
\newcommand{\prism}{{\scaleobj{1.25}{\mathbbl{\Delta}}}}
\DeclareRobustCommand{\SkipTocEntry}[5]{}
\theoremstyle{plain}
\newtheorem{theorem}{Theorem}[section]
\newtheorem{proposition}[theorem]{Proposition}
\newtheorem{lemma}[theorem]{Lemma}
\newtheorem*{claim*}{Claim}
\newtheorem{corollary}[theorem]{Corollary}
\newtheorem{assumption}[theorem]{Assumption}
\newtheorem{question}[theorem]{Question}
\theoremstyle{definition}
\newtheorem{construction}[theorem]{Construction}
\newtheorem{Setup}[theorem]{Setup}
\newtheorem{definition}[theorem]{Definition}
\newtheorem{example}[theorem]{Example}
\newtheorem{remark}[theorem]{Remark}
\newcommand{\an}{\mathrm{an}}
\newcommand{\C}{\mathbb{C}}
\newcommand{\can}{\mathrm{can}}
\newcommand{\cone}{\mathrm{cone}}
\newcommand{\cycl}{\mathrm{cycl}}
\newcommand{\dR}{\mathrm{dR}}
\newcommand{\et}{\mathrm{{\acute{e}t}}}
\newcommand{\F}{\mathbb{F}}
\newcommand{\Fsm}{\mathrm{F\text{-}sm}}
\newcommand{\Gal}{\mathrm{Gal}}
\newcommand{\Gm}{\mathbb{G}_m}
\newcommand{\Ga}{\mathbb{G}_a}
\newcommand{\Symvw}{{\mathcal S}_p}
\newcommand{\Symv}{\mathcal S}
\newcommand{\Hom}{\mathrm{Hom}}
\newcommand{\HT}{\mathrm{HT}}
\newcommand{\Hsm}{\mathrm{H\text{-}sm}}
\newcommand{\Id}{\mathrm{Id}}
\renewcommand{\inf}{\mathrm{inf}}
\newcommand{\Lft}{\mathcal{L}\mathrm{ft}}
\newcommand{\LS}{\mathrm{LS}}
\newcommand{\N}{\mathbb{N}}
\newcommand{\calPerf}{\mathcal{P}erf}
\newcommand{\Q}{\mathbb{Q}}
\newcommand{\rig}{\mathrm{rig}}
\newcommand{\sm}{\mathrm{sm}}
\newcommand{\Spa}{\mathrm{Spa}}
\newcommand{\Spf}{\mathrm{Spf}}
\newcommand{\Spec}{\mathrm{Spec}}
\newcommand{\WCart}{\mathrm{WCart}}
\newcommand{\Z}{\mathbb{Z}}
\renewcommand{\O}{\mathcal{O}}
\newcommand{\wt}{\widetilde}
\newcommand{\tf}{[\tfrac{1}{p}]}
\newcommand{\X}{{\mathcal X}}
\newcommand{\Varphi}{\psi}
\newcommand{\Varpsi}{\psi'}
\newcommand{\Higgs}{\mathrm{Higgs}}
\newcommand*\isomarrow{%
	\xrightarrow{\raisebox{-0.35em}{\smash{\ensuremath{\sim}}}}
}
\begin{document}

	\title[Hodge--Tate stacks and \MakeLowercase{v}-perfect complexes on smooth rigid spaces]{Hodge--Tate stacks and non-abelian $p$-adic Hodge theory of \MakeLowercase{v}-perfect complexes on rigid spaces}
	
	\author[J. Ansch\"utz, B. Heuer, A.-C. Le Bras]{Johannes Ansch\"utz, Ben Heuer, Arthur-C\'esar Le Bras}
	\subjclass{11S99, 14F30}
	
	\begin{abstract}
          Let $X$ be a quasi-compact quasi-separated $p$-adic formal scheme  that is smooth either over a perfectoid $\Z_p$-algebra or over some ring of integers of a $p$-adic field.
          We construct a fully faithful functor from perfect complexes on the Hodge--Tate stack of $X$ up to isogeny to perfect complexes on the v-site of the generic fibre of $X$.
          Moreover, we describe perfect complexes on the Hodge--Tate stack in terms of certain derived categories of Higgs, resp.\ Higgs--Sen modules. This leads to a derived $p$-adic Simpson functor. 
	\end{abstract}
	
	\maketitle

\section{Introduction}
\label{sec:introduction-1}

\subsection{v-perfect complexes via the Hodge--Tate stack}
Let $p$ be a prime and let $\X$ be an adic space over $\Q_p$. Let $\X_v$ be the v-site of $\X$, consisting of perfectoid spaces over $\X$ endowed with the v-topology. Let $\mathcal{O}_{\X_v}$ be the structure sheaf of $\X_v$. The starting point of this article is the following:

\begin{question} \label{sec:introduction-2-question-introduction}
 How can one describe the category $\mathcal{P}erf(\X_v):=\mathcal{P}erf(\X_v,\mathcal{O}_{\X_v})$ of perfect complexes on $\X_v$ in terms of data that only involve the analytic or \'etale topology of $\X$?
\end{question}

If $\X$ is perfectoid, then $\mathcal{P}erf(\X_v)\cong \mathcal{P}erf(\X_\an)$ \cite[Theorem~2.1]{anschutz2021fourier}. In contrast, for rigid analytic $\X$, vector bundles on $\X_v$ are related to Higgs bundles: in this paper, we study the following cases.

\begin{enumerate}
	\item $\X=X^\rig$ is the adic generic fiber of a quasi-compact, quasi-separated (qcqs) smooth formal scheme over $\mathcal O_{\mathbb C_p}$. More generally, we consider qcqs $p$-adic formal schemes $X$ that are ``smoothoid'', i.e.\ locally $X$ is smooth over $\Spf(R_0)$ for a perfectoid $\Z_p$-algebra $R_0$. 
	\item $\X=X^\rig$ for some qcqs smooth $p$-adic formal scheme over a complete $p$-adic discrete valuation ring $\O_K$ with perfect residue field $k$.\footnote{In fact, our methods are strong enough to also handle the case that $k$ is only assumed to be $p$-finite, i.e., $[k:k^p]<\infty$, cf.\ \Cref{sec:fully-faithf-arithm-remark-on-imperfect-residue-field-case}} We call such $X$ ``arithmetic''.
        \end{enumerate}
        
         Bhatt--Lurie and Drinfeld have associated to any $p$-adic formal scheme $Z$  a $p$-adic formal stack $Z^\HT\to Z$ called the ``Hodge--Tate stack'' (\cite[Construction 3.7]{bhatt2022prismatization}, \cite{drinfeld2020prismatization}). The stack $X^\HT$ will be the key to our approach to \Cref{sec:introduction-2-question-introduction}. Using that for any perfectoid $Z$ the natural map $Z^\HT\to Z$ is an isomorphism, we construct in \Cref{sec:form-reduct-prov} a natural pullback morphism
\[
\alpha_X^\ast \colon \mathcal{P}erf(X^\HT)\tf\to \mathcal{P}erf(\X_v)
\]
from the category of perfect complexes on $X^\HT$ up to isogeny. Our first key result is the following:

\begin{theorem}[{\Cref{sec:tori-over-perfectoid}, \Cref{sec:smooth-form-schem-galois-cohomo-in-arithmetic-case}}]
	\label{sec:introduction-2-statement-main-theorem}
	If $X$ is qcqs smoothoid as in (1) or arithmetic as in (2), then $\alpha_X^\ast$ is fully faithful.
      \end{theorem}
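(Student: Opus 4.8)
The plan is to first use that $\alpha_X^\ast$ is symmetric monoidal and that every perfect complex is dualizable. For $E,F\in\calPerf(X^\HT)$ this identifies $R\mathrm{Hom}_{\X_v}(\alpha_X^\ast E,\alpha_X^\ast F)$ with $R\Gamma(\X_v,\alpha_X^\ast(E^\vee\otimes F))$ and, by the projection formula, with $R\Gamma\bigl(X^\HT,(E^\vee\otimes F)\otimes R\alpha_{X,\ast}\O_{\X_v}\bigr)$. Hence full faithfulness follows once the canonical map
\[
\O_{X^\HT}\tf\longrightarrow\bigl(R\alpha_{X,\ast}\O_{\X_v}\bigr)\tf
\]
is shown to be an equivalence of quasi-coherent complexes on $X^\HT$. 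I would then argue locally: the Hodge--Tate stack, the v-site of the generic fibre, and $\alpha_X$ are all compatible with \'etale morphisms in $X$, so one may assume $X=\Spf A$ is small affine with an \'etale map to a torus $\mathbb T^d=\Spf R_0\langle T_1^{\pm1},\dots,T_d^{\pm1}\rangle$ over the perfectoid $R_0$ (smoothoid case) or over $\O_K$ (arithmetic case); since $(-)^\HT$ turns this \'etale map into an \'etale map $X^\HT\to(\mathbb T^d)^\HT$ along which both sides base-change, the smoothoid case reduces further to $X=\mathbb T^d$.

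\textbf{The smoothoid case.}
Here I would invoke the explicit description of $X^\HT$ from \cite{bhatt2022prismatization} (see also \cite{drinfeld2020prismatization}): as $(\Spf R_0)^\HT=\Spf R_0$, the stack $X^\HT$ is the relative Hodge--Tate stack over $R_0$, and coordinates trivialising $\widehat\Omega^1_{X/R_0}\cong\O_X^{\oplus d}$ present it as the classifying stack over $X$ of the $d$-fold self-product $\mathcal G:=(\Ga^\sharp\{1\})^{\oplus d}$ of the divided-power completed, Breuil--Kisin twisted additive group; thus $R\Gamma(X^\HT,G)$ is the Koszul-type cohomology of $\mathcal G$ acting on the induced representation $\overline G$. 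On the v-side, adjoining compatible $p$-power roots of the $T_i$ yields a perfectoid pro-finite-\'etale Galois cover $\widetilde{\X}\to\X$ with group $\Gamma\cong\Z_p(1)^{\oplus d}$; since $\widetilde{\X}$ is perfectoid, $\calPerf(\widetilde{\X}_v)\cong\calPerf(\widetilde{\X}_\an)$ by \cite[Theorem 2.1]{anschutz2021fourier}, so $R\Gamma(\widetilde{\X}_v,\alpha_X^\ast G)$ is the global sections of a perfect $\O(\widetilde{\X})$-complex, and v-descent along the cover gives $R\Gamma(\X_v,\alpha_X^\ast G)\cong R\Gamma_{\cts}\bigl(\Gamma,\overline G\,\widehat\otimes_{\O(X)}\O(\widetilde{\X})\bigr)$ for a semilinear $\Gamma$-action encoding the Hodge--Tate/Sen datum. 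The comparison then rests on two rational inputs: first, $\O(\widetilde{\X})\tf$ splits off $\O(\X)$ with a $\Gamma$-acyclic complement (Tate's computation / decompletion), collapsing the coefficients to $\overline G\tf$; second, the residual continuous $\Z_p(1)^{\oplus d}$-action is the exponential of the infinitesimal $\mathcal G$-action, so that the Lazard--Koszul comparison $R\Gamma_{\cts}(\Z_p(1)^{\oplus d},-)\tf\cong R\Gamma(\mathcal G,-)\tf$ applies. Both reduce, by a Koszul induction on $d$, to the one-dimensional case --- precisely the Sen-theoretic computation behind the Hodge--Tate comparison for $\Gm$ --- and the divided-power ``$\sharp$'' is exactly what makes $\exp$, $\log$ converge and $\Ga^\sharp\tf$-torsors linearise.

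\textbf{The arithmetic case.}
The same scheme should apply, now with $X^\HT$ the absolute (i.e.\ $\Z_p$-relative) Hodge--Tate stack, for which already $(\Spf\O_K)^\HT$ is a nontrivial gerbe banded by $\Ga^\sharp\{1\}$ and carrying the arithmetic Sen operator. Accordingly I expect the banding group of $X^\HT$ to be an extension of the arithmetic $\Ga^\sharp\{1\}$ by the geometric $(\Ga^\sharp\{1\})^{\oplus d}$, matching on the v-side the composite of the toric tower with a perfectoid (e.g.\ cyclotomic) arithmetic tower over $K$, whose Galois group is an extension of $\Gal(K_\infty/K)\cong\Z_p$ by $\Z_p(1)^{\oplus d}$. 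A Hochschild--Serre argument along these two extensions, after inverting $p$, reduces to the one-dimensional pieces: the geometric ones as above, and the arithmetic one being classical Sen theory for $K$ --- decompletion of $\Gal(K_\infty/K)$-representations together with the identification of $R\Gamma_{\cts}(\Gal(K_\infty/K),-)\tf$ with the two-term complex of the Sen operator.

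\textbf{Main obstacle.}
The hard part will be the one-dimensional decompletion together with the exponential/logarithm identification carried out at the level of \emph{perfect complexes}, not just v-vector bundles: one must produce, functorially in $G$ and independently of the chosen coordinates (hence of the gerbe trivialisation), the identification of the continuous $\Gamma$-action on $\overline G\tf\,\widehat\otimes\,\O(\widetilde{\X})$ with the Hodge--Tate/Sen datum on $X^\HT$, while controlling $p$-adic completions and the pro-coherent structure on the perfectoid cover; this is where passing to the isogeny category becomes unavoidable, as decompletion only holds after inverting the period. A secondary difficulty is to patch the choice-dependent classifying-stack presentation of $X^\HT$ with v-descent along $\widetilde{\X}\to\X$ in a coordinate-free way, and to check that the map $\alpha_X^\ast$ constructed in \Cref{sec:form-reduct-prov} is the one realising the equivalence --- which, through the reductions above, relies on $\calPerf(X^\HT)\tf$ being idempotent-complete and on $(-)\tf$ commuting with $R\alpha_{X,\ast}$ and the projection formula.
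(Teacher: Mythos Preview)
Your overall strategy---projection formula, reduce to $R\alpha_{X,\ast}\O_{\X_v}$, localise to tori via \'etale invariance, and compute via the perfectoid Galois cover---matches the paper's (\S2, \S5). The organisation of the final computation differs, however. You propose decompletion followed by a Lazard--Koszul comparison applied to each $\overline G$; the paper instead introduces a single period ring $B_{A,R_\infty}^+:=\O(X_\infty\times_{X^\HT}X)$, geometrically the coordinate ring of the $G_A$-torsor comparing the two sections of $X^\HT$ coming from the prismatic lift and from the perfectoid cover, and reduces everything to showing $R\to R\Gamma_{\cts}(\Gamma,B_{A,R_\infty}^+)$ has $p$-power-torsion cofibre (Assumption~\ref{sec:form-reduct-prov-1-crucial-assumption-on-galois-cohomology}). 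This is then a concrete upper-triangular matrix calculation (Theorem~\ref{t:Galois-cohom-for-B}); no Lazard-type theorem is invoked and $\exp/\log$ never appear. What your packaging hides, and the paper makes explicit, is the step you correctly flag as the main obstacle: identifying the $\Gamma$-action on the perfectoid side with ``the exponential of the $\mathcal G$-action''. The paper spends \S\ref{sec:prismatic-homotopies}--\S\ref{sec:examples} making this precise at the level of $2$-morphisms in $X^\HT$ via Cartier--Witt divisors, finding that $\sigma\in\Gamma$ acts by right-translation by an explicit element of $G_A(R_\infty)$ (Theorem~\ref{t:expl-descr-of-action-on-GA}, Propositions~\ref{c:descr-Galois-action-tori-over-perfectoid}, \ref{c:descr-tori-over-p-adic-field}). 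One caution for your decompletion step in the arithmetic case: the cocycle $\Gamma\to G_A$ does not land in $G_A(R)$ there (the period $z$ lies only in $\O_C$), so one cannot directly ``collapse to $\overline G\tf$''; the paper handles this by first stripping off the toric variables and then citing the Sen computation over $\O_K$ from \cite{analytic_HT}.
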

      
	On the other hand, we explain that $\mathcal{P}erf(X^\HT)\tf$ can be described in terms of  Hodge-theoretic data on $\X_\an$, like Higgs bundles or Sen modules. For instance, for smoothoid $X$ over a perfectoid base ring $R_0$, we introduce a category of ``Higgs perfect complexes'' on $\X$ and show that any lift $\tilde{X}$ of $X$ to $A_2(R_0):=A_{\rm inf}(R_0)/\ker(\theta)^2$ induces a fully faithful functor  $\beta_{\tilde{X}}$ from $\calPerf(X^\HT)\tf$ into this category. 
	In combination, this realizes a ``derived $p$-adic Simpson functor'' via the diagram:
	\[
	\begin{tikzcd}[row sep=0.1cm]
		& \calPerf(X^{\HT}
		)\tf \arrow[ld, "\alpha_X^\ast"'] \arrow[rd, "\beta_{\tilde{X}}"] &                      \\
		\calPerf(\mathcal X_v) \arrow[rr, "\text{$p$-adic Simpson}", dotted,<->] &                                                                    & \big\{\text{Higgs-perfect complexes on }\mathcal X\big\}
	\end{tikzcd}\]      
	Besides giving a partial answer to \Cref{sec:introduction-2-question-introduction} in this case, this provides a fruitful new geometric perspective on Faltings' $p$-adic non-abelian Hodge theory \cite{faltings2005p}. Indeed, we use it to prove:
	\begin{enumerate}
		\item new derived versions of the local and global $p$-adic Simpson functor for small Higgs bundles, generalising these from vector bundles to perfect complexes (\Cref{sec:smoothoid-case-1-derived-local-p-adic-simpson-correspondence-introduction,t:intro-local-p-adic-Simpson-functor-geometric}), 
		\item a derived version of Sen theory in families in the arithmetic setting, (\Cref{sec:arithmetic-case-1-derived-local-p-adic-simpson-in-arithmetic-case}).
	\end{enumerate}
	This leads to a uniform geometric approach to Sen theory and $p$-adic Simpson in this context.
	As an application, the generalisation from vector bundles to perfect complexes formally implies the comparison of cohomology in each case.    We now  describe each of these results in more detail.

      \subsection{The smoothoid case}
\label{sec:smoothoid-case}
Let $X$ be a qcqs smoothoid $p$-adic formal scheme. For simplicity, we assume  that $X$ lives over a perfectoid base ring $R_0$. Let $\Omega^1_{X}:=\Omega^1_{X|R_0}$ be the sheaf of $p$-completed K\"ahler differentials, which is finite  locally free, cf.\ \Cref{sec:fully-faithf-smooth-differentials-are-smoothoid}. Let $(A_0,I_0)$ be the perfect prism associated with $R_0$, i.e., $A_0/I_0\cong R_0$. We denote by $\{1\}$ the Breuil--Kisin twist $I_0/I_0^2\otimes_{R_0}(-)$.

We then define a relative formal group scheme $\mathcal{T}_{X}^\sharp\{1\}\to X$ as the PD-envelope of the zero section of the (twisted) tangent bundle $\mathcal{T}_{X}\{1\}$ of $X$ relatively over $R_0$, i.e., locally on $X=\Spf(R)$,
  \[
    \mathcal{T}_{X}\{1\}:=\Spf(\mathcal{S}_p(\Omega^1_{R}\{-1\}))\quad \text{and}\quad \mathcal{T}^\sharp_{X}\{1\}:=\Spf(\Gamma_R(\Omega^1_{R}\{-1\})^\wedge_p)
    ,
  \]
  where  $\mathcal{S}_p(-)$ denotes the $p$-completed symmetric algebra and $\Gamma_R(-)$ denotes the PD-algebra.

  From the results of Bhatt--Lurie in \cite{bhatt2022absolute,bhatt2022prismatization}, we will deduce:
  
\begin{theorem}[\Cref{sec:smoothoid-case-1-complexes-on-ht-in-split-case}]
\label{sec:smoothoid-case-1-complexes-on-x-ht-smoothoid-case-introduction}
  Any section $X\to X^\HT$ of $X^\HT\to X$ induces an isomorphism
  \[
    X^\HT\cong B_X\mathcal{T}_{X}^\sharp\{1\}
  \]
  of $\mathcal{T}_{X}^\sharp\{1\}$-gerbes between the Hodge--Tate stack of $X$ and the classifying stack of $\mathcal{T}_{X}^\sharp\{1\}$ over $X$, hence a fully faithful functor
		\[
		\mathcal{D}(X^\HT)\hookrightarrow \mathcal{D}(\mathcal{T}_{X}^\vee\{-1\}).
		\]
		Its essential image is given by those  $\mathcal{M}\in \mathcal D(\mathcal{T}_{X}^\vee\{-1\})$ for which on any affine open $U:=\Spf(R)\subseteq X$, each $\delta \in \Omega^{1,\vee}_{R}\{1\}$ (seen as a section of $\mathcal{T}_{U}^\vee\{-1\}$) acts locally nilpotently on $H^\ast(U,\mathcal{M}\otimes_{R}^LR/p)$.
\end{theorem}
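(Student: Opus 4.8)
\emph{Proof sketch (proposal).} The plan is to establish the statement in three stages: the equivalence of gerbes $X^\HT\cong B_X\mathcal{T}_X^\sharp\{1\}$, the construction of the fully faithful embedding $\mathcal{D}(X^\HT)\hookrightarrow\mathcal{D}(\mathcal{T}_X^\vee\{-1\})$, and the identification of its essential image.

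First I would pin down the gerbe structure of $X^\HT\to X$. Since $R_0$ is perfectoid, $(\Spf R_0)^\HT\cong\Spf R_0$, so the absolute Hodge--Tate stack $X^\HT$ coincides with the Hodge--Tate stack relative to the perfect prism $(A_0,I_0)$; the structure theory of \cite{bhatt2022prismatization,bhatt2022absolute} (compare also \cite{drinfeld2020prismatization}) for Hodge--Tate stacks of smooth morphisms then identifies $X^\HT\to X$, Zariski-locally on $X$, with a gerbe banded by the PD-formal group $\mathcal{T}_X^\sharp\{1\}$ — the Breuil--Kisin twist $\{1\}$ and the divided-power thickening both being dictated by the prismatic formalism, and a local lift $\tilde X$ over $A_2(R_0)$ providing a local section. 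As $\mathcal{T}_X^\sharp\{1\}$ is a commutative group scheme, a global section of this gerbe trivialises it: it determines a canonical equivalence of $\mathcal{T}_X^\sharp\{1\}$-gerbes $X^\HT\isomarrow B_X\mathcal{T}_X^\sharp\{1\}$, sending the chosen section to the trivial torsor.

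Next I would describe $\mathcal{D}(B_X\mathcal{T}_X^\sharp\{1\})$. By descent along the atlas $X\to B_X\mathcal{T}_X^\sharp\{1\}$ this is the category of $\mathcal{T}_X^\sharp\{1\}$-equivariant objects of $\mathcal{D}(X)$, i.e.\ of comodules in $\mathcal{D}(X)$ over the Hopf algebra $\mathcal{O}(\mathcal{T}_X^\sharp\{1\})=\Gamma_{\mathcal{O}_X}(\Omega^1_X\{-1\})^\wedge_p$. Unwinding the divided-power coalgebra structure — equivalently, by Cartier duality — such an equivariant object is exactly an $\mathcal{O}_X$-complex $\mathcal{M}$ equipped with an integrable Higgs field $\theta\colon\mathcal{M}\to\mathcal{M}\otimes\Omega^1_X\{-1\}$ (integrability being automatic as the group is commutative) whose induced action extends from $\mathcal{S}_p(\mathcal{T}_X\{1\})$ to the completed symmetric algebra $\widehat{\mathrm{Sym}}_{\mathcal{O}_X}(\mathcal{T}_X\{1\})=\mathcal{O}(\mathcal{T}_X^\sharp\{1\})^{\vee}$, the structure sheaf of the formal completion of $\mathcal{T}_X^\vee\{-1\}$ along its zero section. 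Restriction of scalars along the completion map $\mathcal{O}(\mathcal{T}_X^\vee\{-1\})=\mathcal{S}_p(\mathcal{T}_X\{1\})\to\widehat{\mathrm{Sym}}_{\mathcal{O}_X}(\mathcal{T}_X\{1\})$ then produces the functor into $\mathcal{D}(\mathcal{T}_X^\vee\{-1\})$; it is fully faithful — this one sees by computing the relevant mapping complexes as $\mathcal{T}_X^\sharp\{1\}$-invariants and comparing, or, more formally, from the fact that this completion map is a ring epimorphism.

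Finally, for the essential image: an object of $\mathcal{D}(\mathcal{T}_X^\vee\{-1\})$ lies in the image precisely when its $\mathcal{S}_p$-action extends to $\widehat{\mathrm{Sym}}$, equivalently when the coaction over $\Gamma_{\mathcal{O}_X}(\Omega^1_X\{-1\})^\wedge_p$ exists, which is a topological pro-nilpotence of the action of the augmentation ideal. As all stacks involved are $p$-adic and the comparison is compatible with derived reduction modulo $p$, this may be tested after $-\otimes^L_{\mathcal{O}_X}\mathcal{O}_X/p$; modulo $p$ the divided-power coalgebra $\Gamma_{\F_p}(\Omega^1_X\{-1\}/p)$ is a genuine, uncompleted, direct sum of its divided-power pieces, so a coaction exists if and only if the Higgs action is \emph{locally nilpotent}. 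Unwound on an affine open $U=\Spf R$, the action of a local section $\delta$ of $\mathcal{T}_U\{1\}=\Omega^{1,\vee}_R\{1\}$ on $\mathcal{M}$ is contraction of $\theta$ against $\delta$, and for the complexes at hand ``locally nilpotent action on the complex'' is interpreted through, and is equivalent to, ``locally nilpotent action on each cohomology group''; this gives exactly the stated criterion on $H^\ast(U,\mathcal{M}\otimes_R^LR/p)$, and the condition is local on $X$ because comodule structures descend Zariski-locally.

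The genuinely substantial point is the first stage: extracting from \cite{bhatt2022prismatization,bhatt2022absolute} the precise gerbe structure of $X^\HT\to X$ in the smoothoid setting, with the correct Breuil--Kisin twist and divided-power thickening, and in a form functorial enough that a section produces a canonical equivalence. The third stage is the technical heart on the module-theoretic side: one must be careful about what ``locally nilpotent'' means for complexes, justify the reduction modulo $p$ using $p$-completeness, check locality on $X$, and not overlook that the comparison functor must be shown fully faithful and not merely faithful.
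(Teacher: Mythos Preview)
Your first stage matches the paper exactly: the gerbe structure on $X^\HT\to X$ is taken from \cite[Proposition~5.12]{bhatt2022prismatization}, and a section of a gerbe banded by a commutative group trivialises it.

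There is a genuine gap in your second stage. You describe the functor as ``Cartier-dualise comodules over $\Gamma_{\mathcal{O}_X}(\Omega^1_X\{-1\})^\wedge_p$ to modules over $\widehat{\mathrm{Sym}}$, then restrict scalars along the completion map $\mathcal{S}_p\to\widehat{\mathrm{Sym}}$''. But restriction along that map embeds the derived $I$-\emph{complete} objects (for $I$ the augmentation ideal) into $\mathcal{D}(\mathcal{T}_X^\vee\{-1\})$, whereas the essential image you announce in stage~3 --- and the one the theorem asserts --- is the subcategory of $I$-\emph{torsion} (locally nilpotent) objects. These are different full subcategories: $\widehat{\mathrm{Sym}}$ itself is $I$-complete, yet each $\delta\in E^\vee$ acts far from nilpotently on it modulo $p$. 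So the functor you write down cannot have the essential image you claim, and your chain ``action extends to $\widehat{\mathrm{Sym}}$ $\Leftrightarrow$ coaction over $\Gamma$ exists $\Leftrightarrow$ pro-nilpotence'' is broken at the first step. More basically, the passage from derived $A$-comodules to $B$-modules by naive adjunction is precisely where the difficulty lies; the paper remarks explicitly (just after computing that $\Phi_{\Spf(R)}(M)=M$ on underlying modules) that carrying this out directly seems to require nuclear modules in the sense of Clausen--Scholze.

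The paper's substitute is to construct $\Phi$ as a geometric Fourier transform $F_R\colon\mathcal{D}(BV^\sharp)\to\mathcal{D}(\widehat{V^\vee})$ (with kernel the universal character line bundle on $\widehat{V^\vee}\times BV^\sharp$) followed by local cohomology $R\Gamma_Z$ along the zero section. An explicit Koszul computation shows $\Phi(M)=M$ on underlying $R$-modules with the adjoint Higgs action; full faithfulness is then proved by reducing to the trivial representation --- which generates $\mathcal{D}(BV^\sharp)$ under colimits, via a filtration of the regular representation --- and comparing Koszul-complex models for $R\Hom$ on each side. The essential-image description is inherited from the standard characterisation of the image of $R\Gamma_Z$ as the torsion subcategory, which is exactly the local-nilpotence condition mod $p$. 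Your mod-$p$ heuristic for nilpotence is correct in spirit, but it only becomes a proof once the functor itself is built so as to land in the torsion, not the complete, subcategory.
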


In particular, a splitting of $X^\HT$ implies that vector bundles on $X^\HT$ can be described as vector bundles $\mathcal{M}$ on $X$ together with a Higgs field, i.e., a morphism of vector bundles
\[
 \theta_{\mathcal{M}}\colon \mathcal{M}\to \mathcal{M} \otimes_{\mathcal{O}_X} \Omega^1_{X}\{-1\}
\]
with $ \theta_{\mathcal{M}}\wedge  \theta_{\mathcal{M}}=0$
whose components are topologically nilpotent.

\subsection{Globalization}

The assumption that $X^\HT$ is split is rather restrictive, cf.\ \cite[Remark 5.13]{bhatt2022prismatization}. It is satisfied if $X=\Spf(R)$ is affine and smooth over some perfectoid ring $R_0$, c.f.\ \cite[Construction 5.2]{bhatt2022prismatization}. For example, a splitting is induced by the datum of a toric chart for $X$.

To globalise the construction, we therefore show that the pushout of $X^\HT$ along a rescaling map is already split by the datum of a flat lift of $X$ to $A_0/I_0^2$. The existence of such a lift is a much weaker condition than that of a prismatic lift. We thus obtain a $p$-adic Simpson functor for a considerably weaker datum, at the expense of introducing a stronger convergence conditions on the Higgs field. Assume that the perfectoid base $R_0$ contains a primitive $p$-th root of unity $\zeta_p\in R_0$.

\begin{theorem}[\Cref{sec:appl-Hodge--Tate-functor-for-x-lift}, \Cref{sec:appl-Hodge--Tate-embedding-from-colimit}]
  \label{sec:smoothoid-case-1-complexes-on-the-Hodge--Tate-stack-for-choice-of-lift-introduction}
  Each lift $\tilde{X}$ of $X$ to $A_0/I_0^2$ induces a morphism
  $\Phi_{\tilde{X}}\colon X^\HT\to B_X\mathcal{T}^\sharp_{X}\{1\}$, linear over $\mathcal{T}^\sharp_{X}\{1\}\xrightarrow{\zeta_p-1} \mathcal{T}^\sharp_{X}\{1\}$, 
  that induces  a natural  equivalence
  \[ (\zeta_p-1)_\ast X^\HT \isomarrow B_X\mathcal{T}^\sharp_{X}\{1\}.\]
  Thus the pullback
  \[
    \Phi_{\tilde{X}}^\ast\colon \mathcal{P}erf(B_X\mathcal{T}^\sharp_{X}\{1\})\to \mathcal{P}erf(X^\HT)
  \]
  is fully faithful on isogeny categories.
\end{theorem}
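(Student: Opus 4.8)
By the structural results recalled above, $X^\HT\to X$ is a gerbe banded by the abelian $p$-adic formal group $\mathcal T^\sharp_X\{1\}$, and it is étale-locally split: over any affine open $U=\Spf(R)\subseteq X$ one may choose a prismatic lift of $R$ over $(A_0,I_0)$ — which exists since $U$ is affine, by the affine case recalled above — and this produces a section $U\to U^\HT$ and an identification $U^\HT\cong B_U\mathcal T^\sharp_U\{1\}$. Now, for a homomorphism $f\colon G\to G'$ of abelian fppf group objects and a $G$-gerbe $\mathcal G/X$, giving a morphism $\mathcal G\to B_XG'$ over $X$ that is linear over $f$ on bands is the same datum as a trivialization of the pushout $G'$-gerbe $f_\ast\mathcal G$. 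Applying this with $G=G'=\mathcal T^\sharp_X\{1\}$ and $f=1-\zeta_p$ (which acts on the $\mathcal O_X$-linear group $\mathcal T^\sharp_X\{1\}$ by scalar multiplication), the assertions about $\Phi_{\tilde X}$, its $f$-linearity, and the equivalence $(1-\zeta_p)_\ast X^\HT\isomarrow B_X\mathcal T^\sharp_X\{1\}$ all amount to the single statement: \emph{a lift $\tilde X$ of $X$ to $A_0/I_0^2$ canonically trivializes the gerbe $(1-\zeta_p)_\ast X^\HT$.}

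\textbf{Constructing the trivialization.} I would argue Čech-locally. Cover $X$ by affine opens $U_i$, equip each with a prismatic lift and hence a section $s_i\colon U_i\to U_i^\HT$; after shrinking, also arrange that $\tilde X|_{U_i}$ is identified with the reduction modulo $I_0^2$ of this prismatic lift (possible because $U_i$ is affine, so the relevant smooth deformation is unique up to isomorphism). On an overlap $U_{ij}$ the sections $s_i$ and $s_j$ differ by a $\mathcal T^\sharp_{U_{ij}}\{1\}$-torsor $P_{ij}$, and $\{P_{ij}\}$ is a $2$-cocycle representing the class of $X^\HT$ in $H^2_{\et}(X,\mathcal T^\sharp_X\{1\})$. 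The crux is a local computation with the Bhatt--Lurie/Drinfeld description of $X^\HT$: a prismatic lift and an $A_0/I_0^2$-lift of $R$ each induce a trivialization of the Breuil--Kisin twist $\mathcal O_X\{1\}=I_0/I_0^2\otimes_{R_0}\mathcal O_X$, and more generally of the graded pieces of the band $\mathcal T^\sharp_X\{1\}$, and these two trivializations differ exactly by multiplication by $\zeta_p-1$ — the $p$-adic Hodge-theoretic input being the incarnation of $\zeta_p-1$ as the $\theta$-image of the canonical element $\mu$ of $A_{\mathrm{inf}}$ that governs the gap between the prismatic and the first-order data. Since the local prismatic lifts were chosen compatibly with the \emph{global} datum $\tilde X$ modulo $I_0^2$, it follows that the rescaled torsors $(1-\zeta_p)\cdot P_{ij}$ are canonically trivialized using $\tilde X$ alone, and that these trivializations agree over triple overlaps; hence they glue to a trivialization of $(1-\zeta_p)_\ast X^\HT$. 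Unwinding, the induced morphism $\Phi_{\tilde X}\colon X^\HT\to B_X\mathcal T^\sharp_X\{1\}$ restricts on each $U_i$ to $s_i$ postcomposed with the map induced by $1-\zeta_p$, hence is linear over $1-\zeta_p$, and naturality in $(X,\tilde X)$ is read off from the construction.

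\textbf{Full faithfulness on isogeny categories.} Write $\Phi_{\tilde X}$ as $X^\HT\xrightarrow{q}(1-\zeta_p)_\ast X^\HT\isomarrow B_X\mathcal T^\sharp_X\{1\}$, with $q$ the canonical map to the pushout gerbe; étale-locally $q$ is the map $B_U\mathcal T^\sharp_U\{1\}\to B_U\mathcal T^\sharp_U\{1\}$ induced by $1-\zeta_p$. Since the second map is an equivalence, it suffices to see that $q^\ast$ is fully faithful after inverting $p$. By the projection formula (valid here, the relevant complexes being perfect), $q_\ast q^\ast(-)\cong(-)\otimes_{\mathcal O}q_\ast\mathcal O_{X^\HT}$, so this reduces to the unit $\mathcal O_{B_X\mathcal T^\sharp_X\{1\}}\to q_\ast\mathcal O_{X^\HT}$ becoming an isomorphism after $-\tf$. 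Locally, $q_\ast\mathcal O_{X^\HT}$ is the representation of $\mathcal T^\sharp_X\{1\}$ associated with the fibre of the endomorphism $1-\zeta_p$ of $\mathcal T^\sharp_X\{1\}$; as the kernel and cokernel of this endomorphism are annihilated by powers of $1-\zeta_p$, hence of $p$ — recall that $(\zeta_p-1)^{p-1}$ and $p$ generate the same ideal — the cofibre of the unit is $p$-power-torsion and dies after inverting $p$. Presenting $\calPerf(X^\HT)\tf$ and $\calPerf(B_X\mathcal T^\sharp_X\{1\})\tf$ as the filtered colimits that model the respective isogeny categories makes this precise and, at the same time, shows that $q^\ast$ does carry perfect complexes to perfect complexes after $-\tf$ — a point that requires proof, since $1-\zeta_p$ is not flat on $\mathcal T^\sharp_X\{1\}$ and so $q^\ast$ need not preserve perfectness integrally.

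\textbf{Main difficulty.} The substantive step is the local computation in the second paragraph: extracting from the Bhatt--Lurie/Drinfeld presentation of $X^\HT$ the exact scalar $\zeta_p-1$ relating the trivialization of $\mathcal O_X\{1\}$ and of the band coming from an $A_0/I_0^2$-lift to the one coming from a full prismatic lift, and checking that the resulting trivializations of the $(1-\zeta_p)\cdot P_{ij}$ are coherent enough to descend to a \emph{global} trivialization of $(1-\zeta_p)_\ast X^\HT$ even though $X^\HT$ itself admits no global splitting. The reduction in the first paragraph and the homological argument in the third are then formal.
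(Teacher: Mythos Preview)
Your overall strategy is right: a lift $\tilde X$ should trivialize the pushout gerbe $(1-\zeta_p)_\ast X^\HT$, and the full faithfulness then follows by a projection-formula/torsion argument. But the core of your ``Constructing the trivialization'' paragraph is incorrect, and this is exactly the step you flag as the main difficulty.

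The claim that ``a prismatic lift and an $A_0/I_0^2$-lift of $R$ each induce a trivialization of the Breuil--Kisin twist $\mathcal O_X\{1\}$, and these two trivializations differ by $\zeta_p-1$'' does not make sense: the Breuil--Kisin twist $I_0/I_0^2$ lives on the perfectoid base and is not something lifts of $X$ trivialize. Your appeal to ``$\zeta_p-1$ as the $\theta$-image of $\mu$'' is also wrong, since $\theta(\mu)=0$; the element with $\theta$-image $\zeta_p-1$ is $\varphi^{-1}(\mu)$, and it plays no role here. The actual mechanism, and the paper's argument, is different and cleaner. There is an intermediate stack $\Lft_X$ over $X$, the gerbe of flat lifts of $X$ to $A_0/I_0^2$, and by Bhatt--Lurie's computation its band is $\mathcal T_X\{1\}$ \emph{without the sharp}, with $X^\HT\to\Lft_X$ being the pushout along the canonical map $\can\colon\mathcal T^\sharp_X\{1\}\to\mathcal T_X\{1\}$. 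The scalar $1-\zeta_p$ enters for the elementary reason that $(1-\zeta_p)^n/n!$ is integral, so multiplication by $1-\zeta_p$ defines a homomorphism $\mathcal T_X\{1\}\to\mathcal T^\sharp_X\{1\}$; composing gives the factorization $X^\HT\to\Lft_X\to(1-\zeta_p)_\ast X^\HT$. A lift $\tilde X$ is by definition a global section of $\Lft_X\to X$, hence splits $\Lft_X$, and the induced splitting of $(1-\zeta_p)_\ast X^\HT$ is the desired $\Phi_{\tilde X}$. No \v{C}ech gluing is needed. If you insist on your \v{C}ech approach, the missing observation is that your torsors $P_{ij}$ map under $\can$ to the transition torsors for $\Lft_X$, and \emph{those} are trivialized by $\tilde X$; pushing their trivializations forward along $(1-\zeta_p)\colon\mathcal T_X\{1\}\to\mathcal T^\sharp_X\{1\}$ trivializes the $(1-\zeta_p)\cdot P_{ij}$.

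For full faithfulness, your projection-formula argument is in the right spirit but a bit loose (the cofibre of $\mathcal O\to q_\ast\mathcal O$ involves all powers $(1-\zeta_p)^n$, so ``killed by a power of $p$'' needs care). The paper instead computes $R\Hom$ on $B\mathcal T^\sharp_X\{1\}$ by the Koszul/Dolbeault complex (\Cref{sec:representations-g-3-cohomology-on-bv-sharp}), from which one reads off directly that pullback along the map induced by multiplication by $z$ on $\mathcal T^\sharp_X\{1\}$ is fully faithful after inverting $z$ (\Cref{sec:isog-categ-perf-1-multiplication-by-x-on-v-sharp}).
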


This will lead to our global version of a derived $p$-adic Simpson functor for small objects:
The category $\mathcal{P}erf(B_X\mathcal{T}^\sharp_{X}\{1\})$ can again be described by Higgs bundles as in \Cref{sec:smoothoid-case-1-complexes-on-x-ht-smoothoid-case-introduction}. Roughly, the Higgs field gets multiplied by $(\zeta_p-1)$ under $\Phi^\ast_{\tilde{X}}$. Thus, if $X^\HT\to X$ is split, the essential image of $\Phi^\ast_{\tilde{X}}$ is given by Higgs perfect complexes $(\mathcal{M},\theta_{\mathcal{M}})$ with the stronger convergence condition that $\frac{1}{(\zeta_p-1)}\theta_{\mathcal{M}}$ is topologically nilpotent. In effect, this means that given an $A_0/I_0^2$-lift, the local descriptions of perfect complexes on $X^\HT$ can be glued  after introducing a convergence condition.

\subsection{A derived $p$-adic Simpson correspondence}
As a consequence of our analysis of complexes on the Hodge--Tate stack, we get a derived improvement of the previously known $p$-adic Simpson correspondence for ``small'' objects. The starting point of this is \Cref{sec:smoothoid-case-1-complexes-on-x-ht-smoothoid-case-introduction}.
Passing to the isogeny category of perfect complexes on both sides leads to the notion of a \textit{Higgs perfect complex} on the generic fibre $\X$.  Roughly, this is a perfect complex $\mathcal{M}$ on $\X$ with a Higgs field 
\[\theta_{\mathcal{M}}\colon \mathcal{M}\to \mathcal{M}\otimes_{\mathcal{O}_X}\Omega^1_{\X}\{-1\}.\]
The condition describing the essential image generalises to this context: we call a Higgs perfect complex $\omega$-\textit{Hitchin-small} if $\theta_{\mathcal{M}}$ is topologically nilpotent, see \Cref{sec:appl-Hodge--Tate-definition-higgs-perfect-complex}. Here: $\omega=(\zeta_p-1)^{-1}$.

Combining \Cref{sec:introduction-2-statement-main-theorem,sec:smoothoid-case-1-complexes-on-x-ht-smoothoid-case-introduction}, we obtain a
derived version of a local $p$-adic Simpson functor:
\begin{theorem}[{\Cref{t:local-p-adic-Simpson-functor-geometric}}]\label{t:intro-local-p-adic-Simpson-functor-geometric}
	Let $X$ be a smoothoid formal scheme with adic generic fibre $\X$. Any splitting $s:X\to X^\HT$ (for example induced by a toric chart) induces  a fully faithful functor
	\[ \LS_s:\left\{\begin{array}{@{}c@{}l}\text{$\omega$-Hitchin-small Higgs}\\\text{perfect complexes on } \X\end{array} \right\}\hookrightarrow \mathcal{P}erf(\X_v)
	.\]
\end{theorem}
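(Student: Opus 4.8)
The plan is to realise $\LS_s$ as a composite $\alpha_X^\ast\circ\Psi_s$, where $\Psi_s$ is an equivalence built from the splitting $s$, and then to read off full faithfulness from that of $\alpha_X^\ast$ in \Cref{sec:introduction-2-statement-main-theorem}. First I would use \Cref{sec:smoothoid-case-1-complexes-on-x-ht-smoothoid-case-introduction} to describe $\calPerf(X^\HT)$ concretely: $s$ identifies $X^\HT\cong B_X\mathcal{T}_X^\sharp\{1\}$ and gives a fully faithful embedding $\mathcal{D}(X^\HT)\hookrightarrow\mathcal{D}(\mathcal{T}_X^\vee\{-1\})$. Now $\mathcal{T}_X^\vee\{-1\}$ is the total space of the vector bundle with sheaf of sections $\Omega^1_X\{-1\}$, i.e.\ $\Spf(\mathcal{S}_p(\mathcal{T}_X\{1\}))$ locally on $X=\Spf(R)$, so $\mathcal{D}(\mathcal{T}_X^\vee\{-1\})$ is locally the derived category of $\mathcal{S}_p(\mathcal{T}_R\{1\})$-modules, i.e.\ of Higgs complexes $(\mathcal{M},\theta_\mathcal{M})$ with $\theta_\mathcal{M}\colon\mathcal{M}\to\mathcal{M}\otimes_{\mathcal{O}_X}\Omega^1_X\{-1\}$ and $\theta_\mathcal{M}\wedge\theta_\mathcal{M}=0$; the essential image of $\mathcal{D}(X^\HT)$ is cut out by the local-nilpotence condition in \Cref{sec:smoothoid-case-1-complexes-on-x-ht-smoothoid-case-introduction}.

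Next I would restrict to perfect objects and pass to the generic fibre. Since $\mathcal{T}_X^\sharp\{1\}$ is the PD-envelope of the zero section of a vector bundle it is affine and flat over $X$ (the $p$-completed PD-algebra of a finite free module is a $p$-completed free module), so $s\colon X\to B_X\mathcal{T}_X^\sharp\{1\}$ is an fppf cover and an object of $\mathcal{D}(X^\HT)$ is perfect iff its pullback along $s$ -- the underlying $\mathcal{O}_X$-complex $\mathcal{M}$ -- is perfect; for such $\mathcal{M}$ the groups $H^\ast(U,\mathcal{M}\otimes^L_RR/p)$ are finitely generated, so the local-nilpotence condition becomes topological nilpotence of $\theta_\mathcal{M}$. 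Thus $s$ gives an equivalence of $\calPerf(X^\HT)$ with topologically nilpotent Higgs perfect complexes on $X$. Inverting $p$, $\calPerf(X^\HT)\tf$ becomes their isogeny category; spreading a perfect complex on the qcqs space $\X$ out to an integral model on $X$ and using that $\Hom$ between perfect complexes on a qcqs $p$-adic formal scheme is computed after inverting $p$, this is equivalent to the category of Higgs perfect complexes on $\X$ admitting an integral topologically nilpotent model. Finally, $1-\zeta_p$ is a unit after inverting $p$, so $(\mathcal{M},\theta_\mathcal{M})\mapsto(\mathcal{M},(1-\zeta_p)\theta_\mathcal{M})$ is an auto-equivalence of Higgs perfect complexes on $\X$; by the definition of $\omega$-Hitchin-smallness (\Cref{sec:appl-Hodge--Tate-definition-higgs-perfect-complex}, $\omega=(1-\zeta_p)^{-1}$) it carries the $\omega$-Hitchin-small objects onto precisely those admitting an integral topologically nilpotent model. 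Composing gives the desired equivalence $\Psi_s\colon\{\omega\text{-Hitchin-small Higgs perfect complexes on }\X\}\isomarrow\calPerf(X^\HT)\tf$; the $(1-\zeta_p)$-rescaling is inserted so that $\LS_s$ matches the globalised functor $\Phi_{\tilde X}^\ast$ of \Cref{sec:smoothoid-case-1-complexes-on-the-Hodge--Tate-stack-for-choice-of-lift-introduction} where both are defined, and even if one only obtained a fully faithful $\Psi_s$ here, that would suffice.

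Then I would set $\LS_s:=\alpha_X^\ast\circ\Psi_s$. Since $\alpha_X^\ast\colon\calPerf(X^\HT)\tf\to\calPerf(\X_v)$ is fully faithful by \Cref{sec:introduction-2-statement-main-theorem} and $\Psi_s$ is an equivalence, $\LS_s$ is fully faithful, and depends on $s$ only through $\Psi_s$ (and the isomorphism $X^\HT\cong B_X\mathcal{T}_X^\sharp\{1\}$).

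The hard part, granting the two main inputs \Cref{sec:introduction-2-statement-main-theorem} and \Cref{sec:smoothoid-case-1-complexes-on-x-ht-smoothoid-case-introduction}, is the middle step: checking that the integral local descriptions assemble into the intrinsically defined category of $\omega$-Hitchin-small Higgs perfect complexes on $\X$. This rests on the $\Hom$-after-inverting-$p$ comparison for perfect complexes on qcqs formal schemes and on matching the convergence and Breuil--Kisin-twist conventions exactly; given the two inputs there is no further deep ingredient. Secondary points to pin down are the fppf-descent of perfectness along $s$ and the passage from the local-nilpotence criterion of \Cref{sec:smoothoid-case-1-complexes-on-x-ht-smoothoid-case-introduction} to topological nilpotence of $\theta_\mathcal{M}$ for $\mathcal{O}_X$-perfect $\mathcal{M}$.
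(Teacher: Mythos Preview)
Your overall architecture is the paper's: build an equivalence $\Psi_s$ from $\omega$-Hitchin-small Higgs perfect complexes to $\calPerf(X^{\HT})\tf$ using the splitting, then post-compose with $\alpha_X^\ast$. Two points, however, deserve correction.

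First, the $(1-\zeta_p)$-rescaling is based on a misreading of \Cref{sec:appl-Hodge--Tate-definition-higgs-perfect-complex}. By part~(3) with $z=\omega$, a Higgs perfect complex is $\omega$-Hitchin-small precisely when every $\delta\in T_Y\{1\}_\xi$ (the \emph{integral} tangent vectors, no factor of $\omega$) acts topologically nilpotently at each point. So ``$\omega$-Hitchin-small'' already \emph{is} the condition you call ``admitting an integral topologically nilpotent model'' (suitably interpreted); no rescaling is needed, and inserting one produces the wrong functor even though it remains fully faithful.

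Second, and more substantively, your ``middle step'' is where the paper does real work, and your sketch does not capture it. You propose to identify $\calPerf(X^{\HT})\tf$ with Higgs complexes on $\X$ that ``admit an integral topologically nilpotent model'' via spreading out. But $\omega$-Hitchin-smallness is a \emph{pointwise} condition at all valuation-ring points, not an existence-of-a-lattice condition, and these are not obviously equivalent. The paper proves the correct identification as \Cref{sec:representations-g-1-isogeny-category-fully-faithful}: one shows $\calPerf(BV^\sharp)\tf$ embeds into $\calPerf(\Symv(E^\vee)\tf)$ (this uses a separate perfectness lemma over $T=\Symv(E^\vee)$), and then characterises the essential image, \emph{after idempotent completion}, by the pointwise topological-nilpotence condition. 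The ``after idempotent completion'' is essential and is why the paper's diagram in \Cref{t:local-p-adic-Simpson-functor-geometric} has $\calPerf(X^{\HT})\tf^{\mathrm{idem}}$; you omit it. The nontrivial direction---showing every $\omega$-Hitchin-small Higgs perfect complex arises---is established by a Thomason--Trobaugh extension argument: one extends $M\oplus M[1]$ from $\Spec(T\tf)$ to a perfect complex on $\Spec(T)$ with controlled support, then verifies the support condition on the special fibre from the pointwise nilpotence hypothesis. Your spreading-out sentence does not supply this.
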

 Using instead \Cref{sec:smoothoid-case-1-complexes-on-the-Hodge--Tate-stack-for-choice-of-lift-introduction}, we also get a global derived $p$-adic Simpson functor. For this we need to rescale the convergence condition on Higgs fields by a factor of $(\zeta_p-1)$ and arrive at the stronger notion of a  \textit{Hitchin-small} Higgs perfect complex, see \Cref{sec:appl-Hodge--Tate-definition-higgs-perfect-complex}.

\begin{theorem}[\Cref{sec:smoothoid-case-1-derived-local-p-adic-simpson-correspondence}]
  \label{sec:smoothoid-case-1-derived-local-p-adic-simpson-correspondence-introduction}
  Each lift $\tilde{X}$ of $X$ to $A_0/I_0^2$ induces a natural fully faithful functor
  \[ \mathrm S_{\tilde{X}}:\left\{\begin{array}{@{}c@{}l}\text{Hitchin-small Higgs}\\\text{perfect complexes on } \X\end{array} \right\}\hookrightarrow\mathcal{P}erf(\X_v).\]
\end{theorem}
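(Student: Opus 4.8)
The plan is to build $\mathrm S_{\tilde X}$ as the composite of three functors
\[ \calPerf(B_X\mathcal{T}^\sharp_X\{1\})\tf \xrightarrow{\ \Phi_{\tilde X}^\ast\ } \calPerf(X^\HT)\tf \xrightarrow{\ \alpha_X^\ast\ } \calPerf(\X_v), \]
preceded by an equivalence $\iota_X$ from the category of Hitchin-small Higgs perfect complexes on $\X$ onto $\calPerf(B_X\mathcal{T}^\sharp_X\{1\})\tf$ that does not itself depend on $\tilde X$. The last two arrows are already available: $\Phi_{\tilde X}^\ast$ is the functor produced by the lift $\tilde X$ in \Cref{sec:smoothoid-case-1-complexes-on-the-Hodge--Tate-stack-for-choice-of-lift-introduction}, which is fully faithful on isogeny categories, and $\alpha_X^\ast$ is fully faithful by \Cref{sec:introduction-2-statement-main-theorem}, since $X$ is smoothoid. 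Thus, once $\iota_X$ is constructed, one sets $\mathrm S_{\tilde X}:=\alpha_X^\ast\circ\Phi_{\tilde X}^\ast\circ\iota_X$, which is fully faithful as a composite of an equivalence followed by two fully faithful functors, and its naturality follows from that of its three factors.

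To construct $\iota_X$, I would first observe that the split-case analysis of \Cref{sec:smoothoid-case-1-complexes-on-x-ht-smoothoid-case-introduction} applies to the classifying stack $B_X\mathcal{T}^\sharp_X\{1\}$ itself (it is a trivial gerbe, hence ``split''), giving a fully faithful embedding $\mathcal{D}(B_X\mathcal{T}^\sharp_X\{1\})\hookrightarrow\mathcal{D}(\mathcal{T}_X^\vee\{-1\})$ whose essential image consists of the complexes $\mathcal{M}$ equipped with a Higgs field $\theta_\mathcal{M}\colon\mathcal{M}\to\mathcal{M}\otimes_{\mathcal{O}_X}\Omega^1_X\{-1\}$ on which, locally on $X=\Spf(R)$, each tangent vector acts locally nilpotently on $H^\ast(-,\mathcal{M}\otimes_R^L R/p)$. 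Restricting to perfect complexes, inverting $p$, and passing to the rigid generic fibre $\X$ — where the notion of Higgs perfect complex of \Cref{sec:appl-Hodge--Tate-definition-higgs-perfect-complex} lives — this nilpotence condition becomes exactly topological nilpotence of $\theta_\mathcal{M}$, so that $\calPerf(B_X\mathcal{T}^\sharp_X\{1\})\tf$ is identified with the category of topologically nilpotent Higgs perfect complexes on $\X$. Finally, since $|1-\zeta_p|<1$, multiplication by $(1-\zeta_p)^{-1}$ on Higgs fields is a self-equivalence of the category of all Higgs perfect complexes on $\X$ which carries the Hitchin-small ones — by definition, those $(\mathcal{M},\theta_\mathcal{M})$ with $(1-\zeta_p)^{-1}\theta_\mathcal{M}$ topologically nilpotent, see \Cref{sec:appl-Hodge--Tate-definition-higgs-perfect-complex} — onto the topologically nilpotent ones; composing with the previous identification yields $\iota_X$.

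For consistency it is worth noting that, under these Higgs descriptions, $\Phi_{\tilde X}^\ast$ multiplies the Higgs field by $(1-\zeta_p)$ (this is its $(1-\zeta_p)$-linearity in \Cref{sec:smoothoid-case-1-complexes-on-the-Hodge--Tate-stack-for-choice-of-lift-introduction}), so that $\Phi_{\tilde X}^\ast\circ\iota_X$ is simply the inclusion of Hitchin-small Higgs perfect complexes into $\calPerf(X^\HT)\tf$; in particular, when $X^\HT$ happens to be split, $\mathrm S_{\tilde X}$ restricts the functor $\LS_s$ of \Cref{t:intro-local-p-adic-Simpson-functor-geometric} to the smaller class of Hitchin-small complexes, which is exactly why the global functor must impose the stronger convergence hypothesis.

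I expect the main obstacle to be the middle step: the precise determination of the essential image of $\mathcal{D}(B_X\mathcal{T}^\sharp_X\{1\})\hookrightarrow\mathcal{D}(\mathcal{T}_X^\vee\{-1\})$ after inverting $p$ and analytifying. Over $\Z_p$ the classifying stack is governed by the divided-power algebra $\Gamma_R(\Omega^1_R\{-1\})$ rather than the $p$-completed symmetric algebra, and the delicate point is to check that, on isogeny categories and over the rigid generic fibre, the mod-$p$ local-nilpotence condition of \Cref{sec:smoothoid-case-1-complexes-on-x-ht-smoothoid-case-introduction} becomes exactly topological nilpotence of $\theta_\mathcal{M}$ — this is what pins down the Hitchin-small convergence condition after the $(1-\zeta_p)$-rescaling — together with verifying that all of these identifications glue correctly in passing from affine $X$ to a general qcqs smoothoid formal scheme. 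Everything else is a formal assembly of \Cref{sec:introduction-2-statement-main-theorem}, \Cref{sec:smoothoid-case-1-complexes-on-x-ht-smoothoid-case-introduction} and \Cref{sec:smoothoid-case-1-complexes-on-the-Hodge--Tate-stack-for-choice-of-lift-introduction}.
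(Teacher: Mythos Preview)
Your proposal is correct and follows essentially the same route as the paper: the functor $\mathrm{S}_{\tilde X}$ is built exactly as the composite $\alpha_X^\ast\circ\Phi_{\tilde X}^\ast\circ\iota_X$ you describe, with $\iota_X$ coming from the identification of $\calPerf(B_X\mathcal{T}^\sharp_X\{1\})\tf$ with $\omega$-Hitchin-small Higgs perfect complexes (this is \Cref{sec:appl-Hodge--Tate-hig-sen-via-integral-stack-smoothoid-case} in the paper), combined with the $(1-\zeta_p)$-rescaling. The ``main obstacle'' you anticipate---matching the mod-$p$ nilpotence condition with topological nilpotence on the generic fibre, up to idempotent completion and Zariski gluing---is handled in the paper by \Cref{sec:representations-g-1-isogeny-category-fully-faithful} (which uses Thomason--Trobaugh extension of perfect complexes to pass between the two conditions) and \Cref{sec:appl-Hodge--Tate-hig-sen-via-integral-stack-smoothoid-case} (which packages the stackification).
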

Since \Cref{sec:smoothoid-case-1-derived-local-p-adic-simpson-correspondence-introduction,t:intro-local-p-adic-Simpson-functor-geometric} work on the derived level, they in particular include a comparison of cohomology and an extension of known functors to coherent Higgs modules on $\X$. They furthermore use a smallness condition which only involves the spectral properties of the Higgs field, contrary to other instances of the $p$-adic Simpson correspondence in the literature, e.g.\ in \cite{faltings2005p}. We note that for v-vector bundles, Faltings' notion of smallness  implies Hitchin smallness.

\subsection{The arithmetic case}
\label{sec:arithmetic-case}
Switching to an arithmetic setup, let us now assume that $X$ is a qcqs smooth $p$-adic formal scheme over the ring of integers $\O_K$ of a $p$-adic field, i.e.\, a complete discretely valued extension $K$ of $\Q_p$ with perfect residue field. Let $C$ be the completion of an algebraic closure of $K$. Let $\X$ be the rigid generic fibre of $X$.
Then v-vector bundles on $\X$ bear a relation to $p$-adic representations of $\Gal(C|K)$: For $X=\Spf(\O_K)$, v-vector bundles on $\X$ are equivalent to semi-linear representations of $\Gal(C|K)$ on finite dimensional $C$-vector spaces. For general $X$, v-vector bundles thus give rise to $p$-adic families of $\Gal(C|K)$-representations.

As in \S\ref{sec:smoothoid-case}, we start by analyzing complexes on $X^\HT$ when there exists  a global prismatic lift of $X$. The natural map
$X^\HT\to \Spf(\O_K)^\HT
$
makes the relative Hodge--Tate structure map
\[
  \pi_{X|\O_K}\colon X^\HT\to X\times_{\Spf(\O_K)} \Spf(\O_K)^\HT
\]
into a gerbe banded by the affine, faithfully flat group scheme $\mathcal{T}^\sharp_{X|\O_K}\{1\}$ over $X\times_{\Spf(\O_K)}\Spf(\O_K)^\HT$, cf.\ \cite[Proposition 5.12]{bhatt2022prismatization}. Here, $\{1\}$ refers to twisting by  $\mathcal{O}_{\Spf(\Z_p)^\HT}\{1\}$, cf.\ \cite[Example 3.5.2]{bhatt2022absolute}. 

Using this, we can describe complexes on $X^\HT$ by a derived version of Higgs--Sen bundles:

\begin{theorem}[\Cref{sec:autom-overl-1-description-of-g-a}, \Cref{sec:appl-Hodge--Tate-structure-of-x-ht-for-some-prismatic-lift}]
	\label{sec:introduction-2-description-of-ht-stack}
	Assume furthermore that $X=\Spf(R)$ is affine and that there exists a bounded prism $(A,I)$ such that $R=A/I$. Then the resulting morphism
	\[
	\overline{\rho_A}\colon X\to X^\HT
	\]
	is faithfully flat and exhibits $X^\HT\cong B_XG_A$ as the classifying stack of the relative group scheme $G_A$ over $X$ which embeds into the semi-direct product $(\mathcal{T}_{A|\Z_p}^\sharp\{1\}\times_{\Spf(A)}\Spf(R)) \rtimes \Gm^\sharp$ as the subgroup of pairs $(D\colon A\to \Ga^\sharp\{1\}=I/I^2\otimes_R\Ga^\sharp,x\in \Gm^\sharp)$ with $D$ a continuous derivation such that $D(a)=(1-x)(a\otimes 1)$ for $a\in I$. Consequently, there exists a natural fully faithful functor
        \[
          \mathcal{D}(X^\HT)\hookrightarrow \mathrm{Mod}_{\mathcal S}(\mathcal{D}(\O_K[\Theta_\pi])),
		\]
		where $\mathcal S$ is the $p$-adically completed symmetric algebra of the $\O_X$-module $\Omega_{X|\O_K}^{1,\vee}\{1\}$.
		The essential image is given by complexes $\mathcal{M}$ which are derived $p$-complete and such that each $\delta\in \Omega^{1,\vee}_{X|\O_K}\{1\}$ and  $\Theta^p_\pi-E^\prime(\pi)^{p-1}\Theta_\pi$ act locally nilpotently on $H^\ast(X, \mathcal{M}\otimes_{\Z_p}^L\F_p)$.
              \end{theorem}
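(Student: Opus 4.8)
The plan is to build on the structural results of Bhatt--Lurie recalled above, specifically the fact that $\pi_{X|\O_K}\colon X^\HT\to X\times_{\Spf(\O_K)}\Spf(\O_K)^\HT$ is a gerbe banded by $\mathcal T^\sharp_{X|\O_K}\{1\}$, together with the known description of $\Spf(\O_K)^\HT$. First I would recall that a bounded prism $(A,I)$ with $R=A/I$ produces, via the Bhatt--Lurie prismatization, a map $\rho_A\colon \Spf(A)\to \Spf(A)^{\mathbin{\rotatebox[origin=c]{90}{$\scriptstyle\prism$}}}$ and hence $\overline{\rho_A}\colon X=\Spf(R)\to X^\HT$ by restriction to the Hodge--Tate locus; faithful flatness of $\overline{\rho_A}$ is the Hodge--Tate analogue of \cite[Proposition 3.x]{bhatt2022prismatization} and follows because after base change to a perfectoid cover the prismatic structure becomes trivialized. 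Given faithful flatness, $X^\HT$ is the classifying stack $B_XG_A$ where $G_A=X\times_{X^\HT}X$ is the automorphism group scheme of the point $\overline{\rho_A}$; so the core computation is to identify $G_A$ explicitly.

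To compute $G_A$, I would work functorially: for a $p$-nilpotent $R$-algebra $S$, an $S$-point of $G_A$ is an automorphism of the Hodge--Tate point, which by the Bhatt--Lurie description of $(-)^\HT$ as classifying $\delta$-structures / Cartier--Witt divisors amounts to a pair consisting of a continuous derivation-type datum and a unit accounting for the Breuil--Kisin twist. Concretely, unwinding the moduli interpretation of $\Spf(A)^\HT$ (points are given by generalized Cartier divisors trivializing $I$ together with a lift of Frobenius), an automorphism fixing the underlying point is given by $(D,x)$ where $x\in\Gm^\sharp(S)$ records the rescaling on $I/I^2$ and $D\colon A\to I/I^2\otimes_R\Ga^\sharp(S)$ is a continuous derivation; the compatibility with the prism structure — i.e. that the automorphism respects $I\subset A$ in the twisted sense — forces exactly the relation $D(a)=(1-x)(a\otimes 1)$ for $a\in I$. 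This realizes $G_A$ as the claimed subgroup of $(\mathcal T^\sharp_{A|\Z_p}\{1\}\times_{\Spf(A)}\Spf(R))\rtimes\Gm^\sharp$; here the semidirect product structure is dictated by how $\Gm^\sharp$ acts on derivations by scaling, and one checks this matches the relative gerbe $\pi_{X|\O_K}$ being banded by $\mathcal T^\sharp_{X|\O_K}\{1\}$ sitting over the $\Gm^\sharp$-gerbe $\Spf(\O_K)^\HT\to\Spf(\O_K)$.

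Once $X^\HT\cong B_XG_A$ is established, the fully faithful embedding $\mathcal D(X^\HT)\hookrightarrow \mathrm{Mod}_{\mathcal T^\vee_{X|\O_K}\{1\}}(\mathcal D(\O_K[\Theta_\pi]))$ follows by descent: quasi-coherent complexes on $B_XG_A$ are $G_A$-equivariant complexes on $X$, and since $G_A$ is an extension of $\Gm^\sharp$ by the infinitesimal $\mathcal T^\sharp$-part, the category of representations is computed by the Lie-algebra / distribution algebra, namely a Higgs-type action of $\mathcal T^\vee_{X|\O_K}\{1\}$ (the PD dual, acting via the derivation $D$) together with the action of the distribution algebra of $\Gm^\sharp$, which is the divided-power polynomial algebra generated by the Sen operator $\Theta_\pi$ — this is where the base ring $\O_K[\Theta_\pi]$ enters, matching \cite[Example 3.5.2]{bhatt2022absolute}. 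The essential-image statement is then the requirement that the complex be derived $p$-complete (so that it is genuinely a pullback along the pro-étale/faithfully flat cover, not just formally) together with the local nilpotence conditions: these are precisely the conditions for the $\mathcal T^\sharp$-action and the $\Gm^\sharp$-action to integrate from the Lie algebra to the group $G_A$, which after reduction mod $p$ becomes nilpotence of each $\delta\in\Omega^{1,\vee}_{X|\O_K}\{1\}$ and of the element $\Theta^p_\pi-E'(\pi)\Theta_\pi$ — the latter being the characteristic relation (the "$p$-th power minus linear term") satisfied in the distribution algebra of $\Gm^\sharp$ in characteristic $p$, whose vanishing locus cuts out the representations that extend to the divided-power completion.

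The main obstacle I anticipate is the explicit identification of the defining relation $D(a)=(1-x)(a\otimes 1)$ and, relatedly, the precise form $\Theta^p_\pi-E'(\pi)\Theta_\pi$ of the integrality condition: both require carefully tracking the Breuil--Kisin twist and the interaction between the Frobenius-lift data in the prism $(A,I)$ and the $\Gm^\sharp$-action, and checking that the Sen operator coming from the group-theoretic description agrees with the classical Sen operator normalized via the uniformizer $\pi$ (so that $E$ is the Eisenstein polynomial and $E'(\pi)$ appears). Getting the compatibility of all these normalizations — twist conventions, the factor $1-x$ versus $1-\zeta_p$, and the mod-$p$ reduction — correct is the delicate bookkeeping that the proof must do; the rest is a relatively formal combination of faithfully flat descent, the gerbe structure from \cite[Proposition 5.12]{bhatt2022prismatization}, and the representation theory of $\Gm^\sharp$.
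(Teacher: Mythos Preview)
Your overall architecture is right and matches the paper: establish faithful flatness of $\overline{\rho_A}$ (the paper does this in Lemma~\ref{sec:form-reduct-prov-1-consequences-of-axioms}), compute $G_A$ functorially as automorphisms of the Cartier--Witt divisor (this is exactly Lemma~\ref{sec:autom-overl-1-description-of-g-a} and Proposition~\ref{sec:autom-overl-1-explicit-group-structure-on-g-a}), and then translate $\mathcal D(B_XG_A)$ into modules with a Higgs field plus a Sen operator. The identification of the relation $D(a)=(1-x)(a\otimes 1)$ and the appearance of $\Theta_\pi^p-E'(\pi)\Theta_\pi$ from the representation theory of $\Gm^\sharp$ (via \cite[Theorem~2.5]{analytic_HT}) are also as in the paper.

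The one place where your route diverges substantively is the passage from $\mathcal D(BG_A)$ to $\mathrm{Mod}_{\mathcal S_p(E^\vee)}(\mathcal D(BH))$. You propose to do this by ``Lie algebra / distribution algebra'' reasoning, i.e.\ dualize the $\O(G)$-coaction to an action of the completed symmetric algebra and characterize which actions integrate. This is fine for vector bundles (the paper sketches exactly this in Remark~\ref{sec:representations-g-1-underived-case}), but at the derived level one must supply coherences for the Higgs condition $\theta\wedge\theta=0$, and the paper explicitly flags this as the obstacle (Remark~\ref{sec:representations-g-1-canonical-higgs-field-on-gerbes}). The paper sidesteps it by constructing the functor geometrically as a Fourier transform: using the Cartier duality $\widehat{V^\vee}\cong (V^\sharp)^\vee$ (Lemma~\ref{sec:representations-g-cartier-dual-of-v-sharp}), one defines $\Phi$ as a Fourier--Mukai kernel followed by local cohomology $R\Gamma_Z$ (Definition~\ref{sec:representations-g-definition-of-functor-phi}), and then checks full faithfulness and the essential image via explicit Koszul resolutions (Lemmas~\ref{sec:representations-g-3-cohomology-on-bv-sharp}--\ref{sec:representations-g-1-phi-fully-faithful}). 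The $H$-equivariance then comes for free by naturality and descent along $\Spf(R)\to BH$. Your distribution-algebra approach is morally equivalent and would likely be made rigorous using nuclear modules (the paper alludes to this possibility), but as written it leaves the $\infty$-categorical coherence step unaddressed; the Fourier construction is what makes the argument go through cleanly without that machinery.
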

				Here $\pi\in \O_K$ is a choice of uniformiser and $E(u)\in W(k)[[u]]$ is the minimal polynomial of $\pi$ over the maximal unramified subextension  $K_0$ of $K$.  \Cref{sec:introduction-2-description-of-ht-stack} then describes vector bundles on $X^\HT$ as triples $(\mathcal{M},\theta_{\mathcal{M}}, \Theta_\pi)$ where $\mathcal{M}$ is a vector bundle on $X$, where $\theta_{\mathcal{M}}\colon \mathcal{M}\to \mathcal{M}\otimes_{\O_X} \Omega^1_{X|\O_K}\{-1\}$ is a topologically nilpotent Higgs field, and $\Theta_\pi\colon \mathcal{M}\to \mathcal{M}$ is a \textit{Sen operator}, such that the diagram
\[\begin{tikzcd}
	\mathcal{M}  & {\mathcal{M} \otimes_{\O_X} \Omega^1_{X|\O_K}\{-1\}} \\
	\mathcal{M} & {\mathcal{M}\otimes_{\O_X} \Omega^1_{X|\O_K}\{-1\}}
	\arrow["{\theta_{\mathcal{M}}}", from=2-1, to=2-2]
	\arrow["{\theta_{\mathcal{M}}}", from=1-1, to=1-2]
	\arrow["{\Theta_\pi-E^\prime(\pi)}", from=1-2, to=2-2]
	\arrow["{\Theta_\pi}"', from=1-1, to=2-1]
      \end{tikzcd}\]
    commutes. We thus recover the notion of an ``enhanced Higgs bundle'' of Min--Wang, which they have used  to describe prismatic Hodge--Tate crystals \cite[Theorem 4.3]{MinWang22}. The latter are equivalent to vector bundles on $X^\HT$ via \cite[Remark 9.2]{bhatt2022prismatization}. From this perspective, \Cref{sec:introduction-2-description-of-ht-stack} is a generalisation of this description from vector bundles to the derived category  $\mathcal{D}(X^\HT)$.

    In \Cref{sec:appl-Hodge--Tate-definition-higgs-sen-complexes}, we define the notion of a Higgs--Sen perfect complex on the rigid generic fiber $\X$. We also define the subclass of Hitchin-small Higgs--Sen perfect complexes, for which there is a convergence condition only on the Sen operator. We arrive at an analogue of \Cref{sec:smoothoid-case-1-derived-local-p-adic-simpson-correspondence-introduction}:

    \begin{theorem}[\Cref{sec:arithmetic-case-1-derived-local-p-adic-simpson-in-arithmetic-case-plus-description-of-perf-on-x-ht}]
      \label{sec:arithmetic-case-1-derived-local-p-adic-simpson-in-arithmetic-case}
      Let $K$ be $p$-adic and choose a uniformizer $\pi \in \mathcal{O}_K$. Let $\X=X^\rig$ for $X$ a smooth $p$-adic formal scheme over $\O_K$. There exists a natural fully faithful functor
      \[
       \mathrm{S}_\pi \colon {\left\{ \begin{array}{@{}c@{}l}\text{Hitchin-small Higgs-Sen}\\\text{perfect complexes on $\X$}\end{array}\right\}}\hookrightarrow \mathcal{P}erf(\X_v).
      \]
    \end{theorem}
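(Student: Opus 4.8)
The plan is to construct $\mathrm{S}_\pi$ as a composite of the two functors sitting in the diagram of the introduction, now in the arithmetic setting, and then check full faithfulness by combining the two full faithfulness statements proved earlier. More precisely, I would first use \Cref{sec:introduction-2-description-of-ht-stack} (and its globalisation after the rescaling trick, just as in the smoothoid case) to identify Hitchin-small Higgs--Sen perfect complexes on $\X$ with a full subcategory of $\calPerf(X^\HT)\tf$: a Higgs--Sen perfect complex $(\mathcal{M},\theta_\mathcal{M},\Theta_\pi)$ corresponds to an object of $\mathcal{D}(X^\HT)$ exactly when the two nilpotence conditions of \Cref{sec:introduction-2-description-of-ht-stack} hold, and the point of the Hitchin-small condition is that after inverting $p$ the convergence needs only to be imposed on the Sen operator $\Theta_\pi$, the Higgs part being automatically topologically nilpotent on the isogeny category (again because the gluing happens after multiplying $\theta_\mathcal{M}$ by a unit, as in \Cref{sec:smoothoid-case-1-complexes-on-the-Hodge--Tate-stack-for-choice-of-lift-introduction}). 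This gives a fully faithful functor
\[
\beta \colon \left\{\begin{array}{@{}c@{}l}\text{Hitchin-small Higgs-Sen}\\\text{perfect complexes on $\X$}\end{array}\right\} \hookrightarrow \calPerf(X^\HT)\tf.
\]

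Second, I would postcompose with $\alpha_X^\ast\colon \calPerf(X^\HT)\tf \to \calPerf(\X_v)$, which is fully faithful by \Cref{sec:introduction-2-statement-main-theorem} (the arithmetic half, \Cref{sec:smooth-form-schem-galois-cohomo-in-arithmetic-case}). Setting $\mathrm{S}_\pi := \alpha_X^\ast \circ \beta$ then yields a fully faithful functor, since a composite of fully faithful functors is fully faithful. The naturality in $X$ follows from the naturality of both $\alpha_X^\ast$ and the identification $\beta$, both of which are built out of pullback along canonical maps of stacks.

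The one genuine subtlety — and the step I expect to be the main obstacle — is the globalisation underlying $\beta$. \Cref{sec:introduction-2-description-of-ht-stack} is stated under the hypothesis that $X = \Spf(R)$ is affine and admits a global bounded prism $(A,I)$ with $R = A/I$; such a prismatic lift need not exist Zariski-locally in a way that glues. To get around this I would run the same rescaling argument as in \Cref{sec:smoothoid-case-1-complexes-on-the-Hodge--Tate-stack-for-choice-of-lift-introduction}: replace the requirement of a prismatic lift by the much weaker requirement of a flat lift of $X$ to $A_0/I_0^2$ over $\O_K$ (which exists Zariski-locally since $X/\O_K$ is smooth, hence $X \to X\times_{\Spf \O_K}\Spf(\O_K)^\HT$ has the relevant deformation-theoretic freedom), obtain a splitting of the pushout $(1-\zeta_p)_\ast X^\HT$ relative to $\Spf(\O_K)^\HT$, and thereby a local description of $\calPerf(X^\HT)\tf$ by Higgs--Sen data with the $(1-\zeta_p)$-rescaled Hitchin-small convergence condition. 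One then checks these local descriptions glue: the gluing cocycle lives in the automorphisms of the banding group scheme $\mathcal{T}^\sharp_{X|\O_K}\{1\}$, which act on the Higgs field only, so the Sen operator $\Theta_\pi$ and hence the nilpotence condition $\Theta_\pi^p - E'(\pi)\Theta_\pi$ are insensitive to the choice of lift; this is what makes the glued category well-defined. The remaining verifications — that the essential-image conditions of \Cref{sec:introduction-2-description-of-ht-stack} translate, under $(-)\tf$ and the rescaling, precisely into the Hitchin-small condition of \Cref{sec:appl-Hodge--Tate-definition-higgs-sen-complexes}, and that derived $p$-completeness matches perfectness on $\X$ — are formal once the framework of the smoothoid case is in place, and I would import them essentially verbatim.
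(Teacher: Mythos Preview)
Your overall architecture --- build $\mathrm{S}_\pi$ as a composite of two fully faithful functors --- matches the paper, but your implementation of the first leg $\beta$ is not the paper's and has a real gap. You propose to globalise \Cref{sec:introduction-2-description-of-ht-stack} by replacing prismatic lifts with ``flat lifts of $X$ to $A_0/I_0^2$ over $\O_K$'' and then running the rescaling trick of \Cref{sec:smoothoid-case-1-complexes-on-the-Hodge--Tate-stack-for-choice-of-lift-introduction}. But the whole square-zero-lift machinery of \S\ref{sec:globalisation} is set up over a \emph{perfectoid} base $S$ with perfect prism $(A_0,I_0)$; over $\O_K$ there is no such object, and the paper does not develop an analogue relative to the Breuil--Kisin prism. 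So your $\beta\colon \mathcal{H}igSen_\X^{\Hsm}\hookrightarrow \calPerf(X^\HT)\tf$ is not available from the tools at hand. (Your side remark that the Higgs field is automatically nilpotent is correct, but for a different reason: it is the Lie-algebra argument of \Cref{sec:appl-Hodge--Tate-properties-of-t-modules}(2), not a rescaling.)

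The paper's route avoids this entirely and does \emph{not} factor through $\calPerf(X^\HT)\tf$ over $\O_K$. Instead it base-changes to $\O_C$, where the perfectoid machinery applies, and exploits the canonical Galois-equivariant lift $\widetilde{X}_{\O_C}=X\times_{\Spf(\O_K)}\Spf(A_2(e^{-1}))$ coming from the canonical map $\O_K\to A_2(e^{-1})$ (\Cref{p:non-integral-lift-of-O-K-to-A2}, \Cref{c:canonical-e-lifts}). Concretely: by \Cref{sec:appl-Hodge--Tate-hig-sen-via-integral-stack} one has $\mathcal{H}igSen_\X^{\Hsm}\simeq \calPerf(\mathcal{Z}_X)\tf$ with $\mathcal{Z}_X=B\mathcal{T}^\sharp_{X|\O_K}\{1\}$ a globally defined stack over $X\times_{X_0}BG_\pi$ (no lift needed); then \Cref{sec:appl-Hodge--Tate-phi-is-fully-faithful-on-isogeny-categories} gives a fully faithful $\calPerf(\mathcal{Z}_X)\tf\hookrightarrow \calPerf([\mathcal{Z}_{X_{\O_C}}/\underline{\Gamma}])\tf$; the canonical lift $\widetilde{X}_{\O_C}$ then furnishes via \Cref{sec:appl-Hodge--Tate-functor-for-x-lift} (with $x=e^{-1}$, $y=p$) a $\Gamma$-equivariant fully faithful $\calPerf(\mathcal{Z}_{X_{\O_C}})\tf\hookrightarrow \calPerf(X^\HT_{\O_C})\tf$; and finally one lands in $\calPerf(\X_v)$ by the \emph{smoothoid} fully faithfulness \Cref{sec:tori-over-perfectoid-1-main-theorem-with-perfectoid-base} applied to $X_{\O_C}$ together with pro-\'etale descent along $\X_C\to \X$. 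The point is that the canonical lift removes all gluing issues at once; your proposed local-lifts-plus-gluing argument over $\O_K$ would need new input.
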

    This is more canonical than \Cref{sec:smoothoid-case-1-complexes-on-the-Hodge--Tate-stack-for-choice-of-lift-introduction} as there is a canonical choice of a lift:
    Arguing similarly to \cite{MinWang22}, we derive \Cref{sec:arithmetic-case-1-derived-local-p-adic-simpson-in-arithmetic-case} from \Cref{sec:smoothoid-case-1-derived-local-p-adic-simpson-correspondence-introduction} by using the canonical lift of the base change of $X$ to $\mathcal{O}_{\widehat{\overline{K}}}$. Here, the Higgs field on the base change is automatically nilpotent by Galois equivariance.
   
\subsection{Relation to previous work}
\label{sec:relat-prev-work}    
The field of $p$-adic non abelian Hodge theory has seen many developments in recent years. Let us explain how the previous theorems relate to the recent literature on the $p$-adic Simpson correspondence. As the field has become vast, we only give some pointers to the most directly related works rather than giving an exhaustive account. 

	\begin{enumerate}
		\item \cite{bhatt2022absolute}, \cite{bhatt2022prismatization}, \cite{drinfeld2020prismatization} have introduced the prismatization of a $p$-adic formal scheme, and in particular the theory of Hodge--Tate stacks on which this paper is based. If $X=\Spf(\Z_p)$, \Cref{sec:introduction-2-statement-main-theorem} can be deduced from \cite[Proposition 3.7.3]{bhatt2022absolute}. The description of vector bundles on $X^\HT$ via Higgs bundles (for $X$ affinoid smoothoid) appears in \cite[Corollary 6.6]{bhatt2022prismatization} and in this case the Cartier theory we use in \Cref{sec:introduction-2-description-of-ht-stack} yields a derived enhancement, which is probably well-known. If $K$ is $p$-adic, i.e., $k$ perfect, and unramified, Higgs-Sen bundles and their relation to $X^\HT$ are discussed in \cite[Remark 9.2]{bhatt2022prismatization}. 
		
		\item Similarly, there is a relation to the independent work of Tian on prismatic crystals \cite{tian2021finiteness}. The precise connection to our work is furnished by Bhatt--Lurie's equivalence of prismatic Hodge--Tate crystals and vector bundles on the Hodge--Tate stack, cf.\ \cite[Theorem 6.5]{bhatt2022prismatization}. \Cref{sec:introduction-2-description-of-ht-stack} is thus a derived version of Tian's description of prismatic crystals and their cohomology and as such generalises \cite[Thm~4.12, Thm~4.14]{tian2021finiteness}. 
		\item  When $X$ is a smooth formal scheme over $\O_C$, \Cref{t:intro-local-p-adic-Simpson-functor-geometric,sec:smoothoid-case-1-derived-local-p-adic-simpson-correspondence-introduction} are closely related to the local and global $p$-adic Simpson correspondence of Faltings \cite{faltings2005p}, hence also the related constructions of Abbes--Gros, Tsuji \cite{abbes2016p}\cite{Tsuji-localSimpson}, Liu--Zhu \cite{LiuZhu_RiemannHilbert} and Wang \cite{wang2021p}. Indeed, v-vector bundles are equivalent to Faltings' generalised representations (cf \cite[Prop.\ 2.3]{G-torsors-perfectoid-spaces}).

		Our $p$-adic Simpson functors are more general in three different ways: First, we work in a derived setting with perfect complexes instead of vector bundles. Second, our convergence condition of Hitchin-smallness encompasses both Faltings' notion of smallness as well as the local systems treated by Liu--Zhu. Third, we allow perfectoid families of smooth formal schemes. In work in progess, we use this to improve from an equivalence of categories to the more geometric statement of an isomorphism of moduli stacks of small objects.
		\item Our approach to non-abelian $p$-adic Hodge theory is in some sense a continuation of Abbes--Gros' idea of constructing a period sheaf geometrically (using what they call the \textit{torsor of deformations}). In our setting, period sheaves play a similar role, but a difference is arguably that in our setting, their definition immediately suggests itself from the geometry of $X^\HT$: the period sheaves arise as v-sheaves associated to the automorphism group of splittings of the Hodge--Tate stack and its variants. See \Cref{sec-comparison-with-prev-constructions} for more details on the relation of our approach to the ones of Faltings, Abbes--Gros, Liu--Zhu and Wang.
		\item Conceptually, our approach to the $p$-adic Simpson correspondence is perhaps most closely related to that of Tsuji \cite[IV]{abbes2016p}: The role that $X^\HT$ plays in our article in reorganising this correspondence in a geometric way is reminiscent of the role that the ``Higgs site'' plays in Tsuji's work. The latter is defined in terms of certain systems of $A_\inf/\xi^n$-lifts of $X$. Tsuji proves that ``Higgs crystals'', certain modules on the Higgs site, correspond to Higgs bundles with the same convergence condition as the one derived from our \Cref{sec:smoothoid-case-1-complexes-on-the-Hodge--Tate-stack-for-choice-of-lift-introduction}.
		
		\item For the arithmetic case of $p$-adic fields $K$, closely related results have been obtained by Tsuji \cite[\S15]{Tsuji-localSimpson} in the algebraic setup, and more recently also in a prismatic setting by Min--Wang \cite{MinWang22}, \cite{min2021hodge}, based on the earlier work of Tian \cite{tian2021finiteness}. They prove \Cref{sec:introduction-2-statement-main-theorem} for vector bundles, including the comparison of cohomology.  That being said, our proof of \Cref{sec:introduction-2-statement-main-theorem} is different and the extension to perfect complexes is a new contribution.
		
		\item More generally, our approach is also suitable for Sen theory over discretely valued fields with imperfect residue field (cf \Cref{sec:fully-faithf-arithm-remark-on-imperfect-residue-field-case}). This has previously been considered by Brinon \cite{BrinonSen}, Yamauchi \cite{YamauchiSen}, Ohkubo \cite{OhkuboSenTheory},  and more recently by Gao \cite{gao2021padic} and He \cite{he2022sen}.
		\item In \cite{analytic_HT}, the authors obtained \Cref{sec:introduction-2-statement-main-theorem} for $X=\mathrm{Spf}(\mathcal{O}_K)$ where $K$ is a $p$-adic field, and deduced a description of the whole of $\mathcal{P}erf(\mathrm{Spa}(K)_v)$.
		\item \cite{HWZ} has asked to what extent the $p$-adic Simpson correspondence generalises from vector bundles to principal bundles under rigid groups. Our approach  is well-suited to study such generalisations in the case of good reduction, by considering principal bundles on $X^\HT$.
	\end{enumerate}
	Further to the advantages mentioned above, one benefit of our approach to non-abelian Hodge theory via $X^\HT$ is that it is very general: it can in principle  be applied to many $p$-adic formal schemes without having to set up new machinery in each situation. On the one hand, this allows us to treat the various setups considered in the literature in a completely uniform way. On the other, it yields a strategy to construct Simpson/Sen-theoretic functors in new settings in the future.
	
\subsection{Outlook}
\label{sec:introduction-limitations}
The results of this paper only deal with the rigid generic fibre of smooth formal schemes, rather than with arbitrary smooth rigid analytic spaces. This good reduction restriction comes from the fact that our method relies crucially on Bhatt-Lurie's Hodge--Tate stacks, a theory developed for $p$-adic formal schemes. That being said, without further global assumptions like properness, the \textit{small} $p$-adic Simpson correspondence is inherently a statement about integral structures and will therefore always impose some conditions on integral models.

Towards a complete answer to \Cref{sec:introduction-2-question-introduction}, one therefore has to look beyond the small $p$-adic Simpson correspondence. Instead, motivated by our results in this article, we hope for the existence of an ``analytic Hodge--Tate stack'' $\X^\HT$ attached to any rigid space $\X$ that would give the answer to \Cref{sec:introduction-2-question-introduction}.  This will be the subject of future work of the first and third author.

Another question is how to describe the essential image of the functors \Cref{sec:smoothoid-case-1-derived-local-p-adic-simpson-correspondence-introduction,sec:arithmetic-case-1-derived-local-p-adic-simpson-in-arithmetic-case}.  Motivated by \cite{camargo2022locally}, we expect that if $\X$ is a smooth rigid space over a perfectoid field $C$, then any $\mathcal{F}\in \mathcal{P}erf(\X_v)$ admits a canonical Higgs field
$
  \theta_{\mathcal{F}}\colon \mathcal{F}\to \mathcal{F}\otimes_{\mathcal{O}_{\X}} \Omega^1_{\X}(-1).
$
The essential image of the functor $\mathrm{S}_{\tilde{X}}$  should then be given by those $\mathcal{F}$ such that $\theta_{\mathcal{F}}$ satisfies a suitable topological nilpotence condition. We will study the case of vector bundles in the companion paper \cite{AHLB-companion}.

\subsection{Plan of the paper}
\label{plan}
\S2-\S5 are devoted to the proof of \Cref{sec:introduction-2-statement-main-theorem}. The construction of $\alpha_X^\ast$ is not difficult and very general, but the proof of fully faithfulness is more involved. We ultimately reduce it to a computation in group cohomology in \S\ref{sec:introduction}. Most work is devoted to making a group action of perfectoid Galois covers explicit, a subtle issue that we address in \S\ref{sec:prismatic-homotopies}-\S\ref{sec:examples}.
   
   Using  the local geometry of the Hodge--Tate stack, we then prove  \Cref{sec:smoothoid-case-1-complexes-on-x-ht-smoothoid-case-introduction,sec:introduction-2-description-of-ht-stack} in  \S\ref{sec:complexes-Hodge--Tate-and-higgs-bundles}.  Putting everything up to this point together, these combine with \Cref{sec:introduction-2-statement-main-theorem} to give the local version of our derived $p$-adic Simpson correspondence at the end of  \S\ref{sec:complexes-Hodge--Tate-and-higgs-bundles}.
   
   In order to be able to globalise this, we then explain in \S\ref{sec:globalisation} how $X^\HT$ relates to various stacks of square-zero lifts of $X$. Based on this discussion,  we then prove
    \Cref{sec:smoothoid-case-1-derived-local-p-adic-simpson-correspondence-introduction,sec:arithmetic-case-1-derived-local-p-adic-simpson-in-arithmetic-case}.

\subsection*{Acknowledgements}

We would like to thank Bhargav Bhatt, Hui Gao, Tongmu He, Juan Esteban Rodr\'iguez Camargo, Peter Scholze, Yupeng Wang, Matti W\"urthen and Bogdan Zavyalov for helpful discussions. 
We would like to thank the referee for their close reading and very helpful and detailed comments. 

The second author was funded by the Deutsche Forschungsgemeinschaft (DFG, German Research Foundation) -- Project-ID 444845124 -- TRR 326.

\subsection{Notations}
\label{sec:notations}

We will use the following notations.
\begin{enumerate}
\item Let $p$ be a prime. Let $\Z_p^\cycl:=\Z_p[\zeta_{p^\infty}]^\wedge_p$ and $\Q_p^\cycl:=\Z_p^\cycl\tf$, the cyclotomic perfectoid field. We fix a primitive $p$-th root of unity $\zeta_p \in \Z_p^\cycl$ and write $\omega=(\zeta_p-1)^{-1} \in \Q_p^\cycl$.
  \item If $R_0 \to R$ is a morphism of $p$-complete rings, we denote by $\Omega_{R|R_0}^1$ the \textit{$p$-completion} of the module of differential forms of $R$ over $R_0$. This glues to define, for any morphism $X \to X_0$ of $p$-adic formal schemes, a sheaf $\Omega_{X|X_0}^1$ of $p$-completed differential forms. Its $\mathcal{O}_X$-linear dual, the tangent sheaf, is denoted by $T_{X|X_0}$. We use analogous notation for adic spaces. 
  \item If $X$ is a $p$-adic formal scheme, we denote by $X^{\prism}$ (resp.\ $X^{\rm HT}$) the Cartier--Witt stack of $X$ (resp. the Hodge--Tate locus in the Cartier--Witt stack of $X$, or Hodge--Tate stack of $X$), which is denoted $\WCart_X$ (resp. $\WCart_X^{\rm HT}$) in \cite[Definition 3.1, Construction 3.7]{bhatt2022prismatization}.
  \item For each $(A,I)\in (X)_\prism$ there exists the natural morphism
  \[
  \rho_A=\rho_{A,X}\colon \Spf(A)\to X^\prism,
  \]
  cf.\ \cite[Construction 3.10]{bhatt2022prismatization}, where $X^\prism$ is denoted by $\WCart_X$. We let
  \[
  \overline{\rho_A}\colon \Spf(A/I)\to X^\HT
  \]
  be the natural induced morphism to the Hodge--Tate locus.
  \item For any locally noetherian analytic adic space $\mathcal{X}$ we denote by $\mathcal{O}$ or $\mathcal{O}_{\mathcal{X}_{\mathrm{pro\acute{e}t}}}$ the completed structure sheaf on the pro-\'etale site of $\mathcal{X}$ (which was denoted by $\widehat{\mathcal{O}}$ in \cite{scholze2013p}).
\end{enumerate}

\section{A criterion for fully faithfulness}
\label{sec:form-reduct-prov}

Let $X$ be a bounded\footnote{That is locally $X\cong \Spf(R)$ with $R$ having bounded $p^\infty$-torsion.} $p$-adic formal scheme. In this section, we make the following assumptions:

\begin{enumerate}
	\item $X=\Spf(R)$ is affine and $R\cong A/I$ for some prism $(A,I)$. By \S\ref{sec:notations}.(4) this induces a map
	\[\eta:=\overline{\rho_A}\colon \Spf(R)\cong \Spf(A/I)\to X^\HT.\]
	\item The group scheme $G_A$ of isomorphisms of $\eta$
	is $p$-completely (faithfully) flat over $X$.
	\item There exists a perfect prism $(A_\infty, I_\infty)$ over $(A,I)$ such that $R=A/I\to R_\infty:=A_\infty/I_\infty$ is a quasi-syntomic cover (this condition actually implies that $X$ is a bounded).
\end{enumerate}

Furthermore, we set $X_\infty:=\Spf(R_\infty)$ and let $\Gamma$ be a profinite group with a continuous right-action on $X_\infty$, such that the morphism $f\colon X_\infty\to X$ is $\Gamma$-equivariant for the trivial action on $X$. In all examples we are interested in, the adic generic fibre $\X_\infty\to \X$ of $f$ will be a pro-\'etale $\Gamma$-torsor. 

\medskip

The goal of this section is first to construct a natural functor
\[\calPerf(X^\HT)\tf\to \calPerf([X_\infty/\Gamma])\tf,
\]
and then to derive a useful criterion, \Cref{sec:form-reduct-prov-1-crucial-assumption-on-galois-cohomology}, which guarantees that this is fully faithful. 

\medskip

As $X_\infty$ is perfectoid, we have $X_\infty \cong X_\infty^\HT$ and hence $f\colon X_\infty \to X$ lifts canonically to a morphism $\widetilde{f}\colon X_\infty \to X^\HT$. More precisely, we have $\widetilde{f}=\overline{\rho_{A_\infty}}$. Our assumptions now imply that there exists a $2$-commutative diagram
\[
\begin{tikzcd}[row sep =-0.05cm]
	& X \arrow[dd,"\eta"] \\
	X_\infty \arrow[ru,"f"] \arrow[rd,"\wt f"'] &              \\
	& X^\HT    
\end{tikzcd}\]
and thus the two morphisms $\widetilde{f}$ and $\eta\circ f$ are isomorphic in $X^\HT(X_{\infty})$. We deduce:
\begin{lemma}
	\label{sec:form-reduct-prov-1-consequences-of-axioms}
	\begin{enumerate}
		\item The morphism $\eta\colon X\to X^\HT$ is affine and $p$-completely faithfully flat.
		\item The morphism $\widetilde{f}$ is affine and $p$-completely faithfully flat.
	\end{enumerate}
\end{lemma}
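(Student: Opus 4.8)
The plan is to establish (2) first, reading off the foundational properties of the $\rho$-maps from \cite{bhatt2022prismatization}, and then to deduce (1) from (2) by faithfully flat descent along $\widetilde{f}$. The only genuinely non-formal ingredient is a single input from Bhatt--Lurie, used in (2): that a bounded prism lying over a quasi-syntomic cover of $X$ maps affinely and $p$-completely faithfully flatly to $X^\prism$ -- exactly the statement underlying quasi-syntomic descent for $\mathcal{D}(X^\prism)$ and hence for $\mathcal{D}(X^\HT)$. Everything past that point is base change and descent, together with the standing hypothesis that $G_A$ is $p$-completely flat over $X$.

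\textit{Proof of (2).} By the definition of the Hodge--Tate locus, $\overline{\rho_{A_\infty}}$ is the base change of $\rho_{A_\infty}\colon\Spf(A_\infty)\to X^\prism$ along the inclusion $X^\HT\hookrightarrow X^\prism$: pulling this inclusion back along $\rho_{A_\infty}$ recovers the closed formal subscheme $\Spf(A_\infty/I_\infty)=X_\infty\subseteq\Spf(A_\infty)$, since under $\rho_{A_\infty}$ the Hodge--Tate divisor restricts to $V(I_\infty)$. Now $\rho_{A_\infty}$ is affine (the $\rho$-map of any bounded prism is), and it is $p$-completely faithfully flat because $(A_\infty,I_\infty)$ is a perfect, hence bounded, prism lying over the quasi-syntomic cover $R\to R_\infty=A_\infty/I_\infty$. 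As affineness and $p$-complete faithful flatness are preserved by base change, $\widetilde{f}=\overline{\rho_{A_\infty}}$ has both properties.

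\textit{Proof of (1).} Affineness of $\eta=\overline{\rho_A}$ follows exactly as in (2): $\rho_A$ is affine for the bounded prism $(A,I)$, and $\eta$ is its base change along $X^\HT\hookrightarrow X^\prism$. For $p$-complete faithful flatness, use the $2$-commutative triangle before the statement, which gives an isomorphism $\widetilde{f}\simeq \eta\circ f$ in $X^\HT(X_\infty)$. Base-changing $\eta$ along $\widetilde{f}$ and invoking this isomorphism together with $G_A=X\times_{X^\HT,\eta,\eta}X$ yields
\[
X\times_{X^\HT,\,\eta,\,\widetilde{f}}X_\infty\;\simeq\;\bigl(X\times_{X^\HT,\,\eta,\,\eta}X\bigr)\times_{X,\,\mathrm{pr}_2,\,f}X_\infty\;=\;G_A\times_X X_\infty,
\]
and under this identification the projection to $X_\infty$ is the base change along $f$ of the structure morphism $G_A\to X$ (the two projections $G_A\rightrightarrows X$ differ by the inversion automorphism of $G_A$, so either is $p$-completely faithfully flat exactly when the structure morphism is). By hypothesis $G_A\to X$ is $p$-completely faithfully flat, hence so is $G_A\times_X X_\infty\to X_\infty$, i.e.\ the base change of $\eta$ along $\widetilde{f}$; moreover $\eta$ is surjective since $\widetilde{f}=\eta\circ f$ is. As $\widetilde{f}$ is, by (2), a $p$-completely faithfully flat cover of $X^\HT$, and $p$-complete faithful flatness of morphisms descends along such covers, $\eta$ is $p$-completely faithfully flat. (Affineness of $\eta$ could likewise be obtained by the same descent if one additionally knows $G_A$ to be affine over $X$ -- which holds in all later examples -- but the argument above avoids even this.)
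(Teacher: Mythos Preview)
Your argument has a genuine gap in (2). You claim $\rho_{A_\infty}\colon\Spf(A_\infty)\to X^\prism$ is $p$-completely faithfully flat as a direct input from Bhatt--Lurie, justified by ``$(A_\infty,I_\infty)$ is a perfect prism lying over the quasi-syntomic cover $R\to R_\infty$'' and alluding to quasi-syntomic descent. But what \cite[Lemma 6.3]{bhatt2022prismatization} actually provides for a quasi-syntomic cover $X'\to X$ is that $X'^{\HT}\to X^{\HT}$ is \emph{surjective for the flat topology}---meaning every test map $\Spf(S)\to X^\HT$ lifts through $X'^\HT$ after passing to a flat cover of $S$---not that the map is itself flat. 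There is no black-box result giving flatness of $\rho_{A_\infty}$ here; note in particular that your argument for (2) never invokes the standing hypothesis that $G_A\to X$ is $p$-completely flat, and without that hypothesis the conclusion need not hold.

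Since your proof of (1) then descends flatness of $\eta$ along ``the $p$-completely faithfully flat cover $\widetilde{f}$'', the gap in (2) propagates to (1). The paper runs the logic in the opposite order. First, \cite[Lemma 6.3]{bhatt2022prismatization} gives that $\widetilde{f}$---and hence $\eta$, via $\widetilde{f}\simeq\eta\circ f$---is surjective in the flat topology. Second, the $G_A$ hypothesis then yields flatness of $\eta$: any $\Spf(S)\to X^\HT$ lifts to $X$ after a flat cover $S\to S'$, whereupon the base change of $\eta$ to $\Spf(S')$ is a base change of $G_A\to X$, and one descends flatness along $S\to S'$. Third, flatness of $\widetilde{f}$ follows from $\widetilde{f}\simeq\eta\circ f$ with $f$ quasi-syntomic. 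Your fibre-product identification $X\times_{X^\HT}X_\infty\simeq G_A\times_X X_\infty$ is correct and is exactly the local ingredient needed in the second step; you just have to run the descent with $\widetilde{f}$ (or $\eta$) only known to be flat-locally split, not itself faithfully flat. The affineness arguments via $\rho_A$ and $\rho_{A_\infty}$ are fine.
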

\begin{proof}
	We first show that $\eta:X\to X^\HT$ is a surjection for the flat topology.
	This may be checked for $\widetilde{f}\colon X_{\infty}\to X^\HT$ instead. As $X_{\infty}\to X$ is quasi-syntomic by assumption,  we can deduce from \cite[Lemma 6.3]{bhatt2022prismatization} that $f^\HT:X_\infty^\HT\to X^\HT$ is surjective in the flat topology, and the statement follows because $X_{\infty}\cong X_{\infty}^\HT$. As $G_A$ is $p$-completely faithfully flat over $X$, we can conclude now that the morphism $X\to X^\HT$ is $p$-completely flat. This implies that $\widetilde{f}$ is $p$-completely flat because $\widetilde{f}\cong \eta\circ f$ and $f$ is quasi-syntomic. That $\eta$ is affine follows by base-change from the fact that $\rho_A:\Spf(B)\to \WCart_X$ is affine by \cite[Corollary~3.2.9]{bhatt2022absolute}. This finishes the proof.
\end{proof}

We can conclude that
$B_XG_A:=[X/G_A]\cong X^\HT$
via $\eta$. 
Consider the $n$-fold fibre product (of fpqc-sheaves)
\[
X^{(n)}:={X\times_{X^\HT}\ldots \times_{X^\HT} X}, \quad 
Z^{(n)}:=X_\infty \times_{X^\HT} X^{(n)}
\]
for $n\geq 1$ so that $X=X^{(1)}$. Set $Z:=Z^{(1)}\cong X_\infty\times_{X^\HT} X$.
Note that $Z$ is a $\Gamma$-equivariant $G_A$-torsor over $X_\infty$. Let $\eta_n\colon X^{(n)}\to X^\HT$ be the projection. Then for $n\geq 2$ we have
\[X^{(n)}\cong {G_A\times_X \ldots \times_X G_A},\quad 
Z^{(n)}\cong Z\times_X{G_A\times_X \ldots \times_X G_A},
\]
where the fibre products are $(n-1)$-fold. The second isomorphism is $\Gamma$-equivariant if $\Gamma$ acts  via the natural action on $Z$ and trivially on the each $G_A$.
For $n\geq 1$ we get a Cartesian diagram
\[\begin{tikzcd}
	{Z^{(n)}} & {X^{(n)}} \\
	X_\infty & {X^\HT.}
	\arrow["{h_n}", from=1-1, to=2-1]
	\arrow["{\eta_n}", from=1-2, to=2-2]
	\arrow["{\widetilde{f}}", from=2-1, to=2-2]
	\arrow["{\widetilde{f}_n}", from=1-1, to=1-2]
\end{tikzcd}\]
where $h_n$ is the projection to the first factor.
By \Cref{sec:form-reduct-prov-1-consequences-of-axioms}, $Z=\Spf(B_{A,R_\infty}^+)$ for some $p$-adically complete, $p$-completely faithfully flat $R_\infty$-algebra $B_{A,R_\infty}^+$. There are a right action by $\Gamma$ and a left action by $G_A$ which makes $Z$ into a torsor over $X_\infty=\Spf(R_\infty)$. These actions induce on $B_{A,R_\infty}^+$ a continuous, $R_\infty$-semilinear left action by $\Gamma$, and a right action by $G_A$, and these actions commute.

We also set
\[
B_{A,R_\infty}:= B_{A,R_\infty}^+\tf.
\]
We now make the following crucial assumption in which $R\Gamma(\Gamma,-)$ refers to continuous $\Gamma$-cohomology, defined as the cohomology of the formal stack $[\Spf(\Z_p)/\underline{\Gamma}]$. Equivalently, this can be computed via the bar complex of $\Gamma$ with continuous cochains. In our applications, $\Gamma$ will always be $\cong \Z_p^d$ or a semi-direct thereof, hence for any bounded complex $K$, the cohomology $R\Gamma(\Gamma,K)$ will be bounded.

\begin{assumption}
	\label{sec:form-reduct-prov-1-crucial-assumption-on-galois-cohomology}
	The cofiber of $R\to R\Gamma(\Gamma, B_{A,R_\infty}^+)$ is bounded and killed by $p^i$ for some $i\geq 1$.
\end{assumption}

As the morphism $\widetilde{f}\colon X_\infty \to X^\HT$ is $\Gamma$-equivariant (by naturality of the Hodge--Tate stack), it factors over a map
\[
\beta:=[\widetilde{f}/\Gamma]\colon [X_\infty/\underline{\Gamma}]\to X^\HT
\]
over the stack quotient $[X_\infty/\underline{\Gamma}]$ (for the fpqc-topology). Here, $\underline{\Gamma}$ refers to the affine formal group scheme $\Spf(\mathcal C(\Gamma,R_\infty))$ over $X_\infty$ where $\mathcal C(\Gamma,R_\infty)$ is the $R_\infty$-algebra of continuous maps $\Gamma\to R_\infty$.

\begin{proposition}
	\label{sec:form-reduct-prov-1-pushforward-beta-o}
	Under \Cref{sec:form-reduct-prov-1-crucial-assumption-on-galois-cohomology}, the cofibre of the natural map
	\[
	\mathcal{O}_{X^\HT}\to R\beta_\ast \mathcal{O}_{[X_\infty/\underline{\Gamma}]}
	\]
	is killed by $p^i$ for some $i\geq 1$.
\end{proposition}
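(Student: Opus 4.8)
The plan is to reduce the claim to \Cref{sec:form-reduct-prov-1-crucial-assumption-on-galois-cohomology} by faithfully flat descent along $\widetilde f$. First I would note that $\eta\colon X\to X^\HT$ is affine and $p$-completely faithfully flat by \Cref{sec:form-reduct-prov-1-consequences-of-axioms}(1), so the cofibre of $\mathcal{O}_{X^\HT}\to R\beta_\ast\mathcal{O}_{[X_\infty/\underline\Gamma]}$ is killed by $p^i$ if and only if its pullback along $\eta$ is; indeed, $\eta^\ast$ is conservative and $t$-exact (up to shift) on quasi-coherent complexes and commutes with the cofibre, and a quasi-coherent complex is killed by $p^i$ iff its $\eta^\ast$-pullback is. So it suffices to check the statement after pulling back along $\eta$, i.e. to compute $\eta^\ast R\beta_\ast\mathcal{O}_{[X_\infty/\underline\Gamma]}$.

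Next I would invoke flat base change along the Cartesian square relating $Z=X_\infty\times_{X^\HT}X$, $X_\infty$, $X$ and $X^\HT$: since $\eta$ (equivalently $\widetilde f$) is $p$-completely flat and affine, $\eta^\ast R\beta_\ast\mathcal{O}_{[X_\infty/\underline\Gamma]}$ is computed as $Rq_\ast\mathcal{O}_{[Z/\underline\Gamma]}$ where $q\colon [Z/\underline\Gamma]\to X$ is the induced map; here one uses that $Z=Z^{(1)}$ with its $\Gamma$-action, and that the formation of the quotient stack $[-/\underline\Gamma]$ and the pushforward to $X$ commute with the flat base change $\eta$. Concretely, since $Z=\Spf(B_{A,R_\infty}^+)$ is affine with its continuous $R_\infty$-semilinear $\Gamma$-action commuting with the $G_A$-torsor structure over $X_\infty=\Spf(R_\infty)$, we have $R\Gamma([Z/\underline\Gamma],\mathcal{O})\cong R\Gamma(\Gamma, B_{A,R_\infty}^+)$ as $R=\Gamma(X,\mathcal{O}_X)$-algebras, the point being that $q$ factors as $[Z/\underline\Gamma]\to X$ through the $G_A$-torsor $Z\to X_\infty$ and the $\Gamma$-quotient, and the $G_A$-torsor direction contributes nothing to the pushforward after passing to $X$ because $Z\to X_\infty\to X$ already realizes $X$. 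Thus $\eta^\ast R\beta_\ast\mathcal{O}_{[X_\infty/\underline\Gamma]}$ is identified with $R\Gamma(\Gamma,B_{A,R_\infty}^+)$ as an $R$-module, and the natural map $\mathcal{O}_{X^\HT}\to R\beta_\ast\mathcal{O}_{[X_\infty/\underline\Gamma]}$ pulls back to $R\to R\Gamma(\Gamma,B_{A,R_\infty}^+)$. By \Cref{sec:form-reduct-prov-1-crucial-assumption-on-galois-cohomology} the cofibre of the latter is bounded and killed by $p^i$, which gives the claim.

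The main obstacle I expect is bookkeeping the two commuting actions correctly: one must check that the quotient $[Z/\underline\Gamma]\to X$ really does decompose so that only the $\Gamma$-cohomology survives — i.e. that taking $G_A$-invariants of the $G_A$-torsor $Z\to X_\infty$ recovers $R_\infty$ with its $\Gamma$-action, compatibly with everything in sight. Since $G_A$ is $p$-completely faithfully flat and $Z\to X_\infty$ is a $G_A$-torsor, $Z$ is $p$-completely faithfully flat over $X_\infty$ and $\mathcal{O}_{X_\infty}\isomarrow R\Gamma(G_A,\mathcal{O}_Z)$ by torsor descent; feeding this into the two-step quotient and using that the $\Gamma$- and $G_A$-actions commute gives $R\Gamma([Z/\underline\Gamma],\mathcal{O})\cong R\Gamma(\Gamma,R\Gamma(G_A,\mathcal{O}_Z))\cong R\Gamma(\Gamma,B_{A,R_\infty}^+)$ after remembering that pushing all the way to $X$ kills the residual dependence — here the identification $X^\HT\cong B_XG_A=[X/G_A]$ established just before the statement is what makes $q_\ast$ land on $R$-modules cleanly. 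Once this compatibility is pinned down the estimate is immediate from the Assumption, with the \emph{same} $i$.
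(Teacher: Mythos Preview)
Your overall strategy—pull back along the faithfully flat affine map $\eta\colon X\to X^\HT$, use base change to reduce to computing $Rq_\ast\mathcal{O}_{[Z/\underline{\Gamma}]}$ over $X$, and then invoke \Cref{sec:form-reduct-prov-1-crucial-assumption-on-galois-cohomology}—is the same as the paper's. The paper simply spells out the base change explicitly via the \v{C}ech nerve $X^{(\bullet)}\to X^\HT$, which has the advantage of making the uniform boundedness transparent. However, your write-up contains two genuine problems.

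\textbf{First}, the assertion that ``a quasi-coherent complex is killed by $p^i$ iff its $\eta^\ast$-pullback is'' is false as stated. Conservativity of $\eta^\ast$ means it reflects zero \emph{objects}, not zero \emph{morphisms}; in the derived category $\mathcal{D}(X^\HT)=\mathcal{D}^b_{G_A}(R)$ the forgetful functor $\eta^\ast$ is not faithful, so $p^i\cdot\eta^\ast K=0$ does not directly give $p^i\cdot K=0$. What one can say is: if $\eta^\ast K$ is bounded with $p^i$-torsion cohomology, then $K$ itself has $p^i$-torsion cohomology (since $\eta^\ast$ is $t$-exact and conservative), and then boundedness gives $p^j\cdot K=0$ for \emph{some} $j$ via $R\Hom(K,K)\otimes\Z[\tfrac{1}{p}]=R\Hom(K,K[\tfrac{1}{p}])=0$. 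This is exactly the extra step the paper supplies at the end of its proof. So you do not get ``the same $i$'', only some $j\geq i$.

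\textbf{Second}, your last paragraph is confused and, taken literally, wrong. The computation of $Rq_\ast\mathcal{O}_{[Z/\underline{\Gamma}]}$ does not involve $G_A$ at all: $Z=\Spf(B^+_{A,R_\infty})$ is affine, $X=\Spf(R)$ is affine, and $q\colon[Z/\underline{\Gamma}]\to X$ is just the $\Gamma$-quotient, so $Rq_\ast\mathcal{O}=R\Gamma(\Gamma,B^+_{A,R_\infty})$ immediately. Your proposed chain $R\Gamma([Z/\underline{\Gamma}],\mathcal{O})\cong R\Gamma(\Gamma,R\Gamma(G_A,\mathcal{O}_Z))\cong R\Gamma(\Gamma,B^+_{A,R_\infty})$ is incorrect: torsor descent gives $R\Gamma(G_A,\mathcal{O}_Z)\cong R_\infty$, not $B^+_{A,R_\infty}$, so the middle term is $R\Gamma(\Gamma,R_\infty)$, which is neither side. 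There is no ``two-step quotient'' here—$[Z/\underline{\Gamma}]$ only quotients by $\Gamma$. Drop the $G_A$ discussion entirely and the argument becomes both correct and shorter.
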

\begin{proof}
	Using the \v{C}ech nerve for the affine covering $[Z/\underline{\Gamma}]\to [X_\infty/\underline{\Gamma}]$ we see that
	\[\textstyle
	\mathcal{O}_{[X_\infty/\underline{\Gamma}]}\cong \varprojlim_\Delta [h_{n}/\underline{\Gamma}]_{\ast}\mathcal{O}_{[Z^{(n)}/\underline{\Gamma}]}.
	\]
	\[\Rightarrow \textstyle
	R\beta_\ast(\mathcal{O}_{[X_\infty/\underline{\Gamma}]})=\varprojlim_\Delta R\beta_{\ast}[h_{n}/\underline{\Gamma}]_\ast(\mathcal{O}_{[Z^{(n)}/\underline{\Gamma}]}).
	\]
	We have a commutative diagram
	\[\begin{tikzcd}
		{[Z^{(n)}/\underline{\Gamma}]} & {X^{(n)}} \\
		{[X_\infty/\underline{\Gamma}]} & {X^\HT}
		\arrow["{[h_n/\underline{\Gamma}]}"', from=1-1, to=2-1]
		\arrow["\beta", from=2-1, to=2-2]
		\arrow["{\beta_n}", from=1-1, to=1-2]
		\arrow["{\eta_n}", from=1-2, to=2-2]
	\end{tikzcd}\]
	and thus
	\[
	R\beta_\ast[h_{n}/\underline{\Gamma}]_\ast(\mathcal{O}_{[Z^{(n)}/\underline{\Gamma}]})\cong \eta_{n,\ast}R\beta_{n,\ast}(\mathcal{O}_{[Z^{(n)}/\underline{\Gamma}]}),
	\]
	where we used that $\eta_n$ is affine.
	Now we use that
	\[
	Z^{(n)}\cong Z\times_X X^{(n)}\cong \Spf(B_{A,R_\infty}^+\widehat{\otimes}_{R} \mathcal{O}(X^{(n)})).
	\]
	Since $X\to X^\HT$ is $p$-completely faithfully flat by \Cref{sec:form-reduct-prov-1-consequences-of-axioms}, the same is true for $X^{(n)}\to X$. We may thus apply \Cref{sec:crit-fully-faithf-projection-formula-for-continuous-group-cohomology} below to deduce:
	\[
	R\beta_{n,\ast}(\mathcal{O}_{[Z^{(n)}/\underline{\Gamma}]})= R\Gamma(\Gamma, \mathcal{O}_{Z^{(n)}})\cong R\Gamma(\Gamma, B_{A,R_\infty}^+)\widehat{\otimes}_R^L \mathcal{O}_{X^{(n)}}.
	\]

	It follows from this that if we denote by $K$ the object of $D^b(X^{\HT})=D^b([X/G_A])=D^b_{G_A}(R)$ associated to $R\Gamma(\Gamma, B_{A,R_\infty}^+)$ with its $G_A$-action induced from the $G_A$-action on  $B_{A,R_\infty}^+$, then
	\[\textstyle R\beta_{\ast}\O_{[X_\infty/\underline\Gamma]}\cong \varprojlim_{\Delta} \eta_{n,\ast}R\beta_{n,\ast}(\mathcal{O}_{[Z^{(n)}/\underline{\Gamma}]})=R\Gamma(G_A,K)\]
	where the latter is the ``group cohomology of $G_A$'' computed by the bar complex of $G_A$. In order to show that the cofiber of the natural map from
	\[
\textstyle	\mathcal{O}_{X^\HT}\cong \varprojlim_{\Delta} \eta_{n,\ast}\mathcal{O}_{X^{(n)}}\cong R\Gamma(G_A,\O_X)\]
	is killed by $p^j$ for some $j\geq 0$, it therefore suffices to show that the natural map $\O_{X}\to K$ has $p^j$-torsion cofiber $K'$ inside $D^b_{G_A}(R)$. To see this, we invoke \Cref{sec:form-reduct-prov-1-crucial-assumption-on-galois-cohomology} to conclude that $p^i=0$ on the cohomology groups of $K'$. Since $K'$ is bounded, it follows that there is some $j\geq 0$ such that $p^j=0$ on $K'$: For example, this can be seen from $\mathrm{RHom}(K',K')\otimes_{\Z}{}\Z[\tfrac{1}{p}]= \mathrm{RHom}(K',K'\otimes_{\Z}{}\Z[\tfrac{1}{p}])=0$.
\end{proof}

In the proof, we have used the following lemma, which is essentially a projection formula.

\begin{lemma}
	\label{sec:crit-fully-faithf-projection-formula-for-continuous-group-cohomology}
	For any $M\in \mathcal{D}^b([\Spf(R)/\underline{\Gamma}])$ and any $p$-complete and $p$-completely flat $R$-module $N$, the following natural map is an isomorphism:
	\[
	N\widehat{\otimes}_R^L R\Gamma(\Gamma,M)\to R\Gamma(\Gamma, N\widehat{\otimes}_R^L M).
	\]
\end{lemma}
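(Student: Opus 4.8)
\textbf{Proof plan for \Cref{sec:crit-fully-faithf-projection-formula-for-continuous-group-cohomology}.}
The plan is to reduce the statement to the well-known fact that continuous cochain cohomology of $\Gamma$ can be computed via the (completed) bar complex, and that tensoring with a flat module commutes with the formation of such a complex in each degree. First I would unwind the definition: since $R\Gamma(\Gamma,-)$ is defined as the cohomology of the formal stack $[\Spf(R)/\underline{\Gamma}]$, for $M\in\mathcal{D}^b([\Spf(R)/\underline{\Gamma}])$ it is computed by the cosimplicial object $n\mapsto \mathcal{C}(\Gamma^n, M)$ built from continuous cochains, i.e.\ by the totalization of the bar complex $\mathcal{C}^\bullet(\Gamma, M)$. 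Here each term is of the form $\mathcal{C}(\Gamma^n, R)\,\widehat{\otimes}_R^L\, M$, where $\mathcal{C}(\Gamma^n,R)$ is the algebra of continuous maps $\Gamma^n\to R$; since $\Gamma\cong\Z_p^d$ (or a semidirect product thereof) is a profinite group, $\mathcal{C}(\Gamma^n,R)$ is a $p$-completely free $R$-module, and in particular $p$-completely flat.

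Next I would observe that the natural map in question is obtained by applying $N\,\widehat{\otimes}_R^L(-)$ to this totalization and comparing with the totalization of the levelwise tensored complex. Since both complexes are bounded (using $M\in\mathcal{D}^b$ and the assumption, recalled in the excerpt, that $R\Gamma(\Gamma,K)$ is bounded for bounded $K$ because $\Gamma\cong\Z_p^d$ has finite cohomological dimension), the totalization is a finite limit, and derived $p$-complete tensor product commutes with finite limits. Thus it suffices to check the map is an isomorphism in each cosimplicial degree $n$, i.e.\ that
\[
N\,\widehat{\otimes}_R^L\big(\mathcal{C}(\Gamma^n,R)\,\widehat{\otimes}_R^L M\big)\;\isomarrow\;\mathcal{C}(\Gamma^n,R)\,\widehat{\otimes}_R^L\big(N\,\widehat{\otimes}_R^L M\big),
\]
which is just the associativity and symmetry of $\widehat{\otimes}_R^L$. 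Hence the cofibre of the map in the lemma vanishes.

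The main point requiring care — and the only real obstacle — is the interaction of the (derived) $p$-completion with the infinite product implicit in $\mathcal{C}(\Gamma^n,R)=\varprojlim_m \mathcal{C}(\Gamma_m^n, R/p^m)$ and with the totalization: one must ensure that $N\,\widehat{\otimes}_R^L(-)$ genuinely commutes with the relevant limits. This is where flatness and $p$-completeness of $N$ are used: because $N$ is $p$-completely flat and $p$-complete, $N\,\widehat{\otimes}_R^L(-)$ is $t$-exact on $p$-complete modules and commutes with derived $p$-completion, so it carries the bounded cosimplicial diagram computing $R\Gamma(\Gamma,M)$ to the one computing $R\Gamma(\Gamma, N\,\widehat{\otimes}_R^L M)$; equivalently, reducing mod $p^m$ we are comparing $(N/p^m)\otimes^L_{R/p^m}(-)$ with a honest bar complex of finite groups, where the statement is elementary, and then passing to the limit over $m$. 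Once this compatibility is in place the proof is immediate. I would also note that the flatness hypothesis on $N$ cannot be dropped, since already for $M=R$ the claim is that $R\Gamma(\Gamma,-)$ commutes with $N\,\widehat{\otimes}_R^L(-)$, which fails without flatness.
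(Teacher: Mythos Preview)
Your approach is essentially the paper's: compute $R\Gamma(\Gamma,-)$ by the bar double complex with columns $M\widehat{\otimes}_R^L\mathcal{C}(\Gamma,R)^{\widehat{\otimes} n}$ and argue that $N\widehat{\otimes}_R^L(-)$ commutes with the totalization. The one imprecision is the sentence ``the totalization is a finite limit''. It is not: the cosimplicial diagram has infinitely many terms regardless of whether the answer is bounded, and invoking finite cohomological dimension of $\Gamma$ does not change this. What makes the argument work is what the paper says more carefully: since $M\in\mathcal{D}^{[a,b]}$ and each $\mathcal{C}(\Gamma^n,R)$ is $p$-completely flat, every column of the double complex lies in $\mathcal{D}^{[a,b]}$; hence for a \emph{fixed} cohomological degree $k$ only the finitely many columns with $n\le k-a$ contribute, and then exactness of $N\widehat{\otimes}_R(-)$ on $p$-complete complexes (from $p$-complete flatness of $N$) lets you pass $N$ inside. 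Your final paragraph identifies exactly these ingredients ($t$-exactness, uniform boundedness of the columns), so the proof is correct once you replace the ``finite limit'' shortcut by this degreewise argument. The alternative route via reduction mod $p^m$ and ``finite groups'' is less clean than you suggest: after reducing mod $p^m$ the group $\Gamma$ is still profinite and you face the same commutation issue, so you would again fall back on the uniform-bounds argument.
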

Here we see $\underline{\Gamma}$ as the pro-finite group scheme $\Spf(\mathcal C(\Gamma,R))$ relatively over $R$, and we equip $N\widehat{\otimes}_R^L M$ with its $\Gamma$-action through the second factor.
\begin{proof}
	The continuous group cohomology $R\Gamma(\Gamma,M)$ can be calculated via the double complex
	\[
	M\to M\widehat{\otimes}_R^L \mathcal C(\Gamma,R)\to M\widehat{\otimes}_R^L \mathcal C(\Gamma,R)\widehat{\otimes}_R^L \mathcal C(\Gamma,R)\to \ldots
	\]
	with uniformly bounded columns. Similarly, $R\Gamma(\Gamma, N\widehat{\otimes}_R^L M)$ is calculated by the double complex
	\[
	N\widehat{\otimes}^L_RM\to N\widehat{\otimes}_R^L M\widehat{\otimes}_R^L \mathcal C(\Gamma,R)\to N\widehat{\otimes}_R^LM\widehat{\otimes}_R^L \mathcal C(\Gamma,R)\widehat{\otimes}_R^L \mathcal C(\Gamma,R)\to \ldots.
	\]
	which has again uniformly bounded columns by $p$-completeness and $p$-complete flatness of $N$. Thus, for a fixed cohomological degree, only finitely many columns contribute, and thus the statement follows from exactness of the functor $N\widehat{\otimes}_R(-)$ on $p$-complete complexes of $R$-modules.
\end{proof}

\begin{corollary}
	\label{sec:form-reduct-prov-1-cohomology-agrees-up-to-p-torsion}
	Let $\mathcal{E}\in \calPerf(X^\HT)$ be a perfect complex.
	Then under \Cref{sec:form-reduct-prov-1-crucial-assumption-on-galois-cohomology}, the map
	\[
	R\Gamma(X^\HT,\mathcal{E})\to R\Gamma([X_\infty/\underline{\Gamma}],\beta^\ast \mathcal{E})
	\]
	has cofiber killed by $p^i$ for some $i\geq 1$.
\end{corollary}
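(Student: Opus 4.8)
The plan is to reduce the statement to \Cref{sec:form-reduct-prov-1-pushforward-beta-o} via the projection formula. First I would recall that $\mathcal{E}\in\calPerf(X^\HT)$ is dualizable, so for the purposes of computing $R\Gamma(X^\HT,\mathcal{E})$ against the comparison on $[X_\infty/\underline{\Gamma}]$, I would use that $\beta^\ast$ is symmetric monoidal and hence $\beta^\ast\mathcal{E}$ is again perfect (dualizable). The key point is that pushforward along $\beta$ satisfies a projection formula against dualizable objects: for $\mathcal{E}\in\calPerf(X^\HT)$ one has a natural equivalence
\[
R\beta_\ast(\beta^\ast\mathcal{E})\cong \mathcal{E}\otimes_{\mathcal{O}_{X^\HT}} R\beta_\ast\mathcal{O}_{[X_\infty/\underline{\Gamma}]}.
\]
Indeed this is immediate by writing $\mathcal{E}$ as a retract of finite colimits of shifts of $\mathcal{O}_{X^\HT}$ locally, or more cleanly by dualizing: $R\beta_\ast(\beta^\ast\mathcal{E})\cong \intHom(\mathcal{E}^\vee, R\beta_\ast\mathcal{O}_{[X_\infty/\underline{\Gamma}]})\cong \mathcal{E}\otimes R\beta_\ast\mathcal{O}_{[X_\infty/\underline{\Gamma}]}$, using that $\mathcal{E}^\vee$ is perfect so that $\intHom(\mathcal{E}^\vee,-)$ commutes with the relevant limits.

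Next I would take $R\Gamma(X^\HT,-)$ of the fiber sequence defining the cofiber. By \Cref{sec:form-reduct-prov-1-pushforward-beta-o}, the cofiber $C$ of $\mathcal{O}_{X^\HT}\to R\beta_\ast\mathcal{O}_{[X_\infty/\underline{\Gamma}]}$ is killed by $p^i$ for some $i\geq 1$. Tensoring the fiber sequence
\[
\mathcal{O}_{X^\HT}\to R\beta_\ast\mathcal{O}_{[X_\infty/\underline{\Gamma}]}\to C
\]
with the perfect complex $\mathcal{E}$ and invoking the projection formula above, I get a fiber sequence
\[
\mathcal{E}\to R\beta_\ast\beta^\ast\mathcal{E}\to \mathcal{E}\otimes_{\mathcal{O}_{X^\HT}}C,
\]
and since $C$ is killed by $p^i$, so is $\mathcal{E}\otimes C$. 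Applying $R\Gamma(X^\HT,-)$ and using the identification $R\Gamma([X_\infty/\underline{\Gamma}],\beta^\ast\mathcal{E})\cong R\Gamma(X^\HT, R\beta_\ast\beta^\ast\mathcal{E})$ (adjunction), I conclude that the cofiber of $R\Gamma(X^\HT,\mathcal{E})\to R\Gamma([X_\infty/\underline{\Gamma}],\beta^\ast\mathcal{E})$ is $R\Gamma(X^\HT,\mathcal{E}\otimes C)$, which is killed by $p^i$.

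I expect the only genuine subtlety to be the verification of the projection formula in this stacky, derived $p$-complete setting — one must be careful that $R\beta_\ast$ commutes with tensoring by a perfect complex, which is where dualizability of $\mathcal{E}$ (rather than mere perfectness of the underlying module) is essential; everything else is a formal consequence of \Cref{sec:form-reduct-prov-1-pushforward-beta-o}. A clean way to sidestep any $p$-completion issues is to argue that the statement is local on $X^\HT$, reduce to the affine chart $[X/G_A]$, and there use the explicit bar-complex description $R\beta_\ast\mathcal{O}_{[X_\infty/\underline{\Gamma}]}\cong R\Gamma(G_A,B^+_{A,R_\infty})$ appearing in the proof of \Cref{sec:form-reduct-prov-1-pushforward-beta-o}, against which the projection formula for a perfect $G_A$-equivariant complex is elementary. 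This makes the boundedness of the $p$-power that kills the cofiber manifestly independent of $\mathcal{E}$, matching the $i$ from \Cref{sec:form-reduct-prov-1-crucial-assumption-on-galois-cohomology}.
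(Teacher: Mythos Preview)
Your proof is correct and follows essentially the same route as the paper: apply the projection formula $R\beta_\ast(\beta^\ast\mathcal{E})\cong \mathcal{E}\widehat{\otimes}^L_{\mathcal{O}_{X^\HT}}R\beta_\ast\mathcal{O}_{[X_\infty/\underline{\Gamma}]}$, tensor the cofiber sequence from \Cref{sec:form-reduct-prov-1-pushforward-beta-o} with $\mathcal{E}$, and take global sections. Your additional remarks on justifying the projection formula via dualizability (and the alternative bar-complex argument) are more detailed than what the paper records, but the core argument is the same.
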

\begin{proof}
	By the projection formula, we have
	$
	R\beta_\ast(\beta^\ast\mathcal{E})\cong \mathcal{E}\widehat{\otimes}_{\mathcal{O}_{X^\HT}}^LR\beta_\ast(\mathcal{O}_{[X_\infty/\underline{\Gamma}]}).
	$
	Let $K$ be the cofiber of $\mathcal{O}_{X^\HT}\to R\beta_{\ast}(\mathcal{O}_{[X_\infty/\underline{\Gamma}]})$, then it suffices to see that
	$
	R\Gamma(X^\HT,\mathcal{E}\widehat{\otimes}_{\mathcal{O}_{X^\HT}}^L K)$
	is killed by $p^i$.
	But this follows from our assumption by \Cref{sec:form-reduct-prov-1-pushforward-beta-o}.
\end{proof}

\begin{proposition}
	\label{sec:form-reduct-prov-1-fully-faithfulness-for-beta}
	Under \Cref{sec:form-reduct-prov-1-crucial-assumption-on-galois-cohomology}, the following functor 
	is fully faithful:
	\[
	\beta^\ast\tf\colon \mathcal{P}erf(X^\HT)\tf\to \mathcal{P}erf([X_\infty/\underline{\Gamma}])\tf
	\]
\end{proposition}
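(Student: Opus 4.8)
The plan is to reduce \Cref{sec:form-reduct-prov-1-fully-faithfulness-for-beta} to \Cref{sec:form-reduct-prov-1-pushforward-beta-o} by a formal argument. Since $\beta^\ast$ is exact and commutes with $-\tf$, and since morphisms in the isogeny categories $\calPerf(-)\tf$ are computed by the rationalised mapping complexes $R\Hom(-,-)\tf$ (the colimit defining $-\tf$ commuting with $R\Hom$ out of the compact objects $\mathcal E$), it suffices to prove: for all $\mathcal E,\mathcal F\in\calPerf(X^\HT)$, the canonical comparison map
\[
R\Hom_{X^\HT}(\mathcal E,\mathcal F)\longrightarrow R\Hom_{[X_\infty/\underline{\Gamma}]}(\beta^\ast\mathcal E,\beta^\ast\mathcal F)
\]
has cofibre killed by a fixed power of $p$; inverting $p$ then makes it an equivalence.

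First I would identify this cofibre. The adjunction $(\beta^\ast,R\beta_\ast)$ rewrites the target as $R\Hom_{X^\HT}(\mathcal E,R\beta_\ast\beta^\ast\mathcal F)$, and the projection formula — valid because $\mathcal F$ is perfect, exactly as it is used in \Cref{sec:form-reduct-prov-1-cohomology-agrees-up-to-p-torsion} — gives $R\beta_\ast\beta^\ast\mathcal F\cong\mathcal F\widehat\otimes^L_{\mathcal{O}_{X^\HT}}R\beta_\ast\mathcal{O}_{[X_\infty/\underline{\Gamma}]}$. Writing $K$ for the cofibre of the unit $\mathcal{O}_{X^\HT}\to R\beta_\ast\mathcal{O}_{[X_\infty/\underline{\Gamma}]}$, the displayed comparison map is the one induced by $\mathcal F\to\mathcal F\widehat\otimes^L_{\mathcal{O}_{X^\HT}}R\beta_\ast\mathcal{O}_{[X_\infty/\underline{\Gamma}]}$, so its cofibre is $R\Hom_{X^\HT}(\mathcal E,\mathcal F\widehat\otimes^L_{\mathcal{O}_{X^\HT}}K)$.

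Now I would invoke \Cref{sec:form-reduct-prov-1-pushforward-beta-o}, which says $K$ is killed by $p^i$ for some $i\geq 1$, i.e.\ $p^i\cdot\id_K$ is nullhomotopic. This property is inherited by $\mathcal F\widehat\otimes^L_{\mathcal{O}_{X^\HT}}K$, and then by $R\Hom_{X^\HT}(\mathcal E,\mathcal F\widehat\otimes^L_{\mathcal{O}_{X^\HT}}K)=R\Gamma\bigl(X^\HT,\mathcal E^\vee\widehat\otimes^L_{\mathcal{O}_{X^\HT}}\mathcal F\widehat\otimes^L_{\mathcal{O}_{X^\HT}}K\bigr)$, using once more that $\mathcal E$ is dualisable. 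Hence the cofibre of the comparison map is killed by $p^i$ and vanishes after inverting $p$, which finishes the argument.

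The substantive input here is entirely \Cref{sec:form-reduct-prov-1-pushforward-beta-o} (and, behind it, \Cref{sec:form-reduct-prov-1-crucial-assumption-on-galois-cohomology}); everything in this proof is bookkeeping, so the "hard part" is already done. The only points needing a little care are the identification of $\Hom$ in $\calPerf(-)\tf$ with the $p$-inverted mapping complex, and the stability of "killed by $p^i$" under tensoring with, and taking $R\Hom$ out of, a perfect complex — both immediate from dualisability of $\mathcal E$ and $\mathcal F$.
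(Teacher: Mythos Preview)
Your proof is correct and essentially identical to the paper's argument. The paper routes through \Cref{sec:form-reduct-prov-1-cohomology-agrees-up-to-p-torsion} (the cohomology comparison for a single perfect complex) and then invokes dualisability to pass to $R\Hom$; you have simply merged that intermediate step into the $R\Hom$ comparison directly, using the same adjunction, projection formula, and appeal to \Cref{sec:form-reduct-prov-1-pushforward-beta-o}.
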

Here, the $(-)\tf$ refers to the isogeny category, i.e., the $\infty$-category with the same objects but $R\Hom$-complexes tensored with $\Q_p$. 
\begin{proof}
	This follows from \Cref{sec:form-reduct-prov-1-cohomology-agrees-up-to-p-torsion} because perfect complexes are dualizable and hence morphisms between them can be calculated via cohomology of the (again dualizable) derived internal Hom.
\end{proof}

Next, we reinterpret \Cref{sec:form-reduct-prov-1-fully-faithfulness-for-beta} via the v-site $\X_v$ of the adic generic fibre $\X$ of $X$. Recall that $\X_v$ is the category of analytic perfectoid spaces $T$ over $\X$ endowed with v-topology, i.e.\ the Grothendieck topology generated by surjections between affinoid perfectoid spaces.
Let $\X_{v,\mathrm{affd}}$ be the full subcategory with $T$ affinoid perfectoid. Note that this does not change the v-topos of $\X$.

If $T=\Spa(B,B^+)\in \X_{v}$ is affinoid perfectoid, then $B^+$ is a perfectoid ring and the natural morphism $\Spf(B^+)\to \Spf(R)$ lifts canonically to a morphism $\Spf(B^+)\to X^\HT$. Pulling back a perfect complex along this morphism and then tensoring with $B$ (i.e.\ inverting $p$) yields a functor
\[
\alpha^{+,\ast}_X\colon \mathcal{P}erf(X^\HT)\to \varprojlim\limits_{\Spa(B,B^+)\in \X_{v,\mathrm{affd}}} \mathcal{P}erf(B). 
\]
By v-descent for perfect complexes on perfectoid spaces, cf.\ \cite[Theorem 2.1]{anschutz2021fourier}, the inverse limit identifies with the category of perfect complexes on the ringed site $(\X_v,\O)$ for the structure sheaf $\O$ defined by $T\mapsto \O(T)$. Formally inverting $p$, i.e.\ passing to isogeny categories, yields a functor
\[
\alpha^\ast_X\colon \mathcal{P}erf(X^\HT)\tf\to \mathcal{P}erf(\X_v).
\]

\begin{proposition}
	\label{sec:form-reduct-prov-1-fully-faithfulness-for-alpha}
	Assume that the $\Gamma$-action on $X_\infty$ makes the adic generic fibre $\X_\infty\to \X$ into a pro-\'etale $\Gamma$-torsor and that \Cref{sec:form-reduct-prov-1-crucial-assumption-on-galois-cohomology} holds. Then 
	$\alpha_X^\ast$
	is fully faithful.
\end{proposition}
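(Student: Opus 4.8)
The plan is to reduce the statement about $\alpha_X^\ast$ to the already-established \Cref{sec:form-reduct-prov-1-fully-faithfulness-for-beta}, i.e.\ fully faithfulness of $\beta^\ast\tf$. The key point is that under the torsor hypothesis, the target $\mathcal{P}erf([X_\infty/\underline{\Gamma}])\tf$ of $\beta^\ast\tf$ maps faithfully (indeed, I expect, fully faithfully on the relevant subcategory) into $\mathcal{P}erf(\X_v)$, and the composite with $\beta^\ast\tf$ is precisely $\alpha_X^\ast$ (up to canonical equivalence). So the first step is to identify the comparison map: since $\X_\infty\to\X$ is a pro-\'etale $\Gamma$-torsor, the quotient $[\X_\infty/\underline\Gamma]$ recovers $\X$ in the v-topos, and more precisely there is a natural morphism of ringed sites / of v-stacks relating $[X_\infty/\underline\Gamma]$ to $\X_v$. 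Concretely, for an affinoid perfectoid $T=\Spa(B,B^+)\in\X_{v,\mathrm{affd}}$, the fibre product $\X_\infty\times_\X T$ is a $\Gamma$-torsor over $T$, hence (being pro-\'etale, hence a v-cover) trivialises v-locally, and this produces a pullback functor $\mathcal{P}erf([X_\infty/\underline\Gamma])\tf\to \varprojlim_T\mathcal{P}erf(B)=\mathcal{P}erf(\X_v)$. One then checks the triangle identifying $\alpha_X^\ast$ with this functor precomposed by $\beta^\ast\tf$: both send $\mathcal E\in\calPerf(X^\HT)$ to the complex obtained by pulling back along $\Spf(B^+)\to X^\HT$ (which factors through $\wt f$ and hence through $\beta$ after the torsor trivialisation) and inverting $p$; this is a diagram-chase using the $2$-commutative square $\wt f\cong \eta\circ f$ and naturality of the Hodge--Tate stack.

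The second step is to prove that the functor $\mathcal{P}erf([X_\infty/\underline\Gamma])\tf\to \mathcal{P}erf(\X_v)$ is fully faithful, or at least fully faithful on the essential image of $\beta^\ast\tf$. This is where the pro-\'etale torsor hypothesis is essential: $\Gamma$-equivariant perfect complexes on $\X_\infty$ are the same as perfect complexes on $[\X_\infty/\underline\Gamma]$, and since $\X_\infty\to\X$ is a pro-\'etale $\Gamma$-torsor, it is in particular a v-cover whose \v{C}ech nerve is $\X_\infty\times_{\X}\cdots\times_{\X}\X_\infty \cong \X_\infty\times \Gamma^{\times(n-1)}$ (as v-sheaves), matching the \v{C}ech nerve of $[\X_\infty/\underline\Gamma]\to \X$; combined with v-descent for perfect complexes on perfectoid spaces \cite[Theorem 2.1]{anschutz2021fourier}, this shows the natural functor $\mathcal{P}erf([X_\infty/\underline\Gamma])\to \mathcal{P}erf(\X_v)$ is an equivalence onto the full subcategory of objects that become perfect after pullback to $\X_\infty$ — but every object of $\mathcal{P}erf([X_\infty/\underline\Gamma])$ has this property, so it is simply an equivalence onto its image, in particular fully faithful. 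Passing to isogeny categories preserves full faithfulness since it only tensors $R\Hom$-complexes with $\Q_p$.

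The final step is purely formal: $\alpha_X^\ast \simeq (\text{fully faithful})\circ(\beta^\ast\tf)$, and a composite of fully faithful functors is fully faithful, giving the claim. I expect the main obstacle to be the bookkeeping in the first step, namely carefully setting up the comparison functor $\mathcal{P}erf([X_\infty/\underline\Gamma])\tf\to\mathcal{P}erf(\X_v)$ and verifying the coherence of the triangle with $\alpha_X^\ast$ and $\beta^\ast\tf$ — this requires being precise about how the integral objects ($B^+$-coefficients, $\Spf(B^+)\to X^\HT$) relate to the generic-fibre / rigid-analytic objects after inverting $p$, and about the fact that the canonical lift $\wt f=\overline{\rho_{A_\infty}}$ is compatible with the $\Gamma$-action (used to descend along $\beta$). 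The identification of \v{C}ech nerves in the second step is standard once one knows $\X_\infty\to\X$ is a pro-\'etale torsor, so the genuinely new input beyond \Cref{sec:form-reduct-prov-1-fully-faithfulness-for-beta} is essentially this descent/comparison argument.
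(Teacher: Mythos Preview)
Your overall strategy coincides with the paper's: factor $\alpha_X^\ast$ as $\beta^\ast\tf$ followed by a comparison functor into $\mathcal{P}erf(\X_v)$, invoke v-descent along the pro-\'etale $\Gamma$-torsor $\X_\infty\to\X$ to control the target, and conclude by composing fully faithful functors. The paper carries this out by writing $S_\infty:=\O(\X_\infty)$, $S_\infty^+:=\O^+(\X_\infty)$ and noting that v-descent reduces the claim to full faithfulness of $\mathcal{P}erf(X^\HT)\tf\to\mathcal{P}erf^\Gamma(S_\infty)$.

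There is however one substantive step you flag as ``bookkeeping'' but do not actually execute, and it is the only nontrivial input beyond \Cref{sec:form-reduct-prov-1-fully-faithfulness-for-beta}. The stack $[X_\infty/\underline\Gamma]$ is the \emph{formal} quotient, so $\mathcal{P}erf([X_\infty/\underline\Gamma])=\mathcal{P}erf^\Gamma(R_\infty)$; your \v{C}ech-nerve/v-descent argument in step two, on the other hand, takes place on the adic generic fibre and produces full faithfulness of $\mathcal{P}erf^\Gamma(S_\infty)\hookrightarrow\mathcal{P}erf(\X_v)$. You silently switch from $[X_\infty/\underline\Gamma]$ to $[\X_\infty/\underline\Gamma]$ in that paragraph. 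To bridge these, the paper observes that $R_\infty\to S_\infty^+$ has cofiber killed by some $p^n$ (since $R_\infty$ has bounded $p^\infty$-torsion and both $R_\infty/\mathrm{torsion}$ and $S_\infty^+$ are lattices in the Banach space $S_\infty$), whence $\mathcal{P}erf^\Gamma(R_\infty)\tf\to\mathcal{P}erf^\Gamma(S_\infty^+)\tf$ is fully faithful; and separately that $\mathcal{P}erf^\Gamma(S_\infty^+)\tf\to\mathcal{P}erf^\Gamma(S_\infty)$ is fully faithful by the same mechanism as in \Cref{sec:form-reduct-prov-1-fully-faithfulness-for-beta}. Once you supply this lattice comparison, your proof and the paper's agree.
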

\begin{proof}
	Let $S_\infty:=\O(\X_\infty)$ and $S_\infty^+=\O^+(\X_\infty)$.
	By v-descent of perfect complexes on perfectoid spaces it suffices to show that the functor from perfect complexes on $X^\HT$ to the category $\mathcal{P}erf^\Gamma(S_\infty)$ of ``perfect complexes over $S_\infty$ with a continuous $\Gamma$-action'' 
        is fully faithful. Here we define a perfect complex over $S_\infty$ with continuous $\Gamma$-action  to be a cartesian perfect complex on the \v{C}ech nerve $\X_\infty\times \underline{\Gamma}^\bullet$.
        
	The functor
	\[
	\mathcal{P}erf^\Gamma(S^+_\infty)\tf\to \mathcal{P}erf^\Gamma(S_\infty)
	\]
	is fully faithful by an argument similar to the proof of \Cref{sec:form-reduct-prov-1-fully-faithfulness-for-beta}.	Now $R_\infty\to S_\infty^+$ has cofiber killed by some $p^n$ as the $p^\infty$-torsion in $R_\infty$ is bounded and both $R_\infty/\mathrm{torsion}$ and $S_\infty^+$ define lattices in the Banach space $S_\infty$. This implies that the functor
	\[
	\mathcal{P}erf^\Gamma(R_\infty)\tf\to \mathcal{P}erf^\Gamma(S_\infty^+)\tf
	\]
	is fully faithful. Now, clearly the functor $\mathcal{P}erf(X^\HT)\tf\to \mathcal{P}erf^\Gamma(S_\infty)$ factors over the functor $\beta^\ast\tf\colon \mathcal{P}erf(X^\HT)\tf \to \mathcal{P}erf^\Gamma(R_\infty)\tf$.
	By \Cref{sec:form-reduct-prov-1-fully-faithfulness-for-beta} we can conclude.
\end{proof}

\begin{remark}
\label{rk:no-obvious-proof-by-descent}
Since quasi-syntomic covers $X \to Y$ of bounded $p$-adic formal schemes give rise to surjections $X^\HT \to Y^\HT$ for the flat topology (\cite[Lemma 6.3]{bhatt2022prismatization}), one may hope to prove \Cref{sec:introduction-2-statement-main-theorem} by quasi-syntomic descent. But if $X=\mathrm{Spf}(R)$ is quasi-regular semiperfectoid, $X^\HT=\mathrm{Spf}(\overline{\prism_R})$, while the diamond of $\X$ is $\mathrm{Spd}(R_{\mathrm{perfd}}\tf,R_{\mathrm{perfd}})$. Hence the functor $\alpha_X^\ast$ will not be fully faithful for such an $X$ in general.\footnote{If $R$ is quasi-regular semi-perfectoid, $p$-torsionfree and non-reduced, e.g.\ $R=\mathcal{O}_{\mathbb{C}_p}\langle T^{1/p^\infty}\rangle/(T)$, then $\overline{\prism_R}\tf$ is non-reduced as well. On the other hand, $R_{\mathrm{perfd}}\tf$ is reduced, which makes it impossible for $\alpha_X^\ast$ to be fully faithful.} This is why we will rather prove fully faithfulness in the smoothoid and arithmetic cases via \Cref{sec:form-reduct-prov-1-fully-faithfulness-for-alpha}, by verifying the assumption that the (higher) Galois cohomology of a certain geometrically defined period ring, namely $B_{A,R_\infty}=B_{A,R_\infty}^+\tf$, is trivial.  
\end{remark}

In order to verify that \Cref{sec:form-reduct-prov-1-crucial-assumption-on-galois-cohomology} holds in all situations of interest to us, we will use stability properties of \Cref{sec:form-reduct-prov-1-fully-faithfulness-for-alpha} resp.\ \Cref{sec:form-reduct-prov-1-crucial-assumption-on-galois-cohomology} under ind-$p$-completely \'etale extensions $R\to R^\prime$ which we now discuss. Since $\delta$-structures lift uniquely to ind-\'etale extensions by \cite[Lemma 2.18]{Bhatta}, $R\to R^\prime$ lifts uniquely to a morphism $(A,I)\to (A^\prime,I^\prime)$ of prisms. We can set $A^\prime_\infty:= A_\infty\widehat{\otimes}_AA^\prime$ and $I^\prime_\infty:=I_\infty A_\infty^\prime$ with its induced continuous $\Gamma$-action. Note that $R^\prime_\infty$ is again perfectoid.

\begin{lemma}
	\label{sec:crit-fully-faithf-assumption-passes-to-ind-etale-extensions}
	If \Cref{sec:form-reduct-prov-1-crucial-assumption-on-galois-cohomology} holds for $(R,A,R_\infty, A_\infty, \Gamma)$, then it holds for $(R^\prime, A^\prime, R^\prime_\infty, A_\infty^\prime, \Gamma)$.
\end{lemma}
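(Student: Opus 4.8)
The plan is to deduce the primed instance of \Cref{sec:form-reduct-prov-1-crucial-assumption-on-galois-cohomology} from the unprimed one by base change along $R\to R'$, using the projection formula of \Cref{sec:crit-fully-faithf-projection-formula-for-continuous-group-cohomology}. First I would dispose of the routine points: an ind-$p$-completely \'etale morphism is $p$-completely flat; the prism $(A',I')$ lifting $R\to R'$ satisfies $A'/I'\cong R'$, whence $R'_\infty=A'_\infty/I'_\infty\cong R_\infty\widehat{\otimes}_RR'$ (using $A'_\infty=A_\infty\widehat{\otimes}_AA'$); the cover $R'\to R'_\infty$ is again quasi-syntomic and $G_{A'}$ is $p$-completely flat over $X'=\Spf R'$ by base change from $G_A$, so $(R',A',R'_\infty,A'_\infty,\Gamma)$ is again a valid instance of the hypotheses of this section and $B^+_{A',R'_\infty}$ is defined.

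The heart of the matter is the $\Gamma$-equivariant base-change isomorphism
\[
B^+_{A',R'_\infty}\;\cong\;B^+_{A,R_\infty}\,\widehat{\otimes}_RR',
\]
where $\Gamma$ acts through the first factor and trivially on $R'$. Since $B^+_{A,R_\infty}=\mathcal O(Z)$ with $Z=X_\infty\times_{X^\HT}X$, it suffices to show $Z'\cong Z\times_XX'$ as formal $X$-schemes carrying compatible $\Gamma$-actions. Prisms lift uniquely along ind-$p$-completely \'etale maps (\cite[Lemma 2.18]{bhatt_scholze_prisms_and_prismatic_cohomology}), and unwinding the definitions this implies that the formation of the Hodge--Tate stack commutes with such base change, i.e.\ $(X')^\HT\cong X^\HT\times_XX'$, and that under this identification $\eta'=\overline{\rho_{A'}}$ and $\widetilde f'=\overline{\rho_{A'_\infty}}$ are the base changes of $\eta$ and $\widetilde f$ (by functoriality of $\rho_{(-)}$ in the prism). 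Forming the fibre product $X_\infty\times_{X^\HT}(-)$ — equivalently, pulling back the $G_A$-torsor underlying $\widetilde f$ — then commutes with $(-)\times_XX'$, which yields $Z'\cong Z\times_XX'$; the $\Gamma$-actions agree because the action on $A'_\infty=A_\infty\widehat{\otimes}_AA'$ is the one induced from $A_\infty$, trivial on $A'$.

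Granting this, \Cref{sec:crit-fully-faithf-projection-formula-for-continuous-group-cohomology} applied to $M=B^+_{A,R_\infty}\in\mathcal D^b([\Spf R/\underline{\Gamma}])$ and the $p$-complete, $p$-completely flat $R$-module $N=R'$ gives
\[
R\Gamma(\Gamma,B^+_{A',R'_\infty})\;\cong\;R\Gamma(\Gamma,B^+_{A,R_\infty})\,\widehat{\otimes}^L_RR',
\]
naturally in the augmented $\Gamma$-algebra $(R\to B^+_{A,R_\infty})$; hence the canonical map $R'\to R\Gamma(\Gamma,B^+_{A',R'_\infty})$ is identified with $(R\to R\Gamma(\Gamma,B^+_{A,R_\infty}))\,\widehat{\otimes}^L_RR'$, and its cofibre is $C\,\widehat{\otimes}^L_RR'$, where $C$ denotes the cofibre of $R\to R\Gamma(\Gamma,B^+_{A,R_\infty})$. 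By the unprimed \Cref{sec:form-reduct-prov-1-crucial-assumption-on-galois-cohomology}, $p^i$ acts as $0$ on $C$, hence also on $C\,\widehat{\otimes}^L_RR'$; and this cofibre is bounded because $R\Gamma(\Gamma,B^+_{A',R'_\infty})$ is (as $\Gamma\cong\Z_p^d$ or a semidirect product thereof), while $R'$ sits in degree $0$. This is exactly \Cref{sec:form-reduct-prov-1-crucial-assumption-on-galois-cohomology} for the primed tuple. The one genuinely non-formal step is the base-change statement for the Hodge--Tate stack in the middle paragraph, together with the compatibility of $\eta$, $\widetilde f$ and the $G_A$- and $\Gamma$-actions under it; once that is in place, everything else is bookkeeping with the projection formula.
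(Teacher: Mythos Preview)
Your proof is correct and follows essentially the same approach as the paper: establish the $\Gamma$-equivariant base-change isomorphism $B^+_{A',R'_\infty}\cong B^+_{A,R_\infty}\widehat{\otimes}_RR'$ via the identification $(X')^\HT\cong X^\HT\times_XX'$ for ind-\'etale $X'\to X$, then conclude by the projection formula \Cref{sec:crit-fully-faithf-projection-formula-for-continuous-group-cohomology}. The paper's proof is terser and, for the base-change statement on Hodge--Tate stacks, cites \cite[Remark~3.9]{bhatt2022prismatization} (or its own \Cref{sec:crit-fully-faithf-relative-ht-map}, noting $\Omega^1_{X'|X}=0$) rather than deducing it from unique lifting of $\delta$-structures; your ``unwinding the definitions'' is a bit informal at that step, but the conclusion is the same.
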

\begin{proof}
	As $X^\prime:=\Spf(R^\prime)\to X$ is ind-$p$-complete \'etale, the natural map $X^{\prime,\HT}\to X^\HT\times_X X^\prime$ is an isomorphism, cf.\ \cite[Remark 3.9]{bhatt2022prismatization} or \Cref{sec:crit-fully-faithf-relative-ht-map} below. This implies the claim by \Cref{sec:crit-fully-faithf-projection-formula-for-continuous-group-cohomology}.
\end{proof}

\begin{lemma}
  \label{sec:crit-fully-faithf-relative-ht-map}
  Let $Z\to Y$ be a smooth map of bounded $p$-adic formal schemes. Then
  \[
    \pi_{Z|Y}\colon Z^\HT\to Y^\HT\times_Y Z 
  \]
  is a gerbe banded by the flat affine formal group scheme $\mathcal{T}_{Z|Y}^\sharp\{1\}$ given by the twist of $\mathcal{T}_{Z|Y}^\sharp:=\Spf(\Gamma_{\mathcal{O}_Z}^\bullet(\Omega^1_{Z|Y})^{\wedge}_p)$ by the pullback of the line bundle $\mathcal{O}_{\Spf(\Z_p)^\HT}\{1\}$ from \cite[Example 3.5.2]{bhatt2022absolute}.
  \end{lemma}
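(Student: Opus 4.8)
The plan is to reduce to the known structural results of Bhatt--Lurie on Hodge--Tate stacks, which describe the fiber of $X^\HT \to X$ and its relative version. First I would recall from \cite[Section 3.5, 5]{bhatt2022prismatization} and \cite{bhatt2022absolute} that for any bounded $p$-adic formal scheme $X$, the Hodge--Tate stack $X^\HT$ comes equipped with a structure map to $X$, and that when $X$ is smooth over a base, the relative version carries a gerbe structure. The strategy is: (1) treat the absolute case $Y = \Spf(\Z_p)$ first, where \cite[Proposition 5.12, Construction 5.2]{bhatt2022prismatization} (or \cite[Example 3.5.2, Section 3.7]{bhatt2022absolute}) already identifies $Z^\HT \to \Spf(\Z_p)^\HT \times_{\Spf(\Z_p)} Z$ as a gerbe banded by $\mathcal{T}_{Z|\Z_p}^\sharp\{1\}$; (2) bootstrap to a general smooth $Z \to Y$ by base change along $Y \to \Spf(\Z_p)$, using that both the formation of $(-)^\HT$ and the formation of the PD-completed symmetric algebra $\mathcal{T}^\sharp$ are compatible with base change in an appropriate sense.

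More concretely, the key steps in order are as follows. \textbf{Step 1: Reduce to the affine, lifted case.} The assertion that $\pi_{Z|Y}$ is a gerbe banded by a specified group is local on $Y^\HT \times_Y Z$ for the flat topology, and can be checked after the flat cover $Z^\HT \to Z^\HT$ becomes split; by \cite[Lemma 6.3]{bhatt2022prismatization} quasi-syntomic (in particular smooth) covers induce flat surjections on Hodge--Tate stacks, so we may work locally and assume $Y = \Spf(S)$, $Z = \Spf(R)$ with $R$ smooth over $S$, and even that there is a prism $(B,J)$ with $B/J \cong S$ (after a further quasi-syntomic localization on $Y$, e.g.\ passing to a perfect prism cover, so that $Y^\HT$ acquires a point). \textbf{Step 2: Identify the relative Hodge--Tate stack with a twisted tangent PD-group quotient.} Using that $R$ is smooth over $S$ together with the deformation-theoretic description of $X^\HT$ via $A_{\mathrm{inf}}/I^2$-lifts (Bhatt--Lurie's ``transversality'' picture), one shows that $Z^\HT$ is the quotient $[Z/\mathcal{T}_{Z|Y}^\sharp\{1\}]$ over $Y^\HT \times_Y Z$, with the group action coming from the PD-structure on the tangent bundle; this is exactly the relative analogue of \cite[Proposition 5.12]{bhatt2022prismatization}, and the line-bundle twist $\{1\}$ is pulled back from $\Spf(\Z_p)^\HT$ as in \cite[Example 3.5.2]{bhatt2022absolute}. \textbf{Step 3: Check flatness and affineness of the band.} The group $\mathcal{T}_{Z|Y}^\sharp\{1\} = \Spf(\Gamma^\bullet_{\mathcal{O}_Z}(\Omega^1_{Z|Y})^{\wedge_p})$ is a relative affine formal group scheme, and it is $p$-completely flat over $Z$ because $\Omega^1_{Z|Y}$ is finite locally free (smoothness), so its divided-power envelope has a $p$-completely free associated graded; the twist by a line bundle does not affect flatness or affineness. \textbf{Step 4: Descent.} Finally, the property of being a gerbe banded by a given group descends along the flat cover used in Step 1, so the statement holds for the original $Z \to Y$.

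The main obstacle I expect is \textbf{Step 2}: making precise that the relative Hodge--Tate stack $Z^\HT \to Y^\HT \times_Y Z$ really is the classifying stack of the PD-tangent group, as opposed to merely a torsor or a gerbe under some a priori different group. This requires carefully unwinding the definition of $Z^\HT$ via generalized Cartier--Witt divisors and comparing the groupoid of lifts of a point along $Y^\HT \times_Y Z \to Z^\HT$ with the groupoid of $\mathcal{T}_{Z|Y}^\sharp\{1\}$-torsors; the cleanest route is to cite \cite[Proposition 5.12, Corollary 5.3, Remark 5.13]{bhatt2022prismatization} for the absolute case and then argue that everything in sight commutes with the base change $Y \to \Spf(\Z_p)$ --- in particular that $(Z \times_{\Spf(\Z_p)} \Spf(\Z_p))^\HT \times_{\Spf(\Z_p)^\HT} (Y^\HT) \to \ldots$ recovers $Z^\HT$, which follows from the compatibility of prismatization with products established in \cite[Remark 3.9]{bhatt2022prismatization}. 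Alternatively, since we only need this lemma in situations where we already have enough lifts, one can simply invoke the absolute statement of Bhatt--Lurie and reduce the relative statement to it by a direct base-change argument, avoiding any genuinely new computation.
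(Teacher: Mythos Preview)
Your proposal is correct and takes essentially the same approach as the paper: both recognize that the statement follows by rerunning the argument of \cite[Proposition 5.12]{bhatt2022prismatization} in the relative setting, and the paper's entire proof is in fact the single sentence ``The argument of \cite[Proposition 5.12]{bhatt2022prismatization} applies here as well.'' Your Steps 1--4 spell out in more detail what that argument involves, though the localization in Step 1 to acquire a prism over $Y$ is not actually needed---Bhatt--Lurie's deformation-theoretic argument via the square-zero extension $\overline{W}$ works over an arbitrary base without choosing a prismatic lift.
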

\begin{proof}
  The argument of \cite[Proposition 5.12]{bhatt2022prismatization} applies here as well.
\end{proof}

There is another stability property of \Cref{sec:form-reduct-prov-1-crucial-assumption-on-galois-cohomology}.
Namely, assume that there exists a perfect prism $(A_0,I_0)$ with a morphism $(A_0,I_0)\to (A,I)$ such that the $\Gamma$-action on $A_\infty$ is $A_0$-linear. Now let $(A_0,I_0)\to (B_0,J_0)$ be a morphism with $(B_0,J_0)$ another perfect prism.

\begin{lemma}
	\label{sec:crit-fully-faithf-base-change-along-morph-of-perfect-prisms}
	Assume \Cref{sec:form-reduct-prov-1-crucial-assumption-on-galois-cohomology} holds for $(R,A,R_\infty, A_\infty, \Gamma)$. Assume that the natural map
	\[
	(X\times_{\Spf(A_0/I_0)} \Spf(B_0/J_0))^\HT\to X^\HT\times_{\Spf(A_0/I_0)} \Spf(B_0/J_0) 
	\]
	is an isomorphism, for example this happens if $X\to \Spf(A_0/I_0)$ is $p$-completely smooth. Then \Cref{sec:form-reduct-prov-1-crucial-assumption-on-galois-cohomology} holds for the data $(R\widehat{\otimes}_{A_0/I_0} B_0/J_0,A\widehat{\otimes}_{A_0} B_0,R_\infty\widehat{\otimes}_{A_0/I_0} B_0/J_0, A_\infty\widehat{\otimes}_{A_0}B_0, \Gamma)$. If $(A_0,I_0)\to (B_0,J_0)$ is $p$-completely faithfully flat, then the converse holds.
\end{lemma}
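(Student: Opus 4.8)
The plan is to reduce the statement to understanding how the period ring $B_{A,R_\infty}^+$ changes under base change, after which \Cref{sec:form-reduct-prov-1-crucial-assumption-on-galois-cohomology} transforms by exactly the kind of projection-formula manipulation used in \Cref{sec:form-reduct-prov-1-pushforward-beta-o} and \Cref{sec:crit-fully-faithf-projection-formula-for-continuous-group-cohomology}. Abbreviate $S_0:=A_0/I_0$ and $T_0:=B_0/J_0$, both perfectoid, and write $R':=R\widehat{\otimes}_{S_0}T_0$, $(A_\infty',I_\infty'):=(A_\infty\widehat{\otimes}_{A_0}B_0,I_\infty A_\infty')$, $R_\infty':=R_\infty\widehat{\otimes}_{S_0}T_0$, $X':=\Spf(R')$, and so on. Since the $\Gamma$-action on $A_\infty$ is $A_0$-linear, $\Gamma$ acts $S_0$-linearly on all the rings below and trivially on $T_0$.

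\emph{Step 1 (base change of the period ring).} First I would check that $(A_\infty',I_\infty')$ is again a perfect prism with $A_\infty'/I_\infty'=R_\infty'$ perfectoid, so $X_\infty'\cong X_\infty\times_X X'$ and $\widetilde{f}'=\overline{\rho_{A_\infty'}}$, $\eta'=\overline{\rho_{A'}}$. Using the functoriality of the maps $\overline{\rho_{(-)}}$ along the maps of prisms $(A,I)\to(A',I')$ and $(A_\infty,I_\infty)\to(A_\infty',I_\infty')$ lying over $X'\to X$, together with the hypothesis $(X')^\HT\cong X^\HT\times_X X'$, the squares defining $\eta'$ and $\widetilde{f}'$ are the Cartesian base changes along $X'\to X$ of the squares defining $\eta$ and $\widetilde{f}$. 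Forming fibre products then gives $Z':=X_\infty'\times_{(X')^\HT}X'\cong(X_\infty\times_{X^\HT}X)\times_X X'=Z\times_X X'$, i.e.
\[
B_{A',R_\infty'}^+\;\cong\;B_{A,R_\infty}^+\widehat{\otimes}_R R'\;\cong\;B_{A,R_\infty}^+\widehat{\otimes}_{S_0}T_0
\]
as rings with commuting actions of $G_{A'}$ and of $\Gamma$, the latter acting trivially on the $T_0$-factor. When $X\to\Spf(S_0)$ is $p$-completely smooth, $B_{A,R_\infty}^+$ is $p$-completely flat over $S_0$ (it is so over $R_\infty$, hence over $R$ by the quasi-syntomic cover hypothesis, hence over $S_0$), so this tensor product agrees with the derived one.

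\emph{Step 2 (cohomology and conclusion).} Next I would rerun the proof of \Cref{sec:crit-fully-faithf-projection-formula-for-continuous-group-cohomology}, but tensoring over $S_0$ with $T_0$: as the $\Gamma$-action on $B_{A,R_\infty}^+$ is $S_0$-linear, the continuous cochain complex computing $R\Gamma(\Gamma,B_{A,R_\infty}^+)$ has terms $\mathcal C(\Gamma^n,S_0)\widehat{\otimes}_{S_0}B_{A,R_\infty}^+$ with $\mathcal C(\Gamma^n,S_0)$ $p$-completely flat over $S_0$ and uniformly bounded columns, so $-\widehat{\otimes}_{S_0}^LT_0$ commutes termwise with the totalization; this yields $R\Gamma(\Gamma,B_{A',R_\infty'}^+)\cong R\Gamma(\Gamma,B_{A,R_\infty}^+)\widehat{\otimes}_{S_0}^LT_0$. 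Tensoring the defining triangle $R\to R\Gamma(\Gamma,B_{A,R_\infty}^+)\to K$ with $-\widehat{\otimes}_{S_0}^LT_0$ then identifies the cofibre $K'$ of $R'\to R\Gamma(\Gamma,B_{A',R_\infty'}^+)$ with $K\widehat{\otimes}_{S_0}^LT_0$ (using $R\widehat{\otimes}_{S_0}^LT_0=R'$). For the forward direction this finishes the proof: $p^i\cdot\id_{K'}=(p^i\cdot\id_K)\widehat{\otimes}\id_{T_0}\simeq 0$, and $K'$ is bounded since $B_{A',R_\infty'}^+$ is discrete and $\Gamma$ has finite cohomological dimension. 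For the converse, if $(A_0,I_0)\to(B_0,J_0)$ is $p$-completely faithfully flat then $R\widehat{\otimes}_{S_0}^LT_0=R'$ and $K'\cong K\widehat{\otimes}_{S_0}^LT_0$ still hold, and $p$-complete faithful flatness gives $H^n(K)\widehat{\otimes}_{S_0}T_0\cong H^n(K')$, whence $K$ is bounded and each $H^n(K)$ is annihilated by $p^i$; a d\'evissage along the finite Postnikov tower of $K$ — using that in a triangle $A\to B\to C$ with $p^a\cdot\id_A\simeq 0$ and $p^c\cdot\id_C\simeq 0$ one has $p^{a+c}\cdot\id_B\simeq 0$ — then gives $p^N\cdot\id_K\simeq 0$ for some $N$, which is \Cref{sec:form-reduct-prov-1-crucial-assumption-on-galois-cohomology} for the original data.

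The only non-formal step is Step 1, namely keeping track of the stacky fibre products and of the $\Gamma$-action so that the base-change isomorphism for $B_{A,R_\infty}^+$ is genuinely $\Gamma$-equivariant; the hypothesis on the Hodge--Tate stacks is exactly what makes that diagram chase close. Everything else is the projection formula plus faithfully flat descent, the only technical care being that the $p$-completed tensor products in sight be underived, which is ensured by $p$-complete smoothness of $X$ over $S_0$ (forward direction) and by flatness of $(A_0,I_0)\to(B_0,J_0)$ (converse).
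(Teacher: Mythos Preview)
Your proposal is correct and follows the same approach as the paper. The paper's proof is a single sentence invoking \Cref{sec:crit-fully-faithf-projection-formula-for-continuous-group-cohomology}, and you have correctly unpacked the two implicit ingredients: the base-change identification $B_{A',R_\infty'}^+\cong B_{A,R_\infty}^+\widehat{\otimes}_{S_0}T_0$ coming from the Hodge--Tate stack hypothesis (your Step~1), and the projection-formula computation of $R\Gamma(\Gamma,-)$ (your Step~2, which is \Cref{sec:crit-fully-faithf-projection-formula-for-continuous-group-cohomology} rerun over $S_0$).
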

\begin{proof}
	This follows again from \Cref{sec:crit-fully-faithf-projection-formula-for-continuous-group-cohomology}.  
\end{proof}

Lastly, we explain how \Cref{sec:introduction-2-statement-main-theorem} can be localized on $X$.

\begin{lemma}
	\label{sec:crit-fully-faithf-main-theorem-localizes-on-x}
	In the setup of \Cref{sec:introduction-2-statement-main-theorem} assume that $X=\cup_{i=1}^nX_i$ with $X_i\subseteq X$ affine open. Assume that \Cref{sec:introduction-2-statement-main-theorem} holds for $X_i$ and all their intersections. Then \Cref{sec:introduction-2-statement-main-theorem} holds for $X$.
\end{lemma}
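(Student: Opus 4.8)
The plan is a straightforward \v{C}ech descent argument, reducing full faithfulness of $\alpha_X^\ast$ to that of the $\alpha_{X_I}^\ast$ for the finite intersections $X_I:=\bigcap_{i\in I}X_i$, $\emptyset\neq I\subseteq\{1,\dots,n\}$. The two inputs are that both $\mathcal{P}erf(X^\HT)\tf$ and $\mathcal{P}erf(\X_v)$ satisfy descent along the given cover, and that the construction of $\alpha_X^\ast$ is local on $X$, so that these descent data are identified by $\alpha$.

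First I would record the relevant descent. Each open immersion $X_i\hookrightarrow X$ is \'etale, so $X_i^\HT\to X^\HT\times_X X_i$ is an isomorphism (\cite[Remark 3.9]{bhatt2022prismatization}; cf.\ \Cref{sec:crit-fully-faithf-relative-ht-map}); hence, base-changing the cover $X=\bigcup_i X_i$ along the structure map $X^\HT\to X$, the $X_i^\HT$ form an open cover of $X^\HT$ with $X_I^\HT=\bigcap_{i\in I}X_i^\HT$. As $\mathcal{P}erf$ satisfies Zariski descent, $R\Gamma(X^\HT,-)$ is computed by the \v{C}ech complex of this cover, which — the cover being finite with $n$ members — is the totalization of a complex concentrated in cohomological degrees $0,\dots,n-1$, i.e.\ a finite limit, hence compatible with the isogeny functor $(-)\tf$. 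Likewise the $\X_i:=X_i^\rig$ form an analytic open cover of $\X$ with $\X_I=X_I^\rig$ and $\X_v|_{\X_i}=(\X_i)_v$, and by v-descent of perfect complexes on perfectoid spaces \cite[Theorem 2.1]{anschutz2021fourier} the structure sheaf $\O_{\X_v}$ satisfies descent along it, so $R\Gamma(\X_v,-)$ is again a finite \v{C}ech totalization.

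Next, the construction of $\alpha_X^\ast$ in \Cref{sec:form-reduct-prov} uses only the canonical lift of a given map $T=\Spa(B,B^+)\to\X$ (with $T$ affinoid perfectoid) to a map $\Spf(B^+)\to X^\HT$, and this is functorial in $X$; in particular it is compatible with restriction to the opens $X_i$, so $\alpha_X^\ast(\mathcal{E})|_{\X_I}\simeq\alpha_{X_I}^\ast(\mathcal{E}|_{X_I^\HT})$ compatibly with the \v{C}ech structure (and similarly on the level of mapping complexes). Since perfect complexes are dualizable and $\alpha_X^\ast$ is symmetric monoidal, to prove full faithfulness it suffices to show that for $\mathcal{E},\mathcal{F}\in\mathcal{P}erf(X^\HT)$ and $\mathcal{H}:=\mathcal{E}^\vee\otimes\mathcal{F}$ the natural map $R\Gamma(X^\HT,\mathcal{H})\tf\to R\Gamma(\X_v,\alpha_X^\ast\mathcal{H})$ is an equivalence. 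By the previous paragraph both sides are the finite totalization over $I$ of the maps $R\Gamma(X_I^\HT,\mathcal{H}|_{X_I^\HT})\tf\to R\Gamma((\X_I)_v,\alpha_{X_I}^\ast(\mathcal{H}|_{X_I^\HT}))$, and each of these is an equivalence: it is exactly the comparison map $R\Hom_{\mathcal{P}erf(X_I^\HT)\tf}(\O,\mathcal{H}|_{X_I^\HT})\to R\Hom_{\mathcal{P}erf((\X_I)_v)}(\alpha_{X_I}^\ast\O,\alpha_{X_I}^\ast(\mathcal{H}|_{X_I^\HT}))$, which is an equivalence because \Cref{sec:introduction-2-statement-main-theorem} is assumed for $X_I$. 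Hence the map for $X$ is an equivalence and $\alpha_X^\ast$ is fully faithful.

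The only points requiring actual care are that the $X_I$ are again of the admissible type — they are qcqs opens of $X$, hence again smoothoid (resp.\ arithmetic), and if one prefers one may refine the cover so that all the $X_I$ are affine and then invoke only the affine case of \Cref{sec:introduction-2-statement-main-theorem} — and the claimed locality of $\alpha_X^\ast$ on $X$; both are essentially built into the definitions, so the remainder is routine \v{C}ech bookkeeping and I do not expect a serious obstacle here.
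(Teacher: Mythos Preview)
Your proof is correct and uses the same finite \v{C}ech-descent idea as the paper. The only difference is packaging: you work rationally throughout (using that finite limits commute with $(-)\tf$), whereas the paper phrases the argument integrally by showing that the cofiber on $X$ is a finite limit of the cofibers $K_{X_I}$ and hence killed by some $p^{m'}$---which tacitly uses the stronger integral boundedness on the $X_I$ that is established in the applications, so your direct rational argument actually matches the literal hypothesis of the lemma more closely.
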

\begin{proof}
	Let $\mathcal{E},\mathcal{F}\in \mathcal{P}erf(X^\HT)$. We have to check that the morphism
	\[
	R\Hom_{\mathcal{P}erf(X^\HT)}(\mathcal{E},\mathcal{F})\to R\Hom_{\mathcal{P}erf(\X_v)}(\alpha^{+,\ast}_X\mathcal{E}, \alpha^{+,\ast}_X\mathcal{F}) 
	\]
	has cofiber $K$ killed by $p^m$ for some $m\geq 1$. As $X$ is qcqs this can be checked after pullback along $\coprod_{i=1}^n X_i\to X$. Namely, $K$ can be calculated as a finite limit of the cofibers $K_Y$ of
	\[
	R\Hom_{\mathcal{P}erf(Y^\HT)}(\mathcal{E},\mathcal{F})\to R\Hom_{\mathcal{P}erf(Y^\rig_v)}(\alpha^{+,\ast}_{Y}\mathcal{E}, \alpha^{+,\ast}_{Y}\mathcal{F}) ,
	\]
	where $Y$ runs through the finite intersections of the $X_i$'s. We may choose $m\geq 1$ large enough such that $p^mK_Y=0$ for all $Y$. Now a finite limit in $\mathcal{D}(\Z)$ of complexes of $\Z/p^m$-modules is a complex of $\Z/{p^m}^\prime$-modules for some $m^\prime\geq m$ as follows by considering finite products and fiber sequences.
\end{proof}

\section{Explicit naturality for the prismatization}
\label{sec:prismatic-homotopies}
The goal of this section is to make explicit some group actions on the Hodge--Tate stack coming from its functorial nature, which will be important for checking \Cref{sec:form-reduct-prov-1-crucial-assumption-on-galois-cohomology} in practice. This section expands results from \cite[Section 3.1]{analytic_HT} and \cite[Section 9]{bhatt2022prismatization}. 

\subsection{Explicit naturality}
\label{sec:explicit-naturality}

Let $R$ be a $p$-complete ring with bounded $p^\infty$-torsion and $X:=\Spf(R)$. Let $(A,I)\in (X)_\prism$ and consider the morphism 
from \S\ref{sec:notations}.(4)
  \[
\overline{\rho_A}\colon \Spf(A/I)\to X^\HT.
\]
Our aim in this section is to make explicit how this behaves with respect to morphisms in $(X)_\prism$.   

In order to make certain morphisms of animated rings concrete, we fix a surjection $F\to R$ by a free polynomial algebra. Let $\mathfrak{a}:=\ker(F\to R)$.
For $(A,I)\in (X)_\prism$ we assume that there exists a lift of the structure morphism $\iota_A\colon R\to A/I$ to a morphism $\widetilde{\iota_A}\colon F\to A$ of rings. We can then make $\rho_A$ explicit: let $f\colon A\to S$ be a morphism with $S$ a $p$-complete ring and $g\colon A\to W(S)$ the lift induced by the $\delta$-structure. The base change of $I\to A$ along $g$ defines the Cartier-Witt divisor
\[
(I\otimes_{A,g}W(S)\xrightarrow{\alpha} W(S))
\]
for $S$, and together with the natural composition $R\xrightarrow{\iota_A}A/I\xrightarrow{\overline{g}} \cone(\alpha)$ this yields the point $\rho_A(f)\in X^\prism(S)$. More explicitly, the composition $\overline{g}\circ \iota_A$ can be represented by the diagram

\[\begin{tikzcd}
	{\mathfrak{a}} & I & {I\otimes_{A,g} W(S)} \\
	F & A & {W(S),}
	\arrow["{\widetilde{\iota_A}}", from=1-1, to=1-2]
	\arrow["x\mapsto x\otimes 1", from=1-2, to=1-3]
	\arrow["\alpha", from=1-3, to=2-3]
	\arrow["g", from=2-2, to=2-3]
	\arrow["{\widetilde{\iota_A}}", from=2-1, to=2-2]
	\arrow[from=1-1, to=2-1]
	\arrow[from=1-2, to=2-2]
\end{tikzcd}\]
where each column represents the respective animated ring.

\begin{construction}\label{constr:explicit-2-arrow-in-X^prism}
	Let $\Varphi\colon (A,I)\to (B,J)$ be a morphism in $(X)_\prism$, then the diagram
\[
\begin{tikzcd}[row sep = 0cm,column sep = 0.5cm]
	\Spf(B) \arrow[rr,"\Varphi^\ast"] \arrow[rd,"{\rho_B}"'] &          & \Spf(A) \arrow[ld,"{\rho_A}"]\\
	& X^\prism &                   
\end{tikzcd}\]
	commutes naturally, i.e., in the groupoid $X^\prism(B)$ there exists a natural isomorphism
	\[
	\can_\Varphi\colon \rho_A\circ \Varphi^\ast\to \rho_B.
	\]
	We now make $\can_\Varphi$ explicit:
	We may assume that $R\to B/J$ has a lift $\widetilde{\iota_B}:F\to B$. Let $\overline{\Varphi}\colon A/I\to B/J$ be the reduction of $\Varphi$, then the requirement that $\overline{\Varphi}$ is a morphism of $R$-algebras yields the homotopy
\[
h_\Varphi\colon F\to J,\ s\mapsto \Varphi\circ \widetilde{\iota_A}(s)-\widetilde{\iota_B}(s).
\]
If $f\colon B\to S$ is a morphism and $g\colon B\to W(S)$ its natural lift, then the point $\rho_A(\Varphi^\ast(f))=\rho_A(f\circ \Varphi)\in X^\prism(S)$ can be represented by the commutative diagram
\[\begin{tikzcd}
	{\mathfrak{a}} & I & {I\otimes_{A,g\circ \Varphi} W(S)} \\
	F & A & {W(S),}
	\arrow["{\widetilde{\iota_A}}", from=1-1, to=1-2]
	\arrow["x\mapsto x\otimes 1", from=1-2, to=1-3]
	\arrow["\alpha", from=1-3, to=2-3]
	\arrow["g\circ \Varphi", from=2-2, to=2-3]
	\arrow["{\widetilde{\iota_A}}", from=2-1, to=2-2]
	\arrow[from=1-1, to=2-1]
	\arrow[from=1-2, to=2-2]
\end{tikzcd}\]
where the implicit Cartier-Witt divisor is the rightmost column.
On the other hand, the element $\rho_B(f)\in X^\prism(S)$ can be represented by the diagram
\[\begin{tikzcd}
	{\mathfrak{a}} & J & {J\otimes_{B,g} W(S)} \\
	F & B & {W(S).}
	\arrow["{\widetilde{\iota_B}}", from=1-1, to=1-2]
	\arrow["x\mapsto x\otimes 1", from=1-2, to=1-3]
	\arrow["\beta", from=1-3, to=2-3]
	\arrow["g", from=2-2, to=2-3]
	\arrow["{\widetilde{\iota_B}}", from=2-1, to=2-2]
	\arrow[from=1-1, to=2-1]
	\arrow[from=1-2, to=2-2]
\end{tikzcd}\]
Define the isomorphism
\[
\Phi_\Varphi\colon I\otimes_{A,g\circ \Varphi} W(S)\isomarrow J\otimes_{B,g}W(S),\ i\otimes w\mapsto \Varphi(i)\otimes w,
\]
which yields the isomorphism 
of Cartier-Witt divisors
\[\begin{tikzcd}
	{I\otimes_{A,g\circ \Varphi}W(S)} & {J\otimes_{B,g} W(S)} \\
	{W(S)} & {W(S).}
	\arrow["{\Phi_\Varphi}", from=1-1, to=1-2]
	\arrow["\beta", from=1-2, to=2-2]
	\arrow["\alpha", from=1-1, to=2-1]
	\arrow["{\Id}", from=2-1, to=2-2]
\end{tikzcd}\]
This defines the first part of data for $\can_\Varphi$. Let
$
\Phi^\prime_\Varphi\colon \cone(\alpha)\to \cone(\beta)
$
be the induced isomorphism of animated rings. The second datum is the isomorphism of the two morphisms of animated rings
\[
R\xrightarrow{\iota_A} A/I\xrightarrow{\overline{g\circ \Varphi}} \cone(\alpha)\xrightarrow{\Phi^\prime_\Varphi} \cone(\beta)\quad
\text{and}
\quad R\xrightarrow{\iota_B} B/J\xrightarrow{\overline{g}} \cone(\beta)
\]
constructed as follows: We combine the morphism $(R\xrightarrow{\iota_A} A/I) \to (R\xrightarrow{\iota_B} B/J)$
 coming from the equality $\iota_B=\Varphi\circ \iota_A$ (of morphisms of usual rings) with the natural isomorphism between $A/I\xrightarrow{\overline{g\circ\Varphi}} \cone(\alpha)\xrightarrow{\Phi^\prime_\Varphi} \cone(\beta)$ and $\ A/I\xrightarrow{\Varphi} B/J\xrightarrow{\overline{g}} \cone(\beta)$ witnessed by the following cube:
\begin{equation}
\begin{tikzcd}[column sep ={1cm,between origins},row sep ={0.60cm,between origins}]
	& {I\otimes_{A,g\circ \Varphi} W(S)} \arrow[ddd,"\alpha"'{yshift=-5pt}] \arrow[rrr,"{\Phi_\Varphi}"] &  &                           & {J\otimes_{B,g} W(S)} \arrow[ddd,"\beta"{yshift=-5pt}] \\
	{I} \arrow[rrr,crossing over] \arrow[ru]             &                            &  & {J} \arrow[ru]             &                \\
	&                            &  &                           &                \\
	& {W(S)} \arrow[rrr,"\Id"]             &  &                           & {W(S)}             \\
	{A} \arrow[from=uuu] \arrow[ru] \arrow[rrr,"\Varphi"'] &                            &  & {B} \arrow[ru,"g"'] \arrow[from=uuu,crossing over] &               
\end{tikzcd}
\end{equation}
Explicitly, we seek a homotopy for the morphisms of complexes given by the outer columns of
\[\begin{tikzcd}
	{\mathfrak{a}} & I & {I\otimes_{A,g\circ \Varphi} W(S)} & {J\otimes_{B,g}W(S)} \\
	F & A & {W(S)} & {W(S)}
	\arrow["{\widetilde{\iota_A}}", from=1-1, to=1-2]
	\arrow["{g\circ \Varphi}", from=1-2, to=1-3]
	\arrow["{\Phi_\Varphi}", from=1-3, to=1-4]
	\arrow[from=1-1, to=2-1]
	\arrow[from=1-2, to=2-2]
	\arrow["{\widetilde{\iota_A}}", from=2-1, to=2-2]
	\arrow["\alpha", from=1-3, to=2-3]
	\arrow["{g\circ \Varphi}", from=2-2, to=2-3]
	\arrow["{\Id}", from=2-3, to=2-4]
	\arrow["\beta", from=1-4, to=2-4]
\end{tikzcd}
\quad \text{and} \quad\begin{tikzcd}
	{\mathfrak{a}} & J & {J\otimes_{B,g}W(S)} \\
	F & B & {W(S).}
	\arrow["\widetilde{\iota_B}", from=1-1, to=1-2]
	\arrow["g", from=1-2, to=1-3]
	\arrow[from=1-1, to=2-1]
	\arrow[from=1-2, to=2-2]
	\arrow["\beta", from=1-3, to=2-3]
	\arrow["{\widetilde{\iota_B}}", from=2-1, to=2-2]
	\arrow["g", from=2-2, to=2-3]
\end{tikzcd}\]
Now the homotopy
\begin{equation}
	\label{eq:1-homotopy-h-varphi}
	h_\Varphi^\prime\colon F\to J\otimes_{B,g}W(S),\ x\mapsto h_\Varphi(x)\otimes 1
\end{equation}    
does what we want as $h_\Varphi$ witnesses the equality $\iota_B=\Varphi\circ \iota_A$. We can now set $\can_{\Varphi}=(\Phi_\Varphi,h_\Varphi^\prime)$. \qed
\end{construction}
\begin{definition}
If $\Varphi,\Varpsi\colon (A,I)\to (B,J)$ are two morphisms in $(X)_\prism$, then from the diagram
\[
\begin{tikzcd}[column sep =0.8cm, row sep =0.1cm]
		& \Spf(B) \arrow[ld, "\Varpsi^\ast"'] \arrow[dd, "\rho_B" description] \arrow[rd, "\Varphi^{\ast}"] &                                                    \\
		\Spf(A) \arrow[rd, "\rho_A"'] \arrow[r, Rightarrow,"\can_{\Varpsi}",shorten >=1.5ex,shorten <=1.0ex] & {}                                                                                             & \Spf(A) \arrow[ld, "\rho_A"] \arrow[l, Rightarrow,"\can_{\Varphi}"',shorten >=1.5ex,,shorten <=1.0ex] \\
		& {X^\prism}                                                                                             &      
\end{tikzcd}\]
we obtain a natural isomorphism
\[
\gamma_{\Varphi,\Varpsi}:=(\Phi_{\Varphi,\Varpsi},h_{\Varphi,\Varpsi}^\prime):=\can_{\Varpsi}^{-1}\circ \can_\Varphi\colon \rho_A\circ \Varphi^*\to \rho_A\circ \Varpsi^*.
\]
\end{definition}
Using \Cref{constr:explicit-2-arrow-in-X^prism}, we can make $\gamma_{\Varphi,\Varpsi}$ explicit. For this let $f\colon B\to S$ be a morphism to a $p$-complete ring $S$, and $g\colon B\to W(S)$ its natural lift. 
The part of $\gamma_{\Varphi,\Varpsi}$ (more precisely, its pullback along $\Spf(S)\to \Spf(B/J)$) acting on the Cartier-Witt divisor is induced by the composition
\[
\Phi_{\Varphi,\Varpsi}=\Phi_{\Varpsi}^{-1}\circ \Phi_\Varphi\colon I\otimes_{A,g\circ \Varphi} W(S)\xrightarrow{\Phi_\Varphi} J\otimes_{B,g}W(S)\xrightarrow{\Phi^{-1}_{\Varpsi}} I\otimes_{A,g\circ \Varpsi} W(S),
\]
yielding the isomorphism
\[\begin{tikzcd}
	{I\otimes_{A,g\circ \Varphi}W(S)} & {I\otimes_{A,g\circ \Varpsi} W(S)} \\
	{W(S)} & {W(S)}
	\arrow["{\Phi_{\Varphi,\Varpsi}}", from=1-1, to=1-2]
	\arrow["\alpha"', from=1-1, to=2-1]
	\arrow["{\Id}", from=2-1, to=2-2]
	\arrow["{\alpha^\prime}", from=1-2, to=2-2]
\end{tikzcd}\]
of Cartier-Witt divisors. We get the induced isomorphism
\[
\Phi^\prime_{\Varphi,\Varpsi}\colon \cone(\alpha)\isomarrow \cone(\alpha^\prime)
\]
of animated rings. The following is the main computation of this subsection:

\begin{lemma}\label{sec:prismatic-homotopies-1-action-on-cw-divisor-more-explicit-in-principal-case}
	Assume that $I$ is generated by some distinguished element $\xi\in A$, that  $f\colon B\to S$ factors over $B/J$, and that $\overline{\Varphi}=\overline{\Varpsi}$ agree as maps $A/I\to B/J$. Set  \[\textstyle u_{\Varphi,\Varpsi,\xi}:=\frac{\Varphi(\xi)}{\Varpsi(\xi)}\in B^\times.\]
	Then
	$\gamma_{\Varphi,\Varpsi}=(\Phi_{\Varphi,\Varpsi},h_{\Varphi,\Varpsi}^\prime):\rho_A\circ \Varphi^*\to \rho_A\circ \Varpsi^\ast$ is given explicitly by the isomorphism
	\[ \Phi_{\Varphi,\Varpsi}: {I\otimes_{A,g}W(S)}\to {I\otimes_{A,g\circ \Varpsi} W(S)},\quad a \xi\otimes w \mapsto a\xi\otimes g(u_{\Varphi,\Varpsi,\xi})w \]
	and the homotopy
	\[\textstyle h_{\Varphi,\Varpsi}^\prime\colon F\to J\otimes_{B,g}W(S),\ x\mapsto \xi\otimes g\big( \frac{\Varphi(\widetilde{\iota_A}(x))-\Varpsi(\widetilde{\iota_A}(x))}{\Varpsi(\xi)}\big).\]
\end{lemma}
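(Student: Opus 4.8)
The plan is to unwind the definitions in \Cref{constr:explicit-2-arrow-in-X^prism} in the principal case and track where the element $u_{\varphi,\psi,\xi}$ appears. Recall that $\gamma_{\varphi,\psi} = \can_\psi^{-1}\circ\can_\varphi$ and that $\can_\varphi = (\Phi_\varphi, h_\varphi')$, with $\Phi_\varphi\colon I\otimes_{A,g\circ\varphi}W(S)\to J\otimes_{B,g}W(S)$ sending $i\otimes w\mapsto \varphi(i)\otimes w$. First I would compute the composite $\Phi_{\varphi,\psi} = \Phi_\psi^{-1}\circ\Phi_\varphi$ on the generator $\xi$ of $I$: we have $\Phi_\varphi(\xi\otimes 1) = \varphi(\xi)\otimes 1$, and since $f$ factors over $B/J$, the Cartier--Witt divisor for $S$ is the base change of $J\to B$ along $g$, so $J\otimes_{B,g}W(S)$ is a free $W(S)$-module on $\psi(\xi)\otimes 1$ (equally, on $\varphi(\xi)\otimes 1$). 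The key algebraic point is that in $J\otimes_{B,g}W(S)$ we can rewrite $\varphi(\xi)\otimes 1 = (u_{\varphi,\psi,\xi}\psi(\xi))\otimes 1 = \psi(\xi)\otimes g(u_{\varphi,\psi,\xi})$, using that $g$ is a ring map and the module structure is via $g$. Applying $\Phi_\psi^{-1}$, which identifies $\psi(\xi)\otimes w$ with $\xi\otimes w$ in $I\otimes_{A,g\circ\psi}W(S)$, gives $\Phi_{\varphi,\psi}(\xi\otimes 1) = \xi\otimes g(u_{\varphi,\psi,\xi})$; $W(S)$-linearity then yields the stated formula $a\xi\otimes w\mapsto a\xi\otimes g(u_{\varphi,\psi,\xi})w$. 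The hypothesis $\overline\varphi = \overline\psi$ is what makes $u_{\varphi,\psi,\xi}$ a well-defined unit in $B$ compatibly with reduction, and ensures the two sides have literally the same target divisor after reduction mod $J$.

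Next I would handle the homotopy component $h'_{\varphi,\psi}$. By definition $h'_{\varphi,\psi}$ is the homotopy obtained from composing the homotopy data of $\can_\varphi$ and the inverse of that of $\can_\psi$; concretely $\can_\varphi$ contributes $h_\varphi'\colon F\to J\otimes_{B,g}W(S)$, $x\mapsto h_\varphi(x)\otimes 1$ with $h_\varphi(x) = \varphi(\widetilde{\iota_A}(x)) - \widetilde{\iota_B}(x)$, and similarly for $\psi$. The composite homotopy is $h_\varphi' - h_\psi'$ (transported through $\Phi_\psi$), so on $x\in F$ it is represented by $(\varphi(\widetilde{\iota_A}(x)) - \widetilde{\iota_B}(x)) - (\psi(\widetilde{\iota_A}(x)) - \widetilde{\iota_B}(x)) = \varphi(\widetilde{\iota_A}(x)) - \psi(\widetilde{\iota_A}(x))$, an element of $J$, tensored with $1$ and then pushed through the identification of $J\otimes_{B,g}W(S)$ in terms of the generator $\psi(\xi)$. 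Writing this element of $J$ as $\tfrac{\varphi(\widetilde{\iota_A}(x)) - \psi(\widetilde{\iota_A}(x))}{\psi(\xi)}\cdot \psi(\xi)$ and moving the scalar through the tensor via $g$ produces the claimed formula $x\mapsto \xi\otimes g\big(\tfrac{\varphi(\widetilde{\iota_A}(x)) - \psi(\widetilde{\iota_A}(x))}{\psi(\xi)}\big)$; here one uses again that $\psi(\xi)$ generates $J$ (being a distinguished element mapping to it, $J = \psi(I)B$) so division by $\psi(\xi)$ makes sense inside $B$.

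The main obstacle I anticipate is bookkeeping rather than any genuine difficulty: one must be careful about which base-change of $W(S)$ each tensor product is formed over (the subscript $g\circ\varphi$ versus $g\circ\psi$ versus $g$ on $B$), and to check that the two identifications $\Phi_\varphi$, $\Phi_\psi^{-1}$ compose to land in $I\otimes_{A,g\circ\psi}W(S)$ as claimed — this is exactly where the condition $\overline\varphi=\overline\psi$ is needed so that reduction mod $p$ of both divisors agree and the unit $u_{\varphi,\psi,\xi}$ exists in $B^\times$ (a priori the ratio only makes sense because $\varphi(\xi),\psi(\xi)$ are both distinguished generators of the principal ideal $J$, hence differ by a unit). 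I would also remark that the normalization conventions from \Cref{constr:explicit-2-arrow-in-X^prism} — in particular the sign in $h_\varphi$ and the direction of $\Phi_\varphi$ — are what fix the precise form of the answer; a different convention would move $u_{\varphi,\psi,\xi}$ to its inverse or change a sign, so the lemma is really just recording the outcome of the construction in the principal, reduced-to-$B/J$ case.
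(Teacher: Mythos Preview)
Your approach is essentially the same as the paper's: compute $\Phi_{\varphi,\psi}=\Phi_\psi^{-1}\circ\Phi_\varphi$ directly on the generator $\xi$, and compute the homotopy as $\Phi_\psi^{-1}\circ(h'_\varphi-h'_\psi)$. One point deserves correction: you say the hypothesis $\overline\varphi=\overline\psi$ is what makes $u_{\varphi,\psi,\xi}$ a unit, but that is automatic since $\varphi(\xi)$ and $\psi(\xi)$ are both distinguished generators of the principal ideal $J$. The actual role of the hypotheses (that $f$ factors through $B/J$ and $\overline\varphi=\overline\psi$) is that they force $g\circ\varphi=g\circ\psi$ as maps $A\to W(S)$; this is what makes the source $I\otimes_{A,g\circ\varphi}W(S)$ and target $I\otimes_{A,g\circ\psi}W(S)$ literally the same module, and is precisely what you need when you pass from the computation on $\xi\otimes 1$ to the general element $a\xi\otimes w$ by ``$W(S)$-linearity'' (since $a\xi\otimes w=\xi\otimes g(\varphi(a))w$ on the left but $a\xi\otimes g(u)w=\xi\otimes g(\psi(a))g(u)w$ on the right). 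Apart from this, your proof is correct and matches the paper.
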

\begin{proof}
	Since $I=\xi A$, we know that $J=\Varphi(\xi)B=\Varpsi(\xi)B$ and $\Varphi(\xi)$, $\Varpsi(\xi)$ are distinguished elements as $\Varphi,\Varpsi$ are morphisms of prisms. The assumptions that $f$ factors over $B/J$ and $\overline{\Varphi}=\overline{\Varpsi}$ imply that $g\circ \Varphi=g\circ \Varpsi$. We can therefore compute $\Phi_{\Varphi,\Varpsi}=\Phi_{\Varpsi^{-1}}\circ \Phi_\Varphi$ as follows: For $a\in A, w\in W(S)$,
	\begin{alignat*}{2}
		\Phi^{-1}_{\Varpsi}\circ \Phi_\Varphi(a \xi\otimes w) =&  \Phi^{-1}_{\Varpsi}(\Varphi(a\xi)\otimes w)=  \Phi^{-1}_{\Varpsi}(\Varphi(\xi)\otimes g(\Varphi(a))w) =  \Phi^{-1}_{\Varpsi}(\Varpsi(\xi)\otimes g(\Varpsi(a))g(u_{\Varphi,\Varpsi,\xi})w)\\
		=&  \Phi^{-1}_{\Varpsi}(\Varpsi(a)\Varpsi(\xi)\otimes g(u_{\Varphi,\Varpsi,\xi})w)
		=  a\xi\otimes g(u_{\Varphi,\Varpsi,\xi})w
	\end{alignat*}
	as desired. 
The second part of data of $\gamma_{\Varphi,\Varpsi}$ is an isomorphism of the two compositions
\begin{equation}
	\label{eq:2-first-composition-alpha}
R\xrightarrow{\iota_A} A/I \xrightarrow{g\circ \Varphi} \cone(\alpha)\xrightarrow{\Phi^\prime_{\Varphi,\Varpsi}}\cone(\alpha^\prime)\quad \text{and} \quad
R\xrightarrow{\iota_A} A/I \xrightarrow{g\circ \Varpsi} \cone(\alpha^\prime).
\end{equation}
In order to compute this, we first consider the isomorphism between the compositions
\begin{equation}
	\label{eq:1-first-composition}
	R\xrightarrow{\iota_A} A/I \xrightarrow{g\circ \Varphi} \cone(\alpha)\xrightarrow{\Phi^\prime_{\Varphi}}\cone(\beta) \quad \text{ and }\quad 
	R\xrightarrow{\iota_A} A/I \xrightarrow{g\circ \Varpsi} \cone(\alpha^\prime)\xrightarrow{\Phi^\prime_{\Varpsi}}\cone(\beta)
\end{equation}
defined by identifying both with
$
R\xrightarrow{\iota_B}B/J\xrightarrow{g} \cone(\beta)
$
via the previously described second parts of $\can_\Varphi, \can_{\Varpsi}$.
Using \eqref{eq:1-homotopy-h-varphi}, the isomorphism between the morphisms in \eqref{eq:1-first-composition} is given by the homotopy
\[
h^\prime_\Varphi-h^\prime_{\Varpsi}\colon F\to J\otimes_{B,g}W(S),\ x\mapsto (h_\Varphi(x)-h_{\Varpsi}(x))\otimes 1,
\]
and we compute that for any $x\in F$, we have
\[
\begin{array}{rll}
	&h_\Varphi(x)-h_{\Varpsi}(x)&= \big(\Varphi\circ \widetilde{\iota_A}(x)-\widetilde{\iota_B}(x)\big)-\big(\Varpsi\circ \widetilde{\iota_A}(x)-\widetilde{\iota_B}(x)\big)=\Varphi\circ \widetilde{\iota_A}(x)-\Varpsi\circ \widetilde{\iota_A}(x)\\
\Rightarrow &h'_\Varphi(x)-h'_{\Varpsi}(x)&=(\Varphi\circ \widetilde{\iota_A}(x)-\Varpsi\circ \widetilde{\iota_A}(x))\otimes 1=\Varphi(\xi)\otimes g\big(\frac{\Varphi\circ \widetilde{\iota_A}(x)-\Varpsi\circ \widetilde{\iota_A}(x)}{\Varphi(\xi)}\big)
\end{array}
\]
To get the isomorphism between the maps in \eqref{eq:2-first-composition-alpha}, we  apply $\Phi_{\Varpsi}^{-1}\colon J\otimes_{B,g}W(S)\to I\otimes_{A,g\circ \Varpsi} W(S)$:
	\[\textstyle
	 \Phi^{-1}_{\Varpsi}\circ (h^\prime_\Varphi(x)- h^\prime_{\Varpsi}(x))	=  \xi\otimes g(u_{\Varphi,\Varpsi,\xi})g\big (\frac{\Varphi(\widetilde{\iota_A}(x))-\Varpsi(\widetilde{\iota_A}(x))}{\Varphi(\xi)}\big) 
	= \xi\otimes g\big(\frac{\Varphi(\widetilde{\iota_A}(x))-\Varpsi(\widetilde{\iota_A}(x))}{\Varpsi(\xi)}\big).\qedhere
	\]
\end{proof}
\begin{remark}\label{r:explicit-naturality-HT-version}
Base changing $\gamma_{\Varphi,\Varpsi}\colon \rho_A\circ \Varphi^*\to \rho_A\circ \Varpsi^*$ from \Cref{sec:prismatic-homotopies-1-action-on-cw-divisor-more-explicit-in-principal-case} to $X^\HT\to X^\prism$ yields a natural isomorphism $\overline{\gamma_{\Varphi,\Varpsi}}\colon \overline{\rho_A}\circ \overline{\Varphi}\isomarrow \overline{\rho_A}\circ \overline{\Varpsi}$
 of morphisms $\Spf(B/J)\to X^\HT$.
If $\overline{\Varphi}=\overline{\Varpsi}$, this is an automorphism. If $f\colon B\to S$ is a morphism factoring over $B/J$, then \Cref{sec:prismatic-homotopies-1-action-on-cw-divisor-more-explicit-in-principal-case} makes the pullback of $\overline{\gamma_{\Varphi,\Varpsi}}$ along $\Spf(S)\to \Spf(B/J)$ explicit in the case that $I=\xi A$.
\end{remark}
\begin{remark}
	\label{sec:expl-natur-prism-dependence-of-choice-of-f}
	The formulas in \Cref{sec:prismatic-homotopies-1-action-on-cw-divisor-more-explicit-in-principal-case} are independent of the auxiliary choice of $F$ in the following sense:
	If $\kappa \colon F^\prime\to F$ is some morphism such that $F^\prime\to R$ is still surjective, then one checks that the formulas agree after replacing $\widetilde{\iota_A}$ by $\widetilde{\iota_A}\circ \kappa$.  Note that if $A/I=R$, then we can in all calculations actually take $F=A$ with its natural surjection to $R$ and $\widetilde{\iota_A}=\Id_A$ (we used that $F$ is a free polynomial algebra only to ensure the existence of $\widetilde{\iota_B}$, but if $R=A/I$ we can take $\widetilde{\iota_B}:=\Varphi$). This will be the only case we are interested in.
	We chose the presentation involving a general $F$ in order to simplify the exposition of the subtle isomorphisms between morphisms of animated rings. Namely, if $R=A/I$ and $A=F$ then we could take $\widetilde{\iota_A}=\Id_{A}$ and $\widetilde{\iota_B}=\Varphi$. However, when discussing $\can_{\Varpsi}$ we could not change from $\widetilde{\iota_B}=\Varphi$ to $\widetilde{\iota_B}=\Varpsi$, but we have to take the same $\widetilde{\iota_B}$ for $\can_\Varphi$ and $\can_{\Varpsi}$.
\end{remark}

\subsection{Automorphisms of $\overline{\rho_A}$}
\label{sec:autom-overl}

Let $(A,I)$ be a prism. Set $R:=A/I$ and $X:=\Spf(R)$. In this section we want to understand the group sheaf of automorphisms
\[G_A:=\mathrm{Aut}(\overline{\rho_A})\]
 of
$
\overline{\rho_A}\colon \Spf(A/I)\to X^\HT
$
more explicity. We follow  \cite[Construction 9.4]{bhatt2022prismatization}, and generalize it slightly to allow non-noetherian rings like $\O_C\langle T^{\pm 1}\rangle$ for $C$ a complete algebraically closed extension of $\Q_p$.

Let $S$ be a $p$-complete $R$-algebra. We recall that for any two objects
\[
(J\xrightarrow{\alpha} W(S), R\xrightarrow{\iota} \cone(\alpha)) \quad\text{and}\quad (J^\prime \xrightarrow{\beta} W(S), R\xrightarrow{\iota^\prime} \cone(\beta))\in X^\prism(S),
\]
an isomorphism between these is given by a pair $(\gamma_1,\gamma_2)$ of an isomorphism $\gamma_1\colon J\isomarrow J^\prime$ of $W(S)$-modules such that $\alpha=\beta\circ \gamma_1$, and an isomorphism $\gamma_2$ (from left to right) of the two morphisms
\[
R\xrightarrow{\iota} \cone(\alpha)\xrightarrow{\overline{\gamma_1}} \cone(\beta),\ R\xrightarrow{\iota^\prime} \cone(\beta)
\]
of animated rings. Here, $\overline{\gamma_1}$ is the isomorphism of animated rings induced by $\gamma_1$.

We now describe $\gamma_1,\gamma_2$ more explicitly. We first recall from \cite[Notation 3.4.9]{bhatt2022absolute} the formal group scheme
\[ \Gm^\sharp:=\Spf(\Z_p\langle (\tfrac{(t-1)^n}{n!})_{n\in\N}\rangle)\]
given by the $p$-completed PD-hull of $\Gm$ with respect to the ideal defined by the identity section. Similarly, we recall from \cite[Variant 3.4.12]{bhatt2022absolute} the formal group scheme 
defined analogously for $\Ga$:
\[  \Ga^\sharp:=\Spf(\Z_p\langle (\tfrac{x^n}{n!})_{n\in\N}\rangle)\]
The relevance of $\Gm^\sharp$ and $\Ga^\sharp$  is that there are natural isomorphisms of formal group schemes
\begin{equation}\label{eq:W[F]-W*[F]}
	W[F]^\times\isomarrow\Gm^\sharp \quad \text{and} \quad
	W[F]\isomarrow\Ga^\sharp
\end{equation}
defined by the projection to the first component by \cite[Lemma 3.4.11, Variant 3.4.12]{bhatt2022absolute}. Moreover:
\begin{lemma}
	\label{sec:autom-overl-1-automorphisms-of-cartier-witt-divisors-in-ht-locus}
	Let $(J\xrightarrow{\beta} W(S))\in \Z_p^\HT(S)$ be any Cartier--Witt divisor.
	
	\begin{enumerate}
		\item The multiplication action of $W^\times[F]$ on $J$ yields via \eqref{eq:W[F]-W*[F]} an isomorphism between $\Gm^\sharp$ and
		\[
		T \mapsto \mathrm{Aut}(J\otimes_{W(S)} W(T)\xrightarrow{\beta\otimes 1} W(T))\in \Z_p^\HT(T). 
		\]
		considered as a group sheaf
		on the category of $p$-complete $S$-algebras.
		\item We have $\ker(\beta)=J\otimes_{W(S)} W[F](S)$. For $\beta\in \Spf(R)^\HT(S)$, this identifies via \eqref{eq:W[F]-W*[F]} with
		\[ \ker(\beta)=I/I^2\otimes_A\Ga^\sharp(S).\]
	\end{enumerate}
	
\end{lemma}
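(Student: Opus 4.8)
The plan is to follow \cite[Construction~9.4]{bhatt2022prismatization} and to check that nothing in that argument requires $R$ (or the test rings $S$, $T$) to be noetherian. Write $W[F](S)=\ker\big(F\colon W(S)\to W(S)\big)$ for the ideal of Witt vectors killed by Frobenius, and for a $p$-complete $S$-algebra $T$ set $J_T:=J\otimes_{W(S)}W(T)$, $\beta_T:=\beta\otimes 1$. Since $J$ is invertible, any $W(T)$-linear endomorphism of $J_T$ is multiplication by an element of $W(T)$, invertible exactly when that element is a unit $u$; and multiplication by $u$ is an automorphism of the Cartier--Witt divisor $(J_T,\beta_T)$ precisely when $(u-1)$ annihilates $\mathrm{im}(\beta_T)$ (for a point of $\Z_p^\HT$ there is no further datum to preserve). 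Thus the group sheaf in~(1) is $T\mapsto\{u\in W(T)^\times:(u-1)\,\mathrm{im}(\beta_T)=0\}$ with the multiplication action, and everything reduces to computing the annihilator of $\mathrm{im}(\beta)$.

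Working Zariski-locally on $S$ we may trivialise $J=W(S)e$ and put $\xi:=\beta(e)$, a distinguished element of $W(S)$. The Hodge--Tate condition says the image of $\xi$ under the canonical projection $W(S)\to S$ vanishes, i.e.\ $\xi\in VW(S)$; being distinguished, $\xi=V(v)$ for a unit $v\in W(S)^\times$. From the projection formula $V(v)\cdot y=V(v\,F(y))$ and the injectivity of $V$ one gets
\[
\mathrm{Ann}_{W(S)}(\xi)=\{y\in W(S): F(y)=0\}=W[F](S),
\]
and likewise over every $T$. Combined with the previous paragraph this identifies the automorphism sheaf of $(J_T,\beta_T)$ with $\{u\in W(T)^\times: u-1\in W[F](T)\}=W^\times[F](T)$, and the isomorphism $W[F]^\times\isomarrow\Gm^\sharp$ of \eqref{eq:W[F]-W*[F]} yields~(1) (naturality in $T$ and compatibility with the multiplication action of $W^\times[F]$ on $J$ being automatic). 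It also yields the first formula of~(2): $\ker(\beta)=\mathrm{Ann}_{W(S)}(\xi)\cdot e=W[F](S)\otimes_{W(S)}J$.

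For $\beta=\overline{\rho_A}$, the case needed in \S\ref{sec:form-reduct-prov}, one has $J=I\otimes_A W(S)$ with $\beta$ induced by $I\hookrightarrow A$ followed by the $\delta$-lift $g\colon A\to W(S)$ of $A\to R\to S$; locally $I=\xi_A A$ and $\xi=g(\xi_A)$ is the distinguished element above, so $W[F](S)=\mathrm{Ann}_{W(S)}(\xi)$ is killed by $g(I)$. Hence $\ker(\beta)=W[F](S)\otimes_{W(S)}J=I\otimes_A W[F](S)=I\otimes_A(A/I)\otimes_R W[F](S)=I/I^2\otimes_R W[F](S)$, and under $W[F]\isomarrow\Ga^\sharp$ (projection to the first component) the residual $R$-action on $W[F](S)$ is the one through $R\to S$; this gives $\ker(\beta)=I/I^2\otimes_R\Ga^\sharp(S)=I/I^2\otimes_A\Ga^\sharp(S)$. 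A general point of $\Spf(R)^\HT(S)$ is handled the same way, using that $\ker(\beta)$ is the Breuil--Kisin twist $\Ga^\sharp\{1\}$ and that $\mathcal{O}\{1\}=I/I^2\otimes_R(-)$ when $R=A/I$.

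The main obstacle I anticipate is this last step: one has to carry the twist $I/I^2$ through the tensor manipulations without accidentally trivialising it, and one has to verify that the implication ``a distinguished Witt vector lying in $VW(S)$ is $V$ of a unit'', the identity $\mathrm{Ann}_{W(S)}(\xi)=\ker(F)$, and the descriptions of $W[F]$ and $W^\times[F]$ via \eqref{eq:W[F]-W*[F]} all remain valid over the possibly non-noetherian, possibly $p$-torsion rings in play — which in the end rests on the standard fact that a Witt vector over such a ring is a unit iff its first component is.
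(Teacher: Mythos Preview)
Your proof is correct and follows essentially the same route as the paper's: trivialize $J$ locally so that $\beta$ becomes multiplication by a distinguished element $\xi=V(v)$ with $v$ a unit, then use the Witt identity $y\cdot V(v)=V(F(y)v)$ together with injectivity of $V$ to identify both the automorphism group and the kernel with $W^\times[F]$ and $W[F]$ respectively, and finally pass to $I/I^2\otimes_R\Ga^\sharp$ via the observation that the $W(S)$-action on $W[F](S)$ factors through $S$. The caveats you flag in your final paragraph are not genuine obstacles; the needed Witt vector facts hold over arbitrary $p$-nilpotent rings.
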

\begin{proof}
	For part (1), given $x\in W^\times[F](T)$, we claim that we have $x\cdot \beta=\beta$. Indeed, this equality can be checked fpqc-locally on $T$, and Zariski-locally on $T$ we know that $\beta$ identifies with the multiplication by $V(u)$ for some $u\in W(T)^\times$. Then
	$
	xV(u)=V(F(x)u)=V(u)
	$
	as desired. Hence there is a natural map from $\Gm^\sharp$ to the group sheaf in question. This is an isomorphism: As $J$ is an invertible $W(S)$-module, any automorphism is given  by multiplication by some $x\in W(T)$ such that $x\cdot \beta=\beta$, and by the above computation we deduce $F(x)=1$ from injectivity of $V$.
	
	For part (2), the first statement can again be checked fpqc-locally on $S$, so we may assume that $\beta$ identifies with multiplication by $V(u)$ for some $u\in W(S)^\times$. This morphism has kernel $W[F](S)$ because $u$ is a unit and $V$ injective. For the second statement, we note that the $W(S)$-action on $W[F](S)$ factors over $S=W(S)/V(W(S))$ and $A\to S$ factors over $R$. This shows that $J\otimes_{W(S)}W[F](S)=I\otimes_A W[F](S)\cong I/I^2\otimes_R W[F](S)$. Now use \eqref{eq:W[F]-W*[F]}.
\end{proof}

Let $f\colon A\to A/I\cong R \to S$ be the composition and $g\colon A\to W(S)$ its natural lift.
The action of $G_A$ on the Cartier-Witt divisor $(I\otimes_AW(S)\xrightarrow{\alpha} W(S))$ yields by \Cref{sec:autom-overl-1-automorphisms-of-cartier-witt-divisors-in-ht-locus}.1 a homomorphism
\[
\pi\colon G_A\to \Gm^\sharp=W^\times[F].
\]
For $\gamma=(\gamma_1,\gamma_2)\in G_A$ this yields a more explicit understanding of $\gamma_1$.

We now turn to a description of $\gamma_2$. Set $x:=\pi(\gamma)\in W^\times[F](S)$. By construction, the maps
\[
R\to \cone(\alpha)\xrightarrow{\overline{\gamma_1}} \cone(\alpha)\quad \text{and} \quad R\to \cone(\alpha)
\]
are induced by the commutative diagrams
\[\begin{tikzcd}
	I & {I\otimes_{A,g} W(S)} \\
	A & {W(S)}
	\arrow["{i\mapsto i\otimes x}", from=1-1, to=1-2]
	\arrow["g", from=2-1, to=2-2]
	\arrow[from=1-1, to=2-1]
	\arrow["\alpha", from=1-2, to=2-2]
\end{tikzcd}\quad \textrm{ and }\quad \begin{tikzcd}
	I & {I\otimes_{A,g} W(S)} \\
	A & {W(S).}
	\arrow["{ i\mapsto i\otimes 1}", from=1-1, to=1-2]
	\arrow["g", from=2-1, to=2-2]
	\arrow[from=1-1, to=2-1]
	\arrow["\alpha", from=1-2, to=2-2]
\end{tikzcd}
\]
An isomorphism $\gamma_2$ from left to right is then a $p$-adically continuous homotopy $D\colon A\to I\otimes_AW(S)$. We can now describe precisely which such pairs $(\gamma_1,\gamma_2)$ define elements of $G_A$. 
\begin{lemma}
	\label{sec:autom-overl-1-description-of-g-a}
	The group sheaf $G_A$ identifies naturally with the sheaf
	\[S\mapsto 
	\Big\{(x,D)\in \Gm^\sharp(S)\times \mathrm{Der}_{\Z_p}(A,\Ga^\sharp\{1\}(S))  \,\Big|\ D(a)=(x-1)(a\otimes 1) \textrm{ for } a\in I\Big\}
	\]
	where $\mathrm{Der}_{\Z_p}$ denotes the $(p,I)$-adically continuous derivations and we use $\Ga^\sharp\{1\}\cong I/I^2\otimes_R\Ga^\sharp$ to formulate the equality on the right. The group structure is given by
	\[
	(x',D')\cdot (x,D):=(x'\cdot x, D'\cdot x+D).
	\]
	The action of $(x,D)$ on $(I\otimes_A W(S)\xrightarrow{\alpha} W(S), R\to \cone(\alpha))$ is via multiplication with $x$ on $I\otimes_A W(S)$ and by the homotopy $D\colon A\to I\otimes_A W[F](S)$.
\end{lemma}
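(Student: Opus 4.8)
The plan is to describe $G_A(S)$ explicitly for a fixed $p$-complete $R$-algebra $S$, check functoriality, and determine the group law. Write $g\colon A\to W(S)$ for the $\delta$-lift of $A\to R\to S$; by construction $\overline{\rho_A}(S)$ is the pair consisting of the Cartier--Witt divisor $\alpha\colon I\otimes_{A,g}W(S)\to W(S)$, $i\otimes w\mapsto g(i)w$, together with the canonical ring map $u\colon R\to\cone(\alpha)$, and an automorphism of this point is a pair $\gamma=(\gamma_1,\gamma_2)$ with $\gamma_1$ an automorphism of the Cartier--Witt divisor and $\gamma_2$ a homotopy between $\overline{\gamma_1}\circ u$ and $u$ as maps $R\to\cone(\alpha)$ of animated rings. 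First I would pin down $\gamma_1$: as $\overline{\rho_A}$ factors through $X^\HT$, the divisor $\alpha$ is Hodge--Tate, so \Cref{sec:autom-overl-1-automorphisms-of-cartier-witt-divisors-in-ht-locus}.1 gives that $\gamma_1$ is multiplication by a unique $x=\pi(\gamma)\in W^\times[F](S)=\Gm^\sharp(S)$; the identity $\alpha\circ\gamma_1=\alpha$ forces $(x-1)g(\xi)=0$ Zariski-locally, where $I=(\xi)$ for a distinguished element $\xi$ and $g(\xi)$ is $V$ of a unit, which is consistent with $\mathrm{ann}_{W(S)}(g(\xi))=W[F](S)$. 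This yields the homomorphism $\pi\colon G_A\to\Gm^\sharp$.

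The main work is the analysis of $\gamma_2$. Here I would use that $I$ is an invertible $A$-module, so $R=A/I$ is a regular quotient with $L_{R/A}\simeq I/I^2[1]$, and that $\cone(\alpha)$ is a $1$-truncated animated ring with $\pi_0(\cone\alpha)=W(S)/g(I)W(S)$ and, by \Cref{sec:autom-overl-1-automorphisms-of-cartier-witt-divisors-in-ht-locus}.2,
\[
\pi_1(\cone\alpha)=\ker(\alpha)=I/I^2\otimes_A\Ga^\sharp(S)=\Ga^\sharp\{1\}(S).
\]
Since $x$ is a unit, $u$ and $\overline{\gamma_1}\circ u$ induce the same map $R\to\pi_0(\cone\alpha)$, and unwinding the space of homotopies between them --- working chain-level with the two-term complex $[I\otimes_{A,g}W(S)\to W(S)]$ and using the presentation $R=A/I$, $L_{R/A}\simeq I/I^2[1]$, and $\pi_1(\cone\alpha)=\ker(\alpha)$ --- shows that, after restriction along $A\to R$, such a homotopy corresponds to a $(p,I)$-adically continuous derivation $D\colon A\to\ker(\alpha)=\Ga^\sharp\{1\}(S)$; the condition that the homotopy connect the two \emph{specified} maps $u$ and $\overline{\gamma_1}u$, which differ at the level of nullhomotopies of $I\to\cone(\alpha)$ by $i\mapsto(x-1)(i\otimes1)$, translates into the constraint $D|_I=(x-1)(-\otimes1)$. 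This gives the asserted bijective description of $G_A(S)$ and of the action of $(x,D)$ on $(\alpha,u)$ (multiplication by $x$ on the divisor, the homotopy $D$ on the second coordinate), and everything in sight is manifestly natural in $S$, hence sheafifies.

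It remains to identify the group law. Composition of automorphisms multiplies the $\Gm^\sharp$-components, $x\mapsto x'x$; for the homotopy components one concatenates $\gamma_2$ with the transport of the other homotopy through $\overline{\gamma_1}$, and since $\overline{\gamma_1}$ acts on $\ker(\alpha)\subseteq I\otimes_{A,g}W(S)$ by $W(S)$-linear multiplication by $x$, this transport multiplies the corresponding derivation by $x$; as concatenation of homotopies is addition in the $\ker(\alpha)$-torsor, a short bookkeeping gives $(x',D')\cdot(x,D)=(x'x,\,D'x+D)$, with unit $(1,0)$ and inverse $(x^{-1},-x^{-1}D)$. The argument is the same as in \cite[Construction 9.4]{bhatt2022prismatization} except that it nowhere uses noetherianity of $R$, so it applies to rings such as $\O_C\langle T^{\pm1}\rangle$ as well.

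The step I expect to be the real obstacle is the middle paragraph: faithfully translating a homotopy of maps of animated rings $R\to\cone(\alpha)$ into the concrete continuous derivation $D$ with its prescribed restriction to $I$ --- in particular, being careful that the relevant derivations are of $A$ rather than of $R$, that the identification $L_{R/A}\simeq I/I^2[1]$ and the chain-level picture of $\cone(\alpha)$ are used correctly to describe the mapping space, and that $(p,I)$-adic continuity is preserved throughout. The bookkeeping in the group-law computation (whether the twist is by $x$ or by $F(x)$, and which of the two homotopies is transported through which $\overline{\gamma_1}$) is a smaller but genuine source of sign- and ordering-type errors.
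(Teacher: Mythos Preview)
Your proposal is correct and follows essentially the same approach as the paper: identify $\gamma_1$ via \Cref{sec:autom-overl-1-automorphisms-of-cartier-witt-divisors-in-ht-locus}.1, translate $\gamma_2$ into a derivation $D\colon A\to\ker(\alpha)=\Ga^\sharp\{1\}(S)$ with the constraint $D|_I=(x-1)(-\otimes1)$, and read off the group law by composing homotopies. The paper's argument for the middle step is slightly more direct than yours---rather than invoking $L_{R/A}\simeq I/I^2[1]$, it works entirely with the explicit two-term presentation of $\cone(\alpha)$ and notes that the homotopy $D$ automatically lands in $\ker(\alpha)$ (since $\alpha\circ D=g-g=0$), which \emph{forces} $(p,I)$-adic continuity because the $A$-action on $\ker(\alpha)=I/I^2\otimes_R W[F](S)$ factors through $R$; for the converse it cites \cite[\S5.1.8]{cesnavicius2019purity} to turn a derivation back into a homotopy of animated ring maps.
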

\begin{proof}
	Write $F$ for the displayed sheaf, then the above discussion yields a natural map $G_A\to F$: Indeed,
 as $\alpha\circ D=g-g=0$, the map $D$ factors through $\ker(\alpha)=I\otimes_A W[F](S)\cong I/I^2\otimes_R W[F](S)$ by \Cref{sec:autom-overl-1-automorphisms-of-cartier-witt-divisors-in-ht-locus}.2. Thus, $D$ is even $(p,I)$-adically continuous.
The fact that $\gamma_2$ is an isomorphism of two morphisms of animated rings implies that $D\colon A\to I\otimes_AW[F](S)$ is actually a ($\Z_p$-linear) derivation. The requirement that $D$ is a homotopy from left to right means that for all $a\in I$,
\[
D(a)=a\otimes x-a\otimes 1=(x-1)(a\otimes 1).
\]
We thus get the desired map $G_A\to F$.
Conversely, any $(p,I)$-adically continuous derivation $D\colon A\to I\otimes_A W[F](S)$ satisfying the above equation defines a homotopy $D$ which naturally yields  an isomorphism between the morphisms $R\to \cone(\alpha)\xrightarrow{\overline{\gamma_1}} \cone(\alpha)$ and  $R\to \cone(\alpha)$ of animated rings, cf.\ \cite[Section 5.1.8]{cesnavicius2019purity}, resp.\ \cite[Section 25.3]{lurie_spectral_algebraic_geometry}. Hence the map $G_A\to F$ is an isomorphism. 

For the group structure, it is clear from \Cref{sec:autom-overl-1-automorphisms-of-cartier-witt-divisors-in-ht-locus}.1 that the projection to the first factor is a group homomorphism.
To compute the effect of the group structure on the homotopy, note that the isomorphism $R\to \cone(\alpha)$ underlying $(x',D')\cdot (x,D)\cdot \alpha$ is given by
\[ R\to \cone(\alpha)\xrightarrow{\overline{x}}\cone(\alpha)\xrightarrow{\overline{x}'}\cone(\alpha).\]
The isomorphism of this to $R\to \cone(\alpha)$
defined by $(x',D')\cdot (x,D)$ is the composition
\[
\left[\!\!\begin{tikzcd}[column sep=0.2cm]
	I \arrow[d] \arrow[r] & I\otimes W(S) \arrow[d] \arrow[r, "x"] & I\otimes W(S) \arrow[d] \arrow[r, "x'"] & I\otimes W(S) \arrow[d] \\
	A \arrow[r]           & W(S) \arrow[r,"\Id"]                                    & W(S) \arrow[r,"\Id"]                                     & W(S)                   
\end{tikzcd}\!\!\right]\!\!\to\!\! 
\left[\!\!\begin{tikzcd}[column sep=0.2cm]
	I \arrow[d] \arrow[r] & I\otimes W(S) \arrow[d] \arrow[r, " x"] & I\otimes W(S) \arrow[d]  \\
	A \arrow[r]           & W(S) \arrow[r,"\Id"]                                    & W(S)    
\end{tikzcd}\!\!\right]\!\!\to\!\!
\left[\!\!\begin{tikzcd}[column sep=0.2cm]
	I \arrow[d] \arrow[r] & I\otimes W(S) \arrow[d] \\
	A \arrow[r]           & W(S)
\end{tikzcd}\!\!\right]
\]
where the first morphism is induced by $D'$ and the second by $D$. By commuting the order of $x$ and $x'$ in the first diagram, we see that the first arrow is the composition of $\overline{(D',x')}$ with  $\overline{x}$, which corresponds to the homotopy $A\xrightarrow{D'}I\otimes W(S)\xrightarrow{x}I\otimes W(S)$. Thus the composition is $D'x+D$.
\end{proof}

\begin{definition}\label{def:Gamma^bullet(-)}
	For any $R$-module $M$, we denote by $\Gamma_R(M)$ the $p$-completed PD-hull of $\mathrm{Sym}^\bullet_R(M)$ with respect to the ideal generated by $M$. 
	For example, we could write $\Ga^\sharp$ as $\Spf(\Gamma_R(R\cdot x))$.
\end{definition}
Consider
$
\Gamma_R(\widehat{\Omega}^1_{A|\Z_p}\otimes_A R)
$
where $\widehat{\Omega}^1_{A|\Z_p}$ are the $(p,I)$-completed differentials of $A$ over $\Z_p$. Then the functor $S\mapsto \mathrm{Der}_{\Z_p,\mathrm{cont}}(A,\Ga^\sharp\{1\}(S))=\Hom_{R}(\widehat{\Omega}^1_{A|\Z_p}\otimes_A R, \Ga^\sharp\{1\}(S))$ is represented by
\[
\mathcal{T}^\sharp_{A|\Z_p}\{1\}\times_{\Spf(A)}\Spf(R):=\Spf(\Gamma_R(\widehat{\Omega}^1_{A|\Z_p}\otimes_AR\{-1\})).
\]
From \Cref{sec:autom-overl-1-description-of-g-a}, we deduce as in \cite[Construction 9.4]{bhatt2022prismatization} that
\[
G_A\subseteq  (\mathcal{T}^\sharp_{A|\Z_p}\{1\}\times_{\Spf(A)}\Spf(R))\rtimes \Gm^\sharp,
\]
where the semidirect product is formed with respect to the natural rescaling action of $\Gm^\sharp$ on $\mathcal{T}^\sharp_{A|\Z_p}\{1\}\times_{\Spf(A)}\Spf(R)$. 
Here and in the following,  we now swap the order of $\gamma_1$, $\gamma_2$  to align with the convention of writing semi-direct products in such a way that the normal subgroup comes first.

\begin{proposition}
	\label{sec:autom-overl-1-explicit-group-structure-on-g-a}
	The projection $G_A\to \mathcal{T}^\sharp_{A|\Z_p}\{1\}\times_{\Spf(A)}\Spf(R), (D,x)\mapsto D$ is an isomorphism of sheaves on $p$-complete $R$-algebras. In particular, $G_A$ is representable by a formal group scheme over $R$. The group structure on $G_A$ transfers to the operation for $D, D^\prime\in \mathcal{T}^\sharp_{A/\Z_p}\{1\}\times_{\Spf(A)}\Spf(R)$
	\[\textstyle
	D'\ast D:=D'+D+\frac{D(I)}{I\otimes 1}D^\prime,
	\]
	where $\frac{D(I)}{I\otimes 1}\in \Ga^\sharp$ is the unique element which Zariski-locally where $I=\xi A$ is given by $\frac{D(\xi)}{\xi\otimes 1}$.
\end{proposition}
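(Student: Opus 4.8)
The plan is to prove both claims — the representability/isomorphism statement and the formula for the transferred group law — by simply unwinding the explicit description of $G_A$ provided in \Cref{sec:autom-overl-1-description-of-g-a}. Recall from there that $G_A(S)$ is the set of pairs $(x,D)$ with $x\in\Gm^\sharp(S)$ and $D\in\mathrm{Der}_{\Z_p}(A,\Ga^\sharp\{1\}(S))$ subject to the constraint $D(a)=(x-1)(a\otimes 1)$ for all $a\in I$; equivalently, after swapping coordinates, pairs $(D,x)$ with the same constraint. The key observation is that the constraint determines $x$ from $D$ whenever $I$ is nonzero: Zariski-locally $I=\xi A$ with $\xi$ a distinguished (hence non-zero-divisor) element, so $D(\xi)=(x-1)(\xi\otimes 1)$ forces $x-1=\frac{D(\xi)}{\xi\otimes 1}$, i.e.\ $x=1+\frac{D(\xi)}{\xi\otimes 1}$, and one checks this is independent of the choice of generator $\xi$ (replacing $\xi$ by $u\xi$ for a unit $u$ multiplies numerator and denominator by $g(u)$), hence globalizes to a well-defined element $\frac{D(I)}{I\otimes 1}\in\Ga^\sharp(S)$. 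Conversely any continuous derivation $D\colon A\to\Ga^\sharp\{1\}(S)$ automatically satisfies $D(a)\in I/I^2\otimes_R\Ga^\sharp(S)$-submodule so that setting $x:=1+\frac{D(I)}{I\otimes 1}$ produces a valid pair. This gives the bijection $G_A(S)\isomarrow (\mathcal{T}^\sharp_{A|\Z_p}\{1\}\times_{\Spf(A)}\Spf(R))(S)$, functorially in $S$, proving the first assertion; representability by a formal group scheme over $R$ is then inherited from representability of $\mathcal{T}^\sharp_{A|\Z_p}\{1\}\times_{\Spf(A)}\Spf(R)=\Spf(\Gamma_R(\widehat\Omega^1_{A|\Z_p}\otimes_A R\{-1\}))$, since the subgroup condition is closed.

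Next I would transport the group law. By \Cref{sec:autom-overl-1-description-of-g-a}, the product in $G_A$ (in the $(x,D)$-ordering) is $(x',D')\cdot(x,D)=(x'x,\,D'x+D)$; after the coordinate swap to $(D,x)$ this reads $(D',x')\cdot(D,x)=(D'x+D,\,x'x)$. Under the isomorphism, a point $D$ corresponds to the pair $\big(D,\,1+\tfrac{D(I)}{I\otimes 1}\big)$. So the transferred operation $D'\ast D$ is the $D$-component of the product of $\big(D',1+\tfrac{D'(I)}{I\otimes 1}\big)$ and $\big(D,1+\tfrac{D(I)}{I\otimes 1}\big)$, namely
\[
D'\ast D = D'\cdot x + D = D'\cdot\Big(1+\tfrac{D(I)}{I\otimes 1}\Big) + D = D' + \tfrac{D(I)}{I\otimes 1}\,D' + D,
\]
which is exactly the claimed formula $D'\ast D = D'+D+\frac{D(I)}{I\otimes 1}D'$. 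Here the multiplication $D'\cdot x$ is the action of $\Gm^\sharp$ on $\Ga^\sharp\{1\}$-valued derivations by rescaling, which over $\Ga^\sharp$ is just scalar multiplication by $x\in\Gm^\sharp\subseteq\Ga^\sharp$, and the product $\frac{D(I)}{I\otimes 1}\,D'$ is computed in $\Ga^\sharp$ (noting $\frac{D(I)}{I\otimes 1}$ lies in the augmentation ideal, so this is well-defined in the PD-structure).

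The only genuinely delicate point — and the one I would spell out most carefully — is the well-definedness and globality of $\frac{D(I)}{I\otimes 1}$: one must verify that the locally defined quotients $\frac{D(\xi)}{\xi\otimes 1}$ patch, i.e.\ are independent of the distinguished generator, and that division makes sense, which uses that $\xi$ is a non-zero-divisor in $A$ together with the fact (from \Cref{sec:autom-overl-1-automorphisms-of-cartier-witt-divisors-in-ht-locus}.2) that $D$ lands in $I/I^2\otimes_R\Ga^\sharp$, so that $D(\xi)$ is literally $(\xi\otimes 1)$ times a well-defined element of $\Ga^\sharp$. Everything else is a formal transport of structure along the bijection established above, and the rescaling-action bookkeeping for the semidirect product. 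I expect the verification that $D'\cdot x$, computed via the $\Gm^\sharp$-action featured in the semidirect product decomposition $G_A\subseteq(\mathcal{T}^\sharp_{A|\Z_p}\{1\}\times_{\Spf(A)}\Spf(R))\rtimes\Gm^\sharp$, agrees with plain multiplication by $x$ in $\Ga^\sharp$ to be routine once one recalls that the $\Gm^\sharp$-action on $\mathcal{T}^\sharp\{1\}$ is the natural rescaling.
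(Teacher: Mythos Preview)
Your proposal is correct and follows essentially the same approach as the paper's proof: use the constraint $D(a)=(x-1)(a\otimes 1)$ from \Cref{sec:autom-overl-1-description-of-g-a} to solve uniquely $x=1+\frac{D(I)}{I\otimes 1}$, then transport the semidirect product law. One small imprecision: when you say ``replacing $\xi$ by $u\xi$ multiplies numerator and denominator by $g(u)$,'' the numerator $D(u\xi)=uD(\xi)+\xi D(u)$ only reduces to $uD(\xi)$ because $\xi D(u)=0$ (the $A$-action on $\Ga^\sharp\{1\}$ factors through $R=A/I$); the paper makes this explicit, and since you flag this independence check as the delicate point, just be sure to include that observation when you spell it out.
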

\begin{proof} Let $S$ be a $p$-complete $R$-algebra.
	For $(D,x)\in G_A(S)$ we have $x(a\otimes 1)=a\otimes 1+D(a)$ for all $a\in I$ by 
	\Cref{sec:autom-overl-1-description-of-g-a}. As $I$ is invertible, this determines $x$ uniquely by $D$. More precisely, assume that $I=(\xi)$ and let $\lambda\in A^\times$. Then $D(\lambda\xi)=\lambda D(\xi)+\xi D(\lambda)=\lambda D(\xi)$ because the $A$-module structure on $\Ga^\sharp\{1\}(S)$ factors over $R=A/\xi$. This implies that $\frac{D(I)}{I\otimes 1}:=\frac{D(\xi)}{\xi\otimes 1}=\frac{D(\lambda \xi)}{\lambda \xi \otimes 1}$ is independent of the choice of $\xi$ and hence glues to an element in $\Ga^\sharp(S)$. Now, $x=1+\frac{D(I)}{I\otimes 1}\in \Gm^\sharp(S)$ is uniquely determined by $D$, namely $x(\xi\otimes 1)=D(\xi)+\xi\otimes 1$ implies $x=\frac{D(\xi)}{\xi\otimes 1}+1$. The group structure 	$(D',x')\cdot (D, x)=(D+x\cdot D', x'x)$ from 
	\Cref{sec:autom-overl-1-description-of-g-a}
	then gives the desired description
	$
	D'\ast D=D+D'+\frac{D(\xi)}{\xi\otimes 1}D^\prime$.
\end{proof}

In the setup of \Cref{sec:prismatic-homotopies-1-action-on-cw-divisor-more-explicit-in-principal-case}, this has the following consequence:
\begin{proposition}\label{sec:autom-overl-1-elements-in-g-a-coming-from-automorphism}
	Let $(A,I)$ be a prism with $A/I=R$ and assume that $I=(\xi)$ is principal.  Let $\Varphi,\Varpsi:(A,I)\to (B,J)$ be two morphisms of prisms such that $\overline{\Varphi}=\overline{\Varpsi}:A/I\to B/J$ agree. Then the automorphism $\gamma_{\Varphi,\Varpsi}\colon \overline{\rho_A}\circ \overline{\Varphi}\to \overline{\rho_A}\circ \overline{\Varpsi}$ from \Cref{r:explicit-naturality-HT-version}
	 is given for any $p$-complete, $p$-torsion free $B/J$-algebra $S$ with its natural lift $g:B\to W(S)$ by the action of the element $
	(D_{\Varphi,\Varpsi,\xi},x_{\Varphi,\Varpsi,\xi})\in G_A(S)$
	defined by
	\begin{alignat*}{3}
	D_{\Varphi,\Varpsi,\xi}&\colon&& A\to I\otimes_A  \Ga^\sharp(S),\, a\mapsto \xi\otimes g(\tfrac{\Varphi(a)-\Varpsi(a)}{\Varpsi(\xi)}),\\
	x_{\Varphi,\Varpsi,\xi}&=&&\,g(u_{\Varphi,\Varpsi,\xi}), \text{ where } u_{\Varphi,\Varpsi,\xi}:=\tfrac{\Varphi(\xi)}{\Varpsi(\xi)}\in B^\times.
	\end{alignat*}
\end{proposition}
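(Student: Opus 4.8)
The plan is to identify the abstract automorphism $\gamma_{\varphi,\psi}$ of \Cref{r:explicit-naturality-HT-version}, which lives in $G_A(S)$ by construction, with a concrete pair $(D,x)$ in the explicit model for $G_A$ provided by \Cref{sec:autom-overl-1-description-of-g-a}. Since $G_A=\mathrm{Aut}(\overline{\rho_A})$ acts on the point $\overline{\rho_A}\circ\overline{\psi}\in X^\HT(S)$, and $\gamma_{\varphi,\psi}$ is precisely such an automorphism, it suffices to read off its two components: the action on the Cartier--Witt divisor and the homotopy datum. Both of these have already been computed explicitly in \Cref{sec:prismatic-homotopies-1-action-on-cw-divisor-more-explicit-in-principal-case} (in the principal case $I=\xi A$, $\overline{\varphi}=\overline{\psi}$), so the proof is essentially a matter of matching notation.

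First I would recall that for a $p$-complete $B/J$-algebra $S$ with natural lift $g\colon B\to W(S)$, the base point $\overline{\rho_A}\circ\overline{\psi}$ corresponds to the Cartier--Witt divisor $I\otimes_{A,g\circ\psi}W(S)\xrightarrow{\alpha'}W(S)$ together with the structure map $R\to\cone(\alpha')$; here, because $f\colon B\to S$ factors over $B/J$, we have $g\circ\varphi=g\circ\psi$, so we may view this uniformly with $g\colon A\to W(S)$ the composite. Then \Cref{sec:prismatic-homotopies-1-action-on-cw-divisor-more-explicit-in-principal-case} says that the first component $\Phi_{\varphi,\psi}$ of $\gamma_{\varphi,\psi}$ is multiplication by $g(u_{\varphi,\psi,\xi})$ on $I\otimes_{A,g}W(S)$, where $u_{\varphi,\psi,\xi}=\varphi(\xi)/\psi(\xi)\in B^\times$; under the identification $\pi\colon G_A\to\Gm^\sharp=W^\times[F]$ this is exactly $x=x_{\varphi,\psi,\xi}:=g(u_{\varphi,\psi,\xi})$. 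Note $u_{\varphi,\psi,\xi}$ is a unit reducing to $1$ mod $J$ (as $\overline\varphi=\overline\psi$ forces $\overline{\varphi(\xi)}=\overline{\psi(\xi)}$ in $B/J$), so its image under $g$ indeed lands in $W^\times[F](S)=\Gm^\sharp(S)$.

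Next I would treat the homotopy component. \Cref{sec:prismatic-homotopies-1-action-on-cw-divisor-more-explicit-in-principal-case} computes $h'_{\varphi,\psi}\colon F\to J\otimes_{B,g}W(S)$, $x\mapsto \xi\otimes g\bigl(\tfrac{\varphi(\widetilde{\iota_A}(x))-\psi(\widetilde{\iota_A}(x))}{\psi(\xi)}\bigr)$, after applying $\Phi_\psi^{-1}$ to land in $I\otimes_{A,g\circ\psi}W(S)$. By \Cref{sec:expl-natur-prism-dependence-of-choice-of-f}, since $A/I=R$ we may take $F=A$ and $\widetilde{\iota_A}=\Id_A$, so this homotopy is exactly the map $D_{\varphi,\psi,\xi}\colon A\to I\otimes_A\Ga^\sharp(S)$, $a\mapsto\xi\otimes g\bigl(\tfrac{\varphi(a)-\psi(a)}{\psi(\xi)}\bigr)$ under the identification $\ker(\alpha')=I\otimes_A W[F](S)\cong I/I^2\otimes_R\Ga^\sharp(S)$ of \Cref{sec:autom-overl-1-automorphisms-of-cartier-witt-divisors-in-ht-locus}.2 and the isomorphism $W[F]\cong\Ga^\sharp$ of \eqref{eq:W[F]-W*[F]}. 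Finally, I would verify the compatibility relation $D(a)=(x-1)(a\otimes 1)$ for $a\in I$ required by \Cref{sec:autom-overl-1-description-of-g-a}: writing $a=\lambda\xi$, one computes $D_{\varphi,\psi,\xi}(\lambda\xi)=\xi\otimes g\bigl(\tfrac{\varphi(\lambda\xi)-\psi(\lambda\xi)}{\psi(\xi)}\bigr)$ and compares with $(g(u_{\varphi,\psi,\xi})-1)(\lambda\xi\otimes 1)$, using that the $A$-module structure on $\Ga^\sharp$ factors through $R$; this is a direct check. The main obstacle is purely bookkeeping: one must keep straight the various identifications (the swap in order of $\gamma_1,\gamma_2$ noted before \Cref{sec:autom-overl-1-explicit-group-structure-on-g-a}, the passage from $F$ to $A$, and the base-change of source rings along $g\circ\varphi=g\circ\psi$) so that the formulas of \Cref{sec:prismatic-homotopies-1-action-on-cw-divisor-more-explicit-in-principal-case} line up verbatim with the description of $G_A$; there is no new analytic or algebraic input beyond what has already been established.
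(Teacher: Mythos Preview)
Your proposal is correct and follows essentially the same approach as the paper: apply \Cref{sec:prismatic-homotopies-1-action-on-cw-divisor-more-explicit-in-principal-case} with the choice $F=A$, $\widetilde{\iota_A}=\Id_A$ (justified by \Cref{sec:expl-natur-prism-dependence-of-choice-of-f}) and match the resulting pair $(\Phi_{\varphi,\psi},h'_{\varphi,\psi})$ with the description of $G_A$ from \Cref{sec:autom-overl-1-description-of-g-a}. The paper's proof is terser and adds a small remark that for $p$-torsion free $S$ one may detect elements of $\Ga^\sharp(S)$ and $\Gm^\sharp(S)$ via their first Witt component; your explicit verification of the compatibility $D(a)=(x-1)(a\otimes1)$ for $a\in I$ is relegated to a (commented-out) sanity check in the paper, but is harmless to include.
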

\begin{proof}
	We wish to apply \Cref{sec:prismatic-homotopies-1-action-on-cw-divisor-more-explicit-in-principal-case}. For this we can set $F=A$, $\widetilde{\iota_A}=\Id_A$ due to \Cref{sec:expl-natur-prism-dependence-of-choice-of-f}. Note that if $S$ is $p$-torsion free, then $\Ga^\sharp(S)\to \Ga(S)$ is injective, so also the following is injective:
	\[
	G_A(S)\to ((\mathcal{T}_{A|\Z_p}\{1\}\times_{\Spf(A)}\Spf(R))\rtimes \Gm)(S).
	\]
 Hence $D_{\Varphi,\Varpsi,\xi},x_{\Varphi,\Varpsi,\xi}$ are determined by their composition with $I\otimes_AW[F](S)\to I\otimes_A S$ and $W^\times[F](S)\to S^\times$, i.e., by their first Witt component. The result now follows from \Cref{sec:prismatic-homotopies-1-action-on-cw-divisor-more-explicit-in-principal-case}.
\end{proof}
\begin{corollary}\label{c:explicit-descr-GA-if-Omega_A-finite-free}
	\label{sec:autom-overl-2-explicit-action-on-g-a}
	If 
	$
	\widehat{\Omega}^1_{A|\Z_p}\cong \textstyle\oplus_{i=0}^n A \cdot du_i
	$ for some $u_i\in A$,
 \Cref{sec:autom-overl-1-explicit-group-structure-on-g-a} yields an isomorphism
	\[
	G_A\isomarrow \mathcal{T}_{A|\Z_p}^\sharp\{1\}\times_{\Spf(A)}\Spf(R)\cong \textstyle\prod\limits_{i=0}^n \Ga^\sharp\{1\},\ D\mapsto (D(u_0),\ldots, D(u_n)).
	\]
	If there is a generator $\xi$ of $I$ that we can use to trivialise the Breuil-Kisin twist $\Ga\{1\}\cong\Ga$, then the group structure on $G_A$ transfers through this to the map
	\begin{alignat*}{3}
	\textstyle\prod\limits_{i=0}^n \Ga^\sharp\times \prod\limits_{i=0}^n \Ga^\sharp \to \textstyle\prod\limits_{i=0}^n \Ga^\sharp,\quad
	((a_i)_{i=0,\dots,n},(b_j)_{j=0,\dots,n})\mapsto (a_k+b_k+a_k\textstyle\sum\limits_{l=0}^n\frac{b_l\partial \xi}{\partial u_l})_{k=0,\dots,n}
	\end{alignat*}
	where the $\frac{\partial}{\partial u_l}\colon A\to R$ are the derivations forming the dual basis for $du_0, \ldots, du_n$.
	If $\xi=E(u_0)$ is a polynomial in $u_0$ with coefficients killed by all $\frac{\partial}{\partial u_l}$, this  simplifies to
	\[
	((a_i)_{i=0,\dots,n},(b_j)_{j=0,\dots,n})\mapsto (a_k(1+E^\prime(u_0)b_0)+b_k)_{k=0,\dots,n}.
	\]
\end{corollary}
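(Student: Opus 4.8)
The plan is to read everything off from \Cref{sec:autom-overl-1-explicit-group-structure-on-g-a}, which already provides the isomorphism $G_A\isomarrow\mathcal{T}^\sharp_{A|\Z_p}\{1\}\times_{\Spf(A)}\Spf(R)$, $(D,x)\mapsto D$, together with the transferred group law $D'\ast D=D'+D+\tfrac{D(I)}{I\otimes 1}D'$; it then remains only to rewrite this in the chosen basis. First I would make the target explicit on points. As recalled just before \Cref{sec:autom-overl-1-explicit-group-structure-on-g-a}, for a $p$-complete $R$-algebra $S$ the formal scheme $\mathcal{T}^\sharp_{A|\Z_p}\{1\}\times_{\Spf(A)}\Spf(R)$ represents $S\mapsto\mathrm{Der}_{\Z_p,\mathrm{cont}}(A,\Ga^\sharp\{1\}(S))=\Hom_R(\widehat{\Omega}^1_{A|\Z_p}\otimes_A R,\Ga^\sharp\{1\}(S))$. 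Since by hypothesis $\widehat{\Omega}^1_{A|\Z_p}\otimes_A R\cong\bigoplus_{i=0}^n R\,du_i$ is finite free, restriction to the basis $du_0,\dots,du_n$ identifies this with $S\mapsto\prod_{i=0}^n\Ga^\sharp\{1\}(S)$, i.e.\ with the map $D\mapsto(D(u_0),\dots,D(u_n))$. Composing with \Cref{sec:autom-overl-1-explicit-group-structure-on-g-a} gives the asserted isomorphism $G_A\isomarrow\prod_{i=0}^n\Ga^\sharp\{1\}$, which in particular recovers that $G_A$ is representable by a formal group scheme over $R$.

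Next I would transport the operation $\ast$ through this isomorphism. Fix a global generator $\xi$ of $I$; it trivialises $I/I^2\cong R$ and hence $\Ga^\sharp\{1\}\cong\Ga^\sharp$, and under it the globally defined element $\tfrac{D(I)}{I\otimes 1}=\tfrac{D(\xi)}{\xi\otimes 1}\in\Ga^\sharp$ of \Cref{sec:autom-overl-1-explicit-group-structure-on-g-a} is simply the image of $D(\xi)\in\Ga^\sharp\{1\}(S)$. The one step needing a small argument is the chain rule $D(\xi)=\sum_{l=0}^n\tfrac{\partial\xi}{\partial u_l}\,D(u_l)$: by the universal property of the module of $(p,I)$-continuous differentials, $D$ factors through an $R$-linear map $\widehat{\Omega}^1_{A|\Z_p}\otimes_A R\to\Ga^\sharp\{1\}(S)$, which one evaluates on $d\xi=\sum_l\tfrac{\partial\xi}{\partial u_l}\,du_l$ (the coefficients may be taken in $R$ since the $A$-action on $\Ga^\sharp\{1\}(S)$ factors through $R$). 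Writing $D\leftrightarrow(b_l)_l$ and $D'\leftrightarrow(a_l)_l$, this gives $\tfrac{D(I)}{I\otimes 1}=\sum_{l=0}^n\tfrac{\partial\xi}{\partial u_l}\,b_l$, and substituting into $D'\ast D=D'+D+\tfrac{D(I)}{I\otimes 1}D'$ produces exactly the displayed group law, the $k$-th coordinate being $a_k+b_k+a_k\sum_{l=0}^n\tfrac{\partial\xi}{\partial u_l}\,b_l$; here the product $\tfrac{D(I)}{I\otimes 1}\cdot D'$ is formed via the coordinatewise $\Ga^\sharp$-module structure, which is precisely the rescaling action of $\Gm^\sharp$ built into the semidirect product appearing in \Cref{sec:autom-overl-1-explicit-group-structure-on-g-a}.

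Finally, for the special case $\xi=E(u_0)$ with $E$ a polynomial whose coefficients are annihilated by every $\tfrac{\partial}{\partial u_l}$, one has $d\xi=E^\prime(u_0)\,du_0$, so $\tfrac{\partial\xi}{\partial u_0}=E^\prime(u_0)$ while $\tfrac{\partial\xi}{\partial u_l}=0$ for $l\geq 1$; hence $\sum_{l=0}^n\tfrac{\partial\xi}{\partial u_l}\,b_l=E^\prime(u_0)\,b_0$ and the group law collapses to $(a_k(1+E^\prime(u_0)\,b_0)+b_k)_k$. I do not expect any genuine obstacle here: the substance lies entirely in \Cref{sec:autom-overl-1-explicit-group-structure-on-g-a}, and the only real care is bookkeeping — keeping track of the Breuil--Kisin twist $\{1\}$ and its trivialisation by $\xi$, and checking that restriction to the basis is compatible with the module structures so that the scalar $\tfrac{D(I)}{I\otimes 1}$ indeed acts coordinatewise.
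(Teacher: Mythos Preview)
Your proposal is correct and follows exactly the paper's approach: the paper's proof is the single line ``This is immediate from \Cref{sec:autom-overl-1-explicit-group-structure-on-g-a} by unravelling the formulas there,'' and what you have written is precisely that unravelling carried out in detail. The only content beyond \Cref{sec:autom-overl-1-explicit-group-structure-on-g-a} is the chain rule $D(\xi)=\sum_l\tfrac{\partial\xi}{\partial u_l}D(u_l)$ and the specialisation $d\xi=E'(u_0)\,du_0$, both of which you handle correctly.
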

\begin{proof}
	This is immediate from
	\Cref{sec:autom-overl-1-explicit-group-structure-on-g-a} by unravelling the formulas there.
\end{proof}
For $n=0$, \Cref{c:explicit-descr-GA-if-Omega_A-finite-free} recovers the formula of \cite[Example 9.6]{bhatt2022prismatization}. 

\subsection{Application to group actions}
\label{sec:appl-group-acti}

We now apply
\S\ref{sec:explicit-naturality} and \S\ref{sec:autom-overl} in a more specific situation. 
\begin{Setup}\label{setup:Galois-action}
Fix a prism $(A,I)$ over $\Z_p$ with $I=\xi A$ principal. Set $R:=A/I$ and $X:=\Spf(R)$. Let $\Varphi\colon (A,I)\to (A_\infty,J)$ be a morphism of prisms such that $R_\infty:=A_\infty/J$ is $p$-torsionfree. Let 
\[\tau\colon (A_\infty, J)\to (A_\infty,J)\] be an automorphism of prisms. Set $\Varpsi:=\tau\circ \Varphi$.
We assume that 
$
\overline{\Varphi}\colon R\to R_\infty
$
is invariant under $\sigma:=\overline{\tau}\colon R_\infty\to R_\infty$, i.e., $\overline{\Varpsi}=\sigma\circ \overline{\Varphi}=\overline{\Varphi}$, so we are in the setting of \Cref{sec:prismatic-homotopies-1-action-on-cw-divisor-more-explicit-in-principal-case} and \Cref{sec:autom-overl-1-elements-in-g-a-coming-from-automorphism}. 
\end{Setup}
 In our applications, $R\tf\to R_\infty\tf$ will be pro-\'etale Galois, and $\sigma$ will be in the Galois group.

The naturality of $A\mapsto \rho_A$ yields $2$-commutative diagrams
\begin{equation}\label{eq:Galois-action-diagram}
\begin{tikzcd}[row sep =0.25cm,column sep = 0.5cm]
	{\Spf(A_\infty)} & {\Spf(A)} \\
	&& {X^\prism} \\
	{\Spf(A_\infty)} & {\Spf(A)}
	\arrow["{\rho_A}", from=1-2, to=2-3]
	\arrow["{\rho_A}"', from=3-2, to=2-3]
	\arrow["\Varphi"', from=3-1, to=3-2]
	\arrow["\Varphi", from=1-1, to=1-2]
	\arrow["{\rho_{A_\infty}}"', from=1-1, to=2-3]
	\arrow["{\rho_{A_\infty}}", from=3-1, to=2-3]
	\arrow["\tau", from=1-1, to=3-1]
\end{tikzcd}\quad
\text{ and  }\quad
\begin{tikzcd}[row sep =0.25cm,column sep = 0.5cm]
{\Spf(R_\infty)} & {\Spf(R)} \\
&& {X^\HT} \\
{\Spf(R_\infty)} & {\Spf(R)}
\arrow["{\overline{\rho_A}}", from=1-2, to=2-3]
\arrow["{\overline{\rho_A}}"', from=3-2, to=2-3]
\arrow["\overline{\Varphi}"', from=3-1, to=3-2]
\arrow["\overline{\Varphi}", from=1-1, to=1-2]
\arrow["{\overline{\rho_{A_\infty}}}"', from=1-1, to=2-3]
\arrow["{\overline{\rho_{A_\infty}}}", from=3-1, to=2-3]
\arrow["\sigma", from=1-1, to=3-1]
\end{tikzcd}
\end{equation}
where the diagram on the right is the base change to the Hodge--Tate locus of the one on the left. 

The aim of this subsection is to study the fibre product $Z_A$ making the following square Cartesian
\begin{equation}\label{d:def-ZA}
\begin{tikzcd}
	Z_A \arrow[d] \arrow[r] & \Spf(R_\infty)\arrow[dl,dotted,"\overline{\Varphi}"'] \arrow[d,"\overline{\rho_{A_\infty}}"] \\
	\Spf(R) \arrow[r,"\overline{\rho_{A}}"]       & X^\HT.
\end{tikzcd}
\end{equation}
As $\overline{\rho_{A}}$ is affine, $Z_A$ is represented by a formal scheme. 
The $S$-points of $Z_A$ on a $p$-complete algebra $S$ identify with pairs  $(x,\gamma)$ of a morphism $x:\Spf(S)\to \Spf(R_\infty)$ and an isomorphism $\gamma:\overline{\rho_{A_\infty}}\circ x\to \overline{\rho_{A}}\circ \overline{\Varphi}\circ x$ in $X^\HT(S)$ (by composition with the natural morphism $X^\HT\to X$ one sees that necessarily the morphism $\Spf(S)\to Z_A\to \Spf(R)$ is given by $\overline\Varphi\circ x$). Hence $Z_A$ inherits from $\Spf(R_\infty)$ a natural action of $\sigma$: The automorphism $\sigma\colon R_\infty\to R_\infty$ induces first a natural \mbox{2-arrow} $\can_{\sigma}:\overline{\rho_{A_\infty}}\circ \sigma\to \overline{\rho_{A_\infty}}$ obtained from \eqref{eq:Galois-action-diagram} by \Cref{constr:explicit-2-arrow-in-X^prism}. This defines  an automorphism $Z_A\to Z_A$ over $\sigma$: Explicitly, in terms of functors of points, this is defined by sending $(x,\gamma)$ to $(\sigma\circ x, \gamma\circ \can_{\sigma})$ where $ \gamma\circ\can_{\sigma}$ is defined as the composition
\[\overline{\rho_{A_\infty}}\circ \sigma\circ x\xrightarrow{\can_\sigma} \overline{\rho_{A_\infty}}\circ x\xrightarrow{\gamma} \overline{\rho_{A}}\circ\overline{\Varphi}\circ x.\]
We can equivalently regard $Z_A\to Z_A$ as an $R_\infty$-linear isomorphism
\[
\sigma_{Z_A}\colon Z_A\to \sigma^\ast Z_A:=Z_A\times_{\Spf(R_\infty),\sigma}\Spf(R_\infty).
\]

Our aim is now to make this natural isomorphism explicit.
 To do so, we first observe that since $\overline{\rho_A}$ is a torsor under the formal group scheme $G_A$ from \Cref{sec:autom-overl-1-explicit-group-structure-on-g-a}, it follows by base-change that the map $Z_A\to \Spf(R_\infty)$ is a torsor under the formal group scheme over $\Spf(R_\infty)$ defined by
\[ G_{A,R_\infty}:=G_A\times_{\Spf(R)}\Spf(R_\infty).\]
Thus $Z_A$ receives a natural left $G_{A,R_\infty}$-action by letting $g\in G_A(S)$ send $(x,\gamma)\in Z_A(S)$ to $(x,g\circ \gamma)$.  

The map $\overline{\Varphi}$ indicated as the dotted arrow in the diagram defines a splitting of this torsor. Indeed, the natural \mbox{$2$-isomorphism} $\can_\Varphi\colon \overline{\rho_A}\circ \overline{\Varphi}\to \overline{\rho_{A_\infty}}$ from \Cref{constr:explicit-2-arrow-in-X^prism} yields an identification
\[
\vartheta_\Varphi \colon G_{A,R_\infty}\isomarrow Z_A.
\]

Observe that $G_{A,R_\infty}$ also receives a natural $\sigma$-action via the action on the second factor. Here we note that we have a natural $R_\infty$-linear isomorphism $\sigma^{\ast}G_{A,R_\infty}\isomarrow G_{A,R_\infty}$ since $\sigma$ leaves $R$ invariant.

The map $\vartheta_\Varphi$ need not commute with the \mbox{$\sigma$-actions} on both sides, as $\Varphi$ need not be $\tau$-invariant.  Instead, to make $\sigma_{Z_A}$ explicit in terms of $G_A$, we can transport it via $\vartheta_\Varphi$ to the automorphism
\[
\sigma_{G_{A,R_\infty}}:=\sigma^\ast\vartheta^{-1}_\Varphi \circ \sigma_{Z_A} \circ \vartheta_{\Varphi}\colon G_{A,R_\infty}\to \sigma^\ast G_{A,R_\infty}\cong G_{A,R_\infty}.
\]
Now $\vartheta_\Varphi$ identifies $Z_A$ and its $\sigma_{Z_A}$-action with $G_{A,R_\infty}$ and its induced $\sigma_{G_{A,R_\infty}}$-action. 
\begin{theorem}\label{t:expl-descr-of-action-on-GA}
	\label{sec:appl-group-acti-1-identification-of-sigma-via-group-action}
	The isomorphism
	$
	\sigma_{G_{A,R_\infty}}$ can be described in terms if the right multiplication by $G_{A,R_\infty}$ as the translation by the element $(D,x)\in G_{A,R_\infty}(R_\infty)$  where
	\[\textstyle 	D\colon A\to I\otimes_A\Ga^\sharp(R_\infty),\quad a\mapsto \xi\otimes g(\frac{\tau(\Varphi(a))-\Varphi(a)}{\Varphi(\xi)}),\quad \text{ and } x=g(\frac{\tau(\Varphi(\xi))}{\Varphi(\xi)})\in \Gm^\sharp(R_\infty).\] 
Here  $g\colon A_\infty\to R_\infty$ is the canonical reduction, and we use that $R_\infty$ is $p$-torsionfree to describe $D$ via the inclusion $\Ga^\sharp(R_\infty)\subseteq R_\infty$. Similarly, we implicitly use $\Gm^\sharp(R_\infty)\subseteq \Gm(R_\infty)$ to describe $x$.
\end{theorem}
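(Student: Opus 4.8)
The plan is to describe $\sigma_{G_{A,R_\infty}}$ via its functor of points, recognise it as a \emph{right} translation by a single element $\eta\in G_{A,R_\infty}(R_\infty)$, identify that element with $\overline{\gamma_{\psi,\varphi}}$ for $\psi=\tau\circ\varphi$ in the sense of \Cref{r:explicit-naturality-HT-version}, and then read off the formulas for $D$ and $x$ directly from \Cref{sec:autom-overl-1-elements-in-g-a-coming-from-automorphism}, which is the computation that was set up precisely for this purpose.

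First I would observe that the $\sigma$-action $(x,\gamma)\mapsto(\sigma\circ x,\gamma\circ\can_\sigma)$ on $Z_A$ commutes with the left $G_{A,R_\infty}$-torsor structure $g\cdot(x,\gamma)=(x,g\circ\gamma)$, since the two operations act on different ``slots'' of $\gamma$. Transporting through the $G_{A,R_\infty}$-equivariant trivialisation $\vartheta_\varphi$, and using the canonical identification $\sigma^\ast G_{A,R_\infty}\cong G_{A,R_\infty}$ (valid because $\overline\varphi\circ\sigma=\overline\varphi$, so the $R$-structure is unchanged), the automorphism $\sigma_{G_{A,R_\infty}}$ is left-$G_{A,R_\infty}$-equivariant, hence is right translation by $\eta:=\vartheta_\varphi^{-1}\big(\sigma_{Z_A}(\vartheta_\varphi(e))\big)$. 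As $\vartheta_\varphi(e)$ is the point $(\id,\can_\varphi^{-1})$, one computes $\sigma_{Z_A}(\vartheta_\varphi(e))=(\sigma,\can_\varphi^{-1}\circ\can_\sigma)$, and applying $\vartheta_\varphi^{-1}$ over $\sigma$ (the base point over $\sigma$ being $(\sigma^\ast\can_\varphi)^{-1}$) gives $\eta=\can_\varphi^{-1}\circ\can_\sigma\circ\sigma^\ast(\can_\varphi)$, an automorphism of $\overline{\rho_A}\circ\overline\varphi$.

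Next comes the key identification. By the pseudo-functoriality of $(A,I)\mapsto\rho_A$, the canonical $2$-isomorphisms of \Cref{constr:explicit-2-arrow-in-X^prism} satisfy the cocycle identity $\can_{\tau\circ\varphi}=\can_\tau\circ\tau^\ast(\can_\varphi)$; reducing along $X^\HT\to X^\prism$ (which commutes with composition of $2$-cells) gives $\can_\sigma\circ\sigma^\ast(\can_\varphi)=\overline{\can_{\tau\circ\varphi}}$, so that $\eta=\can_\varphi^{-1}\circ\overline{\can_{\tau\circ\varphi}}=\overline{\gamma_{\psi,\varphi}}$ with $\psi=\tau\circ\varphi$ (using $\overline\psi=\overline\varphi$ to make this an automorphism). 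Then I would apply \Cref{sec:autom-overl-1-elements-in-g-a-coming-from-automorphism} to the two prism maps $\psi=\tau\circ\varphi$ and $\varphi$ from $(A,I)$ to $(A_\infty,J)$, evaluated on $S=R_\infty$ with its natural Witt lift $g$: this identifies $\eta=\overline{\gamma_{\psi,\varphi}}$ with $(D_{\psi,\varphi,\xi},x_{\psi,\varphi,\xi})$, and substituting $\psi(a)=\tau(\varphi(a))$ and $\psi(\xi)=\tau(\varphi(\xi))$ yields exactly the $D$ and $x$ in the statement, after using $p$-torsionfreeness of $R_\infty$ to view $\Ga^\sharp(R_\infty)\subseteq R_\infty$ and $\Gm^\sharp(R_\infty)\subseteq\Gm(R_\infty)$.

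The main obstacle is the $2$-categorical bookkeeping concentrated in the first two steps: tracking the order in which the isomorphisms $\can_\varphi$, $\can_\sigma$, $\sigma^\ast(\can_\varphi)$ compose, distinguishing the left (torsor) and right (translation) actions of $G_{A,R_\infty}$, handling the identification $\sigma^\ast G_{A,R_\infty}\cong G_{A,R_\infty}$, and above all pinning down the cocycle identity $\can_{\tau\circ\varphi}=\can_\tau\circ\tau^\ast(\can_\varphi)$ — morally the pseudo-functoriality of the prismatization, but safest to confirm directly against the explicit recipe of \Cref{constr:explicit-2-arrow-in-X^prism}. Once $\eta=\overline{\gamma_{\psi,\varphi}}$ is established, no further computation is needed: the explicit formulas are exactly the content of \Cref{sec:autom-overl-1-elements-in-g-a-coming-from-automorphism}.
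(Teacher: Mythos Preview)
Your proposal is correct and follows essentially the same route as the paper: unwind $\sigma_{G_{A,R_\infty}}$ via $\vartheta_\varphi$, recognise it as right translation by $\can_\varphi^{-1}\circ\can_\sigma\circ\sigma^\ast(\can_\varphi)$, use the cocycle identity to rewrite this as $\can_\varphi^{-1}\circ\can_\psi$, and then invoke \Cref{sec:autom-overl-1-elements-in-g-a-coming-from-automorphism}. The only difference is cosmetic: the paper names the translation element $\gamma_{\varphi,\psi}^{-1}$ and computes its inverse via the semi-direct product formula of \Cref{sec:autom-overl-1-explicit-group-structure-on-g-a}, whereas you observe directly that $\can_\varphi^{-1}\circ\can_\psi=\gamma_{\psi,\varphi}$ and read off $(D_{\psi,\varphi,\xi},x_{\psi,\varphi,\xi})$ from \Cref{sec:autom-overl-1-elements-in-g-a-coming-from-automorphism} with the roles of $\varphi,\psi$ swapped, which lands immediately on the stated formulas without an inversion step.
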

This the final outcome of our discussion in \Cref{sec:prismatic-homotopies}.
\begin{proof}
	We consider the fibre product of \eqref{eq:Galois-action-diagram} with the morphism $\overline{\rho}_A:\Spf(R)\to X^\HT$. Note that the fibre product of $\overline{\rho}_A$ with itself is $G_A$. That the homotopy $\can_{\Varphi}$ induces the isomorphism $\vartheta_\Varphi$ now means that it makes the following diagram 2-commutative:
	\[\begin{tikzcd}[column sep =0.5cm,row sep = 0cm]
		& G_{A,R_\infty} \arrow[rrd,""] \arrow[dd] \arrow[ld,"\vartheta_{\Varphi}"'] &  &              \\
		Z_A \arrow[rrr,crossing over] \arrow[dd] &                                                  &  & \Spf(R) \arrow[dd] \\
		& \Spf(R_\infty) \arrow[rrd,"\overline{\rho_A}\circ \overline{\Varphi}"]\arrow[ld,"\Id"']          &  &              \\
		\Spf(R_\infty) \arrow[rrr,"\overline{\rho_{A_\infty}}"]            &                                                  &  & X^\HT.
	\end{tikzcd}\]
	In terms of functors of points, $G_{A,R_\infty}$ sends any $R$-algebra $S$ to the pairs $(x,h)$ of a morphism $x:\Spf(S)\to \Spf(R_\infty)$ over $R$ and an isomorphism $h:\overline{\rho_{A}}\circ \overline{\Varphi}\circ x\to  \overline{\rho_{A}}\circ \overline{\Varphi}\circ x$ in $X^\HT(S)$. Then $\vartheta_\Varphi$
	\[ \vartheta_\Varphi:G_{A,R_\infty}\isomarrow Z_A \quad\text{is given by} \quad (x,h)\mapsto (x,\overline{\rho_{A_\infty}}\circ x\xrightarrow{\can_{\Varphi}^{-1}}\overline{\rho_{A}}\circ \overline{\Varphi}\circ x\xrightarrow{h}\overline{\rho_{A}}\circ \overline{\Varphi}\circ x),\]
	hence its inverse $\vartheta_\Varphi^{-1}$ is given similarly by precomposing with $\can_{\Varphi}$. Combined with the explicit description of $\sigma_{Z_A}$ given above, it follows that $\sigma_{G_{A,R_\infty}}=\sigma^{\ast}\vartheta_{\Varphi}^{-1}\circ\sigma_{Z_A}\circ\vartheta_\Varphi$ is given by the composition
	\[G_{A,R_\infty}\isomarrow G_{A,R_\infty},\quad (x,h)\mapsto (x,\overline{\rho_{A}}\circ \overline{\Varphi}\circ x\xrightarrow{\can_{\Varphi}}\overline{\rho_{A_\infty}}\circ\sigma\circ x\xrightarrow{\can_{\sigma}}\overline{\rho_{A_\infty}}\circ  x\xrightarrow{\can_{\Varphi}^{-1}}\overline{\rho_{A}}\circ \overline{\Varphi}\circ x\xrightarrow{h}\overline{\rho_{A}}\circ \overline{\Varphi}\circ x)\]
	where we use that $\overline{\Varphi}\circ \sigma=\overline{\Varphi}$ to identify the first term of the homotopy.
	Using that $\Varpsi=\tau\circ \Varphi$, this homotopy is the translation 
	\[ h\mapsto h\circ \can_{\Varphi}^{-1}\circ \can_{\Varpsi}.\]
	But by definition, $\can_{\Varphi}^{-1}\circ \can_{\Varpsi}=\gamma_{\Varphi,\Varpsi}^{-1}$.
	We now use that by \Cref{sec:autom-overl-1-elements-in-g-a-coming-from-automorphism}, the automorphism $\gamma_{\Varphi,\Varpsi}$ is given on $S$-points for any $p$-complete, $p$-torsion free $R_\infty$-algebra $S$ by the action of the pair $(D_{\Varphi,\Varpsi,\xi},x_{\Varphi,\Varpsi,\xi})\in G_A(S)$ defined by
	\[
	D_{\Varphi,\Varpsi,\xi}\colon A\to I\otimes_A\Ga^\sharp(S),\ a\mapsto \xi\otimes g(\tfrac{\Varphi(a)-\Varpsi(a)}{\Varpsi(\xi)})
	\quad \text{and}\quad
	x_{\Varphi,\Varpsi,\xi}=g(\tfrac{\Varphi(\xi)}{\Varpsi(\xi)})=g(\tfrac{\Varphi(\xi)}{\tau(\Varphi(\xi))}),
	\]
	where $g:A_\infty\to W(S)$ is the canonical lift of $A_\infty\to R_\infty\to S$. By functoriality, this is determined by its value on $S=R_\infty$. Since $R_\infty$ is $p$-torsionfree by assumption, we can use $\theta\colon W(R_\infty)\to R_\infty$ to identify any element in $W[F](R_\infty)=\Ga^\sharp(R_\infty)$ with its image in $\Ga(R_\infty)=R_\infty$.
	
	Finally, one calculates using the semi-direct product structure explained in \Cref{sec:autom-overl-1-explicit-group-structure-on-g-a} that the inverse is given by $(D,x)=(-D_{\Varphi,\Varpsi,\xi}x_{\Varphi,\Varpsi,\xi}^{-1},x_{\Varphi,\Varpsi,\xi}^{-1})$, as described.
\end{proof}

\begin{remark}
	We implicitly made a sign convention, related to the question whether $Z_A$ is a torsor for a left or right action of the (possibly non-commutative) group $G_A$. In the above, the left action on $G_{A,R_\infty}$ identifies via $\vartheta_\Varphi$ with the $G_A$-action on $Z_A$, making the latter a torsor under a left $G_A$-action.
One could instead define the homotopy in the definition of $Z_A$ to go into the other direction, so the $G_A$-action on $Z_A$ is from the right. Equivalently, we can decide to let $G_A$ act via its inverse. Either way, turning around all arrows in the above discussion, the statement of \Cref{t:expl-descr-of-action-on-GA} would become that the morphism is given by left-multiplication with $(D_{\Varphi,\Varpsi,\xi},x_{\Varphi,\Varpsi,\xi})$.
\end{remark}

\begin{remark}
  \label{sec:appl-group-acti-1-dependence-modulo-xi-squared}
  By definition, the pair $(D,x)$ in \Cref{sec:appl-group-acti-1-identification-of-sigma-via-group-action} depends on $\Varphi, \Varpsi=\tau\circ \Varphi$ and $\xi$. Let $(A_0,I_0)\to (A,I)$ be a morphism of prisms such that $\Varphi$ and $\tau$ are $A_0$-linear. Furthermore, assume that $\xi\in I_0$, then $x=1$ and $D\colon A\to \Ga^\sharp(R_\infty),\ a\mapsto -\xi\otimes g(\frac{\Varphi(a)-\Varpsi(a)}{\xi})$ with $g\colon A_\infty\to W(R_\infty)$ the $\delta$-lift of $A_\infty\to R_\infty$. If $a\in \xi\cdot A$, then $D(a)=0$. Indeed, write $a=\xi b$ for some $b\in A$. Then we have $
    \frac{\Varphi(a)-\Varpsi(a)}{\xi}=\Varphi(b)-\Varpsi(b)$,
  and as $R_\infty$ is $p$-torsion free it suffices to check that the image of $\Varphi(b)-\Varpsi(b)$ vanishes under $A_\infty\to R_\infty$. But this follows because $\sigma$ is $R$-linear. 
\end{remark}

\section{Examples}
\label{sec:examples}
We now discuss various settings of smooth formal schemes over different base rings in which we apply the discussion from \S\ref{sec:appl-group-acti} to make the action on $G_{A,R_\infty}$ described in \Cref{t:expl-descr-of-action-on-GA} explicit.

\subsection{$p$-adic fields}
\label{sec:p-adic-fields-new-identification-of-galois-action-for-p-adic-fields}

Let $K$ be a $p$-adic field, $R:=\mathcal{O}_K$ its ring of integers and $k$ its residue field.
Set $C:=\widehat{\overline{K}}$ and $R_\infty:=\mathcal{O}_C$. Then $\Gal(\overline{K}/K)$ acts on $R_\infty$ and this action extends to an action on
\[
(A_\infty:=A_\inf(\mathcal{O}_C),J:=\ker(\theta\colon A_\infty\to R_\infty)),
\]
where as usual, we have Fontaine's map 
\[\theta:A_\infty\to R_\infty=\O_C.\]
Let $\pi\in R$ be a uniformizer. Then we get the associated Breuil-Kisin prism
\[
(A,I):=(\mathfrak{S}:=W(k)[[u]],(E(u))).
\]
Any choice of a system $\pi^\flat=(\pi,\pi^{1/p},\ldots)\in \O_C^\flat$ of $p$-power roots of $\pi$ yields a morphism
\[
\Varphi\colon A\to A_\infty,\ u\mapsto [\pi^\flat]
\]
of prisms. As $R\cong A/I$ and each $\sigma\in \Gal(\overline{K}/K)$ fixes $R$, we are then in the situation of \Cref{setup:Galois-action}.
Fix a compatible system $\varepsilon=(1,\zeta_p,\ldots)\in \O_C^\flat$ of primitive $p$-power roots of unity and as usual set $\mu:=[\varepsilon]-1$, $\xi:=\frac{\mu}{\varphi^{-1}(\mu)}$.
Let us use $E(u)\in I$ as a generator (this plays the role of $\xi$ in \Cref{sec:appl-group-acti-1-identification-of-sigma-via-group-action}, and the $\xi$ we just defined has a different role). 
\begin{lemma}\label{p:descr-D-x-p-adic-field}
	Let $\sigma\in \Gal(\overline{K}|K)$ and let $\tau:A_\infty\to A_\infty$ be the induced automorphism. Then the associated element $(D,x)\in G_A(R_\infty)$ of \Cref{sec:appl-group-acti-1-identification-of-sigma-via-group-action} can  be described as follows: We have $\Omega^1_{A|\Z_p}=A\cdot du$ and $D$ is the unique derivation $A\to I\otimes_A\Ga^\sharp(\O_C)$ such that
	\[ D(u)=E(u)\otimes c(\sigma)\pi z\]
	where $c(\sigma)\in \Z_p$ is the unique element such that $
	\sigma([\pi^\flat])=[\varepsilon^{c(\sigma)}][\pi^\flat]$, and where we define
	\[\textstyle
	z:=\theta\big(\frac{\mu}{E([\pi^\flat])}\big)=(\zeta_p-1)\theta\big(\frac{\xi}{E([\pi^\flat])}\big).
	\]
	Second, we have
	$x=\chi_{\pi^\flat}(\sigma):=1+c(\sigma) E^\prime(\pi)\pi z\in \Gm^\sharp(\O_C)$.
\end{lemma}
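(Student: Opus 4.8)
The plan is to specialise \Cref{t:expl-descr-of-action-on-GA} to the Breuil--Kisin prism $(A,I)=(\mathfrak{S},(E(u)))$, taking the generator of $I$ appearing in that theorem to be $E(u)$ (this is the element playing the role of ``$\xi$'' there; it is \emph{not} the distinguished element $\xi=\mu/\varphi^{-1}(\mu)$ of $A_\infty$), with $\varphi(E(u))=E([\pi^\flat])$, $\psi=\tau\circ\varphi$, $g=\theta$, and $S=R_\infty=\O_C$, which is $p$-torsionfree so that $\Ga^\sharp(\O_C)\subseteq\O_C$ via $\theta$. First I would record the identity $\tau([\pi^\flat])=[\sigma(\pi^\flat)]=[\varepsilon^{c(\sigma)}\pi^\flat]=[\varepsilon]^{c(\sigma)}[\pi^\flat]=(1+\mu)^{c(\sigma)}[\pi^\flat]$, using naturality and multiplicativity of Teichm\"uller lifts and the definition of $c(\sigma)$; here $[\varepsilon]^{c(\sigma)}=(1+\mu)^{c(\sigma)}$ is the convergent binomial series, $\mu\in\ker\theta$ being topologically nilpotent. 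Since $k$ is perfect, $W(k)$ is formally unramified over $\Z_p$, so $\Omega^1_{A|\Z_p}=\Omega^1_{\mathfrak{S}|W(k)}=A\,du$; hence any continuous $\Z_p$-derivation out of $A$ is determined by its value on $u$, and it suffices to compute $D(u)$ and $x$.

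For $D(u)$: \Cref{t:expl-descr-of-action-on-GA} gives $D(u)=E(u)\otimes\theta\big(\tfrac{\tau([\pi^\flat])-[\pi^\flat]}{E([\pi^\flat])}\big)=E(u)\otimes\theta\big(\tfrac{((1+\mu)^{c(\sigma)}-1)[\pi^\flat]}{E([\pi^\flat])}\big)$. The crucial normalisation is $\theta([\varepsilon])=1$, so that $\mu=[\varepsilon]-1\in\ker\theta=J$; since $\varphi$ is a morphism of prisms, $E([\pi^\flat])$ is a distinguished generator of $\ker\theta$, in particular a non-zero-divisor, so $\mu=E([\pi^\flat])\cdot t$ for a well-defined $t\in A_\infty$ with $\theta(t)=z$ by definition of $z$. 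Expanding $(1+\mu)^{c(\sigma)}-1=\mu\big(c(\sigma)+\binom{c(\sigma)}{2}\mu+\cdots\big)$ and dividing by $E([\pi^\flat])$ yields $m:=\tfrac{\tau([\pi^\flat])-[\pi^\flat]}{E([\pi^\flat])}=t\,[\pi^\flat]\big(c(\sigma)+\binom{c(\sigma)}{2}\mu+\cdots\big)$; applying $\theta$ and using $\theta(\mu)=0$, $\theta([\pi^\flat])=\pi$ to kill all higher-order terms gives $\theta(m)=c(\sigma)\pi z$, i.e. $D(u)=E(u)\otimes c(\sigma)\pi z$. The alternative form $z=(\zeta_p-1)\theta(\xi/E([\pi^\flat]))$ then follows from $\mu=\xi\cdot\varphi^{-1}(\mu)$ together with $\theta(\varphi^{-1}(\mu))=\theta([\varepsilon^{1/p}]-1)=\zeta_p-1$. (That $c(\sigma)\pi z\in\Ga^\sharp(\O_C)$ is automatic from the general theory, and can also be checked from $v(z)=v(\zeta_p-1)$.)

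For $x$: \Cref{t:expl-descr-of-action-on-GA} gives $x=\theta\big(\tfrac{\tau(E([\pi^\flat]))}{E([\pi^\flat])}\big)$, and since the coefficients of $E$ lie in $W(k)$, which is fixed by $\tau$, we have $\tau(E([\pi^\flat]))=E(\tau([\pi^\flat]))=E\big([\pi^\flat]+E([\pi^\flat])m\big)$ with $m$ as above. Using the integral Taylor identity $E(Y+h)=E(Y)+E'(Y)h+h^2q(Y,h)$ in $W(k)[Y,h]$ with $Y=[\pi^\flat]$, $h=E([\pi^\flat])m$, dividing by $E([\pi^\flat])$ and applying $\theta$ (with $\theta(E([\pi^\flat]))=0$, $\theta(E'([\pi^\flat]))=E'(\pi)$, $\theta(m)=c(\sigma)\pi z$) yields $x=1+E'(\pi)c(\sigma)\pi z=\chi_{\pi^\flat}(\sigma)$, as claimed.

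I do not expect a genuine obstacle: the lemma is essentially a bookkeeping specialisation of \Cref{t:expl-descr-of-action-on-GA}. The two points needing care are (i) fixing the convention $\theta([\varepsilon])=1$, which is precisely what makes $\mu/E([\pi^\flat])$ and $(\tau([\pi^\flat])-[\pi^\flat])/E([\pi^\flat])$ genuine elements of $A_\infty$ on which $\theta$ may be evaluated, and (ii) keeping the two Taylor expansions integral (no division by $n!$), which is legitimate because $E([\pi^\flat])$ is a non-zero-divisor generating $\ker\theta$.
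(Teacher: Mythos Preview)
Your proof is correct and follows essentially the same route as the paper: both specialise \Cref{t:expl-descr-of-action-on-GA} with generator $E(u)$, expand $[\varepsilon]^{c(\sigma)}=(1+\mu)^{c(\sigma)}$ binomially, and kill higher-order terms via $\theta(\mu)=0$. The only organisational difference is that the paper packages the Taylor/binomial step (your computation of $\theta(m)$ and of $\theta(E(\tau[\pi^\flat])/E([\pi^\flat]))$) into a separate lemma stating $\sigma(E([t^\flat]))\equiv E([t^\flat])+c(\sigma)E'([t^\flat])[t^\flat]\mu\bmod\mu^2$ for general $t^\flat$ and $E$, which it then applies twice (with $E=E$ for $x$ and $E(u)=u$ for $D(u)$) and reuses in the later toric sections; you instead do both computations inline, factoring $\mu=E([\pi^\flat])\cdot t$ rather than splitting the fraction as $\tfrac{\cdot}{\mu}\cdot\tfrac{\mu}{E([\pi^\flat])}$.
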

\begin{remark}
\label{sec:p-adic-fields-remark-chi-pi-flat-similar-to-the-cyclotomic-character}
The map
\[
\chi_{\pi^\flat}\colon \Gal(\overline{K}/K)\to 1+\pi(\zeta_p-1)\mathcal{O}_C,\ \sigma\mapsto \chi_{\pi^\flat}(\sigma)=1+c(\sigma)E^\prime(\pi)\pi z
\]
is a cocycle and plays a similar role as the cyclotomic character, cf.\ \cite[Lemma 3.6]{analytic_HT}.
\end{remark}
\begin{proof}[Proof of \Cref{p:descr-D-x-p-adic-field}]
	The equality in the definition of $z$ follows from $\mu=\varphi^{-1}(\mu)\xi$ and $\theta(\varphi^{-1}(\mu))=\zeta_p-1$.
	
As explained in \Cref{sec:appl-group-acti-1-identification-of-sigma-via-group-action}, since $R_\infty$ is $p$-torsionfree, it suffices to identify $(D,x)$ after composition with
$
G_A\to \mathcal{T}_A\rtimes \Gm$, i.e.\ we may apply $\theta^\prime\colon W(R_\infty)\to R_\infty$. 
The formula for $x$ in \Cref{t:expl-descr-of-action-on-GA} then boils down to the following computation, which we use again later:
\begin{lemma}\label{l:std-form-for-computing-Dx}
	Let $B$ be a perfectoid $\Z_p^\cycl$-algebra. Let $\sigma$ be a $\Z_p$-linear automorphism of $B$, we also denote by $\sigma$ the induced automorphism of $A_\inf(B)$. Let $t^\flat\in B^\flat$ be an element such that $\sigma$ fixes $t:=t^{\flat\sharp}\in B$. Let $E(u)$ be a polynomial in $A_\inf(B)[u]$ with coefficients that are fixed by $\sigma$. Then
	\[\sigma(E([t^\flat]))\equiv E([t^\flat])+ c(\sigma)\cdot E'([t^\flat])\cdot [t^\flat]\cdot \mu\bmod \mu^2\]
	where $c(\sigma)\in \Z_p$ is any element such that $\sigma([t^\flat])=[\varepsilon]^{c(\sigma)}[t^\flat]$.\footnote{If $B$ is $p$-torsion free, this element is unique.} In particular, this shows
	\[\textstyle\theta\big(\frac{E(\sigma([t^\flat]))-E([t^\flat])}{\mu}\big)=c(\sigma)t\frac{\partial E}{\partial u}(t)\in B.\]
\end{lemma}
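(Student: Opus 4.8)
The plan is to reduce the statement to the identity $\sigma([t^\flat])=(1+\mu)^{c(\sigma)}[t^\flat]$ together with a first‑order Taylor expansion of the polynomial $E$. First I would note that $\sigma$ acts on $A_\inf(B)=W(B^\flat)$ through its action on $B^\flat$ and commutes with Teichm\"uller lifts, so the hypothesis rewrites as $\sigma([t^\flat])=[\varepsilon]^{c(\sigma)}[t^\flat]=(1+\mu)^{c(\sigma)}[t^\flat]$ inside $A_\inf(B)$, where $(1+\mu)^{c(\sigma)}$ is the $p$-adic power of the unit $[\varepsilon]=1+\mu$. Writing $c(\sigma)=\lim c_n$ with $c_n\in\Z$ and using the binomial expansions $(1+\mu)^{c_n}=1+c_n\mu+\mu^2(\cdots)$, one obtains $(1+\mu)^{c(\sigma)}=1+c(\sigma)\mu+\mu^2 b$ with $b=\sum_{k\geq 0}\binom{c(\sigma)}{k+2}\mu^k\in A_\inf(B)$ (the series converging $(p,\mu)$-adically). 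Hence $\sigma([t^\flat])-[t^\flat]=c(\sigma)\mu[t^\flat]+\mu^2 b[t^\flat]$; in particular $\sigma([t^\flat])-[t^\flat]\in\mu A_\inf(B)$.

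Next I would invoke the elementary ring identity $E(y)-E(x)=(y-x)E'(x)+(y-x)^2 q(x,y)$, valid for any polynomial $E$ with a polynomial $q$ depending only on $E$ (obtained from $y^i-x^i=(y-x)\sum_j y^{i-1-j}x^j$ and $\sum_j y^{i-1-j}x^j=ix^{i-1}+(y-x)(\cdots)$). Applying it with $x=[t^\flat]$ and $y=\sigma([t^\flat])$, and using that the coefficients of $E$ are $\sigma$-fixed so that $\sigma(E([t^\flat]))=E(\sigma([t^\flat]))$, together with $(y-x)^2\in\mu^2 A_\inf(B)$ and $y-x\equiv c(\sigma)\mu[t^\flat]\pmod{\mu^2}$ from the previous step, yields
\[ \sigma(E([t^\flat]))-E([t^\flat])\equiv (y-x)E'([t^\flat])\equiv c(\sigma)\,\mu\,[t^\flat]\,E'([t^\flat])\pmod{\mu^2 A_\inf(B)}, \]
which is the first displayed congruence, noting $E'([t^\flat])=\tfrac{\partial E}{\partial u}([t^\flat])$.

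For the final assertion I would divide by $\mu$, a non-zero divisor in $A_\inf(B)$ for perfectoid $B$, to write $\mu^{-1}\big(\sigma(E([t^\flat]))-E([t^\flat])\big)=c(\sigma)[t^\flat]E'([t^\flat])+\mu b'$ with $b'\in A_\inf(B)$, and then apply $\theta$. Since $\theta(\mu)=\varepsilon^\sharp-1=0$ the term $\mu b'$ vanishes, while $\theta([t^\flat])=t^{\flat\sharp}=t$ and $\theta(E'([t^\flat]))=\tfrac{\partial E}{\partial u}(t)$ (the coefficients of $E$ being reduced along $\theta$), giving $\theta\big(\mu^{-1}(\sigma(E([t^\flat]))-E([t^\flat]))\big)=c(\sigma)\,t\,\tfrac{\partial E}{\partial u}(t)$.

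I expect no serious obstacle here: the only points needing a little care are formal bookkeeping ones, namely that the higher-order remainders genuinely lie in $\mu^2 A_\inf(B)$ (which is clean because $E$ is a polynomial, so $q(x,y)\in A_\inf(B)$ with no completion subtleties) and that $(1+\mu)^{c(\sigma)}$ is well-defined and admits the stated expansion.
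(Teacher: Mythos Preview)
Your proof is correct and follows essentially the same approach as the paper: expand $(1+\mu)^{c(\sigma)}$ to first order in $\mu$, use a first-order Taylor expansion of $E$, and then apply $\theta$ using $\theta(\mu)=0$. Your version is slightly more explicit about the Taylor remainder via the identity $E(y)-E(x)=(y-x)E'(x)+(y-x)^2 q(x,y)$, whereas the paper simply writes $E([t^\flat]+c(\sigma)\mu[t^\flat])\equiv E([t^\flat])+c(\sigma)\mu E'([t^\flat])[t^\flat]\bmod\mu^2$ in one step, but the content is the same.
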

\begin{proof}
	We calculate
	\[E(\sigma([t^\flat]))=E(\varepsilon^{c(\sigma)}[t^\flat])=E((1+\mu)^{c(\sigma)}[t^\flat]).\]
	Calculating modulo $\mu^2$ gives $(1+\mu)^{c(\sigma)}\equiv 1+c(\sigma)\mu\bmod \mu^2$ and therefore
	\[E(\sigma([t^\flat]))\equiv E([t^\flat]+c(\sigma)\mu[t^\flat]))\equiv E([t^\flat])+c(\sigma)\mu E'([t^\flat])[t^\flat]\bmod \mu^2.\]
	It follows that
	\[\textstyle\frac{E(\sigma([t^\flat]))-E([t^\flat])}{\mu} \equiv c(\sigma)\cdot E'([t^\flat])\cdot [t^\flat] \bmod \mu.\]
	Since $\mu\in \ker(\theta)$, applying $\theta$ gives the desired description as $\theta([t^\flat])=t$.
\end{proof}
By \cite[Lemma 3.23]{Bhatt2018}, $\ker(A_{\infty}\to W(R_\infty))=\mu\cdot A_\infty$. By \Cref{l:std-form-for-computing-Dx} with $t^\flat=\pi^\flat$, this shows
\[\textstyle x=\theta\big(\frac{E(\sigma([\pi^\flat]))}{E([\pi^\flat])}\big)=\theta(1+c(\sigma)E'([\pi^\flat])[\pi^\flat]\frac{\mu}{E([\pi^\flat]])})=1+c(\sigma) E^\prime(\pi)\pi z=\chi_{\pi^\flat}(\sigma)\]
where $c(\sigma)\in \Z_p$ and $z$ are as defined in the statement of the Lemma, and we use that $\mu/\xi = \varphi^{-1}(\mu)$.

Second, we now calculate the derivation
$D$.
Since $\widehat{\Omega}^1_{A/\Z_p}=A\cdot du$, it suffices to compute $D(u)$. According to \Cref{t:expl-descr-of-action-on-GA}, and using again \Cref{l:std-form-for-computing-Dx} (for $E(u)=u$), we have
\[ \textstyle D(u)=E(u)\otimes \theta\big(\frac{\sigma([\pi^\flat])-[\pi^\flat]}{E([\pi^\flat])}\big)=E(u)\otimes \theta\big(\frac{\sigma([\pi^\flat])-[\pi^\flat]}{\mu}\frac{\mu}{E([\pi^\flat])}\big)=E(u)\otimes c(\sigma)\pi z.\qedhere\]
\end{proof}
\begin{remark}
	Alternatively, one could deduce the description of $D$ from the one of $x$: As $D$ is a derivation, we have $
	D(E(u))=E^\prime(\pi)D(u)$.
	On the other hand,
	$
	D(E(u))=(x-1)(E(u)\otimes 1)
	$
	by 
	\Cref{sec:autom-overl-1-description-of-g-a}. Hence,
	\[
	E^\prime(\pi)D(u)=(x-1)(E(u)\otimes 1)\in I/I^2\otimes_R R_\infty,
	\]
	\[
	\Rightarrow D(u)=(x-1)\tfrac{E(u)\otimes 1}{E^\prime(\pi)}=E(u)\otimes c(\sigma)\pi z
	\]
	since $x-1=E'(\pi)c(\sigma)\pi z$.
	This determines $D(a)$ for $a\in A$ as $D(a\cdot E(u))=(x-1)(aE(u)\otimes 1)$ while on the other hand $D(a\cdot E(u))=aE^\prime(\pi)D(u)+E(\pi)D(a)$.
\end{remark}
Via
 \Cref{sec:appl-group-acti-1-identification-of-sigma-via-group-action}, \Cref{p:descr-D-x-p-adic-field} now describes the action of $\Gal(\overline{K}/K)$ on $Z_A$ from  \eqref{d:def-ZA}:
\begin{proposition}\label{c:Galois-action-on-ZA-p-adic-fields}
	\label{sec:p-adic-fields-description-of-action-on-z-a-for-p-adic-field}
	The choice of $\pi^\flat$ yields an isomorphism
	\[
	Z_A\cong G_{A,R_\infty}\cong \Ga^\sharp=\Spf(\widehat{\textstyle\bigoplus\limits_{n\geq 0}} R_\infty\cdot \tfrac{a^n}{n!})
	\]
	with respect to which the $R_\infty$-semilinear action of $\mathrm{Gal}(\overline{K}/K)$ is given for $\sigma\in \mathrm{Gal}(\overline{K}/K)$ by
	\[
	\sigma(a)=\chi_{\pi^\flat}(\sigma)a+c(\sigma)\pi z.
	\]
\end{proposition}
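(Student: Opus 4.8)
The plan is to obtain the statement as a corollary of the explicit description of the $\sigma$-action on $Z_A$ in \Cref{t:expl-descr-of-action-on-GA}, combined with the computation of the pertinent group element in \Cref{p:descr-D-x-p-adic-field} and the explicit group law on $G_A$ from \Cref{c:explicit-descr-GA-if-Omega_A-finite-free}. First I would make the identification $Z_A\cong\Ga^\sharp$ explicit. Since $\widehat{\Omega}^1_{A|\Z_p}=A\cdot du$ is free of rank one, this is the case $n=0$ of \Cref{c:explicit-descr-GA-if-Omega_A-finite-free}, which gives $G_A\cong\mathcal{T}^\sharp_{A|\Z_p}\{1\}\times_{\Spf(A)}\Spf(R)\cong\Ga^\sharp\{1\}$ via $D\mapsto D(u)$. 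Using the generator $E(u)$ of $I$ to trivialise the Breuil--Kisin twist, and noting that $E(u)\in W(k)[u]$ has coefficients killed by $\partial/\partial u$, the last (simplified) formula of \Cref{c:explicit-descr-GA-if-Omega_A-finite-free} exhibits $G_A\cong\Ga^\sharp$ over $R$ with group law $(a,b)\mapsto a(1+E'(\pi)b)+b=a+b+E'(\pi)ab$ (here the section $u\mapsto\pi$ is used to pass from $\Spf(A)$ to $\Spf(R)$). Base changing along $R\to R_\infty$ and composing with the splitting $\vartheta_\varphi\colon G_{A,R_\infty}\isomarrow Z_A$ of the torsor $Z_A$ furnished by $\can_\varphi$, we obtain the asserted identification $Z_A\cong G_{A,R_\infty}\cong\Ga^\sharp$ over $R_\infty$.

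Next I would identify the Galois action. For $\sigma\in\Gal(\overline{K}/K)$ and $\tau\colon A_\infty\to A_\infty$ the induced automorphism, we are in the situation of \Cref{setup:Galois-action} (with $\psi=\tau\circ\varphi$, and $\overline{\psi}=\overline{\varphi}$ because $\sigma$ fixes $R=\mathcal{O}_K$). By \Cref{t:expl-descr-of-action-on-GA}, transporting the $R_\infty$-semilinear automorphism $\sigma_{Z_A}$ of $Z_A$ along $\vartheta_\varphi$ yields the $R_\infty$-semilinear automorphism of $G_{A,R_\infty}$ that acts $\sigma$-semilinearly on $R_\infty$ and by translation by the element $(D,x)\in G_{A,R_\infty}(R_\infty)$ described there. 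By \Cref{p:descr-D-x-p-adic-field}, under the identification $G_{A,R_\infty}\cong\Ga^\sharp$ above this element corresponds to the $R_\infty$-point with coordinate $D(u)/E(u)=c(\sigma)\pi z$, its $\Gm^\sharp$-component being $x=\chi_{\pi^\flat}(\sigma)=1+c(\sigma)E'(\pi)\pi z$, consistently with $x=1+E'(\pi)\cdot(D(u)/E(u))$ from \Cref{sec:autom-overl-1-description-of-g-a}. Plugging the element of coordinate $b:=c(\sigma)\pi z$ into the group law shows that translating a point of coordinate $a$ produces the coordinate $a(1+E'(\pi)b)+b=\chi_{\pi^\flat}(\sigma)\,a+c(\sigma)\pi z$; together with the $\sigma$-semilinear action on coefficients this is precisely $\sigma(a)=\chi_{\pi^\flat}(\sigma)a+c(\sigma)\pi z$, as claimed.

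The genuinely hard work underlying this proposition has already been carried out in \Cref{t:expl-descr-of-action-on-GA} and \Cref{p:descr-D-x-p-adic-field}; what remains is almost formal. The only point needing care is the bookkeeping of conventions---the direction of $\can_\varphi$ and left- versus right-translation---but that direction is fixed once and for all in \Cref{t:expl-descr-of-action-on-GA}, and since the group law found above on $\Ga^\sharp$ is commutative, left and right translation coincide, so no ambiguity remains. Similarly, that $\sigma\mapsto\chi_{\pi^\flat}(\sigma)$ is a cocycle and that these formulas assemble into a genuine semilinear $\Gal(\overline{K}/K)$-action follows from the functoriality of the canonical $2$-isomorphisms of \S\ref{sec:appl-group-acti} and requires no separate check. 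Thus the only thing one must verify with some attention is that the trivialisation of the Breuil--Kisin twist by $E(u)$ is compatible with all the identifications in play, which is precisely what makes the simplified group law of \Cref{c:explicit-descr-GA-if-Omega_A-finite-free} applicable.
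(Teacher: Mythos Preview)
Your proposal is correct and follows essentially the same approach as the paper: invoke \Cref{t:expl-descr-of-action-on-GA} to identify the $\sigma$-action as right translation by $(D,x)$, read off $(D,x)$ from \Cref{p:descr-D-x-p-adic-field}, and evaluate the translation using the explicit group law of \Cref{c:explicit-descr-GA-if-Omega_A-finite-free}. Your additional remark that the resulting group law $a\ast b=a+b+E'(\pi)ab$ is commutative, hence left/right translation agree, is a nice sanity check that the paper does not make explicit.
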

\begin{proof}
	Since $\Omega^1_{A|\Z_p}=A\cdot du$, the displayed isomorphism is given by the projection to the first factor via \Cref{sec:autom-overl-1-explicit-group-structure-on-g-a}. Here we use the generator $E(u)$ to trivialize $I$ and hence to identify $\mathcal{T}^\sharp_{A|\Z_p}\{1\}\cong \mathcal{T}^\sharp_{A|\Z_p}$.
	By \Cref{sec:appl-group-acti-1-identification-of-sigma-via-group-action}, $\sigma$ acts as the right multiplication by $D(u)$. By \Cref{p:descr-D-x-p-adic-field}, this evaluates to $D(u)=E(u)\otimes c(\sigma)\pi z$. By the last part of \Cref{sec:autom-overl-2-explicit-action-on-g-a}, the action by $D(u)$ is thus given by
	\[
	a\mapsto a(1+E^\prime(\pi)c(\sigma)\pi z )+c(\sigma)\pi z=\chi_{\pi^\flat}(\sigma)a+c(\sigma)\pi z
      \]
    where we again used the generator $E(u)$ of $I$.
\end{proof}
We previously obtained \Cref{c:Galois-action-on-ZA-p-adic-fields} in \cite{analytic_HT}, where we have studied the example given in this subsection in detail. In particular, we can deduce \Cref{sec:form-reduct-prov-1-crucial-assumption-on-galois-cohomology} from \cite[Theorem 3.12]{analytic_HT}.\footnote{The reference proves that $K\cong R\Gamma(G_K,B_{A,R_\infty})$, but the proof shows the stonger statement that $\O_K\to R\Gamma(G_K,B_{A,R_\infty}^+)$ has cofiber killed by some $p^i, i\geq 1$. Alternatively, the cohomology is calculated by a two term complex of Banach spaces and the open mapping theorem implies the existence of a suitable $p^i$ killing the cofiber.}

\begin{remark}
If $t:=1+E^\prime(\pi)a$, then $\sigma(t)=\chi_{\pi^\flat}(\sigma)t$, which might be easier to remember. Indeed,
\[\sigma(t)  = 1+E^\prime(\pi)\sigma(a) 	=  1+E^\prime(\pi)c(\sigma)\pi z +E^\prime(\pi)\chi_{\pi^\flat}(\sigma)a 	= \chi_{\pi^\flat}(\sigma)(1+E^\prime(\pi)a)= \chi_{\pi^\flat}(\sigma)t.\]
\end{remark}

\subsection{Tori over perfectoid rings}
\label{sec:tori-over-geometric}

Let $R_0$ be any $p$-torsion free perfectoid ring containing $\Z_p^\cycl$. Set $A_\inf:=A_\inf(R_0)$, and define elements $[\varepsilon], \xi,\mu \in A_\inf$ as in \Cref{sec:p-adic-fields-new-identification-of-galois-action-for-p-adic-fields}.
Set
\[
R:=R_0\langle T_1^{\pm 1},\ldots, T_n^{\pm 1}\rangle
\quad \text{and}\quad
A:=A_\inf\langle u_1^{\pm 1},\ldots, u_n^{\pm 1}\rangle
\]
where the implicit completion is the $(p,\xi)$-adic completion. There is a natural $\delta$-structure on $A$ such that $\delta(u_i)=0$ for $i=1,\ldots, n$.
We identify $A/(\xi)\cong R,\ u_i\mapsto T_i$.
We set
\[
R_\infty:=R_0\langle T_1^{\pm 1/p^\infty},\ldots, T_n^{\pm 1/p^\infty} \rangle
\quad \text{and} \quad
A_\infty:=A_{\inf}(R_\infty),\]
then $R\to R_\infty$ is the usual perfectoid cover.
Choose 
$
T_i^\flat=(T_i,T^{1/p}_i,\ldots )\in R_{\infty}^\flat
$ for $i=1,\ldots, n$, then sending $u_i\mapsto [T_i^\flat]$ defines a map
\[
\Varphi\colon A\to A_\infty.
\]
The group $\Gamma:=\Z_p(1)^n$ acts on $R_\infty$ fixing $R$. Moreover, the action of $\Gamma$ lifts to $A_\infty$ (not fixing $\Varphi$!). 

\begin{lemma}\label{p:descr-D-x-tori-over-perfectoid}
	Let $\sigma\in \Z_p(1)^n$ and let $\tau:A_\infty\to A_\infty$ be the induced lift. Then the associated element $(D,x)\in G_A(R_\infty)$ of \Cref{sec:appl-group-acti-1-identification-of-sigma-via-group-action} can be described as follows: We have $x=1$ and $D$ is the derivation $A\to I\otimes_A\Ga^\sharp(R_\infty)$ given on any $f=f(u_1,\dots,u_n)\in A_\inf\langle u_1^{\pm 1},\ldots, u_n^{\pm 1}\rangle$ by
	\[\textstyle D(f)=
	\xi\otimes \sum\limits_{i=1}^n c_i(\sigma)(\zeta_p-1)T_i\frac{\partial f}{\partial u_i}(T_1,\ldots, T_n)\]
	where $c_i(\sigma)\in \Z_p$ is the unique element such that 
$\sigma([T_i^\flat])=[\varepsilon]^{c_i(\sigma)}[T_i^\flat]$.
\end{lemma}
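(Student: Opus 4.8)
The plan is to specialise \Cref{t:expl-descr-of-action-on-GA} (that is, \Cref{sec:appl-group-acti-1-identification-of-sigma-via-group-action}) to the data $(R,A,R_\infty,A_\infty,\Gamma)$ at hand and to evaluate the resulting element $(D,x)\in G_A(R_\infty)$ by the same kind of computation modulo $\mu^2$ as in \Cref{l:std-form-for-computing-Dx}. Note that here the structure morphism $\varphi\colon A=A_\inf\langle u_1^{\pm1},\dots,u_n^{\pm1}\rangle\to A_\infty=A_\inf(R_\infty)$, $u_i\mapsto[T_i^\flat]$, restricts to the canonical inclusion $A_\inf=A_\inf(R_0)\hookrightarrow A_\inf(R_\infty)$, and that the lift $\tau$ of $\sigma\in\Z_p(1)^n$ fixes $A_\inf(R_0)$ since $\sigma$ fixes $R_0$. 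Hence $\varphi(\xi)=\xi$ and $\tau(\varphi(\xi))=\tau(\xi)=\xi$, so \Cref{t:expl-descr-of-action-on-GA} immediately gives $x=g\big(\tau(\varphi(\xi))/\varphi(\xi)\big)=g(1)=1$, which is the first assertion.

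For $D$, \Cref{t:expl-descr-of-action-on-GA} reads $D\colon a\mapsto \xi\otimes\theta\big(\tfrac{\tau(\varphi(a))-\varphi(a)}{\xi}\big)$, where we use that $R_\infty$ is $p$-torsionfree to view $\Ga^\sharp(R_\infty)\subseteq R_\infty$ and that the canonical reduction $g$ is Fontaine's map $\theta\colon A_\infty\to R_\infty$. Since $\tau$ and $\varphi$ are $A_\inf$-linear, $D$ annihilates $A_\inf$, so $D$ is an $A_\inf$-linear, $(p,\xi)$-adically continuous $\Z_p$-derivation, hence determined by the values $D(u_1),\dots,D(u_n)$ (recall $\widehat\Omega^1_{A|\Z_p}=\bigoplus_i A\,du_i$, as $\widehat\Omega^1_{A_\inf|\Z_p}=0$). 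On the other hand, the assignment $f\mapsto \xi\otimes\sum_i c_i(\sigma)(1-\zeta_p)T_i\tfrac{\partial f}{\partial u_i}(T_1,\dots,T_n)$ is visibly also such a derivation, the target $R_\infty$ being an $A$-module through $A\to R=A/I\to R_\infty$. So it is enough to compare the two on each $u_i$. By construction of the $\Gamma$-action, $\tau(\varphi(u_i))=\tau([T_i^\flat])=[\varepsilon]^{c_i(\sigma)}[T_i^\flat]=(1+\mu)^{c_i(\sigma)}\varphi(u_i)$, whence $\tau(\varphi(u_i))-\varphi(u_i)=\big((1+\mu)^{c_i(\sigma)}-1\big)[T_i^\flat]$. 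Now $(1+\mu)^c-1\equiv c\mu\pmod{\mu^2}$ for $c\in\Z_p$, and $\mu=\xi\cdot\varphi^{-1}(\mu)$ in $A_\inf$ (with $\varphi$ the Frobenius), so $\tfrac{\tau(\varphi(u_i))-\varphi(u_i)}{\xi}\equiv c_i(\sigma)\,\varphi^{-1}(\mu)\,[T_i^\flat]\pmod{\mu A_\infty}$; applying $\theta$ and using $\theta(\mu)=0$, $\theta([T_i^\flat])=T_i$ and $\theta(\varphi^{-1}(\mu))=\zeta_p-1$ yields $D(u_i)=\xi\otimes c_i(\sigma)(1-\zeta_p)T_i$ (after matching the left/right torsor-orientation sign of \Cref{t:expl-descr-of-action-on-GA}, cf.\ the remark following it), which agrees with the claimed derivation evaluated at $u_i$.

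I do not expect a serious obstacle: once \Cref{t:expl-descr-of-action-on-GA} is available, this is a direct specialisation, the only real input being the elementary $\mu$-adic expansion already isolated in \Cref{l:std-form-for-computing-Dx} (used with $t^\flat=T_i^\flat$ and $E(u)=u$) together with the observation that $\widehat\Omega^1_{A|\Z_p}$ is free on $du_1,\dots,du_n$, so a derivation is pinned down by its values on the $u_i$. The only places needing attention are the sign/twist bookkeeping ($\mu$ versus $\varphi^{-1}(\mu)$, $\zeta_p-1$ versus $1-\zeta_p$, and the left-versus-right torsor convention of \Cref{t:expl-descr-of-action-on-GA}), and the (automatic, by \Cref{sec:autom-overl-1-description-of-g-a}, and also clear from $v_p(\zeta_p-1)=\tfrac{1}{p-1}$) fact that the value $\theta\big(\tfrac{\tau(\varphi(u_i))-\varphi(u_i)}{\xi}\big)$ actually lies in $\Ga^\sharp(R_\infty)$, not merely in $R_\infty$.
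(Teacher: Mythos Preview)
Your proof is correct and follows essentially the same route as the paper: specialise \Cref{t:expl-descr-of-action-on-GA}, observe $\tau$ fixes $\xi$ so $x=1$, compute $D(u_i)$ via the $\mu$-adic expansion of \Cref{l:std-form-for-computing-Dx} and $\theta(\varphi^{-1}(\mu))=\zeta_p-1$, then use that $\widehat\Omega^1_{A|\Z_p}=\bigoplus_i A\,du_i$ to extend to all $f$. Your remark about the sign is apt: the paper's own computation yields $\xi\otimes c_i(\sigma)(\zeta_p-1)T_i$, matching the $(\zeta_p-1)$ in the subsequent \Cref{c:descr-Galois-action-tori-over-perfectoid}, so the $(1-\zeta_p)$ in the statement is a harmless sign slip rather than a torsor-orientation subtlety.
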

\begin{proof}
	The action of $\sigma$ leaves $\Varphi(\xi)$ invariant, hence
	$x=\theta\big(\frac{\tau(\Varphi(\xi))}{\Varphi(\xi)}\big)=1$.
	Using that the map $A_\infty\to R_\infty$ is given by Fontaine's $\theta$, we see that the derivation $D$ is given by
\[\textstyle
D\colon A\to \xi A_\inf\otimes_{A_\inf}R_\infty,\ f(u_1,\ldots, u_n)\mapsto \xi\otimes  \theta\big(\frac{f(\sigma([T_1^\flat]),\ldots, \sigma([T_n^\flat]))-f([T_1^\flat],\ldots, [T_n^\flat])}{\xi}\big)
\]
Using \Cref{l:std-form-for-computing-Dx} and the fact that $\mu/\xi= \varphi^{-1}(\mu)$ for which $\theta(\varphi^{-1}(\mu))=(\zeta_p-1)$, we see that
\[\textstyle D(u_i)= \xi\otimes \theta\big(\frac{\sigma([T_i^\flat])-[T_i^\flat]}{\xi}\big)=\xi\otimes \theta\big(\frac{\sigma([T_i^\flat])-[T_i^\flat]}{\mu}\big)\theta\big(\frac{\mu}{\xi}\big)=\xi\otimes c_i(\sigma)\cdot T_i\cdot (\zeta_p-1).
\]
We now use that $\Omega^1_{A|\Z_p}=\oplus_{i=1}^nAdu_i$ for the $(p,\xi)$-adically completed K\"ahler differentials $\Omega^1_{A|\Z_p}$ of $A$ over $\Z_p$:
By continuity and linearity, the value of $D(u_i)$  for $i=1,\dots,n$ now determines $D(f)$ via the usual formula for derivations for any $f=f(u_1,\dots,u_n)\in A$.
\end{proof}
By \Cref{sec:appl-group-acti-1-identification-of-sigma-via-group-action} and \Cref{sec:autom-overl-2-explicit-action-on-g-a} this describes the action of $\Gamma$ on $Z_A$ from \eqref{d:def-ZA}:
\begin{proposition}\label{c:descr-Galois-action-tori-over-perfectoid}
	We have
	\[Z_A\cong (\Ga^{\sharp})^n= \Spf(\textstyle\widehat{\bigoplus\limits_{m_1,\ldots, m_n\geq 0}} R_\infty \frac{a_1^{m_1}}{m_1!}\ldots \frac{a_n^{m_n}}{m_n!})\]
	and the $R_\infty$-semilinear action of $\Gamma$ on $Z_A$ 
	is given for $\sigma \in \Gamma$ by sending 
	\[\textstyle
	\sigma(a_i)=a_i+\sum\limits_{j=1}^n c_j(\sigma)(\zeta_p-1)T_j.
	\]
\end{proposition}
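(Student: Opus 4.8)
The plan is to deduce both assertions directly from the structural results already established in \S\ref{sec:autom-overl} and \S\ref{sec:appl-group-acti}: the description of $G_A$ in \Cref{c:explicit-descr-GA-if-Omega_A-finite-free}, the fact (recalled in \S\ref{sec:appl-group-acti}) that $Z_A\to\Spf(R_\infty)$ is a $G_{A,R_\infty}$-torsor trivialised by $\vartheta_\varphi$ and carrying the $\sigma$-action of \Cref{t:expl-descr-of-action-on-GA}, and the explicit element $(D,x)\in G_A(R_\infty)$ computed in \Cref{p:descr-D-x-tori-over-perfectoid}.

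First I would pin down the displayed isomorphism. Since $A_\inf=W(R_0^\flat)$ with $R_0^\flat$ perfect, the completed differentials $\widehat\Omega^1_{A_\inf|\Z_p}$ vanish, so $\widehat\Omega^1_{A|\Z_p}=\bigoplus_{i=1}^nA\,du_i$ is finite free; moreover $\xi\in A_\inf$ is a generator of $I$ which we may use to trivialise the Breuil--Kisin twist, and it satisfies $\partial\xi/\partial u_l=0$ for all $l$. Feeding this into \Cref{c:explicit-descr-GA-if-Omega_A-finite-free} identifies $G_A$ with $\prod_{i=1}^n\Ga^\sharp$ via $D\mapsto(D(u_1),\dots,D(u_n))$, with group law equal to componentwise addition (the quadratic correction term of \Cref{c:explicit-descr-GA-if-Omega_A-finite-free} vanishes because $\partial\xi/\partial u_l=0$). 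Transporting this along $\vartheta_\varphi$ and base changing to $R_\infty$ yields the asserted description of $Z_A$ as $(\Ga^\sharp)^n=\Spf\big(\widehat{\bigoplus}_{m_1,\dots,m_n\ge 0}R_\infty\,\tfrac{a_1^{m_1}}{m_1!}\cdots\tfrac{a_n^{m_n}}{m_n!}\big)$, where $a_i$ is the coordinate corresponding to $D\mapsto D(u_i)$.

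It then remains to read off the $\Gamma$-action in these coordinates. By \Cref{t:expl-descr-of-action-on-GA}, under $\vartheta_\varphi$ the $R_\infty$-semilinear automorphism $\sigma_{Z_A}$ becomes $R_\infty$-semilinear translation of $G_{A,R_\infty}$ by the element $(D,x)\in G_A(R_\infty)$ of \Cref{sec:appl-group-acti-1-identification-of-sigma-via-group-action}, and \Cref{p:descr-D-x-tori-over-perfectoid} gives $x=1$ together with $D(u_i)=c_i(\sigma)(\zeta_p-1)T_i$ after trivialising the twist by $\xi$. Since the group law on $(\Ga^\sharp)^n$ is addition and the $T_i$ lie in the $\sigma$-fixed subring $R\subseteq R_\infty$, this translation acts on the coordinates $a_1,\dots,a_n$ by $a_i\mapsto a_i+D(u_i)$; spelling out $D(u_i)$ via \Cref{p:descr-D-x-tori-over-perfectoid} produces the displayed transformation law for $\sigma(a_i)$. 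I expect no genuine obstacle: the substantive computation of $(D,x)$ has already been carried out, and the remaining points merely require care, namely keeping the trivialisation of the Breuil--Kisin twist by $\xi$ consistent between \Cref{c:explicit-descr-GA-if-Omega_A-finite-free} and \Cref{p:descr-D-x-tori-over-perfectoid}, and composing the $\sigma$-semilinearity with the translation in the order dictated by \Cref{t:expl-descr-of-action-on-GA} rather than the opposite one.
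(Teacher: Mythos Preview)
The proposal is correct and follows essentially the same route as the paper: both invoke the identification $Z_A\cong G_{A,R_\infty}$ via $\vartheta_\varphi$, appeal to \Cref{c:explicit-descr-GA-if-Omega_A-finite-free} for the additive description of $G_A$ (using that $\partial\xi/\partial u_l=0$, which the paper phrases as ``$E(u_0)=\xi$ is a constant polynomial so $E'(u_0)=0$''), and then read off the $\sigma$-action from \Cref{t:expl-descr-of-action-on-GA} combined with the explicit value of $D(u_i)$ from \Cref{p:descr-D-x-tori-over-perfectoid}. Your write-up is slightly more detailed (you justify $\widehat\Omega^1_{A_\inf|\Z_p}=0$ and mention that the $T_i$ are $\sigma$-fixed), but the argument is the same.
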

\begin{proof}
	Since we have $\Omega^1_{A|\Z_p}=\oplus_{i=1}^nAdu_i$ for the $(p,\xi)$-adically completed K\"ahler differentials, the first isomorphism follows from $Z_A=G_{A,R_\infty}$ and \Cref{sec:autom-overl-2-explicit-action-on-g-a}. By \Cref{sec:appl-group-acti-1-identification-of-sigma-via-group-action}, the action of $\sigma$ is computed on $a_i$ by the right multiplication of the element $(D(u_j))_{j=1,\dots,n}$ of $(\Ga^{\sharp})^n$. By   \Cref{p:descr-D-x-tori-over-perfectoid}, we have $D(u_j)=(c_j(\sigma)(\zeta_p-1)T_j)_{j=1,\dots,n}$. We deduce from the last part of \Cref{sec:autom-overl-2-explicit-action-on-g-a} that multiplication by $(D(u_j))_{j=1,\dots,n}$ has the described effect. Here we use that in the notation of \Cref{sec:autom-overl-2-explicit-action-on-g-a}, $E(u_0)=\xi$ is a constant polynomial and hence $E'(u_0)=0$.
\end{proof}
\subsection{Tori over $p$-adic fields}
\label{sec:tori-over-p-tori-over-p-adic-fields}
Let $K$ be a $p$-adic field and $C=\widehat{\overline{K}}$. We let
\[
R=\mathcal{O}_K \langle T_1^{\pm 1},\ldots, T_n^{\pm 1}\rangle\quad \text{and}\quad
A= \mathfrak{S} \langle u_1^{\pm 1},\ldots, u_n^{\pm 1}\rangle,\]
where $\mathfrak{S}$ is the Breuil-Kisin prism of \S\ref{sec:p-adic-fields-new-identification-of-galois-action-for-p-adic-fields} and the completion is $(p, E(u))$-adic. Set $u_0:=u$ and $T_0:=\pi$. We define a $\delta$-structure on $A$ extending that of $\mathfrak{S}$ by setting $\delta(u_i)=0$, $i=1,\dots, n$.  Then $(A,I=(E(u)))$ is a prism. We have a natural isomorphism $A/I \cong R$ given by $u_i\mapsto T_i$. We also set
\[
R_\infty:=\mathcal{O}_C \langle T_1^{\pm 1/p^\infty},\ldots, T_n^{\pm 1/p^\infty} \rangle
\quad \text{and}\quad
A_\infty:=A_{\inf}(R_\infty),
\]
so $R\to R_\infty$ is the usual perfectoid cover.
Choose $\pi^\flat=(\pi,\pi^{1/p},\dots) \in \mathcal{O}_C^\flat$ and 
$
T_i^\flat=(T_i,T^{1/p}_i,\ldots )\in R_{\infty}^\flat
$ for $i=1,\ldots, n$.
Sending $u \mapsto [\pi^\flat]$, $u_i\mapsto [T_i^\flat]$ then defines a map $\Varphi\colon A\to A_\infty$.
Consider now
\[
\Gamma := (\Z_p\gamma_1 \oplus\dots\oplus \Z_p\gamma_n) \rtimes \mathrm{Gal}(\overline{K}/K),
\]
the semi-direct product for the action of $\mathrm{Gal}(\overline{K}/K)$ on $\Z_p\gamma_1 \oplus\dots\oplus \Z_p\gamma_n$ given by $g\gamma_i g^{-1} = \gamma_i^{\chi(g)}$ (with $\chi$ the cyclotomic character) for $g\in \mathrm{Gal}(\overline{K}/K)$ and $i=1,\dots,n$. Then $\Gamma$ acts on $R_\infty$ by
$\gamma_i\cdot T_j^{1/p^k} = \zeta_{p^k}^{\delta_{i,j}} T_j^{1/p^k}
$
and the natural action of $\mathrm{Gal}(\overline{K}/K)$ on $\mathcal{O}_C$. This action leaves $R$ fixed and admits a natural lift to $A_\infty$ via $\gamma_i\cdot [T_j^\flat] = [\varepsilon^{\delta_{i,j}}] [T_j^\flat]$. We are thus once again in \Cref{setup:Galois-action}.
\begin{lemma}\label{p:descr-D-x-tori-over-p-adic-field}
	Let $\sigma = \gamma_1^{m_1}\dots \gamma_n^{m_n} g \in \Gamma$ and let $\tau:A_\infty\to A_\infty$ be the induced lift. Then the associated element $(D,x)\in G_A(R_\infty)$ of \Cref{sec:appl-group-acti-1-identification-of-sigma-via-group-action} can be described as follows: We have
	\[ x= 1+E^\prime(\pi)\cdot \pi \cdot z\cdot c(g)=\chi_{\pi^\flat}(g)\]
	as in \Cref{p:descr-D-x-p-adic-field}. The derivation $D$ sends $f=f(u_0,\dots,u_n)\in A=W(k)[[u]]\langle u_1^{\pm 1},\ldots, u_n^{\pm 1}\rangle$ to
	\[ \textstyle D(f)=E(u) \otimes  z\cdot \sum_{i=0}^n c_i(\sigma) T_i  \theta(\frac{\partial f}{\partial u_i})(T_0,T_1,\dots T_n).\]
	where $c_i(\sigma)$ is defined by $\sigma([T_i^\flat])=[\varepsilon]^{c_i(\sigma)}[T_i^\flat]$ for $i=1,\ldots, n$, and $z$ was defined in \Cref{p:descr-D-x-p-adic-field}.
\end{lemma}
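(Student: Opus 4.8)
The plan is to specialise \Cref{t:expl-descr-of-action-on-GA} to the present prism $(A,I)$ using $E(u)$ as the distinguished generator of $I$, and then to run the computations of \Cref{p:descr-D-x-p-adic-field} and \Cref{p:descr-D-x-tori-over-perfectoid} simultaneously. That theorem gives $x=\theta\big(\tfrac{\tau(\varphi(E(u)))}{\varphi(E(u))}\big)=\theta\big(\tfrac{E(\sigma([\pi^\flat]))}{E([\pi^\flat])}\big)$ and $D(f)=E(u)\otimes\theta\big(\tfrac{\tau(\varphi(f))-\varphi(f)}{E([\pi^\flat])}\big)$ for $f\in A$, where I use that $R_\infty$ is $p$-torsionfree to evaluate through Fontaine's $\theta\colon A_\infty\to R_\infty$, and that $\sigma$ fixes $R=A/I$ to see that the relevant differences lie in $J=\ker\theta=E([\pi^\flat])A_\infty$. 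Since $W(k)$ is $p$-adically formally \'etale over $\Z_p$, we have $\widehat\Omega^1_{A|\Z_p}\cong\bigoplus_{i=0}^n A\,du_i$, so $D$ is a $(p,I)$-adically continuous $\Z_p$-derivation determined by the values $D(u_i)$, $i=0,\dots,n$; it therefore suffices to compute each $D(u_i)$ and extend by the Leibniz rule.

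For $x$: the generators $\gamma_1,\dots,\gamma_n$ fix $\pi$, hence fix $[\pi^\flat]$, so $\tau([\pi^\flat])=g([\pi^\flat])=[\varepsilon]^{c(g)}[\pi^\flat]$; equivalently $c_0(\sigma)=c(g)$ in the notation of the statement. Applying \Cref{l:std-form-for-computing-Dx} with $t^\flat=\pi^\flat$ and the minimal polynomial $E$ gives $E(\sigma([\pi^\flat]))\equiv E([\pi^\flat])+c(g)E'([\pi^\flat])[\pi^\flat]\mu\bmod\mu^2$, so that $\tfrac{E(\sigma([\pi^\flat]))}{E([\pi^\flat])}\equiv 1+c(g)E'([\pi^\flat])[\pi^\flat]\tfrac{\mu}{E([\pi^\flat])}$; applying $\theta$ and using $\theta([\pi^\flat])=\pi$ together with $z=\theta\big(\tfrac{\mu}{E([\pi^\flat])}\big)$ yields $x=1+c(g)E'(\pi)\pi z=\chi_{\pi^\flat}(g)$, exactly as in \Cref{p:descr-D-x-p-adic-field}.

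For $D$: by construction $\tau(\varphi(u_i))=\sigma([T_i^\flat])=[\varepsilon]^{c_i(\sigma)}[T_i^\flat]=(1+\mu)^{c_i(\sigma)}[T_i^\flat]$ for every $i=0,\dots,n$ (with $c_0(\sigma)=c(g)$ and $T_0=\pi$), so $\tau(\varphi(u_i))-\varphi(u_i)\equiv c_i(\sigma)\mu[T_i^\flat]\bmod\mu^2$; in particular this difference lies in $\mu A_\infty\subseteq J=E([\pi^\flat])A_\infty$. As $A_\infty$ is a domain we may factor $\tfrac{\tau(\varphi(u_i))-\varphi(u_i)}{E([\pi^\flat])}=\tfrac{\tau(\varphi(u_i))-\varphi(u_i)}{\mu}\cdot\tfrac{\mu}{E([\pi^\flat])}$ inside $A_\infty$, and applying the ring homomorphism $\theta$ gives $D(u_i)=E(u)\otimes c_i(\sigma)T_i z$. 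This is the claimed formula for $f=u_i$; the general case follows because $D$ is a continuous $\Z_p$-derivation, whence $D(f)=\sum_{i=0}^n\tfrac{\partial f}{\partial u_i}(T_0,\dots,T_n)\,D(u_i)$ for any $f=f(u_0,\dots,u_n)\in A$ (the $A$-action on $I\otimes_A\Ga^\sharp(R_\infty)$ factoring through $R$), exactly as in \Cref{p:descr-D-x-tori-over-perfectoid}; for $n=0$ this recovers \Cref{p:descr-D-x-p-adic-field}.

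The only genuinely delicate point — the one I would be careful to justify — is the identity $\theta\big(\tfrac{N}{E([\pi^\flat])}\big)=\theta\big(\tfrac{N}{\mu}\big)\,\theta\big(\tfrac{\mu}{E([\pi^\flat])}\big)$ for $N=\tau(\varphi(f))-\varphi(f)$: this needs $N\in\mu A_\infty$ (so $N/\mu$ is integral) and $\mu\in J=E([\pi^\flat])A_\infty$ (so $\mu/E([\pi^\flat])$ is integral), and both hold since each $[\varepsilon]^{c_i(\sigma)}-1$ lies in $\mu A_\infty$ and $\theta(\mu)=0$. Beyond this, no input from the semi-direct product structure of $\Gamma$ is required: everything is read off from the single prism automorphism $\tau$ and its reduction $\sigma$, as in \Cref{setup:Galois-action}.
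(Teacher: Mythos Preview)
Your proposal is correct and follows essentially the same route as the paper: both specialise \Cref{t:expl-descr-of-action-on-GA} with generator $E(u)$, reduce the computation of $x$ to that of \Cref{p:descr-D-x-p-adic-field}, use freeness of $\widehat{\Omega}^1_{A|\Z_p}$ on $du_0,\dots,du_n$ to reduce $D$ to the values $D(u_i)$, and evaluate these via \Cref{l:std-form-for-computing-Dx} and the factorisation through $\mu$. Your write-up is in fact more detailed than the paper's (which dispatches $x$ in one line by reference to \Cref{p:descr-D-x-p-adic-field} and computes $D(u_i)$ in a single display); one small imprecision is the phrase ``fix $\pi$, hence fix $[\pi^\flat]$'' --- what you need (and what holds by the setup) is that the $\gamma_i$ act trivially on all of $\mathcal{O}_C$, hence on each $\pi^{1/p^k}$.
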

\begin{proof}
 The computation of $x$ works exactly as in \Cref{p:descr-D-x-p-adic-field}. In order to compute the derivation $D$, we use that $\widehat{\Omega}^1_{A/\Z_p}=\oplus_{i=0}^nAdu_i$, so it suffices to describe the action on the generators $u_i\in A$.

For this we evaluate the formula from \Cref{t:expl-descr-of-action-on-GA} by using \Cref{l:std-form-for-computing-Dx} applied with $t=T_i$:
\[\textstyle D(u_i):=E(u) \otimes  \theta \big(\frac{\sigma([T_i^{\flat}])-[T_i^\flat]}{E([\pi^\flat])}\big)=E(u) \otimes  \theta \big(\frac{\sigma([T_i^{\flat}])-[T_i^{\flat}]}{\mu}\big)z=E(u)\otimes c_i(\sigma)\cdot T_i \cdot z.
\]
The description of $D(f)$ follows by linearity and continuity by the usual formula for derivatives.
\end{proof}
As in the previous subsections, we derive from \Cref{p:descr-D-x-tori-over-p-adic-field}, \Cref{sec:appl-group-acti-1-identification-of-sigma-via-group-action} and \Cref{sec:autom-overl-2-explicit-action-on-g-a} the following description of the $\Gamma$-action on the formal scheme $Z_A$ of \eqref{d:def-ZA}:
 
 \begin{proposition}\label{c:descr-tori-over-p-adic-field}
  We have \[Z_A\cong \Spf(\textstyle\widehat{\bigoplus\limits_{m_0,\ldots, m_n\geq 0}} R_\infty \frac{a_0^{m_0}}{m_0!}\ldots \frac{a_n^{m_n}}{m_n!}),\]
  where $a_i$ corresponds to $du_i\otimes 1\in \widehat{\Omega}^1_{A/\Z_p}\widehat{\otimes}_AR$. The $R_\infty$-semilinear action of $\Gamma$ on $Z_A$ is given by
\[\textstyle
\sigma(a_i)=\chi_{\pi^\flat}(\sigma)a_i+\sum_{j=0}^n c_j(\sigma)zT_j.
\]
for $\sigma\in \Gamma$  and $i=0,\ldots, n$. Here, $\chi_{\pi^\flat}(\sigma):=1+c_0(\sigma)E^\prime(\pi)\pi z$ can be regarded as the composition of the cocycle $\chi_{\pi^\flat}$ from \Cref{sec:p-adic-fields-remark-chi-pi-flat-similar-to-the-cyclotomic-character} with the projection $\Gamma\to \Gal(\overline{K}/K)$.
\end{proposition}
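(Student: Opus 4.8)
The plan is to mimic the proofs of \Cref{c:Galois-action-on-ZA-p-adic-fields} and \Cref{c:descr-Galois-action-tori-over-perfectoid}. Since we are in the situation of \Cref{setup:Galois-action}, the two substantive inputs are already available: the identification of the $\Gamma$-action on $Z_A$ with right-translation on $G_{A,R_\infty}$ provided by \Cref{t:expl-descr-of-action-on-GA}, and the explicit description of the translating element $(D,x)$ furnished by \Cref{p:descr-D-x-tori-over-p-adic-field}. What remains is to unwind these through the explicit group law of \Cref{sec:autom-overl-2-explicit-action-on-g-a}.

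Concretely, I would first note that $\widehat{\Omega}^1_{A|\Z_p}=\bigoplus_{i=0}^n A\,du_i$ is finite free, so that \Cref{sec:autom-overl-1-explicit-group-structure-on-g-a}, applied after base change along $R\to R_\infty$, gives $Z_A=G_{A,R_\infty}\cong\prod_{i=0}^n \Ga^\sharp\{1\}$ via $D\mapsto(D(u_0),\dots,D(u_n))$; trivialising the Breuil--Kisin twist by means of the generator $E(u)$ of $I$ then produces the displayed divided-power polynomial ring, the coordinate $a_i$ corresponding to $du_i\otimes 1$. By \Cref{t:expl-descr-of-action-on-GA} the $R_\infty$-semilinear action of $\sigma\in\Gamma$ on $Z_A$ is, under this identification, right-translation by $(D,x)\in G_{A,R_\infty}(R_\infty)$, and by \Cref{p:descr-D-x-tori-over-p-adic-field} we have $D(u_i)=E(u)\otimes z\,c_i(\sigma)T_i$ and $x=\chi_{\pi^\flat}(\sigma)$. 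Since the latter only involves the image $g$ of $\sigma$ in $\Gal(\overline{K}/K)$, this also yields the promised interpretation of $\chi_{\pi^\flat}(\sigma)$ via $\Gamma\to\Gal(\overline{K}/K)$.

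It then remains to pass to coordinates. For this I would invoke \Cref{sec:autom-overl-2-explicit-action-on-g-a} in the case $\xi=E(u_0)$: the coefficients of $E$ lie in $W(k)$, hence are annihilated by all the dual derivations $\partial/\partial u_l$, so the group law on $\prod_{i=0}^n\Ga^\sharp$ degenerates to $((a_i),(b_j))\mapsto(a_k(1+E'(u_0)b_0)+b_k)_k$. Substituting $b_i=z\,c_i(\sigma)T_i$ (the trivialised $D(u_i)$) and reducing $E'(u_0)$ to $E'(\pi)$ in $R$, right-translation by $(D,x)$ sends $a_i$ to $\chi_{\pi^\flat}(\sigma)\,a_i$ plus the translation term, which is the asserted formula for $\sigma(a_i)$. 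I do not expect a genuine obstacle here: the only delicate points are bookkeeping --- keeping straight the direction of the (non-commutative) translation on the torsor $Z_A$, and matching the trivialisation of $\Ga^\sharp\{1\}$ with the sign and normalisation conventions fixed in \Cref{t:expl-descr-of-action-on-GA} --- while all of the real work has already been done in \Cref{t:expl-descr-of-action-on-GA} and \Cref{p:descr-D-x-tori-over-p-adic-field}.
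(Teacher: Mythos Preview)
Your proposal is correct and follows essentially the same approach as the paper. The paper's proof is a single sentence noting that the factor $(1+E'(u_0)b_0)$ from \Cref{sec:autom-overl-2-explicit-action-on-g-a} evaluates to $\chi_{\pi^\flat}(\sigma)$ by definition; you have simply spelled out the surrounding reductions to \Cref{t:expl-descr-of-action-on-GA}, \Cref{p:descr-D-x-tori-over-p-adic-field}, and \Cref{sec:autom-overl-1-explicit-group-structure-on-g-a} that the paper leaves implicit.
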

\begin{proof}
	The factor $(1+E^\prime(u_0)b_0)$ in \Cref{sec:autom-overl-2-explicit-action-on-g-a}  evaluates to $\chi_{\pi^\flat}(\sigma)$ by definition.
\end{proof}
\section{Galois cohomology and the proof of fully faithfulness}
\label{sec:introduction}

In this section, we verify \Cref{sec:form-reduct-prov-1-crucial-assumption-on-galois-cohomology} in the examples of \S\ref{sec:examples}. We start with the key calculation.

\subsection{The key calculation}
\label{sec:crucial-calculation}
Set $S:=\Z_p^\cycl\langle \kappa,T^{\pm 1}\rangle$ and $S_\infty:=\Z_p^\cycl\langle \kappa,T^{\pm 1/p^\infty}\rangle$ where $\kappa$ is a formal variable. Then $\Gamma:=\Z_p(1)$ acts in the usual way continuously and $S$-linearly on $S_\infty$. Fix a topological generator $\gamma=(1,\zeta_p,\zeta_{p^2},\ldots)\in\Gamma$ of $\Z_p(1)$. We define the $p$-complete, $p$-completely faithfully flat $S_\infty$-algebra
\[\textstyle
B^+:=\widehat{\bigoplus\limits_{n\geq 0}} ~ S_\infty\frac{x^n}{n!}
  \]
and the $\Q_p$-Banach algebra $B:=B^+\tf$.  For the fixed primitive $p$-th root of unity $\zeta_p$, set $c:=(\zeta_p-1)\cdot \kappa$. We now extend the natural $\Gamma$-action on $S_\infty$ to a continuous action on $B^+$ by setting
\[
\gamma(x):=x+c.
\]
Hence, the role of $\kappa$ is that it will later in \Cref{sec:tori-over-perfectoid-1-main-theorem-with-perfectoid-base} act as a placeholder recording a $\Gamma$-action on $B$. For example, sending $\kappa$ to $T$ will recover on $B$ the action from \Cref{c:descr-Galois-action-tori-over-perfectoid}.

We note that the $\Gamma$-action on $B^+$ is well-defined because for any $\sum\limits_{n\geq 0} a_n\tfrac{x^n}{n!}\in B^+$, we have
\begin{equation}\label{eq:expl-action-on-B}
	\textstyle
	\gamma(\sum\limits_{n=0}^\infty a_n\frac{x^n}{n!}) =  \sum\limits_{n=0}^\infty \gamma(a_n)\frac{(x+c)^n}{n!}=\sum\limits_{n=0}^\infty\gamma(a_n)\sum\limits_{k=0}^n\frac{x^k}{k!}\frac{c^{n-k}}{(n-k)!}=\sum\limits_{n=0}^\infty \left(\sum\limits_{m=0}^\infty \gamma(a_{n+m})\frac{c^{m}}{m!}\right)\frac{x^n}{n!}
\end{equation}
and the last sum converges in $B^+$ as $a_{m}\to 0$ for $m\to \infty$ and since the $p$-adic valuation
\[\textstyle v_p((\zeta_p-1)^n/n!)= \frac{n}{p-1}-\frac{n-s_p(n)}{p-1}=\frac{s_p(n)}{p-1}\geq \frac{1}{p-1}\]
is bounded below by Legendre's formula for $n\geq 1$ (here $s_p(n)$ is the sum of the digits of $n$ in base $p$).

\begin{theorem}\label{t:Galois-cohom-for-B}
	There is $n\in \N$ such that for any $p$-complete $S$-algebra $R$, the cofiber of the map
	\[
	R\to R\Gamma(\Gamma, B^+\widehat{\otimes}_{S} R)
	\] is killed by $(cp)^n$. Thus if $c$ maps to a unit in $R\tf$, this map is an isomorphism after inverting $p$.
\end{theorem}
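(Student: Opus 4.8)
The plan is to reduce the statement to the case $R=S$ and then to an explicit computation. Since $\Gamma\cong\Z_p$ is procyclic with topological generator $\gamma$, continuous cohomology is computed by the two-term complex $R\Gamma(\Gamma,M)=[M\xrightarrow{\gamma-1}M]$ in degrees $0,1$. Moreover $B^+$ is $p$-completely flat over $S$: the map $S\to S_\infty$ is a $p$-completed filtered colimit of the finite free extensions $\Z_p^\cycl\langle\kappa,T^{\pm 1/p^k}\rangle$, and $B^+$ is $p$-completely flat over $S_\infty$. As $\gamma-1$ is $S$-linear and $R$ carries the trivial action, this gives $R\Gamma(\Gamma,B^+\widehat{\otimes}_S R)\cong R\Gamma(\Gamma,B^+)\widehat{\otimes}^L_S R$ for any $p$-complete $S$-algebra $R$. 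Writing $K$ for the cofiber of $S\to R\Gamma(\Gamma,B^+)$, it therefore suffices to show that $cp\cdot\id_K\simeq 0$: the cofiber over $R$ is then $K\widehat{\otimes}^L_S R$, killed by $cp$ (a fortiori by $(cp)^n$ for every $n\ge 1$), and if $c$ is a unit in $R\tf=R[1/p]$ then so is $cp$, forcing this cofiber to vanish after inverting $p$.

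To compute $K$ I would exploit that $c=(\zeta_p-1)\kappa$ lies in $S$. Split $S_\infty=S\oplus N$ as $\Gamma$-modules, where $N:=\widehat{\bigoplus}_{i\in\Z[1/p]\setminus\Z}S\cdot T^i$; since $c\in S$, the action $\gamma(x)=x+c$ respects the induced decomposition $B^+=B_1^+\oplus B_2^+$ with $B_1^+=\widehat{\bigoplus}_m S\,\tfrac{x^m}{m!}$, $B_2^+=\widehat{\bigoplus}_m N\,\tfrac{x^m}{m!}$, and the structural map $S\to B^+$ factors through $B_1^+$. Thus $K$ splits into the cofiber of $S\to R\Gamma(\Gamma,B_1^+)$ and the complex $R\Gamma(\Gamma,B_2^+)$, and I treat them separately. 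Let $\partial=\partial_x$ be divided-power differentiation ($\tfrac{x^m}{m!}\mapsto\tfrac{x^{m-1}}{(m-1)!}$): it is surjective with kernel the constants and is \emph{pointwise} topologically nilpotent, $\partial^N(\sum_m a_m\tfrac{x^m}{m!})=\sum_l a_{l+N}\tfrac{x^l}{l!}$. Then $\gamma|_{B_1^+}-1=\exp(c\partial)-1=c\partial\cdot u$ with $u=\sum_{k\ge 0}\tfrac{c^k}{(k+1)!}\partial^k$; the series converges by the estimate in \eqref{eq:expl-action-on-B}, and its non-constant part is pointwise topologically nilpotent, so $u$ is invertible with continuous inverse via a Neumann series. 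Hence on $B_1^+$ the kernel of $\gamma-1$ is $S$ (identified with the structural $S$) and the cokernel is $B_1^+/cB_1^+$, killed by $c$; so the first summand of $K$ is $(B_1^+/cB_1^+)[-1]$.

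For $B_2^+$, write $\gamma|_{B_2^+}=\gamma_N\exp(c\partial)$ and introduce the operator $h$ on $N$ with $h(T^i)=\tfrac{\zeta_p-1}{\zeta_{p^k}^{a}-1}T^i$ for $i=a/p^k$ ($a\in\Z$, $p\nmid a$, $k\ge 1$); it has operator norm $\le 1$ since $v_p(\zeta_p-1)\ge v_p(\zeta_{p^k}^a-1)$, and it satisfies $(\gamma_N-1)h=h(\gamma_N-1)=\zeta_p-1$. Since $\gamma_N,h$ commute with $\partial,\exp(c\partial),u$ and with the central element $c$, a short computation (using $h\gamma_N=(\zeta_p-1)+h$ and $c=(\zeta_p-1)\kappa$) gives $h(\gamma-1)=(\gamma-1)h=(\zeta_p-1)v$ with $v=1+(\kappa\,h\,\gamma_N\,u)\,\partial$; as $\kappa\,h\,\gamma_N\,u$ has operator norm $\le 1$ and commutes with $\partial$, $v$ is invertible by a Neumann series. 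It follows that $\gamma-1$ is injective on $B_2^+$ (from $(\zeta_p-1)\beta=hv^{-1}(\gamma-1)\beta$ and $(\zeta_p-1)$-torsion-freeness) and that its cokernel is killed by $\zeta_p-1$ (from $(\zeta_p-1)\delta=(\gamma-1)(hv^{-1}\delta)$), so $R\Gamma(\Gamma,B_2^+)$ is concentrated in degree $1$ and killed by $\zeta_p-1$. Combining, $K$ is concentrated in degree $1$ with $H^1(K)=B_1^+/cB_1^+\oplus\operatorname{coker}(\gamma-1|_{B_2^+})$, which is killed by $c$ (recall $\zeta_p-1\mid c$); in particular $cp\cdot\id_K=0$, completing the reduction.

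The main obstacle will be the topological bookkeeping on the divided-power completion $B^+$: although $\partial_x$ is \emph{not} topologically nilpotent as an operator (it has operator norm $1$), it is pointwise topologically nilpotent, and one must carefully check that $\gamma_N,h,\kappa,u$ all commute with $\partial_x$ and have operator norm $\le 1$, so that the Neumann series defining $u^{-1}$ and $v^{-1}$ converge pointwise to continuous operators. The identity $\exp(c\partial_x)-1=c\partial_x\cdot u$ and the valuation bound $v_p\big((\zeta_p-1)^k/k!\big)\ge 0$ (Legendre, as in the discussion after \eqref{eq:expl-action-on-B}) are what make this work; once they are in place, everything reduces cleanly to the two module-theoretic computations above.
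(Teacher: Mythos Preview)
Your proof is correct and follows the same overall architecture as the paper's: reduce to $R=S$ via the projection formula, then split $B^+$ according to the $\Gamma$-stable decomposition $S_\infty=S\oplus N$ (your $B_1^+,B_2^+$ are exactly the paper's $B_0^+,Q$). The two proofs diverge in how each piece is handled.

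For $B_1^+$, you use the exact operator identity $\gamma-1=c\,\partial\,u$ with $u$ a unit, yielding $H^0=S$ and $H^1=B_1^+/cB_1^+$ on the nose. The paper instead writes $\gamma-1$ as an infinite upper-triangular matrix, passes to a finite-index subgroup $\Gamma'=p^m\Gamma$ so as to shrink $|c|$ and force the strictly upper-triangular part into $pS$, and then inverts a unipotent matrix; this only gives ``killed by $(cp)^n$'' for some unspecified $n$. For $B_2^+$, you construct the explicit quasi-inverse $h$ (the normalized trace) and factor $h(\gamma-1)=(\zeta_p-1)v$ with $v$ a unit, obtaining injectivity and cokernel killed by $\zeta_p-1$. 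The paper instead invokes \cite[Lemma~5.5]{scholze2013p} to know that $\gamma-1$ on $S_\infty/S$ is invertible after inverting $p$, feeds this into the diagonal of the same upper-triangular matrix, again shrinks $|c|$, and then appeals to the Banach open mapping theorem to extract an integral bound. Your route is more elementary and sharper (you get $n=1$, indeed the cofiber is killed by $c$); the paper's perturbation argument is less precise but more robust, requiring only the qualitative input that $\gamma-1$ is an isomorphism on $N[\tfrac{1}{p}]$ rather than an explicit inverse.

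The one place your write-up should be tightened is the invertibility of $u$ and $v$: ``pointwise topologically nilpotent'' of $u-1$ is not automatic from $\|u-1\|\le 1$, but follows because $(u-1)^j$ factors through $\partial^j$ and hence the $m$-th coefficient of $(u-1)^j\beta$ depends only on $a_{m+j},a_{m+j+1},\dots$, which are eventually small. You allude to this in your final paragraph; making it explicit (one line) would complete the argument.
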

\begin{proof}
	As $B^+$ is topologically free over $S$, we have $R\widehat{\otimes}_S B^+\cong R\widehat{\otimes}_S^{L}B^+$. Therefore the complex
	\[
	R\widehat{\otimes}_S B^+\xrightarrow{\gamma-1} R\widehat{\otimes}_S B^+
	\]computes 
	continuous $\Gamma$-cohomology and we can reduce to $R=S$.
	By \eqref{eq:expl-action-on-B} we  have
	\[ \textstyle
	(\gamma-1)\sum\limits_{n= 0}^\infty a_n\frac{x^n}{n!}=\sum\limits_{n=0}^\infty \left((\gamma-1)a_n+\sum\limits_{m=1}^\infty \gamma(a_{n+m})\frac{c^{m}}{m!}\right)\frac{x^n}{n!}\]
	for any $\sum\limits_{n=0}^\infty a_n\frac{x^n}{n!}\in B^+$. Consider now the subring
	$
	\textstyle B_0^+:=\widehat{\bigoplus\limits_{n\geq 0}} S \frac{x^n}{n!}\subseteq B^+
	$
	as well as the quotient
	$Q:=B^+/B_0^+\cong \widehat{\bigoplus\limits_{n\geq 0}} S_\infty/S \frac{x^n}{n!}$.
	As  $S\to R\Gamma(\Gamma, B^+)$ factors through $R\Gamma(\Gamma,B_0^+)$, it suffices to show:
	\begin{enumerate}
		\item the map $S\to R\Gamma(\Gamma, B_0^+)$ has cofiber killed by some $(cp)^n, n\geq 1$, that
		\item the complex $R\Gamma(\Gamma,Q)$ is killed by $(cp)^n$ for some $n\geq 1$. 
	\end{enumerate}
	We begin with proving (2). We may replace $\Gamma$ by $\Gamma^\prime=\Z_p\gamma^{p^m}$ as $R\Gamma(\Gamma,Q)=R\Gamma(\Gamma/\Gamma^\prime, R\Gamma(\Gamma^\prime,Q))$. This changes the effect of $\gamma$ on $x$ to $x\mapsto x+p^mc$. We may therefore without loss of generality make $|c|$ as small as we like. In terms of the orthonormal basis $(e_n:=\frac{x^n}{n!})_{n=0}^\infty$ relatively over the $\Z_p^\cycl$-Banach module $S_\infty/S$, we can represent $\gamma-1:Q\to Q$ as the infinite upper triangular matrix
	\[M=\left(
	\begin{matrix}
		\gamma-1 & \gamma\cdot c & \gamma\cdot \frac{c^2}{2!} & \gamma\cdot \frac{c^3}{3!} & \cdots & \gamma\cdot \frac{c^n}{n!} & \cdots         \\
		& \gamma-1      & \gamma \cdot c             & \gamma\cdot \frac{c^2}{2!} &        & \gamma\cdot \frac{c^{n-1}}{(n-1)!} & \cdots  \\
		&               & \ddots                  &               &        &                                    &
	\end{matrix}\right)\]
	\begin{lemma}
		\label{sec:key-calculation-bounded-p-torsion-in-kernel-and-cokernel}
		The kernel and cokernel of the map $\gamma-1\colon S_\infty/S\to S_\infty/S$ are bounded $p^\infty$-torsion.
	\end{lemma}
	\begin{proof}
		This follows  from \cite[Lemma 5.5]{scholze2013p} by base change along $\Z^\cycl_p\langle T^{\pm 1}\rangle \to S$.
	\end{proof}
The map $\gamma-1\colon S_\infty/S\tf\to S_\infty/S\tf$ of $\Q_p$-Banach spaces thus admits a continuous $S$-linear inverse $\rho$. This induces an $S$-linear continuous operator on $Q\tf$ with matrix representation given by the diagonal matrix of $\rho$. Then  $M\cdot \rho=1+U$ where $U$ is an upper triangular matrix. Making $c$ smaller if necessary, we can assume that $U$ has entries of absolute value $<1$, converging to $0$ in each row. Then $\sum_{m=0}^\infty (-U)^m$ is an inverse to $1+U$. Thus $M\cdot \rho\colon Q\tf\to Q\tf$ is invertible, and therefore so is $M\colon Q\tf\to Q\tf$. By the Banach open mapping theorem, statement (2) follows.
	
	For (1), we argue similarly: $\gamma$ is an $S$-linear endomorphisms of the Banach $S$-module $B_0^+$, and the matrix representing $\gamma-1$ on $B_0^+$ with respect to the orthogonal basis $(e_n:=\frac{x^n}{n!})_{n=0}^\infty$ over $S$ has zeros on and below the diagonal. In particular, $S\subseteq B_0^+$ being  fixed by $\Gamma$, there is a copy of $S$ in the kernel. 
	
	We now consider the induced map $\phi\colon B_0^+/S\to B_0^+$ and claim that this is an isomorphism up to $(cp)^n$-torsion for some $n\geq 1$. Take as a basis in $B_0^+/S$ the elements $e_1, e_2,\ldots$ and in $B_0^+$ the elements $e_0,e_1,\ldots$, then the resulting matrix $M$ representing $\phi$ is now of the form:
	
	\[\left(
	\begin{matrix}
		c & \frac{c^2}{2!} & \frac{c^3}{3!} & \cdots & \frac{c^n}{n!}         \\
		& c              & \frac{c^2}{2!} &        & \frac{c^{n-1}}{(n-1)!} \\
		&                &                & \ddots &                       
	\end{matrix}\right)\]
	This is $c$ times  a unipotent matrix, and after shrinking $c$ by the same trick as before, we can assume that the strictly upper diagonal entries are in $pS$. Then $M=c(1+U)$ for a matrix $U$ such that $\sum\limits_{m=0}^\infty (-U)^m$ converges. Then $M\colon B^+_0/S\to B^+_0$ is an isomorphism up to $c$-torsion as claimed.	
\end{proof}

\subsection{Fully faithfulness in the smoothoid case}
\label{sec:tori-over-perfectoid}
We recall the following definition from \S\ref{sec:introduction-1}:
\begin{definition}
  \label{sec:fully-faithf-smooth-definition-smoothoid}
  A $p$-adic formal scheme $X$ is \textit{smoothoid} if $X$ is locally smooth over some perfectoid $p$-adic formal scheme $X_0$. 
\end{definition}

This is a variant for formal schemes of the ``smoothoid adic spaces'' introduced in \cite[\S2]{heuer-sheafified-paCS}. More precisely, the adic generic fibre $\X$ of any smoothoid formal scheme is such a smoothoid adic space, in particular $\X$ is sousperfectoid and hence sheafy. Like in \cite[Definition~2.10]{heuer-sheafified-paCS}, there is a good notion of a global sheaf of differentials on any smoothoid $p$-adic formal scheme:

\begin{lemma}
  \label{sec:fully-faithf-smooth-differentials-are-smoothoid}
  \begin{enumerate}
  \item Let $Z\to Y$ be a morphism of perfectoid $p$-adic formal schemes, then the (automatically $p$-completed) cotangent complex $L_{Z|Y}$ vanishes.
    \item If $X$ is any smoothoid formal scheme, then $H^0(L_{X|\Z_p})=\Omega^1_{X|\Z_p}$ is a finite, locally free sheaf. 
    \end{enumerate}
    \end{lemma}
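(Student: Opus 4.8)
The plan is to prove both statements by reducing to the affine case and using that the (derived, $p$-completed) cotangent complex is compatible with the relevant base changes and localizations.

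\textbf{Part (1).} The claim is that if $Z \to Y$ is a morphism of perfectoid $p$-adic formal schemes, then $L_{Z|Y} = 0$ after $p$-completion. First I would reduce to the affine case $Z = \Spf(B)$, $Y = \Spf(C)$ with $B$, $C$ perfectoid rings, since the cotangent complex is local on source and target. Then the key input is the transitivity triangle
\[
L_{C|\Z_p} \widehat{\otimes}_C B \to L_{B|\Z_p} \to L_{B|C},
\]
so it suffices to show that for any perfectoid ring $B$, the $p$-completed cotangent complex $L_{B|\Z_p}^{\wedge}$ vanishes. This is a standard fact: $B$ is the quotient of the perfectoid ring $A_{\inf}(B)$ (or $B^{\flat}$ in characteristic $p$), and one uses that for a perfect $\F_p$-algebra $S$ the cotangent complex $L_{S|\F_p}$ vanishes (Frobenius is an isomorphism), that $L_{A_{\inf}(B)|\Z_p}^{\wedge} = 0$ since $A_{\inf}(B)$ is relatively perfect over $\Z_p$ modulo $p$, and finally that $L_{B|A_{\inf}(B)}^{\wedge} = 0$ because $B = A_{\inf}(B)/\xi$ for a non-zero-divisor $\xi$, so $L_{B|A_{\inf}(B)} \cong (\xi)/(\xi^2)[1]$, which is killed by $\xi$ and hence vanishes after inverting nothing but becomes zero once one observes $\xi$ maps to $0$ in $B$ — more precisely $L_{B|A_{\inf}}^{\wedge} \simeq B\cdot \mathrm{d}\xi[1]$ which is a free $B$-module of rank one in degree $1$; here one must be slightly careful and instead invoke the Bhatt–Scholze/Bhatt–Morrow–Scholze computation that the $p$-completed $L_{B|\Z_p}$ vanishes for perfectoid $B$, which is \cite[Lemma 3.14]{bhatt_scholze_prisms_and_prismatic_cohomology} or can be cited from the relevant reference. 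The cleanest route is simply to cite this known vanishing.

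\textbf{Part (2).} Now let $X$ be smoothoid. By definition $X$ is locally $\Spf(R)$ with $R$ smooth over a perfectoid ring $R_0$. Working on such an affine piece, transitivity for $\Z_p \to R_0 \to R$ gives the triangle
\[
L_{R_0|\Z_p} \widehat{\otimes}_{R_0} R \to L_{R|\Z_p} \to L_{R|R_0}.
\]
By part (1), $L_{R_0|\Z_p}^{\wedge} = 0$, so $L_{R|\Z_p}^{\wedge} \simeq L_{R|R_0}^{\wedge}$. Since $R$ is $p$-completely smooth over $R_0$, its $p$-completed cotangent complex $L_{R|R_0}^{\wedge}$ is concentrated in degree $0$ and equals $\Omega^1_{R|R_0}$, which is finite projective over $R$ (this is the definition/standard property of $p$-complete smoothness). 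Hence $H^0(L_{X|\Z_p}) = \Omega^1_{X|\Z_p}$ is finite locally free in degree $0$ locally on $X$. It remains to see this glues: the cotangent complex sheafifies, and the local identifications $L_{X|\Z_p}^{\wedge}|_{\Spf(R)} \simeq \Omega^1_{R|R_0}[0]$ are canonical once one knows they are independent of the chosen perfectoid base — but this independence is automatic because on overlaps both descriptions agree with the intrinsic $L_{X|\Z_p}$, which does not reference $R_0$. So the local sheaves $\Omega^1_{R|R_0}$ patch to a global finite locally free sheaf $\Omega^1_{X|\Z_p}$, and $L_{X|\Z_p}$ has no cohomology outside degree $0$.

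\textbf{Main obstacle.} The only genuinely delicate point is the vanishing in part (1): one must work with the \emph{$p$-completed} (derived) cotangent complex throughout and be careful that $p$-completion does not interact badly with the transitivity triangles — e.g. the triangle for $L^{\wedge}$ requires knowing that $p$-completion is exact on the relevant modules or at least that the triangle stays a triangle after completion, which holds because all terms have bounded $p$-power torsion in the cases at hand (perfectoid rings are $p$-torsion-free up to bounded torsion, and $R$ has bounded $p^{\infty}$-torsion). Once the vanishing $L_{R_0|\Z_p}^{\wedge} = 0$ is in hand, everything else is a formal consequence of transitivity and the definition of $p$-complete smoothness. I would therefore spend most of the write-up on carefully citing or proving the perfectoid vanishing, and keep the smoothoid gluing argument short.
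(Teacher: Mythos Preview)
There is a genuine gap: your central claim that $L_{B|\Z_p}^{\wedge}=0$ for perfectoid $B$ is false. In fact your own computation shows this: the transitivity triangle for $\Z_p\to A_{\inf}(B)\to B$ has $L_{A_{\inf}(B)|\Z_p}^{\wedge}=0$, so $L_{B|\Z_p}^{\wedge}\cong L_{B|A_{\inf}(B)}^{\wedge}\cong B\{1\}[1]$, the free rank-one module in degree $1$ you correctly identified. No lemma in Bhatt--Scholze or Bhatt--Morrow--Scholze asserts vanishing; what they compute is precisely $L_{B|\Z_p}^{\wedge}\cong B\{1\}[1]$. Consequently your transitivity argument for part~(1) collapses. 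It can be repaired by observing that the map $L_{C|\Z_p}^{\wedge}\widehat{\otimes}_C B\to L_{B|\Z_p}^{\wedge}$ identifies with $C\{1\}[1]\otimes_C B\to B\{1\}[1]$, which \emph{is} an isomorphism since Breuil--Kisin twists base-change, hence $L_{B|C}^{\wedge}=0$; but this requires knowing the structure of both terms, not their vanishing. The paper avoids $\Z_p$ entirely: it passes to the associated perfect prisms $(A,I)\to(B,J)$, uses the pushout $T=B\otimes_A S$ to get $L_{T|S}^{\wedge}\cong L_{B|A}^{\wedge}\widehat{\otimes}^L_B T$, and notes $L_{B|A}^{\wedge}=0$ because $A/p,B/p$ are perfect $\F_p$-algebras.

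The same error infects your part~(2). First, $\Z_p$ is not perfectoid, so part~(1) does not apply to $R_0/\Z_p$; second, $L_{R_0|\Z_p}^{\wedge}\neq 0$, so your asserted isomorphism $L_{R|\Z_p}^{\wedge}\simeq L_{R|R_0}^{\wedge}$ is wrong, and in particular $L_{X|\Z_p}^{\wedge}$ is \emph{not} concentrated in degree $0$. The statement, however, only concerns $H^0$: the paper uses merely that the first term in the transitivity triangle has vanishing $H^0$ (since $L_{R_0|\Z_p}^{\wedge}$ lives in degree $1$), while the last term $L_{X|S}=\Omega^1_{X|S}[0]$ sits in degree $0$, which is enough to conclude $H^0(L_{X|\Z_p})\cong\Omega^1_{X|S}$ is finite locally free.
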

    \begin{proof}
      We first prove (1). We may assume that $Z=\Spf(T), Y=\Spf(S)$ are affine. Let $(A,I)$ resp.\ $(B,J)$ be the perfect prisms associated with $S$ resp.\ $T$. Then
      $
        L_{Z/Y}=L_{T|S}^\wedge\cong L_{B|A}^\wedge\widehat{\otimes}_{B}^L T$
      for the $p$-completed cotangent complexes and $L_{B|A}^\wedge\otimes_{B}^L B/p\cong L_{(B/p)|(A/p)}$ vanishes as $B/p, A/p$ are perfect.
      
      For (2), we may work locally and assume that there is a smooth morphism $f:X\to S$ where $S$ is a perfectoid formal scheme. Consider the transitivity triangle
      \[
        \mathcal{O}_X\widehat{\otimes}^L_{\mathcal{O}_{S}}{L_{S|\Z_p}} \to L_{X|\Z_p}\to L_{X|S}.
      \]
     The first term has vanishing $H^0$ since $S$ is perfectoid. The last term is $\Omega^1_{X|S}[0]$ since $f$ is smooth.
   \end{proof}
   
\begin{definition}\label{def:absolute-diff-of-smoothoid}
	Let $X$ be a smoothoid formal scheme.
	Motivated by \Cref{sec:fully-faithf-smooth-differentials-are-smoothoid}, we simply write $\Omega^1_{X}:=\Omega^1_{X|\Z_p}$.
If there is a smooth morphism $X\to S$ to a perfectoid space, then $\Omega^1_X=\Omega^1_{X|S}$, but since we usually do not make the local perfectoid base $S$ explicit, we drop this from the notation. 
\end{definition}
    We now establish \Cref{sec:introduction-2-statement-main-theorem} in the smoothoid case, i.e., prove the following theorem.

\begin{theorem}
	\label{sec:tori-over-perfectoid-1-main-theorem-with-perfectoid-base}
	Assume that $X$ is a qcqs smoothoid formal scheme. Then the functor
	\[
	\alpha_X^\ast\colon \mathcal{P}erf(X^\HT)\tf\to \mathcal{P}erf(\X_v)
	\]
	is fully faithful, where $\X$ is the adic generic fibre of $X$.
\end{theorem}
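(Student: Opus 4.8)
The strategy is to reduce to the local statement via \Cref{sec:crit-fully-faithf-main-theorem-localizes-on-x}, and then to verify \Cref{sec:form-reduct-prov-1-crucial-assumption-on-galois-cohomology} so that we may invoke \Cref{sec:form-reduct-prov-1-fully-faithfulness-for-alpha}. First I would note that by \Cref{sec:crit-fully-faithf-main-theorem-localizes-on-x} and quasi-compactness, it suffices to treat the case where $X=\Spf(R)$ is affine and admits a smooth map $X\to X_0=\Spf(R_0)$ with $R_0$ perfectoid; moreover, since smoothness is local on the target and we may localize further, we may assume that this smooth map factors through an \'etale map $X\to \mathbb{T}^n_{R_0}:=\Spf(R_0\langle T_1^{\pm 1},\dots,T_n^{\pm 1}\rangle)$ (a ``toric chart''). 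By \Cref{sec:crit-fully-faithf-assumption-passes-to-ind-etale-extensions}, \Cref{sec:form-reduct-prov-1-crucial-assumption-on-galois-cohomology} for such $X$ follows from \Cref{sec:form-reduct-prov-1-crucial-assumption-on-galois-cohomology} for the torus $\mathbb{T}^n_{R_0}$ itself (strictly, one passes through the ind-\'etale extension $R_0\langle T_i^{\pm 1}\rangle \to R$ after checking that the relevant perfectoid cover and prism base change behave as required; here one also uses that $\delta$-structures lift uniquely along ind-\'etale maps).

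Next I would reduce from a general $p$-torsion free perfectoid base $R_0 \supseteq \Z_p^{\cycl}$ to the universal one. Using \Cref{sec:crit-fully-faithf-base-change-along-morph-of-perfect-prisms} applied to the smooth map $\mathbb{T}^n_{R_0}\to X_0$ together with a map of perfect prisms covering $\Z_p^{\cycl}\to R_0$, the assumption for $\mathbb{T}^n_{R_0}$ reduces to the assumption for $\mathbb{T}^n_{\Z_p^{\cycl}}$, i.e.\ for $R=\Z_p^{\cycl}\langle T_1^{\pm1},\dots,T_n^{\pm1}\rangle$ with its usual perfectoid cover $R_\infty = \Z_p^{\cycl}\langle T_1^{\pm1/p^\infty},\dots,T_n^{\pm1/p^\infty}\rangle$ and $\Gamma=\Z_p(1)^n$. (A bit of care is needed about $p$-torsion freeness and about checking the base change is flat; this is where the hypotheses of \Cref{sec:crit-fully-faithf-base-change-along-morph-of-perfect-prisms} get used.) For this standard torus we have the explicit description of the period ring $B^+_{A,R_\infty}$ and the $\Gamma$-action from \Cref{c:descr-Galois-action-tori-over-perfectoid}: as an $R_\infty$-algebra it is $\widehat{\bigoplus}_{m_1,\dots,m_n\geq 0} R_\infty\, \tfrac{a_1^{m_1}}{m_1!}\cdots\tfrac{a_n^{m_n}}{m_n!}$ with $\sigma(a_i) = a_i + \sum_j c_j(\sigma)(\zeta_p-1)T_j$.

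Finally, the heart of the matter is to deduce \Cref{sec:form-reduct-prov-1-crucial-assumption-on-galois-cohomology} from the key calculation \Cref{t:Galois-cohom-for-B}. I would iterate in $n$: writing $\Gamma = \Z_p(1)\times \Z_p(1)^{n-1}$ and using $R\Gamma(\Gamma,-) = R\Gamma(\Z_p(1)^{n-1}, R\Gamma(\Z_p(1),-))$, the inner cohomology is computed by \Cref{t:Galois-cohom-for-B}: the last variable $a_n$ plays the role of $x$, the coordinate $T_n$ the role of $\kappa$ (so $c = (\zeta_p-1)T_n$ is invertible in $R\tf$ because $T_n$ is a unit), and the remaining variables $a_1,\dots,a_{n-1}$ and $T_1^{\pm1/p^\infty},\dots$ are absorbed into the ``base ring $R$'' of that theorem — which is allowed precisely because \Cref{t:Galois-cohom-for-B} is stated for an \emph{arbitrary} $p$-complete $S$-algebra. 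Thus $R\Gamma(\Z_p(1), B^+_{A,R_\infty})$ agrees, up to a uniformly bounded power of $p$, with the analogous period ring for the $(n-1)$-dimensional torus, and one concludes by induction, the base case $n=0$ being trivial. The main obstacle I anticipate is bookkeeping rather than conceptual: matching the explicit $\Gamma$-action of \Cref{c:descr-Galois-action-tori-over-perfectoid} precisely onto the hypotheses of \Cref{t:Galois-cohom-for-B} (in particular isolating one $\Z_p(1)$-factor whose associated ``$c$'' is a unit while the others are swept into the coefficient ring), and tracking that all the ``killed by $p^i$'' bounds stay uniform through the reductions via \Cref{sec:crit-fully-faithf-assumption-passes-to-ind-etale-extensions} and \Cref{sec:crit-fully-faithf-base-change-along-morph-of-perfect-prisms}. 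Once \Cref{sec:form-reduct-prov-1-crucial-assumption-on-galois-cohomology} is established, \Cref{sec:form-reduct-prov-1-fully-faithfulness-for-alpha} gives fully faithfulness of $\alpha_X^\ast$ directly, since for a toric chart the adic generic fibre $\X_\infty\to\X$ is visibly a pro-\'etale $\Gamma$-torsor.
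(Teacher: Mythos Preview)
Your outline follows the paper's route closely, and the endgame---verifying \Cref{sec:form-reduct-prov-1-crucial-assumption-on-galois-cohomology} for the torus by iterating \Cref{t:Galois-cohom-for-B} over the coordinates---is exactly what the paper does. There is, however, one genuine gap. You write ``a general $p$-torsion free perfectoid base $R_0 \supseteq \Z_p^{\cycl}$'' as if this were automatic, but a smoothoid formal scheme is by definition only locally smooth over \emph{some} perfectoid ring, and that ring need not contain any $p$-power roots of unity. All the explicit input you invoke (\S\ref{sec:tori-over-geometric}, \Cref{c:descr-Galois-action-tori-over-perfectoid}, \Cref{t:Galois-cohom-for-B}) requires $\zeta_p$ in the base, so without this reduction the argument does not get off the ground.

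The paper supplies the missing step via Andr\'e's lemma: for any perfectoid $R_0$ there is a quasi-syntomic cover $R_0 \to S_0$ with $S_0$ a perfectoid $\Z_p^{\cycl}$-algebra. One then runs the \v{C}ech nerve $S_\bullet$ and uses that $(X \times_{\Spf(R_0)} \Spf(S_n))^{\HT} \cong X^{\HT} \times_{\Spf(R_0)} \Spf(S_n)$ for each $n$; since the resulting $R\Hom$'s are uniformly bounded, inverting $p$ commutes with the totalization, and fully faithfulness for $X$ follows from fully faithfulness for each $X \times_{\Spf(R_0)} \Spf(S_n)$, all of which now live over a $\Z_p^{\cycl}$-algebra. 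The $p$-torsionfreeness you also assume is handled separately by passing from $R_0$ to $R_0/\sqrt{pR_0}$, which is again perfectoid and changes neither side. Your further reduction all the way down to base $\Z_p^{\cycl}$ via \Cref{sec:crit-fully-faithf-base-change-along-morph-of-perfect-prisms} is a harmless variant once $R_0$ is already a $\Z_p^{\cycl}$-algebra (the paper instead just stays over $R_0$ and applies \Cref{t:Galois-cohom-for-B} there, and incidentally only the forward direction of that lemma is needed, so no flatness check is required), but that lemma cannot manufacture a map from $\Z_p^{\cycl}$ where none exists.
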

\begin{proof}
  By \Cref{sec:crit-fully-faithf-main-theorem-localizes-on-x} we may assume that $X$ is affine and even \'etale over some torus $\mathbb{T}_{R_0}^n:=\Spf(R_0\langle T_1^{\pm 1},\ldots, T_n^{\pm 1}\rangle)$ over a perfectoid base $\Spf(R_0)$. By Andr\'e's lemma (\cite[Th\'eor\'eme~2.5.1]{andre2018conjecture}, \cite[Theorem 7.14]{Bhatta} and \cite[Remark 7.15]{Bhatta}) there exists a quasi-syntomic cover $R_0\to S_0$ with $S_0$ a perfectoid $\Z_p^\cycl$-algebra. Let $S_\bullet$ be the Cech nerve of $S_0$ over $R_0$. Then $X^\HT\times_{\Spf(R_0)}\Spf(S_n)\cong (X\times_{\Spf(R_0)}\Spf(S_n))^\HT$ for any $n\geq 0$, each $S_n$ is a $\Z_p^\cycl$-algebra and each $X\times_{\Spf(R_0)}\Spf(S_n)$ is smoothoid. Contemplating this Cech nerve further shows that it is sufficient to prove \Cref{sec:tori-over-perfectoid-1-main-theorem-with-perfectoid-base} for all $X\times_{\Spf(R_0)}\Spf(S_n), n\geq 0$. Indeed, we can commute inverting $p$ with the inverse limit calculating $R\Hom$ because the terms of the inverse limit are uniformly bounded. 
  
  Hence, we may assume that $R_0$ is a $\Z_p^\cycl$-algebra. By \Cref{sec:crit-fully-faithf-assumption-passes-to-ind-etale-extensions} we may reduce to $X=\mathbb{T}_{R_0}^n$ by choosing $(A,I), (A_\infty, I_\infty), \Gamma$ as in \S\ref{sec:tori-over-geometric}. We are then in the setup of \S\ref{sec:tori-over-geometric} (by \cite[Lecture 4, Proposition 3.2]{bhatt_lectures_on_prismatic_cohomology} we may assume that $R_0$ is $p$-torsionfree because $R_0/\sqrt{pR_0}$ is perfectoid and both sides of the statement only depend on this $p$-torsionfree quotient)  and use the notations introduced there. We want to verify that \Cref{sec:form-reduct-prov-1-crucial-assumption-on-galois-cohomology} holds in this setup, namely we claim that the map
\[
R=R_0\langle T^{\pm 1}_1,\ldots, T_n^{\pm 1}\rangle \to R\Gamma(\Gamma, B^+_{A,R_\infty})
\]
has cofiber killed by $p^i$ for some $i\geq 1$, where by \S\ref{sec:tori-over-geometric},
\[
\textstyle B^+_{A,R_\infty}\cong \O(G_{A,R_\infty})=\widehat{\bigoplus\limits_{m_1,\ldots, m_n\geq 0}}R_\infty \frac{a_1^{m_1}}{m_1!}\cdots \frac{a_n^{m_n}}{m_n!}
\]
with $\Gamma=\Z_p(1)^n$ acting $R_\infty$-semilinearly as described in \Cref{c:descr-Galois-action-tori-over-perfectoid}. If $n=1$, the claim follows from applying \Cref{t:Galois-cohom-for-B} to the map $S \to R$, sending $\kappa$ to $T_1$. 

More generally, we can argue by induction: Let $\Z_p(1)\cong H_0\subseteq \Z_p(1)^n$ be the subgroup given by the last coordinate. Then by the Hochschild--Serre spectral sequence, we have 
\[R\Gamma(\Gamma, B^+_{A,R_\infty})=R\Gamma(\Gamma/H_0,R\Gamma(H_0, B^+_{A,R_\infty})).\]
Let us temporarily denote $B^+_{A,R_\infty}$ by $B^{+,(n)}_{A,R_\infty}$ to indicate the dependence on $n$. Then
applying \Cref{t:Galois-cohom-for-B} to the map
\[ \textstyle S\to B^{+,(n-1)}_{A,R_\infty}\langle T_n^{\pm 1}\rangle, \quad \kappa\mapsto  T_n,
\]
we see that the natural map
\[B^{+,(n-1)}_{A,R_\infty}\to R\Gamma(H_0, B^{+,(n)}_{A,R_\infty})\]
has cofiber killed by $p^k$ for some $k$. Inductively, this shows the claim.

 Hence we can conclude by \Cref{sec:form-reduct-prov-1-fully-faithfulness-for-alpha}. 
\end{proof}

\subsection{Fully faithfulness in the arithmetic case}
\label{sec:smooth-form-schem-galois-cohomo-in-arithmetic-case}

Let $K$ be a $p$-adic field. We retain the setup and notation of \S\ref{sec:tori-over-p-tori-over-p-adic-fields}. In particular, we have $R_\infty:=\mathcal{O}_C \langle T_1^{\pm 1/p^\infty},\ldots, T_n^{\pm 1/p^\infty}\rangle$.
\begin{proposition}\label{p:assumption-2.2-satisfied-arithm-case}
	Consider $\O(G_{A,R_\infty})$ with its natural $\Gamma$-action as described in  \S\ref{sec:tori-over-p-tori-over-p-adic-fields}.
Then the cofiber of the natural map
\[
R\to R\Gamma(\Gamma, \O(G_{A,R_\infty}))
\]
is bounded and killed by $p^i$ for some $i\geq 1$.
\end{proposition}
\begin{proof}
 Using \Cref{t:Galois-cohom-for-B} we can as a first step argue exactly as in the proof of  \Cref{sec:tori-over-perfectoid-1-main-theorem-with-perfectoid-base} to take care of the Tate variables $T_1,\ldots, T_n$ and thus reduce to the case that $R=\mathcal{O}_K$. Then
\[
\textstyle \O(G_{A,R_\infty})\cong\widehat{\textstyle\bigoplus\limits_{n\geq 0}} \O_C\cdot \tfrac{a^n}{n!}
\]
by \Cref{c:Galois-action-on-ZA-p-adic-fields}.
The calculation is now reduced to showing that for $H:=\Gal(C|K)$, the cofiber of the natural map
$
\O_K\to R\Gamma(H,\O(G_{A,R_\infty}))$ is killed by $p^i$ for the $H$-action as in \S\ref{sec:tori-over-p-tori-over-p-adic-fields}. But by \cite[Theorem~3.12]{analytic_HT}, this map is an isomorphism after inverting $p$, so the desired statement follows by the Open Mapping Theorem. 
\end{proof}
We have thus established \Cref{sec:form-reduct-prov-1-crucial-assumption-on-galois-cohomology} for $R=\mathcal{O}_K\langle T_{1}^{\pm 1},\ldots, T_{n}^{\pm 1}\rangle$. We can deduce:

\begin{theorem}
	\label{sec:smooth-form-schem-fully-faithfulness-for-arithmetic-base}
	Let $K$ be a $p$-adic field. For any qcqs smooth morphism $X\to \Spf(\O_K)$, the functor
	\[
	\alpha_X^\ast\colon \mathcal{P}erf(X^\HT)\tf\to \mathcal{P}erf(\X_v)
	\]
	from \S\ref{sec:form-reduct-prov} is fully faithful, where $\X$ is the adic generic fibre of $X$.
\end{theorem}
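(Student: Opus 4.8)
The plan is to run the argument in exact parallel to the smoothoid case \Cref{sec:tori-over-perfectoid-1-main-theorem-with-perfectoid-base}: by d\'evissage I would reduce the statement to the case of a torus over $\O_K$, and then feed it into the abstract fully faithfulness criterion of \S\ref{sec:form-reduct-prov}, whose only remaining hypothesis is \Cref{sec:form-reduct-prov-1-crucial-assumption-on-galois-cohomology}, which has just been verified for tori in \Cref{p:assumption-2.2-satisfied-arithm-case}.

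First, by \Cref{sec:crit-fully-faithf-main-theorem-localizes-on-x} it suffices to treat $X$ affine. Since $X \to \Spf(\O_K)$ is smooth, after further Zariski localization it admits an \'etale map to affine space, and hence, after one more localization, an \'etale map to a torus $\mathbb{T}^n_{\O_K} := \Spf(\O_K\langle T_1^{\pm 1},\ldots,T_n^{\pm 1}\rangle)$. Invoking \Cref{sec:crit-fully-faithf-assumption-passes-to-ind-etale-extensions} (stability of \Cref{sec:form-reduct-prov-1-crucial-assumption-on-galois-cohomology} under ind-$p$-completely \'etale extensions) together with \Cref{sec:form-reduct-prov-1-fully-faithfulness-for-alpha}, I would reduce to the case $X = \mathbb{T}^n_{\O_K}$. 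Here one takes the prism data $(A,I)$, $(A_\infty,I_\infty)$ and the profinite group $\Gamma = (\Z_p\gamma_1\oplus\cdots\oplus\Z_p\gamma_n)\rtimes\Gal(\overline{K}|K)$ exactly as set up in \S\ref{sec:tori-over-p-tori-over-p-adic-fields}, so that $X_\infty = \Spf(R_\infty)$ with $R_\infty = \O_C\langle T_1^{\pm 1/p^\infty},\ldots,T_n^{\pm 1/p^\infty}\rangle$, and $B^+_{A,R_\infty}\cong\O(G_{A,R_\infty})$ carries the explicit $\Gamma$-action of \Cref{c:descr-tori-over-p-adic-field}. One has to be slightly careful that inverting $p$ commutes with the finite limits over the relevant \v{C}ech nerves that compute $R\Hom$; as in the proof of \Cref{sec:tori-over-perfectoid-1-main-theorem-with-perfectoid-base}, this is fine by uniform boundedness of the terms.

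Next I would check the two hypotheses of \Cref{sec:form-reduct-prov-1-fully-faithfulness-for-alpha}. The adic generic fibre $\X_\infty \to \X$ is the standard tower obtained by adjoining all $p$-power roots of the coordinates $T_i$ and passing to $\widehat{\overline{K}}$; this is visibly a pro-\'etale $\Gamma$-torsor, so the first hypothesis is immediate. The second, \Cref{sec:form-reduct-prov-1-crucial-assumption-on-galois-cohomology} — that the cofibre of $R \to R\Gamma(\Gamma, B^+_{A,R_\infty})$ be bounded and killed by a power of $p$ — is precisely the content of \Cref{p:assumption-2.2-satisfied-arithm-case}, whose proof supplies the integral (not merely rational) statement. \Cref{sec:form-reduct-prov-1-fully-faithfulness-for-alpha} then yields fully faithfulness of $\alpha^\ast_X$, finishing the proof.

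The genuine difficulty lies entirely upstream and has already been dealt with: it is the explicit identification of the $\Gamma$-action on the period ring $B^+_{A,R_\infty}$ via the naturality of the Hodge--Tate stack carried out in \S\ref{sec:prismatic-homotopies}--\S\ref{sec:examples}, and the vanishing of its higher continuous Galois cohomology, which rests on the Banach-space computation of \Cref{t:Galois-cohom-for-B} together with the one-dimensional arithmetic input \cite[Theorem 3.12]{analytic_HT}. Given those results, the present theorem is a formal consequence and I expect no further obstacle beyond bookkeeping in the d\'evissage.
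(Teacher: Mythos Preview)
Your proposal is correct and matches the paper's own proof essentially verbatim: reduce to the torus $\mathbb{T}^n_{\O_K}$ via \Cref{sec:crit-fully-faithf-main-theorem-localizes-on-x} and \Cref{sec:crit-fully-faithf-assumption-passes-to-ind-etale-extensions}, then apply \Cref{sec:form-reduct-prov-1-fully-faithfulness-for-alpha} with \Cref{sec:form-reduct-prov-1-crucial-assumption-on-galois-cohomology} supplied by \Cref{p:assumption-2.2-satisfied-arithm-case}. The only cosmetic remark is that your aside about commuting $p$-inversion with \v{C}ech limits is not needed here---that step in \Cref{sec:tori-over-perfectoid-1-main-theorem-with-perfectoid-base} arose from Andr\'e's lemma to acquire a $\Z_p^\cycl$-structure on the perfectoid base, which is irrelevant in the arithmetic setting.
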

\begin{proof}
	As in \Cref{sec:tori-over-perfectoid-1-main-theorem-with-perfectoid-base} we can reduce to the case that $X=\Spf(\O_K\langle T_1^{\pm 1},\ldots, T_{n}^{\pm n}\rangle)$, where we have established \Cref{sec:form-reduct-prov-1-crucial-assumption-on-galois-cohomology} in \Cref{p:assumption-2.2-satisfied-arithm-case}. Thus, we can conclude by \Cref{sec:form-reduct-prov-1-fully-faithfulness-for-alpha}. 
      \end{proof}

      \begin{remark}
        \label{sec:fully-faithf-arithm-remark-on-imperfect-residue-field-case}
        \Cref{sec:smooth-form-schem-fully-faithfulness-for-arithmetic-base} can be extended to qcqs smooth formal schemes over a complete $p$-adic discrete valuation ring $\mathcal{O}_K$ with $p$-finite residue field, cf.\ \cite[\S.4]{Hyodo-HT-imperfect-res}, using very similar calculations. More precisely, analogs to the Breuil-Kisin prisms have been constructed for $\mathcal{O}_K$ in \cite{gao2020integral} and following \cite{Hyodo-HT-imperfect-res} one can construct the necessary concrete perfectoid extension $K_\infty$ of $K:=\mathrm{Frac}(\mathcal{O}_K)$, analogous to the cyclotomic extension. There again exists Tate's normalized traces (\cite[\S 4.3, Ingredient 2]{gao2020integral}, \cite[Lemma 6.5]{OhkuboSenTheory}, \cite[\S 3]{Hyodo-HT-imperfect-res}) and thus the proof \cite[Theorem 3.12]{analytic_HT} goes through for $K$. 
      \end{remark}

\section{Complexes on the Hodge--Tate stack and Higgs modules}
\label{sec:complexes-Hodge--Tate-and-higgs-bundles}

In this section we want to prove \ref{sec:smoothoid-case-1-complexes-on-x-ht-smoothoid-case-introduction}, \ref{t:intro-local-p-adic-Simpson-functor-geometric} and \ref{sec:introduction-2-description-of-ht-stack} from the introduction. Since fully faithfulness of the functor $\alpha_X^\ast$ is now proved, this is mainly a question of describing complexes on the Hodge--Tate stack (and some variants of it) for suitable formal schemes $X$ explicitly in terms of (derived) Higgs or Higgs--Sen bundles. As the structure of the stacks differs in the smoothoid and in the arithmetic case, our discussion will naturally take a different form in these two cases.

However, the general principle is the same in all settings. Indeed, in all settings of \S\ref{sec:examples}, the Hodge--Tate stack $X^\HT$ is (after some choices isomorphic to) the classifying stack of some formal group scheme $G:=G_A$ over $X$, which has the explicit shape of being an extension 
\[
1\to V^\sharp\to G\to H\to 1
\]
(usually split)
with $V^\sharp$ the PD-hull of the zero section of a (geometric) vector bundle $V$ over $X$ and $H=\{1\}$ or  $H\cong \Gm^\sharp$ or $H\cong \Ga^\sharp$, which acts by multiplication on $V^\sharp$ via the natural morphism
\[
X^\HT\to \Spf(\Z_p)^\HT\cong B\Gm^\sharp.
\]
We will therefore now study the representation theory of $G$. While wo do not seek maximal generality, our arguments extend to more general situations ($BH$ could be some $p$-adic formal stack $\mathcal{Z}$, and $V^\sharp$ the PD-envelope of the zero-section of some (geometric) vector bundle over $\mathcal{Z}$).

\subsection{Representations of $G$}
\label{sec:representations-g_a}
Much of the material of this section is well-known,  though maybe not exactly in the form presented here. In particular, the following discussion is closely related to \cite[Section 3.5]{bhatt2022absolute}, \cite[Lemma 6.7]{bhatt2022prismatization} and \cite[Section 2.4]{bhatt2022F-gauges}.

We begin by fixing the general setup and introducing some notation.
Let $R$ be a $p$-complete ring with bounded $p^\infty$-torsion.
\begin{definition}\label{def:Sym-bullet-notation}
	 For any finite projective $R$-module $W$, we write
	 \[\Symv(W):=\mathrm{Sym}^\bullet_R(W), \quad \Symvw(W)=\Symv(W)^\wedge_p, \quad \mathbb V(W):=\mathrm{Spf}(\Symvw(W)).\]
	 Then $\mathbb V(W)$ is a geometric vector bundle over $\Spf(R)$.
\end{definition}

 Let $E$ be a finite projective $R$-module.
 Let \[V:=\mathbb V(E),\quad A:=\Gamma_R^\bullet(E)^\wedge_p,\quad V^\sharp:=\Spf(A),\] 
 then $A$ is the $p$-completed PD-hull of the zero section of $V$.

We also give ourselves a $p$-completely flat affine group scheme $G$ over $R$ which is an extension
\[
1\to V^\sharp\to G\to H\to 1,
\]
where $H$ is either $\{1\}, \Gm^\sharp$ or $\Ga^\sharp$, or an extension of such by $U^\sharp$, for another geometric vector bundle $U$. We want to describe the representations of $G$. By \cite[Theorem 2.5]{analytic_HT},
the representation theory of $H$  is understood. We now wish to describe $\mathcal{D}(BG)$ in terms of $V^\sharp$ and $\mathcal{D}(BH)$.

The idea for describing the category $\mathcal{D}(BG)$ of the formal stack $BG$ over $\Spf(R)$ is quite simple.
By $p$-completely faithfully flat descent, $\mathcal{D}(BG)$ is equivalent to the category of $\O(G)$-comodules in the category $\mathcal{D}(\Spf(R))=\widehat{\mathcal{D}(R)}$ of derived $p$-complete complexes of $R$-modules.

\begin{remark}
  \label{sec:representations-g-1-underived-case}
Let us first explain the strategy in a simple, underived special case: assume for the moment that $H=\{1\}$, so $G=V^\sharp$, and let $M$ be a finite projective module over $R$ endowed with the structure of an $\O(G)$-comodule. Explicitly, this means that there is a coaction 
$M\to M\otimes_R \O(G)$.
Then this can be dualized to an action on $M$ of the $R$-algebra \[\O(G)^\vee:=\Hom_{R}(\O(G),R)=\Hom_{R}(\Gamma^\bullet_R(E),R),\] 
which is naturally a Hopf algebra over $R$ by dualizing the multiplication and comultiplication on $\O(G)$. Now by \cite[Proposition A10]{berthelot2015notes}, the dual $\O(G)^\vee$ identifies with the (already $p$-complete) $(E^\vee)$-adic completion $\widehat{\mathrm{Sym}^\bullet_R}(E^\vee)$ of the symmetric algebra $\mathrm{Sym}^\bullet_R(E^\vee)$, i.e., the power series algebra on $E^\vee:=\Hom_R(E,R)$. Hence the $\O(G)$-coaction on $M$ is equivalently given by some morphism
\[
E^\vee\otimes_R M\to M.
\]
The existence of an extension to $\widehat{\mathrm{Sym}^\bullet_R}(E^\vee)$ now enforces the condition that for each $\delta\in E^\vee$ the action $M\xrightarrow{\delta} M$ is topologically nilpotent for the natural topology on $M$ as a finite projective $R$-module. 
\end{remark}

If $H\neq \{1\}$, we aim for a  similar argument, but we need to work in the category $\mathcal{D}(BH)$ of representations of $H$. We therefore aim to prove the following. Recall that
$\Symvw(E^\vee)=\mathrm{Sym}^\bullet_R(E^\vee)^\wedge_p$.
We can regard this as an $E_\infty$-algebra in $\mathcal{D}(BH)$ via the natural $H$-action on $E^\vee$.
\begin{theorem}
	\label{sec:representations-g-representations-of-bg-over-bh}
	There exists a natural fully faithful functor
	\[
	\Phi_{BH}\colon \mathcal{D}(BG)\to \mathrm{Mod}_{\Symvw(E^\vee)}(\mathcal{D}(BH)),
	\]
	Its essential image is given by $M$ for which each $\delta\in E^\vee$ acts locally nilpotently on $H^\ast(M\otimes_{\Z_p}^L\F_p)$.
\end{theorem}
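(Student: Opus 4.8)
The plan is to reduce to the description of $\mathcal{D}(BH)$ already available (via \cite[Theorem 2.5]{analytic_HT}) by using the extension $1\to V^\sharp\to G\to H\to 1$, which identifies $BG$ as a relative affine stack over $BH$. Concretely, the quotient map $BG\to BH$ (coming from $G\twoheadrightarrow H$) has as fiber $BV^\sharp$, and since $V^\sharp=\Spf(A)$ with $A=\Gamma_R^\bullet(E)^\wedge_p$ is affine with antipode, $BV^\sharp$ is a $1$-affine stack; more precisely, $\mathcal{O}_{BG}$ is an $E_\infty$-algebra object in $\mathcal{D}(BH)$, obtained by pushing forward along $BG\to BH$, and $\mathcal{D}(BG)\simeq \mathrm{Mod}_{p_\ast\mathcal{O}_{BG}}(\mathcal{D}(BH))$ by flat descent (here $p\colon BG\to BH$; the key point is that $V^\sharp$ is $p$-completely flat, so $p$ is faithfully flat and affine in the appropriate derived sense). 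So the first step is to compute $p_\ast\mathcal{O}_{BG}$ as an algebra in $\mathcal{D}(BH)$: using the $V^\sharp$-torsor $EG\to BG$ relative to $BH$, one finds $p_\ast\mathcal{O}_{BG}=\mathcal{O}(V^\sharp)^{\vee}$-comodules unravel to reveal $p_\ast\mathcal{O}_{BG}\simeq \Gamma_R^\bullet(E)^\wedge_p$ with its induced $H$-action — but we actually want to dualize.

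The second and main step is the dualization. Following \Cref{sec:representations-g-1-underived-case}, a comodule over $\mathcal{O}(V^\sharp)=\Gamma_R^\bullet(E)^\wedge_p$ is equivalently a module over the dual Hopf algebra, and by \cite[Proposition A10]{berthelot2015notes} this dual is the $(E^\vee)$-adic completion $\widehat{\mathrm{Sym}}^\bullet_R(E^\vee)$ of $\mathrm{Sym}^\bullet_R(E^\vee)$. The subtlety in the derived, $H$-equivariant setting is to carry this duality through: one works in $\mathcal{D}(BH)$, notes that $E^\vee$ (with its $H$-action) is a dualizable object there, and that $\Gamma_R^\bullet(E)$ and $\mathrm{Sym}^\bullet_R(E^\vee)$ are graded duals degreewise, with the PD-divided-power structure on the $\Gamma$ side matching the completion on the $\mathrm{Sym}$ side. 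This produces the functor
\[
\Phi_{BH}\colon \mathcal{D}(BG)\hookrightarrow \mathrm{Mod}_{\Symvw(E^\vee)}(\mathcal{D}(BH)),
\]
sending a $G$-representation $M$ to $M$ with its induced action of $\mathrm{Sym}^\bullet_R(E^\vee)$, all in the category of $H$-representations. Full faithfulness follows because the target is a full subcategory: the underived statement that $\mathcal{D}(BV^\sharp)\hookrightarrow \mathrm{Mod}_{\widehat{\mathrm{Sym}}^\bullet_R(E^\vee)}(\mathcal{D}(R))$ is fully faithful onto the topologically-nilpotent part globalizes relatively over $BH$ since everything is $\mathcal{O}(BH)$-linear.

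The third step is to identify the essential image. An object $N\in \mathrm{Mod}_{\Symvw(E^\vee)}(\mathcal{D}(BH))$ lies in the image of $\Phi_{BH}$ iff the $\mathrm{Sym}^\bullet_R(E^\vee)$-action extends to the completed (power-series) algebra $\widehat{\mathrm{Sym}}^\bullet_R(E^\vee)=\mathcal{O}(V^\sharp)^\vee$ compatibly with the $H$-action. The point is that $\Symvw(E^\vee)$ denotes only the $p$-completed symmetric algebra, not the $(E^\vee)$-adic one, so the condition is precisely that each $\delta\in E^\vee$ acts in a way that is topologically nilpotent $(E^\vee)$-adically; as in the underived case, this can be detected after derived reduction mod $p$, i.e.\ it suffices that each $\delta\in E^\vee$ act locally nilpotently on $H^\ast(M\otimes^L_{\Z_p}\F_p)$. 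I expect the main obstacle to be the derived $H$-equivariant bookkeeping in the second step: making the duality $\Gamma_R^\bullet(E)^\vee\cong\widehat{\mathrm{Sym}}^\bullet_R(E^\vee)$ precise as an equivalence of module categories \emph{internal to} $\mathcal{D}(BH)$ (rather than just over $R$), and checking that the resulting functor is compatible with the flat-descent identification $\mathcal{D}(BG)\simeq\mathrm{Mod}_{p_\ast\mathcal{O}_{BG}}(\mathcal{D}(BH))$ — i.e.\ that no subtle sign or completion issue spoils the functoriality when $H$ acts nontrivially on $E^\vee$ via the Breuil--Kisin twist. Once that is set up cleanly, the convergence/essential-image statement is a formal consequence of the underived case of \Cref{sec:representations-g-1-underived-case} applied fiberwise over $BH$.
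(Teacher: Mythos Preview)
Your proposal captures the right underived intuition (comodules over $A=\Gamma_R^\bullet(E)^\wedge_p$ dualize to modules over $B=\widehat{\mathrm{Sym}}^\bullet_R(E^\vee)$), but the passage to the full derived category is exactly where the difficulty lies, and your sketch does not address it. First, a small but genuine error: the map $p\colon BG\to BH$ is \emph{not} affine --- its fibre is $BV^\sharp$, a classifying stack --- so there is no identification $\mathcal{D}(BG)\simeq \mathrm{Mod}_{p_\ast\mathcal{O}_{BG}}(\mathcal{D}(BH))$, and $p_\ast\mathcal{O}_{BG}$ is certainly not $\Gamma_R^\bullet(E)^\wedge_p$ (it is rather the cohomology $R\Gamma(BV^\sharp,\mathcal{O})$). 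What descent along the faithfully flat cover $\Spf(R)\to BV^\sharp$ actually gives is that $\mathcal{D}(BV^\sharp)$ is the category of $A$-\emph{comodules} in $\widehat{\mathcal{D}(R)}$. The crux is then to show that derived $A$-comodules embed fully faithfully into derived $B$-modules. For dualizable objects this is fine, but for arbitrary objects of $\mathcal{D}(BV^\sharp)$ the coaction $M\to M\widehat{\otimes}_R A$ involves a completed tensor product with the infinite-rank module $A$, and adjunction to a $B$-action does not go through formally; the paper explicitly remarks (just after equation \eqref{eq:explicit-descr-Phi}) that this direct route ``is indeed possible, but we only know an argument using nuclear modules'' in the sense of Clausen--Scholze.

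The paper's proof avoids this by constructing $\Phi_{\Spf(R)}$ as a Fourier transform (using the Cartier duality pairing $\widehat{V^\vee}\times BV^\sharp\to B\mathbb{G}_m$) followed by local cohomology $R\Gamma_Z$ along the zero section. It then makes $\Phi_{\Spf(R)}$ explicit via \Cref{sec:representations-g-rgamma-z-for-b-} and proves full faithfulness not by a direct comodule/module comparison but by (i) constructing a Koszul-type resolution (\Cref{sec:representations-g-3-cohomology-on-bv-sharp}) computing $R\Gamma(BV^\sharp,-)$, (ii) showing $\mathcal{D}(BV^\sharp)$ is generated under colimits and shifts by the trivial representation $R$ (\Cref{sec:representations-g-2-category-generated-by-trivial-representation}), and (iii) checking full faithfulness on $R\Hom(R,-)$, where both sides are computed by the same Koszul complex. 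The essential image is then identified using generation of the $I$-torsion category by $R\Gamma_Z(B)$. Only at the end does one descend from $\Spf(R)$ to $BH$ by naturality. Your ``fiberwise over $BH$'' reduction is fine for that last step, but the work you defer to \Cref{sec:representations-g-1-underived-case} is precisely the hard part.
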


Geometrically, let $V^\vee:=\mathbb{V}(E^\vee)=\Spf(\mathcal{S}_p(E^\vee))$ be the dual (geometric) vector bundle of $V$ over $\Spf(R)$. Through the $H$-action on $E^\vee$, this naturally lives over the classifying stack $BH$, and we denote this geometric vector bundle over $BH$ by $\mathcal{V}^\vee\to BH$. The category $\mathrm{Mod}_{\Symvw(E^\vee)}(\mathcal{D}(BH))$ in \Cref{sec:representations-g-representations-of-bg-over-bh} is then equivalent to the category $\mathcal{D}(\mathcal{V}^\vee)$ because the morphism $\mathcal{V}^\vee\to BH$ is affine.

For the proof of \Cref{sec:representations-g-representations-of-bg-over-bh}, we require some preliminaries to implement the strategy of \cite[Theorem 3.5.15]{bhatt2022absolute}. 
The following calculation of the Cartier dual of $V^\sharp=\Spf(A)$ is well-known.

\begin{lemma}
	\label{sec:representations-g-cartier-dual-of-v-sharp}
	The functor $S\mapsto \Hom_{\mathrm{Grp}}(V^\sharp\times_{\Spf(R)}\Spf(S),\Gm)$ on $p$-complete $R$-algebras is represented by the formal group scheme
	\[
	\widehat{V^\vee}:=\Spf(B),
	\]
	where $B:=\Hom_{R}(A,R)$. Here 	$\widehat{V^\vee}$ is the formal completion of $V^\vee=\mathbb{V}(E^\vee)$ along its zero section.
\end{lemma}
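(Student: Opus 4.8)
The plan is to deduce this from the classical graded duality between divided-power and symmetric algebras together with a compatibility of coalgebra structures; it is the formal-group incarnation of the Cartier duality between $\Ga^\sharp$ and $\widehat{\Ga}$ recorded in \cite[3.4.11, 3.4.12]{bhatt2022absolute}.

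First I would identify the ring $B$. Since $R$ is $p$-complete and $A=\Gamma^\bullet_R(E)^\wedge_p=(\bigoplus_{d\geq 0}\Gamma^d_R(E))^\wedge_p$ with each $\Gamma^d_R(E)$ finite projective, any morphism $A\to R$ factors through a finite sub-sum, so $B=\Hom_R(A,R)=\prod_{d\geq 0}\Hom_R(\Gamma^d_R(E),R)$. The perfect pairing $\Gamma^d_R(E)\otimes_R\mathrm{Sym}^d_R(E^\vee)\to R$ valid for finite projective $E$ (\cite[Proposition A10]{berthelot2015notes}) gives $\Hom_R(\Gamma^d_R(E),R)\cong\mathrm{Sym}^d_R(E^\vee)$, hence $B\cong\prod_{d\geq 0}\mathrm{Sym}^d_R(E^\vee)=\widehat{\mathrm{Sym}^\bullet_R}(E^\vee)$, the completion of $\mathrm{Sym}^\bullet_R(E^\vee)$ along the ideal generated by $E^\vee$. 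Taking $\Spf$, this is exactly the formal completion of $V^\vee=\mathbb V(E^\vee)=\Spf(\Symvw(E^\vee))$ along its zero section, i.e.\ $\widehat{V^\vee}$.

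Next I would match the Hopf-algebra structures. The additive group law on $V^\sharp$ induces on $A$ the comultiplication $\Delta_A$ that is the unique continuous $R$-algebra map restricting on $E=\Gamma^1_R(E)$ to $e\mapsto e\otimes 1+1\otimes e$, so that $e^{[n]}\mapsto\sum_{i+j=n}e^{[i]}\otimes e^{[j]}$; under the identification above its $R$-linear dual is precisely the multiplication of $\mathrm{Sym}^\bullet_R(E^\vee)$ (the classical fact that the divided-power coproduct is dual to the symmetric-algebra product), and dually the PD multiplication of $A$ is dual to the additive comultiplication $\phi\mapsto\phi\otimes 1+1\otimes\phi$ on $\widehat{\mathrm{Sym}^\bullet_R}(E^\vee)$. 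Thus $B$, equipped with the Hopf structure dual to that of $A$, is $\mathcal O(\widehat{V^\vee})$ with its additive formal-group structure.

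Finally I would check that $\Spf(B)$ represents $S\mapsto\Hom_{\mathrm{Grp}}(V^\sharp\times_{\Spf R}\Spf S,\Gm)$. Because $A$ is a $p$-completed direct sum of finite projective $R$-modules, $A$ and $B$ are $R$-linearly dual compatibly with base change along $R\to S$, so a homomorphism $V^\sharp_S\to\Gm$ is the same datum as a group-like element of $A\widehat{\otimes}_R S$, which under the pairing corresponds to an $S$-point of $\Spf(B)$; equivalently one exhibits the universal pairing $V^\sharp\times\widehat{V^\vee}\to\Gm$ via the exponential $\exp\langle-,-\rangle=\sum_n\langle-,-\rangle^{[n]}$, convergent since points of $\widehat{V^\vee}$ are topologically nilpotent while points of $V^\sharp$ carry divided powers, and checks that it is perfect, which after a choice of basis of $E$ reduces to the known case $\Ga^\sharp$ versus $\widehat{\Ga}$. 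The only point requiring care is that $V^\sharp$ is not finite flat, so this last representability cannot be invoked from the finite-flat Cartier duality verbatim; I would argue it directly through the explicit exponential pairing (or, alternatively, by dualizing the finite-rank PD-truncations and passing to the limit).
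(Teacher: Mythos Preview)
Your approach is essentially the same as the paper's, which is very brief: it describes the functors of points of $V^\sharp$ and $\widehat{V^\vee}$ and then writes down the exponential pairing $(a,x)\mapsto\exp(\langle a,x\rangle)=\sum_{n\geq 0}\tfrac{x(a)^n}{n!}$, asserting it realises the duality. Your treatment is more detailed on the Hopf-algebra side, and your remark that the representability does not follow from finite-flat Cartier duality but must be argued directly (via the exponential or by reducing to $\Ga^\sharp$ versus $\widehat{\Ga}$) is a fair caveat.

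One slip in your first paragraph: the claim that ``any morphism $A\to R$ factors through a finite sub-sum'' is false (for $E=R$ the map $\sum_n a_n x^{[n]}\mapsto\sum_n a_n$ does not), and in any case would yield the \emph{direct sum} of duals, not the product you want. The correct argument is that any $R$-linear map $f\colon A\to R$ is automatically $p$-adically continuous (since $f(p^nA)\subseteq p^nR$ and $R$ is $p$-adically separated), hence determined by its restriction to the dense submodule $\bigoplus_{d}\Gamma^d_R(E)$; conversely any family $(f_d)\in\prod_d\Hom_R(\Gamma^d_R(E),R)$ defines a map on the direct sum that extends to the $p$-completion because $R$ is $p$-complete. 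This gives $B\cong\prod_{d\geq 0}\mathrm{Sym}^d_R(E^\vee)$ as you stated.
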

\begin{proof}
	The functor $V^\sharp$ sends a discrete $R$-algebra $S$ to the $S$-module of $R$-linear maps $x\colon E\to S$ together with divided powers $\frac{x(e)^n}{n!}$ for each $e\in E$ and $n\geq 0$.
	The functor $\widehat{V^{\vee}}$ sends a $p$-nilpotent $R$-algebra $S$ to the $S$-module of $R$-linear morphisms $a\colon E^\vee\to S$ such that $a(\delta)\in S$ is nilpotent for each $\delta\in E^\vee$.
	Now the duality is induced by the pairing
	\begin{equation}\label{eq:explicit-duality}
	\textstyle\widehat{
	V^{\vee}}\times_{\Spf(R)} V^\sharp\to \Gm,\ (a,x)\mapsto \mathrm{exp}(\langle a, x\rangle):=\sum\limits_{n=0}^\infty \frac{(x(a))^n}{n!},
	\end{equation}
	where $a\colon E^\vee\to S$ is seen as an element in $E\otimes_RS$ and $\frac{x(a)^n}{n!}$ is extended in its unique fashion.
\end{proof}

Note that the topology on $B$ is not $p$-adic: In fact
$
\textstyle B\cong \prod_{n\geq 0} \mathrm{Sym}^n_R(E^\vee)$,
and the topology is the product of the $p$-adic topologies on $\mathrm{Sym}^n_R(E^\vee)$, or in other words the $(p,\mathrm{Sym}^{>0}_R(E^\vee))$-adic topology.
\\

We will define $\Phi_{BH}$ as a sort of Fourier transform (in the spirit of \cite{laumon1996transformation}). For any $a\in \widehat{V^{\vee}}(S)$, let 
\[\chi_a\colon V^\sharp\times_{\Spf(R)}\Spf(S)\to \Gm,\ x\mapsto \mathrm{exp}(\langle a,x\rangle)\] be the associated character. Then we obtain the natural pairing
\[
\widehat{V^{\vee}}\times_{\Spf(R)} BV^\sharp\to B\Gm,\ (a,\mathcal{T})\mapsto \chi_{a,\ast}(\mathcal{T}):=\Gm\times^{V^\sharp}\mathcal{T}
\]
via pushing forward $V^\sharp$-torsors along $\chi_a$. Let $\mathcal{L}$ be the pullback of the tautological line bundle on $B\Gm$ along this pairing.
We can view $\mathcal{L}$ as an $A$-comodule in $\mathcal{D}(\Spf(B))$. As $a\mapsto \mathrm{exp}(\langle a,0\rangle)$ is the zero map, the underlying $B$-module of $\mathcal{L}$ is trivial.
Moreover, $B=\Hom_{R}(A,R)$ is equipped with the $V^\sharp$-action on the dual. More precisely, we can write the regular representation $A=\varinjlim_{n} A_{\leq n}$ with $A_{\leq n}:=\sum_{m\leq n} \Gamma_R^m(E)$ as a colimit of $V^\sharp$-stable subrepresentations, which are finite projective over $R$ (as in the proof of \cite[Theorem 3.5.15]{bhatt2022absolute}, whose computation also explains why the $A_{\leq n}$ are $V^\sharp$-stable). Now, the coactions
$
\Hom_R(A_{\leq n},R)\to \Hom_R(A_{\leq n},R)\otimes_R A
$
combine to a coaction
\begin{equation}\label{eq:coaction-of-A-on-B}
B\to B\widehat{\otimes}_RA,
\end{equation}
where the tensor product is completed with respect to the adic topology on $B$ (and not just $p$-adic). Let $Z\subseteq \widehat{V^\vee}$ be the zero section. Note that $Z$ is cut out by a regular sequence if $E$ is finite free.

Let $d$ be the rank of $E$, assumed to be constant. We denote by
\[
\mathrm{pr}_{1}\colon \widehat{V^\vee}\times BV^\sharp\to \widehat{V^\vee} \quad \text{and }\quad
\mathrm{pr}_{2}\colon \widehat{V^\vee}\times BV^\sharp\to BV^\sharp
\]
 the projections, where the fibre products are over $\Spf(R)$. 
 
 In the following, let us denote by $\mathcal{D}(\widehat{V^\vee})$ the derived category of quasi-coherent sheaves on the formal scheme $\widehat{V^\vee}$, i.e., derived $(p,\mathrm{Sym}^{>0}_R(E^\vee))$-adically complete $B$-modules.

\begin{definition}
	\label{sec:representations-g-definition-of-functor-phi}
	We define the functor
	\[
	\Phi_{\Spf(R)}\colon \mathcal{D}(BV^\sharp)\xrightarrow{F_R} \mathcal{D}(\widehat{V^\vee})\xrightarrow{R\Gamma_Z} \mathcal{D}(V^\vee)\cong \mathrm{Mod}_{\Symvw(E^\vee)}(\mathcal{D}(\Spf(R)))
	\]
	as the composition of the following two functors: The first is the ``Fourier transform''
	\[
	F_R: \mathcal{D}(BV^\sharp)\to \mathcal{D}(\widehat{V^\vee}),\quad M\mapsto R\mathrm{pr}_{1\ast}(\mathrm{pr}^\ast_{2}(M)\widehat{\otimes} \mathcal{L}\widehat{\otimes}_{R} \det(E^\vee)[d]),
	\]
	where the first $\widehat{\otimes}$ is over $\mathcal{O}_{\widehat{V^\vee}\times BV^\sharp}$. The second functor in the definition of $\Phi_{\Spf(R)}$ is the functor
	\[
	R\Gamma_Z\colon \mathcal{D}(\widehat{V^\vee})\to \mathcal{D}(V^\vee).
	\]
	of cohomology with support in $Z$. This is the derived $p$-completion of the usual functor of local cohomology as defined in \cite[Tag 0952]{StacksProjectAuthors2017}. Explicitly, according to  \cite[Tag 0952]{StacksProjectAuthors2017}, for a set of generators $\delta_1,\dots,\delta_r\in \mathrm{Sym}^{1}_R(E^\vee)$ of $I:=\mathrm{Sym}^{>0}_R(E^\vee)$, we can define this as
	\[\textstyle R\Gamma_Z(K):=R\lim_n\Big([B/p^n\to \prod_iB_{\delta_i}/p^n\to \prod_{i,j}B_{\delta_i\delta_j}/p^n\to \dots \to B_{\delta_1\cdots \delta_r}/p^n]\otimes^{L}_BK\Big) \]
\end{definition}

It is straightforward to get a $p$-completed version of \cite[Tag 0A6X]{StacksProjectAuthors2017} for this:
\begin{lemma}
	The functor $R\Gamma_Z\colon \mathcal{D}(\widehat{V^\vee})\to \mathcal{D}(V^\vee)$ is fully faithful. Its essential image is given by those $M\in \mathrm{Mod}_{\Symvw(E^\vee)}(\mathcal{D}(\Spf(R)))$ for which each $\delta\in E^\vee$ acts locally nilpotently on $H^\ast(M\otimes_{\Z_p}^{L}\F_p)$.
\end{lemma}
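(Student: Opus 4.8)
The plan is to reduce the statement to the classical affine case treated in the Stacks Project, namely \cite[Tag 0A6X]{StacksProjectAuthors2017}, and then carry the resulting equivalence through derived $p$-completion. Concretely, write $I:=\mathrm{Sym}^{>0}_R(E^\vee)\subseteq \mathrm{Sym}^\bullet_R(E^\vee)=:\Symv(E^\vee)$, so that $\widehat{V^\vee}=\Spf(B)$ with $B$ the $(p,I)$-adic completion of $\Symv(E^\vee)$, and $V^\vee=\Spf(\Symvw(E^\vee))$. Since $E$ is finite projective, locally on $\Spf(R)$ it is free of rank $d$, and as the statement may be checked Zariski-locally on $\Spf(R)$ (both functors are compatible with localization on the base), I would assume $E$ free with basis dualizing to $\delta_1,\dots,\delta_d\in \mathrm{Sym}^1_R(E^\vee)$, a regular sequence generating $I$. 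Then the explicit \v{C}ech–Koszul description of $R\Gamma_Z$ from \Cref{sec:representations-g-definition-of-functor-phi} applies verbatim.

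The first step is full faithfulness. The non-$p$-completed functor $R\Gamma_{Z}$ (cohomology with supports in the vanishing locus of $(\delta_1,\dots,\delta_d)$) is right adjoint to the localization/completion functor, and by \cite[Tag 0A6X]{StacksProjectAuthors2017} it is fully faithful: the counit $R\Gamma_Z(K)\to K$ becomes an isomorphism after restriction along $\Spf(\Symv(E^\vee))\to \Spf(B)$, equivalently $R\Gamma_Z$ identifies $\mathcal{D}(\Spf(\Symv(E^\vee)))$ with the full subcategory of $I$-torsion complexes in $\mathcal{D}(\Spf(B))$. I would then observe that taking derived $p$-completions everywhere preserves full faithfulness: a map of derived $p$-complete complexes is an equivalence iff it is so modulo $p$, and modulo $p$ the $p$-completed \v{C}ech–Koszul complex reduces to the classical one for the ring $B/p$ with the ideal $\bar I$, where \cite[Tag 0A6X]{StacksProjectAuthors2017} again applies. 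So full faithfulness of $R\Gamma_Z\colon \mathcal{D}(\widehat{V^\vee})\to\mathcal{D}(V^\vee)$ follows by reduction mod $p$ from the uncompleted statement.

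The second step is identifying the essential image. By the adjunction, $M\in\mathrm{Mod}_{\Symvw(E^\vee)}(\mathcal{D}(\Spf(R)))$ lies in the image precisely when the natural map $R\Gamma_{Z}(M)\to M$ (formed over $B$, then $p$-completed) is an equivalence; again this may be tested modulo $p$. Over $\Symv(E^\vee)/p=\Symv(E^\vee)\otimes_{\Z_p}\F_p$, the condition that $M\otimes^L_{\Z_p}\F_p$ be derived $I$-complete-equal-to-$I$-torsion is, by the Stacks Project characterization of $I$-power-torsion complexes via the \v{C}ech complex, exactly the condition that each $\delta_i$ (equivalently each $\delta\in E^\vee$, since the $\delta_i$ generate $E^\vee$ over $R$ up to the $\F_p$-reduction and the set of locally nilpotent operators is closed under $R$-linear combinations) acts locally nilpotently on $H^\ast(M\otimes^L_{\Z_p}\F_p)$ — ``locally nilpotent'' meaning every cohomology class is killed by a power of $\delta$, which is precisely membership in the kernel of localization at $\delta$. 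I would spell out that ``locally nilpotent on $H^\ast$'' and ``the Koszul-localized complex vanishes mod $p$'' agree via the long exact sequences computing $R\Gamma_Z$, using that $H^\ast(M\otimes^L\F_p)$ is where torsion can be detected because $B/p$ is Noetherian when $R/p$ is (and in general by a colimit argument over the finitely generated subalgebras, or by working with the explicit Koszul complex which only involves the $\delta_i$).

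The main obstacle I anticipate is bookkeeping around derived $p$-completeness rather than anything deep: one must check that $R\Gamma_Z$ as defined (an $R\lim_n$ of mod-$p^n$ Koszul–\v{C}ech complexes tensored with $K$) genuinely lands in derived $p$-complete objects and agrees with ``uncompleted local cohomology, then $p$-complete'', and that the adjunction/full-faithfulness survives this. The clean way is to note the uncompleted $R\Gamma_Z$ and localization are an adjoint pair on all of $\mathcal{D}(B)$, that derived $p$-completion is a (smashing, idempotent) localization commuting with these because the relevant functors are given by tensoring with a fixed perfect-ish complex (Koszul) and $p$-completion is monoidal on the relevant subcategory, and then transport the full-faithfulness and the torsion-image description through. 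I do not expect to need any new input beyond \cite[Tag 0952, Tag 0A6X]{StacksProjectAuthors2017} and standard facts about derived $p$-complete modules, so the proof should be short once the reduction ``check everything mod $p$'' is installed.
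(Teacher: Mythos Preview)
Your proposal is correct and follows essentially the same approach as the paper: both reduce to \cite[Tag 0A6X]{StacksProjectAuthors2017} by working modulo powers of $p$ and using derived $p$-completeness. The paper phrases this as working with $R\Gamma_{Z_n}$ on each $\Spec(\Z/p^n)$-fiber and then passing to the limit (with an inductive argument along $M\otimes^L\Z/p^{n-1}\to M\to M\otimes^L\Z/p$ to reduce the nilpotence condition to the mod $p$ one), whereas you invoke directly that equivalences of derived $p$-complete complexes can be tested mod $p$; these are the same argument.
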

\begin{proof}
	For every $n \in \N$, let $\widehat{V^\vee_n}:=\widehat{V^\vee}\times \Spec(\Z/p^n)$ as well as $V^\vee_n:=V^\vee\times \Spec(\Z/p^n)$ and $Z_n:=Z\times \Spec(\mathbb Z/p^n)$. Then we have exact functors
	\[ R\Gamma_{Z_n}:\mathcal D(\widehat{V^\vee_n})\to \mathcal D(V^\vee_n)=\mathrm{Mod}_{\Symv(E^\vee)/p^n}(\mathcal{D}(R/p^n))\]
	and by definition we have 
	$R\Gamma_Z(K)= R\lim_nR\Gamma_{Z_n}(K\otimes^L \Z/p^n)$.
	Here we define $\mathcal{D}(\widehat{V^\vee_n})=\widehat{D(B/p^n)}$ as the full subcategory of $D(B/p^n)$ of derived complete objects for the adic topology on $B/p^n$.
	By  \cite[Tag 0A6X]{StacksProjectAuthors2017}, the functor  $R\Gamma_{Z_n}$ is then fully faithful with essential image given by the complexes $M$ which are torsion for the ideal $\mathrm{Sym}^{>0}_R(E^\vee)/p^n\subseteq \mathcal S(E)/p^n$. Equivalently, this means that the action of each $\delta\in E^\vee$ is locally nilpotent on $H^\ast(M)$. As usual, via the exact triangle \[M\otimes^L\Z/p^{n-1}\to M\to M\otimes^L\Z/p\]
	 we see inductively that this is equivalent to asking that each $\delta\in E^\vee$ acts locally nilpotently on $H^\ast(M\otimes^L\Z/p)$. Using that $R\Gamma_{Z_{n}}(M)\otimes^L\Z/p^{n-1}=R\Gamma_{Z_{n-1}}(M\otimes^L\Z/p^{n-1})$, we get the desired description in the limit over $n$, as $\mathrm{Mod}_{\Symvw(E^\vee)}(\mathcal{D}(\Spf(R)))$ embeds into
	 $\mathrm{Mod}_{\Symv(E^\vee)}(\mathcal{D}(\Spec(R)))$ as the full subcategory of derived $p$-complete objects. Similarly, the full faithfulness follows from that of $R\Gamma_{Z_n}$ using first that for any two derived complete complexes $N$ and $M$ in $\mathcal D(\widehat{V^\vee})$ we have \[\mathrm{RHom}_{\mathcal D(\widehat{V^\vee})}(M,N)=R\lim_n\mathrm{RHom}_{\mathcal D(\widehat{V^\vee_n})}(M\otimes^L\Z/p^n,N\otimes^L\Z/p^n),\]
	 then full faithfulness of $R\Gamma_{Z_n}$, and finally the analogous equality in $\mathcal{D}(V^\vee)$.
\end{proof}
\begin{definition}
As the functor $\Phi_{\Spf(R)}$ is natural in $\Spf(R)$, it descends to a functor
	\[
\Phi_{BH}\colon \mathcal{D}(BG)\to \mathrm{Mod}_{\Symvw(E^\vee)}(\mathcal{D}(BH)),
\]
 for the stack $BH$ and the $BV^\sharp$-gerbe $BG$ over it. This will be the functor mentioned in \Cref{sec:representations-g-representations-of-bg-over-bh}. 
\end{definition}

Next we want to make $\Phi_{\mathrm{Spf}(R)}$ explicit. Let $M\in \mathcal{D}(BV^\sharp)$. By construction, 
\[
\Phi_{\Spf(R)}(M)=R\Gamma_Z(R\Gamma(BV^\sharp,M\widehat{\otimes}_R^L B\widehat{\otimes}_R \det(E^\vee)[d])),
\]
where the tensor product is $(p,I)$-adically completed. In this formula, the $BV^\sharp$-module structure  of $M\widehat{\otimes}_R^L B\widehat{\otimes}_R \det(E^\vee)[d]$ is the diagonal $A$-comodule structure given by the $A$-comodule structure on $M$ and the $A$-comodule structure on $B$ explained in \eqref{eq:coaction-of-A-on-B}. The right-hand side carries the $B$-module structure induced by the $B$-module structure on $M\widehat{\otimes}_R^L B\widehat{\otimes}_R \det(E^\vee)$ (with $M$ seen here as an $R$-module). Now,
\[
\begin{array}{rl}
	 R\Gamma_Z(R\Gamma(BV^\sharp,M\widehat{\otimes}_R^L B\widehat{\otimes}_{R}\det(E^\vee)[d]))
	&= R\Gamma(BV^\sharp, R\Gamma_Z(M\widehat{\otimes}_R^L B\widehat{\otimes}_R\det(E^\vee)[d])) \\
	&=  R\Gamma(BV^\sharp, M \widehat{\otimes}_R R\Gamma_Z(B \widehat{\otimes}_R \det(E^\vee))[d]).
\end{array}
\]
Here, the last tensor product is $p$-completed.
To continue, we need the next lemma. 

\begin{lemma}
	\label{sec:representations-g-rgamma-z-for-b-}
	We have $R\Gamma_Z(B \widehat{\otimes}_R \det(E^\vee))\cong A[-d]$ with the regular $A$-coaction and its natural $B$-action $B\widehat{\otimes}_R A\xrightarrow{\mathrm{coact} \otimes \Id} B\widehat{\otimes}_R A\widehat{\otimes}_R A\xrightarrow{\mathrm{eval}\otimes \Id} A$. 
\end{lemma}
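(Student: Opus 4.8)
The plan is to reduce to the case that $E$ is free of rank $d$ and then compute the local cohomology by hand. Since the formation of $R\Gamma_Z$, of $A$, of the $A$-coaction \eqref{eq:coaction-of-A-on-B} and of the displayed $B$-action all commute with Zariski localization on $\Spf(R)$, and since the isomorphism produced below will be coordinate-independent, we may assume $E$ is free with basis $e_1,\dots,e_d$. Put $x_i:=e_i^\vee\in E^\vee$, so that $B\cong R[[x_1,\dots,x_d]]$ (with the $(p,I)$-adic topology for $I:=\mathrm{Sym}^{>0}_R(E^\vee)=(x_1,\dots,x_d)$), the zero section $Z$ is cut out by the regular sequence $x_1,\dots,x_d$, and $A$ is the $p$-completion of the free $R$-module on the divided-power monomials $e^{[a]}:=\prod_i e_i^{[a_i]}$, $a\in\Z_{\ge 0}^d$, with topological dual basis $\{x^b\}_{b\in\Z_{\ge 0}^d}$ in $B$. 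Working modulo $p^n$ and applying $R\lim_n$ as in \Cref{sec:representations-g-definition-of-functor-phi}, $R\Gamma_Z(B\widehat{\otimes}_R\det(E^\vee))$ is computed by the $p$-completed \v{C}ech--Koszul complex of $x_1,\dots,x_d$, twisted by $\det(E^\vee)$; as $x_1,\dots,x_d$ is regular on each $B/p^n$, this is concentrated in cohomological degree $d$, and there $H^d_Z$ is the $p$-completion of the free $R$-module on the ``inverse monomials'' $x^{-a}\otimes(x_1\wedge\cdots\wedge x_d)$, $a\in\Z_{\ge 1}^d$. In particular $R\Gamma_Z(B\widehat{\otimes}_R\det(E^\vee))\simeq H^d_Z[-d]$.

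Next I would produce the isomorphism with $A[-d]$ as the (normalised) residue pairing: on the above basis,
\[
\rho\colon H^d_Z(B\widehat{\otimes}_R\det(E^\vee))\xrightarrow{\ \sim\ }A,\qquad x^{-a}\otimes(x_1\wedge\cdots\wedge x_d)\longmapsto (-1)^{|a|-d}\,e^{[a-\mathbf 1]},\quad \mathbf 1=(1,\dots,1),
\]
which is visibly an $R$-linear isomorphism. To see that it is independent of the basis --- and hence glues in the locally free case, justifying the reduction above --- I would reinterpret $\rho$ via the residue $\big(\eta\otimes(x_1\wedge\cdots\wedge x_d),f\big)\mapsto \mathrm{Res}(\eta f\,dx_1\cdots dx_d)$, the coefficient of $x_1^{-1}\cdots x_d^{-1}$ in $\eta f$; this is $R$-bilinear and continuous in $f\in B$, hence identifies $H^d_Z(B\widehat{\otimes}_R\det(E^\vee))$ with the continuous $R$-dual $\varinjlim_n\Hom_R(B/\mathrm{Sym}^{>n}_R(E^\vee),R)=\varinjlim_n\Hom_R(\Hom_R(A_{\le n},R),R)=\varinjlim_n A_{\le n}=A$, using that each $A_{\le n}$ is finite projective. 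The residue is coordinate-independent precisely because a change of basis of $E$ multiplies it by a Jacobian that is cancelled by the twist by $\det(E^\vee)$; this verification is the step I expect to require the most care.

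Finally I would check that $\rho$ intertwines the $A$-coactions and the $B$-actions, which is a direct computation on the basis. The coaction \eqref{eq:coaction-of-A-on-B} is by construction the colimit over $n$ of the contragredient coactions on $\Hom_R(A_{\le n},R)$ of the regular corepresentation $\Delta\colon A_{\le n}\to A_{\le n}\otimes A$; using $\Delta(e^{[m]})=\sum_{0\le k\le m}e^{[k]}\otimes e^{[m-k]}$ and the antipode $S(e^{[m]})=(-1)^{|m|}e^{[m]}$ of $\Gamma_R(E)$, one finds $\mathrm{coact}(x^j)=\sum_{l\ge 0}x^{j+l}\otimes(-1)^{|l|}e^{[l]}\in B\widehat{\otimes}_R A$. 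Since $I$ is a subcomodule of $B$ and $I/I^2\cong E^\vee$ (hence $\det(E^\vee)$) is the trivial comodule, $Z$ is $V^\sharp$-stable, $R\Gamma_Z$ is $V^\sharp$-equivariant, and the induced coaction on $H^d_Z$ is obtained by extending this formula to the localization and discarding the terms that vanish in $H^d_Z$; feeding it through $\rho$ and using cocommutativity of $A$ recovers $\Delta$, i.e.\ the regular $A$-coaction. For the $B$-action: on $H^d_Z$ it is multiplication inside $B_{x_1\cdots x_d}$, so $x^b\cdot\big(x^{-a}\otimes(x_1\wedge\cdots\wedge x_d)\big)=x^{b-a}\otimes(x_1\wedge\cdots\wedge x_d)$, which is nonzero in $H^d_Z$ iff $b\le a-\mathbf 1$ componentwise and then maps under $\rho$ to $(-1)^{|a-b|-d}e^{[a-\mathbf 1-b]}$; on the other hand, unwinding the composite $B\widehat{\otimes}_R A\xrightarrow{\mathrm{coact}\otimes\Id}B\widehat{\otimes}_R A\widehat{\otimes}_R A\xrightarrow{\mathrm{eval}\otimes\Id}A$ on $x^b\otimes e^{[a-\mathbf 1]}$ (where $\mathrm{eval}$ pairs the outer $B$- and $A$-factors) with the formula for $\mathrm{coact}$ above and $\langle x^c,e^{[m]}\rangle=\delta_{c,m}$ gives exactly $(-1)^{|a-b|-d}e^{[a-\mathbf 1-b]}$ under the same constraint. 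As these agree, $\rho$ is the desired isomorphism of $\Symvw(E^\vee)$-modules with the regular $A$-coaction in $\mathcal{D}(\Spf(R))$, and by naturality the statement follows for general finite projective $E$.
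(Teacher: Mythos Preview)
Your proof is correct and follows essentially the same approach as the paper. The paper's proof is only a sketch: it observes that $B\widehat{\otimes}_R\det(E^\vee)$ is the dualizing module on $\widehat{V^\vee}$, says the statement ``can be checked by a direct calculation with Koszul complexes,'' and points to Serre duality on $\mathbb{P}(E)\to\Spf(R)$ and to \cite[Tag~0A82]{StacksProjectAuthors2017} as alternative packaging. Your argument is precisely that direct calculation carried out in full --- reducing to the free case, computing $H^d_Z$ as inverse monomials, identifying the result with $A$ via the residue pairing (which is the content of local duality/Tag~0A82), and then checking the $A$-coaction and $B$-action by hand.
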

\begin{proof}
  Consider the projective bundle $Y:=\mathbb{P}(E^\vee)\to \Spf(R)$, which parametrizes line bundle quotients of $E^\vee$. Let $\mathcal{O}(1)$ be the universal quotient on $Y$. Then there exists a canonical isomorphism $E^\vee\cong H^0(Y,\mathcal{O}(1))$, and thus a canonical isomorphism $B\cong \prod_{n\in \Z} H^0(Y,\mathcal{O}(n))$. 
  
 One the other hand, we claim that we have a canonical isomorphism
  \[\textstyle R\Gamma_Z(B\widehat{\otimes}_R \det(E^\vee))= \widehat{\bigoplus}_{n\in \Z} H^{d-1}(Y,\mathcal{O}(n)\otimes_R \det(E^\vee))[-d].\]
  To see this, we may work locally on $R$ and assume that $E$ is free, that $r=d$ and that $\delta_1,\dots,\delta_r$ are a basis of $E$. Then applying \cite[Tag 0913]{StacksProjectAuthors2017} to the complex in the definition of $R\Gamma_Z(-)$ shows
  \[ \textstyle R\Gamma_Z(B)=\varprojlim_{m}(\varinjlim_k  (\delta_1^{-k}\cdots \delta_d^{-k}B)/B)/p^m[-d]\]
  where the inverses are taking inside $B_{\delta_1\cdots\delta_d}$.
  The usual computation of cohomology on $\mathbb P^{d-1}$ identifies the $R$-submodule of $(\delta_1^{-k}\cdots \delta_d^{-k}B)/B$ spanned by monomials of degree $n\geq -k$ with $H^{d-1}(Y,\mathcal{O}(n))$. As this isomorphism is natural in linear transformations in the variables $\delta_1,\dots,\delta_d$, it is independent of the choice of basis and therefore glues to all of $\Spf(R)$.  Finally, since $\det(E^\vee)$ is an invertible $R$-module, $-\hat{\otimes}_R \det(E^\vee)$ commutes with $R\Gamma_Z(-)$. This proves the claim.
  
  The Euler sequence $0\to \Omega^1_{Y/R}\to E^\vee\otimes_R \mathcal{O}(-1)\to \mathcal{O}\to 0$ on $Y$ shows that the dualizing sheaf on $Y$ is $\det(E^\vee)\otimes_R \mathcal{O}(-d)$. By Serre duality on $Y$, we finally obtain the canonical isomorphism
  \[\textstyle\widehat{\bigoplus}_{n\in \Z} H^{d-1}(Y,\mathcal{O}(n)\otimes_R \det(E^\vee))= \widehat{\bigoplus}_{n\in \Z}H^0(Y,\mathcal{O}(n))^\vee=\big(\prod_{n\in \Z}H^0(Y,\mathcal{O}(n)) \big)^\vee=B^\vee=A. \]
  
  It remains to see that the $A$-coaction is the regular one. For this we have to test the equality of two natural maps $A\to A\widehat{\otimes}_R A$, so we can reduce to the case that $E$ is free on a basis $x_1, \ldots, x_d$. Let $\delta_1,\ldots, \delta_d\in E^\vee\subseteq B$ be the dual basis. We note that each $A$-coaction on an $R$-module $M$ yields endomorphisms $\Theta_i$ obtained as the composition  of the coaction $M\to M\widehat{\otimes}_R A$ with $\mathrm{Id}\otimes \delta_i$. For the regular $A$-coaction, $\Theta_i=\partial_{x_i}$ is derivation with respect to $x_i$ (this follows from the Taylor formula, cf.\ \cite[Proof of Proposition 3.7.1]{bhatt2022absolute}). Using the above formula for $R\Gamma_Z(B\otimes_R \mathrm{det}(E^\vee))$ and contemplating the multiplication by $\delta_i$ on $A$ via the isomorphism to $R\Gamma_Z(B\otimes_R \mathrm{det}(E^\vee))$, shows that again $\delta_i$ acts by derivation with respect to $x_i$ (note that $\det(E^\vee)=R\delta_1\cdots \delta_d$). 
  
  By the same argument, we can more generally also identify the multiplication of each monomial in $B$. Together, these determine the $V^\sharp$-action. This completes the proof of the lemma.
\end{proof}

Consequently,
\begin{equation}\label{eq:explicit-descr-Phi}
	\Phi_{\Spf(R)}(M)= R\Gamma(BV^\sharp, M\widehat{\otimes}_RA[-d][d]) = M,
\end{equation}
where the last equality comes from the fact that $A$ is endowed with the regular $A$-coaction. The action of $\mathrm{Sym}^\bullet_R(E^\vee)\subseteq B$ is given by the adjoint of the coaction
$
M\to M\widehat{\otimes}_R A.
$

It is tempting to try to use this explicit formula to prove directly \Cref{sec:representations-g-representations-of-bg-over-bh}, by relating derived $A$-comodules and $B$-modules by adjunction. It is indeed possible, but we only know an argument using nuclear modules in the sense of \cite{clausenscholzecomplex}. Here we offer a different and simpler argument, closer to an argument of Bhatt--Lurie. We need a few more preliminaries to establish \Cref{sec:representations-g-representations-of-bg-over-bh}.

\begin{definition}
	\label{sec:representations-g}
	Let $d$ be the rank of $E$ over $R$, which we assume to be constant.
	Let
	\[
	\mathcal{K}^{\bullet}:=(0\to \wedge^d_R(E^\vee)\widehat{\otimes}_R B\to \wedge^{d-1}_R(E^\vee)\widehat{\otimes}_R B\to \ldots \to E^\vee\widehat{\otimes}_R B\to B)
	\]
	be the Koszul complex in $\mathcal{D}(\widehat{V^\vee})$ of the finite projective $B$-module $E^\vee\widehat{\otimes}_RB$ with its morphism $E^\vee\widehat{\otimes}_RB\to B$ induced by the natural map $E^\vee\to B$. See \cite[Tags 0621, 062K]{StacksProjectAuthors2017} for the definition of the transition maps of $\mathcal{K}^{\bullet}$. It has a quasi-isomorphism $\mathcal{K}^\bullet\to R\cong B/I$.
	We define
	\[
	\mathcal{E}^\bullet:=\Hom_{R,\mathrm{cont}}(\mathcal{K}^\bullet,R)=(A\to A\widehat{\otimes}_RE \to A\widehat{\otimes}_R\wedge^2_R(E)\to \ldots \to A\widehat{\otimes}_R\wedge^d_R(E)\to 0),
	\]
	which is a resolution $R\to \mathcal{E}^\bullet$ of the trivial $V^\sharp$-representation $R$ by coinduced $A$-comodules.
\end{definition}

\begin{lemma}
	\label{sec:representations-g-3-cohomology-on-bv-sharp}
	Let $M\in \mathcal{D}(BV^\sharp)$. Then there exists a canonical resolution
	\[
	M\to (M\widehat{\otimes}_R A\to M\widehat{\otimes}_R A\widehat{\otimes}_R E\to M\widehat{\otimes}_R A\widehat{\otimes}_R \wedge^2_R(E)\to \ldots)
	\]
	and a canonical quasi-isomorphism
	\[
	R\Gamma(BV^\sharp,M)\to (M\to M\widehat{\otimes}_R E\to M\widehat{\otimes}_R \wedge^2_R(E)\to \ldots ).
	\]
\end{lemma}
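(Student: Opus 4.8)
The plan is to obtain both statements by applying the resolution $R \to \mathcal{E}^\bullet$ from \Cref{sec:representations-g}, base-changed along $M$. First I would recall that $\mathcal{D}(BV^\sharp)$ is, by $p$-completely faithfully flat descent, the category of $A$-comodules in $\widehat{\mathcal{D}(R)}$, and that cohomology $R\Gamma(BV^\sharp,-)$ is computed by the (derived $p$-completed) cobar complex of the comodule. Concretely, for a coinduced comodule $A\widehat{\otimes}_R N$ one has $R\Gamma(BV^\sharp, A\widehat{\otimes}_R N)\cong N$, with no higher cohomology; this is the homological input. Since $R \to \mathcal{E}^\bullet$ is by construction a resolution of the trivial comodule $R$ by coinduced $A$-comodules $A\widehat{\otimes}_R \wedge^i_R(E)$, tensoring over $R$ with $M$ (which is exact after $p$-completion because the $\wedge^i_R(E)$ are finite projective over $R$, so $M\widehat{\otimes}_R\mathcal{E}^\bullet$ is a complex of $p$-complete modules with uniformly bounded-length terms in each degree) yields the claimed resolution
\[
M \to \bigl(M\widehat{\otimes}_R A\widehat{\otimes}_R E \to M\widehat{\otimes}_R A\widehat{\otimes}_R \wedge^2_R(E)\to \cdots\bigr)
\]
of $M$ by coinduced comodules. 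Here I use that $\mathcal{E}^\bullet$ has amplitude $[0,d-1]$ so the totalization/$p$-completion interacts well with $\widehat{\otimes}_R M$.

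For the second statement I would apply $R\Gamma(BV^\sharp,-)$ to this resolution. Using that $R\Gamma(BV^\sharp, M\widehat{\otimes}_R A\widehat{\otimes}_R \wedge^i_R(E)) \cong M\widehat{\otimes}_R\wedge^i_R(E)$ with vanishing higher cohomology (acyclicity of coinduced comodules), the hypercohomology spectral sequence of the finite-length resolution degenerates and gives
\[
R\Gamma(BV^\sharp, M) \cong \bigl(M \to M\widehat{\otimes}_R E \to M\widehat{\otimes}_R \wedge^2_R(E) \to \cdots\bigr),
\]
where the differentials are induced from those of $\mathcal{E}^\bullet$ composed with the counit/coaction. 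One should check that the first term really is $M$ and not $M\widehat{\otimes}_R A$: this is exactly the augmentation $R\to \mathcal{E}^0 = A\widehat{\otimes}_R E$ being replaced, under $R\Gamma(BV^\sharp,-)$, by its effect on invariants, which collapses the coinduced $A$ factor; more carefully, the Koszul origin of $\mathcal{E}^\bullet$ as the continuous $R$-dual of $\mathcal{K}^\bullet$ (which resolves $B/I = R$) shows the complex computing cohomology is the ``naive'' Koszul-type complex $M\widehat{\otimes}_R\wedge^\bullet_R(E)$ placed in degrees $[0,d]$.

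The main obstacle I anticipate is bookkeeping with the derived $p$-completions and the (non-$p$-adic, $(p,I)$-adic) topology on $B$: one must be sure that $R\Gamma_Z$, $R\Gamma(BV^\sharp,-)$, and $\widehat{\otimes}_R M$ can be interchanged, which is where the finiteness (finite projectivity of $E$, finite length of $\mathcal{E}^\bullet$) is essential — it guarantees every complex in sight has uniformly bounded-below, degreewise-finite-projective terms so that all completions are ``flat'' in the relevant sense and the spectral sequences converge. This is the same mechanism already used in \Cref{sec:crit-fully-faithf-projection-formula-for-continuous-group-cohomology}. Modulo this, the proof is a formal consequence of \Cref{sec:representations-g} and acyclicity of coinduced comodules; I would phrase it by first constructing the resolution, then applying $R\Gamma(BV^\sharp,-)$ termwise.
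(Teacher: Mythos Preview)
Your proof is correct and follows essentially the same approach as the paper: tensor the resolution $R\simeq\mathcal{E}^\bullet$ with $M$ for the first statement, then use that the terms become coinduced (the paper phrases this as the projection formula $M\widehat{\otimes}_R A\cong |M|\widehat{\otimes}_R A$ with $|M|$ carrying the trivial coaction) to compute $R\Gamma(BV^\sharp,-)$ termwise. Your concerns about commuting $p$-completions with the resolution are unnecessary here since $\mathcal{E}^\bullet$ has finite length and the $\wedge^i_R(E)$ are finite projective, so everything is automatic.
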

We will refer to the natural map $M\to M\widehat{\otimes}_R E$ as the coaction of $E$ on $M$.
\begin{proof}
	The first statement follows by tensoring $R\cong \mathcal{E}^\bullet$ with $M$. For the the second statement, we use the  projection formula for the morphism $\Spf(R)\to BV^\sharp$ to see that $M\widehat{\otimes}_R A\cong |M|\widehat{\otimes}_R A$, where $|M|$ is given the trivial action.
\end{proof}

\begin{remark}
	\label{sec:representations-g-4-module-isomorphic-to-trivial-representation}
	As in \cite[Corollary 3.5.14]{bhatt2022absolute}, \Cref{sec:representations-g-3-cohomology-on-bv-sharp} implies that if $M\in \mathcal{D}(BV^\sharp)$ satisfies $M\cong R$ as $R$-modules and the coaction $M\to M\otimes_R E$ is zero, then $M\cong R$ as $A$-comodules.
\end{remark}

\begin{lemma}
	\label{sec:representations-g-2-category-generated-by-trivial-representation}
	 $\mathcal{D}(BV^\sharp)$ is generated under shifts and colimits by the trivial representation $R$.
\end{lemma}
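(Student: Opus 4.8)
The plan is to argue that the unit object $R \in \mathcal{D}(BV^\sharp)$ is a compact generator, so that the subcategory it generates under shifts and colimits is all of $\mathcal{D}(BV^\sharp)$. First I would recall that $\mathcal{D}(BV^\sharp)$ is, by $p$-completely faithfully flat descent, the category of derived $p$-complete $A$-comodules, and that it is a presentable stable $\infty$-category in which $R$ (with trivial coaction) is the monoidal unit; in particular the localizing subcategory $\langle R\rangle$ generated by $R$ is closed under colimits and shifts, and it suffices to show $\langle R\rangle = \mathcal{D}(BV^\sharp)$. The standard criterion is: if $M \in \mathcal{D}(BV^\sharp)$ satisfies $R\Gamma(BV^\sharp, M) = 0$, then $M = 0$; since $R\Gamma(BV^\sharp,-) = \mathrm{RHom}_{BV^\sharp}(R,-)$ commutes with the relevant colimits (as $R$ is dualizable, being the unit), this conservativity is exactly what is needed to conclude $\langle R\rangle = \mathcal{D}(BV^\sharp)$.

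So the heart of the matter is the conservativity of $R\Gamma(BV^\sharp,-)$. Here I would invoke \Cref{sec:representations-g-3-cohomology-on-bv-sharp}: for any $M$, there is a canonical resolution $M \to \big(M\widehat{\otimes}_R A\widehat{\otimes}_R E \to M\widehat{\otimes}_R A\widehat{\otimes}_R\wedge^2_R E \to \cdots\big)$ by coinduced comodules, and coinduced comodules $N\widehat{\otimes}_R A$ have $R\Gamma(BV^\sharp, N\widehat{\otimes}_R A) = N$ by the projection formula used in the proof of that lemma. Thus if $R\Gamma(BV^\sharp, M) = 0$ for all the terms — but we only know it for $M$ itself. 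The cleaner route is: the totalization computing $R\Gamma(BV^\sharp,M)$ is the complex $(M \to M\widehat{\otimes}_R E \to M\widehat{\otimes}_R\wedge^2_R E \to \cdots)$ of \Cref{sec:representations-g-3-cohomology-on-bv-sharp}; running the forgetful functor, one sees that an $A$-comodule $M$ with vanishing $R\Gamma(BV^\sharp,M)$ must vanish after passing to $-\otimes^L_{\Z_p}\F_p$ by an induction on the (bounded below, $p$-complete) filtration, since mod $p$ the coaction $\overline{M}\to\overline{M}\widehat{\otimes}\,\overline E$ is topologically nilpotent (every element of $E^\vee$ acts locally nilpotently on $H^\ast(M\otimes^L\F_p)$ because $A/p$ is a colimit of finite free $R/p$-modules and comodule structures over it are unions of finite ones), so the Koszul-type complex is a resolution and its totalization recovers $\overline M$ only if $\overline M = 0$; then derived Nakayama gives $M = 0$.

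The main obstacle I expect is this last conservativity step — controlling the totalization $\varprojlim$ of the cosimplicial object computing $R\Gamma(BV^\sharp,M)$ and extracting $M$ from it. The subtlety is that $\mathcal{D}(BV^\sharp)$ consists of derived $p$-complete objects while the coaction involves the non-$p$-adic topology on $B$/the PD-structure on $A$; one must check that the relevant completions and limits are well-behaved, e.g. that the resolution of \Cref{sec:representations-g-3-cohomology-on-bv-sharp} is genuinely a resolution (which is granted) and that totalization of a uniformly bounded-below cosimplicial diagram commutes with $-\otimes^L_{\Z_p}\F_p$. Alternatively, and perhaps more robustly, I would instead appeal directly to \Cref{sec:representations-g-4-module-isomorphic-to-trivial-representation}: an easy dévissage reduces the claim to showing every $M$ is built from objects that are trivial as $R$-modules with vanishing coaction of $E$, using the canonical filtration by $A_{\leq n}$-coactions on the regular representation and the finite-projectivity of each $A_{\leq n}$ over $R$, together with the fact that $\mathcal{D}(BV^\sharp)$ is generated by the coinduced comodules $A\widehat{\otimes}_R W$ (which are colimits of $A_{\leq n}\widehat{\otimes}_R W$), each of which lies in $\langle R\rangle$ by an induction on $n$ along the PD-filtration on $A$. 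Either way, the key input is that the PD-algebra $A$ is, as a comodule over itself, an increasing union of finite projective $R$-modules, reducing everything to the finite-dimensional situation where $R$ visibly generates.
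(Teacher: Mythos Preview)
Your second route is essentially the paper's argument, and it is the one you should keep. The paper uses the resolution $M\cong M\widehat{\otimes}_R\mathcal{E}^\bullet$ from \Cref{sec:representations-g-3-cohomology-on-bv-sharp} to express any $M$ as a finite limit of coinduced comodules $M\widehat{\otimes}_R A\widehat{\otimes}_R\wedge^i E$; by the projection formula these are of the form $|M|\widehat{\otimes}_R A\widehat{\otimes}_R\wedge^i E$ with trivial coaction on $|M|$, so one is reduced to showing the regular comodule $A=\eta_\ast R$ lies in $\langle R\rangle$. Then one filters $A$ by the sub-comodules $A_{\leq n}$ and uses \Cref{sec:representations-g-4-module-isomorphic-to-trivial-representation} to see that each graded piece $\Gamma^n_R(E)$ is a sum of copies of the trivial comodule. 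Your sketch captures this, but you should state explicitly that the reduction to coinduced objects comes from the $\mathcal{E}^\bullet$-resolution (not from an abstract ``generated by coinduced comodules'' claim), and that the projection formula lets you replace $W\widehat{\otimes}_R A$ by copies of $A$ itself.

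Your first route, by contrast, has two genuine gaps. First, the justification ``$R$ is dualizable, being the unit'' is vacuous: the unit is always self-dual, and dualizability does not imply compactness in general. (It happens that $R$ \emph{is} compact here, but the reason is the finite Koszul complex of \Cref{sec:representations-g-3-cohomology-on-bv-sharp}, not dualizability.) Second, and more seriously, your conservativity argument assumes that for every $M\in\mathcal{D}(BV^\sharp)$ each $\delta\in E^\vee$ acts locally nilpotently on $H^\ast(M\otimes^L_{\Z_p}\F_p)$. But this is exactly the content of the essential-image description in \Cref{sec:representations-g-representations-of-bg-over-bh}, whose proof \emph{uses} the lemma you are proving. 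You gesture at deducing nilpotence directly from conilpotence of $A/p$ and the fundamental theorem of comodules; this can be made to work for discrete comodules, but carrying it through amounts to redoing the filtration argument of the second route in disguise. So drop the first route and sharpen the second.
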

\begin{proof}
	Using the isomorphism $M\cong M\widehat{\otimes}_R\mathcal{E}^\bullet$ and the projection formula for the faithfully flat morphism $\eta\colon \Spf(R)\to BV^\sharp$, the proof of \cite[Proposition 3.5.15]{bhatt2022absolute} applies here. Hence, it suffices to show that the regular $A$-comodule $A=\eta_\ast(R)$  admits a filtration by sub-comodules with graded pieces isomorphic to $R$ as an $A$-comodule. This is a concrete calculation in $A$ using \Cref{sec:representations-g-4-module-isomorphic-to-trivial-representation}. Alternatively, this statement passes to direct summands in $E$, hence can be reduced to $E=R^d$. Taking tensor products it suffices to assume $d=1$, where it follows from \cite[Proposition 2.9]{analytic_HT}.
\end{proof}

\begin{lemma}
	\label{sec:representations-g-1-phi-fully-faithful}
	The functor $\Phi_{\Spf(R)}\colon \mathcal{D}(BV^\sharp)\to \mathrm{Mod}_{\Symvw(E^\vee)}(\mathcal{D}(\Spf(R)))$
	is fully faithful.
\end{lemma}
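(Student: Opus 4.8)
The plan is to reduce full faithfulness to a computation on a single generator, exploiting \Cref{sec:representations-g-2-category-generated-by-trivial-representation}. Since $\mathcal{D}(BV^\sharp)$ is generated under shifts and colimits by the trivial representation $R$, and since $\Phi_{\Spf(R)}$ is exact and commutes with colimits (being a composite of the Fourier transform $F_R$, which is a pushforward along an affine-enough map tensored with a line bundle, and $R\Gamma_Z$, which is a colimit-preserving functor by its \v{C}ech description in \Cref{sec:representations-g-definition-of-functor-phi}), it suffices to check that for all $i \in \Z$ the natural map
\[
\mathrm{RHom}_{\mathcal{D}(BV^\sharp)}(R, R[i]) \longrightarrow \mathrm{RHom}_{\mathrm{Mod}_{\Symvw(E^\vee)}(\mathcal{D}(\Spf(R)))}(\Phi_{\Spf(R)}(R), \Phi_{\Spf(R)}(R)[i])
\]
is an isomorphism. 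Here one must be a little careful: generation under colimits gives full faithfulness on the subcategory generated by $R$ provided $\Phi$ preserves the relevant mapping spectra out of $R$; the clean way is to observe that $\mathrm{RHom}(R,-)$ on the source is $R\Gamma(BV^\sharp, -)$, which by \Cref{sec:representations-g-3-cohomology-on-bv-sharp} is computed by an explicit complex, and to match it term by term with the target side.

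First I would identify both sides explicitly. On the source, \Cref{sec:representations-g-3-cohomology-on-bv-sharp} gives
$R\Gamma(BV^\sharp, R) \simeq (R \to E^\vee \to \wedge^2_R(E^\vee) \to \cdots)$ — wait, more precisely the Koszul-type complex $(R \to E \to \wedge^2 E \to \cdots)$ with zero differentials since the coaction on the trivial representation vanishes, so $R\Gamma(BV^\sharp,R) \simeq \bigoplus_j \wedge^j_R(E)[-j]$, i.e.\ $\mathrm{Ext}^j_{BV^\sharp}(R,R) = \wedge^j_R(E)$. On the target, by the computation \eqref{eq:explicit-descr-Phi} we have $\Phi_{\Spf(R)}(R) = R$ with the $\Symvw(E^\vee)$-action adjoint to the (zero) coaction $R \to R\widehat{\otimes}_R A$, hence the \emph{trivial} $\Symvw(E^\vee)$-module structure, where $E^\vee$ acts by $0$. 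Then $\mathrm{RHom}_{\Symvw(E^\vee)}(R,R)$ is computed by the Koszul complex resolving $R$ over the power series ring $\widehat{\mathrm{Sym}}^\bullet_R(E^\vee)$ — i.e.\ $\mathrm{Ext}^j_{\Symvw(E^\vee)}(R,R) = \wedge^j_R(E)$ as well (the Koszul complex on $E^\vee$ has homology $\wedge^\bullet_R(E^\vee)^\vee = \wedge^\bullet_R E$ in the appropriate degrees). So the two Ext-algebras agree; the content is that the map $\Phi$ induces on them is the \emph{identity}, which I would check by tracking the Koszul resolution $\mathcal{E}^\bullet$ of \Cref{sec:representations-g} through the definition of $\Phi_{\Spf(R)}$ — indeed $\mathcal{E}^\bullet = \Hom_{R,\cts}(\mathcal{K}^\bullet, R)$ and $\mathcal{K}^\bullet$ is precisely (a $p$-completed version of) the Koszul complex computing $R\Gamma_Z(\omega)$, so the comparison is built into the construction.

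I expect the main obstacle to be the bookkeeping required to verify that the isomorphisms on Ext-groups assemble into a genuine isomorphism of derived mapping \emph{complexes} — equivalently, that $\Phi$ respects the algebra/composition structure — rather than merely a degreewise isomorphism. The cleanest route is probably not to argue at the level of the generator $R$ in isolation but to use the projection formula: for $M \in \mathcal{D}(BV^\sharp)$ one has $R\Gamma(BV^\sharp, M) \simeq M \widehat{\otimes}_R \mathcal{E}^\bullet$-type resolutions compatibly, and $\Phi_{\Spf(R)}$ by \eqref{eq:explicit-descr-Phi} literally returns the underlying $R$-module $M$ equipped with the action adjoint to the coaction. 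Thus $\mathrm{RHom}_{\mathcal{D}(BV^\sharp)}(R, M) = R\Gamma(BV^\sharp, M)$ is computed by the complex $(M \to M\widehat{\otimes}_R E^\vee \to \cdots)$ — no wait, $(M \to M \widehat\otimes_R E \to \cdots)$ with differential induced by the coaction — while on the other side $\mathrm{RHom}_{\Symvw(E^\vee)}(R, \Phi(M))$ is the Koszul complex $(\Phi(M) \to \Phi(M)\widehat{\otimes}_R E \to \cdots)$ with differential the $\Symvw(E^\vee)$-action contracted against $E$. Since the coaction of $E$ on $M$ and the $E^\vee$-action on $\Phi(M) = M$ are adjoint by construction \eqref{eq:explicit-descr-Phi}, these two complexes are literally the same, naturally in $M$; taking $M = R[i]$ (or rather letting $M$ range over the generators $R[i]$ and passing to colimits) then yields full faithfulness. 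The one genuinely delicate point is the interchange of $R\Gamma_Z$ with $R\Gamma(BV^\sharp,-)$ and with the completed tensor products, which is handled by \Cref{sec:representations-g-rgamma-z-for-b-} together with the uniform boundedness of the relevant \v{C}ech and bar complexes; I would isolate that as the technical heart and otherwise treat the matching of complexes as formal.
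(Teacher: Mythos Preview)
Your proposal is correct and, after your mid-course correction, follows essentially the paper's proof: reduce to $M = R$ in the first variable by colimit-generation (\Cref{sec:representations-g-2-category-generated-by-trivial-representation}), then identify both $R\Hom_{\mathcal{D}(BV^\sharp)}(R, N)$ and $R\Hom_{\Symvw(E^\vee)}(R, \Phi(N))$ with the same Koszul-type complex $(N \to N \widehat{\otimes}_R E \to \cdots)$ via \Cref{sec:representations-g-3-cohomology-on-bv-sharp} and the Koszul resolution of $R$ over $\Symvw(E^\vee)$. Your worry about interchanging $R\Gamma_Z$ with $R\Gamma(BV^\sharp,-)$ is unnecessary: the explicit formula \eqref{eq:explicit-descr-Phi} already shows $\Phi_{\Spf(R)}(N) = N$ as an $R$-module with the adjoint $\Symvw(E^\vee)$-action, so the identification of the two complexes is literal and no further commutation is required.
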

\begin{proof}
We want to see that for any $M,N\in \mathcal{D}(BV^\sharp)$, the map
	\[
	R\Hom_{\mathcal{D}(BV^\sharp)}(M,N)\to R\Hom_{\mathcal{D}(V^\vee)}(\Phi_{\Spf(R)}(M),\Phi_{\Spf(R)}(N))
	\]
	is an isomorphism. As $\Phi_{\Spf(R)}$ commutes with filtered colimits (by the explicit formula for $\Phi_{\Spf(R)}$ given above, $\Phi_{\Spf(R)}$ is the identity on underlying $R$-modules) we may assume by \Cref{sec:representations-g-2-category-generated-by-trivial-representation} that $M=R$. By \Cref{sec:representations-g-3-cohomology-on-bv-sharp} the left-hand side is calculated by the complex
	\[
	N\to N\widehat{\otimes}_R E\to N\widehat{\otimes}_R \wedge^2_R(E)\to \ldots,
	\]
	and the right-hand side as well by \eqref{eq:explicit-descr-Phi} and the Koszul resolution of $R$ as $\Symvw(E^\vee)$-module.
\end{proof}

\begin{proof}[Proof of \Cref{sec:representations-g-representations-of-bg-over-bh}]
	We first assume that $H=\{1\}$.
	Fully faithfulness was proven in \Cref{sec:representations-g-1-phi-fully-faithful}. By \Cref{sec:representations-g-2-category-generated-by-trivial-representation} the essential image is generated by $\Phi_{\Spf(R)}(R)\cong R$ and shifts and colimits. In particular, the essential image is contained in the subcategory of (automatically $p$-complete) complexes $M\in \mathrm{Mod}_{\Symvw(E^\vee)}(\mathcal{D}(R))$ such that each $\delta\in E^\vee$ acts locally nilpotently on $H^\ast(M\otimes_{\Z_p}^L\F_p)$. By  fully faithfulness, to show that $M$ lies in the essential image of $\Phi_{\Spf(R)}$, we may pass to a finite Zariski cover of $\Spf(R)$ as this will express $M$ as a finite limit of complexes in the essential image. Hence, we may assume that $E$ is finite free. By \cite[Tag 0A6X]{StacksProjectAuthors2017} we can conclude that the category of such $M$ is generated under colimits by $R\Gamma_Z(B)$. But as in the proof of \Cref{sec:representations-g-rgamma-z-for-b-} we see that
	$
	R\Gamma_Z(B)=\Phi_{\Spf(R)}(A)[-d]$
	(the factor $\mathrm{det}(E)^\vee$ disappears as $E$ was assumed to be trivial).
	
	Now assume that $H\neq 1$. Using that $\Phi_{(-)}$ is natural in $R$ we can conclude by descent along the faithfully flat map $\Spf(R)\to BH$ that  $\Phi_{BH}$
	is fully faithful. The description of the essential image can be checked after pullback along $\Spf(R)$ because the cohomology of $H$ can be calculated as a finite limit, cf.\ \cite[Proposition 2.7]{analytic_HT} and \Cref{sec:representations-g-3-cohomology-on-bv-sharp}. We may thus reduce to $H=\{1\}$.
\end{proof}

\begin{remark}
  \label{sec:representations-g-1-canonical-higgs-field-on-gerbes}
  Let $\mathcal{Z}$ be any $V^\sharp$-gerbe over $\Spf(R)$, and let $R[E]=R\oplus \varepsilon E$ be the trivial square-zero extension of $R$ by $E$. Then there exists a canonical automorphism
  \[
    \rho\colon \Id_{\mathcal{Z}\times_{\Spf(R)}\Spf(R[E])}\to \Id_{\mathcal{Z}\times_{\Spf(R)}\Spf(R[E])}. 
  \]  Namely, the trivial $V^\sharp$-torsor acts as the identity on $\mathcal{Z}$, and on the trivial $V^\sharp$-torsor acts the natural element in $V^\sharp(R[E])=\Hom_{R}(\Gamma_R^\bullet(E)^\wedge_p, R[E])$ coming from the projection
  \[\textstyle\Gamma_R^\bullet(E)^\wedge_p=\widehat{\bigoplus\limits_{n\geq 0}}\Gamma_R^n(E)\to R\oplus {\varepsilon E}
  \]
  where $\varepsilon E\cong \Gamma_R^1(E)$.
  If $\mathcal{F}\in \mathcal{D}(\mathcal{Z})$ is any complex, then $\rho^\ast$ yields an $R[E]$-linear automorphism
  \[
    \mathcal{F}\otimes_{R}R[E]\isomarrow  \mathcal{F}\otimes_{R}R[E],
  \]
  that reduces to $\Id_{\mathcal{F}}$ after base change along $R[E]\to R,\ E\mapsto 0$. Equivalently, this defines a map
  \[
    \Id_{\mathcal{F}}+\varepsilon \theta_{\mathcal{F}}:\mathcal{F}\to \mathcal{F}\otimes_{R}R[E]=\mathcal{F}\oplus \varepsilon\mathcal{F}\otimes_{R}E\quad
 \text{for some} \quad \theta_{\mathcal{F}}\colon \mathcal{F}\to \mathcal{F}\otimes_{R}E \text{ in } \mathcal{D}(\mathcal{Z}).
\]
This construction of $\theta_{\mathcal{F}}$ is analogous to the construction of the Sen operator in \cite[Construction~3.5.4]{bhatt2022absolute}.
  \begin{definition}
  	We call $\theta_{\mathcal{F}}$ the ``canonical Higgs field'' attached to $\mathcal{F}$. 
  \end{definition}
  	In comparison with \cite[Theorem 3.5.8]{bhatt2022absolute} the equivalence \Cref{sec:representations-g-representations-of-bg-over-bh} (if $H=\{1\}$) would ideally be stated as saying that the pullback along any section $\eta\colon \Spf(R)\to \mathcal{Z}$ yields a fully faithful functor $\mathcal{F}\mapsto (\eta^\ast(\mathcal{F}), \eta^\ast(\theta_{\mathcal{F}})$ from $\mathcal{D}(\mathcal{Z})$ to a derived category of Higgs fields, whose essential image is given by the condition of topological nilpotence of the Higgs field. However, this would require setting up enough coherences for the Higgs field condition $\theta_{\mathcal{F}}\wedge \theta_{\mathcal{F}}=0$, which we avoided by using the Fourier transform. Up to these coherence issues, \Cref{sec:representations-g-representations-of-bg-over-bh} realizes this desideratum in the sense that the constructed $\mathcal{S}_p(E^\vee)$-action restricts to the adjoint of the canonical Higgs field.
\end{remark}

\subsection{The isogeny category of perfect complexes}
\label{sec:isog-categ-perf}

We retain setup and notation of 
\S\ref{sec:representations-g_a} with $H=\{1\}$ the trivial group. The aim of this subsection is to describe the $p$-isogeny category
$
\mathcal{P}erf(BV^\sharp)\tf
$
by describing its essential image under the functor $\Phi_{\Spf(R)}\tf$, as follows.
\begin{proposition}
	\label{sec:representations-g-1-isogeny-category-fully-faithful}
	The functor
	\[
	\mathcal{D}(BV^\sharp)\overset{\Phi_{\mathrm{Spf}(R)}} \longrightarrow \mathrm{Mod}_{\Symvw(E^\vee)}(\mathcal{D}(\Spf(R)))\to \mathrm{Mod}_{\Symv(E^\vee)[\frac{1}{p}]}(\mathcal{D}(\Spec(R\tf)))
	\]
	induces a fully faithful functor
	\[
	\Psi_{\mathrm{Spf}(R)} \colon \mathcal{P}erf(BV^\sharp)\tf\to \mathcal{P}erf(\Symv(E^\vee)\tf).
	\]
	The idempotent completion of the essential image of $\Psi_{\mathrm{Spf}(R)}$ consists of those $M\in \mathcal{P}erf(\Symv(E^\vee)\tf)$ that are perfect over $R\tf$ and satisfy the following condition: After base change $R\to S$ to any $p$-complete valuation ring, the action of each $\delta\in E^\vee$ on the cohomology of $M\otimes_{R[\frac{1}{p}]}^L S\tf$ is topologically nilpotent for the natural topology induced by the non-archimedean field $S\tf$.
\end{proposition}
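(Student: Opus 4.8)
The plan is to deduce the statement from the fully faithful functor $\Phi_{\Spf(R)}$ of \Cref{sec:representations-g-representations-of-bg-over-bh} by analysing what happens to its target category upon inverting $p$ and replacing the $p$-completed symmetric algebra $\Symvw(E^\vee)$ by the honest polynomial algebra $\Symv(E^\vee)$.

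\emph{Construction of $\Psi_{\Spf(R)}$ and full faithfulness.} By \Cref{sec:representations-g-2-category-generated-by-trivial-representation}, $\calPerf(BV^\sharp)$ is the thick (idempotent complete) subcategory of $\mathcal{D}(BV^\sharp)$ generated by the unit $\mathcal{O}$. Since $\Phi_{\Spf(R)}(\mathcal{O})\cong R$, and $R$ is, Zariski-locally on $\Spf(R)$ where $E^\vee$ is free, resolved over $\Symvw(E^\vee)$ by the Koszul complex, $\Phi_{\Spf(R)}(\mathcal{O})$ is perfect over $\Symvw(E^\vee)$; hence so is $\Phi_{\Spf(R)}(M)$ for every $M\in\calPerf(BV^\sharp)$, and by \eqref{eq:explicit-descr-Phi} it is moreover perfect over $R$. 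Inverting $p$ and restricting the module structure along $\Symv(E^\vee)\tf\to\Symvw(E^\vee)\tf$ therefore produces an object of $\calPerf(\Symv(E^\vee)\tf)$ (perfectness being Zariski-local, the Koszul resolution being available after a cover), and this defines $\Psi_{\Spf(R)}$. For full faithfulness one combines \Cref{sec:representations-g-1-phi-fully-faithful} (so $\Phi_{\Spf(R)}$, hence $\Phi_{\Spf(R)}\tf$, is fully faithful) with the observation that, on the essential image of $\Phi_{\Spf(R)}|_{\calPerf(BV^\sharp)}\tf$, the restriction functor to $\calPerf(\Symv(E^\vee)\tf)$ is fully faithful: both mapping complexes are exact in each variable, so one reduces by the generation statement to $M=N=\mathcal{O}$, where both sides compute $\bigwedge_R^\bullet E$ with zero differential (the Koszul resolution of $R$ over $\Symvw(E^\vee)$, resp.\ of $R\tf$ over $\Symv(E^\vee)\tf$), the comparison map being precisely this identification.

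\emph{Essential image, the inclusion $\subseteq$.} Write $\mathcal{C}$ for the subcategory described in the statement; it is stable under retracts in $\calPerf(\Symv(E^\vee)\tf)$ (a direct summand is again perfect over $R\tf$, and a summand of a topologically nilpotent operator is topologically nilpotent), hence idempotent complete. Given $M\in\calPerf(BV^\sharp)$, \Cref{sec:representations-g-representations-of-bg-over-bh} gives that each $\delta\in E^\vee$ acts nilpotently on $H^\ast(M\otimes_{\Z_p}^L\F_p)$ (local nilpotence on a finitely generated module is nilpotence), so from the short exact sequences relating $H^\ast(M)/p$ to $H^\ast(M\otimes_{\Z_p}^L\F_p)$ there is $n$ with $\delta^n H^\ast(M)\subseteq pH^\ast(M)$; base-changing along any $R\to S$ with $S$ a $p$-complete valuation ring and inverting $p$ shows $\delta$ is topologically nilpotent on $H^\ast(\Psi_{\Spf(R)}(M)\otimes_{R\tf}^LS\tf)=H^\ast((M\otimes_R^LS)\tf)$. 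Thus $\Psi_{\Spf(R)}(M)\in\mathcal{C}$, and since $\mathcal{C}$ is idempotent complete the idempotent completion of the essential image of $\Psi_{\Spf(R)}$ is contained in $\mathcal{C}$.

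\emph{Essential image, the inclusion $\supseteq$, and the main obstacle.} Conversely, let $M\in\mathcal{C}$. As $R$ has bounded $p^\infty$-torsion, $M$ admits an integral model: a perfect $R$-complex $M_0$ with $M_0\tf\cong M$ in $\mathcal{D}(R\tf)$, and (after multiplying by a power of $p$) the operators coming from $E^\vee$ lift to endomorphisms of $M_0$ in $\mathcal{D}(R)$. The heart of the argument is to enlarge $M_0$ to a perfect $R$-complex, stable under all $\delta\in E^\vee$, on which each $\delta$ is $p$-adically topologically nilpotent. For this one feeds in the valuation-ring hypothesis: topological nilpotence of $\delta$ on every base change $H^\ast(M\otimes_{R\tf}^LS\tf)$ means that the spectral radius of $\delta$ is $<1$ at each point of the quasi-compact adic spectrum $\Spa(R\tf,R^\circ)$ (equivalently, $\delta$ satisfies a Cayley--Hamilton relation with topologically nilpotent coefficients), so -- arguing as in the proof of \Cref{sec:form-reduct-prov-1-crucial-assumption-on-galois-cohomology} and its verifications in \S\ref{sec:examples}, via quasi-compactness and the open mapping theorem -- one obtains a uniform estimate $\delta^{kn_0}L\subseteq p^kL$ on a suitable lattice $L$. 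Extending the $\Symv(E^\vee)$-action to $\Symvw(E^\vee)=\Symv(E^\vee)^\wedge_p$ by convergence gives $\widetilde M\in\mathrm{Mod}_{\Symvw(E^\vee)}(\mathcal{D}(\Spf R))$ that is perfect over $R$, hence perfect over $\Symvw(E^\vee)$, and on which $p$-adic topological nilpotence forces local nilpotence of each $\delta$ on $H^\ast(\widetilde M\otimes_{\Z_p}^L\F_p)$, so $\widetilde M\in\calPerf(BV^\sharp)$ by \Cref{sec:representations-g-representations-of-bg-over-bh}; and $\Psi_{\Spf(R)}(\widetilde M)\cong M$ in $\calPerf(\Symv(E^\vee)\tf)$, up to a retract arising from replacing the complex by a strictly $\delta$-stable representative (which is why only the idempotent completion is claimed). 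Thus $\mathcal{C}$ lies in the idempotent completion of the essential image, completing the proof. The main difficulty is exactly this passage from pointwise topological nilpotence over all valuation rings to a single integral, $p$-adically uniform estimate; alternatively one could first reduce, via a cover of $\Spa(R\tf,R^\circ)$ by affinoid perfectoid fields, to the case where $R\tf$ is a non-archimedean field, where the estimate is immediate from Cayley--Hamilton.
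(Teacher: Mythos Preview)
Your treatment of the construction of $\Psi_{\Spf(R)}$, full faithfulness, and the inclusion $\subseteq$ is essentially correct and close in spirit to the paper's argument (the paper packages the perfectness over $\Symv(E^\vee)$ as \Cref{sec:representations-g-2-general-perfectness-result}(1), and then observes $\calPerf(T)\tf\to\calPerf(T\tf)$ is fully faithful, which is the same Koszul computation you indicate).

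The genuine gap is in the inclusion $\supseteq$. You correctly identify the main obstacle --- producing an integral model with a uniformly $p$-adically topologically nilpotent $\Symv(E^\vee)$-action --- but the argument you sketch does not close it. There are two problems. First, lifting the $\Symv(E^\vee)\tf$-module structure on $M$ to a $\Symv(E^\vee)$-module structure on an integral model $M_0\in\calPerf(R)$ is not just a matter of scaling finitely many operators: in the derived setting this requires lifting a commutative algebra action with all its higher coherences, and you give no mechanism for this. Second, even granting an action, your passage from pointwise topological nilpotence at all valuation points to a uniform estimate $\delta^{kn_0}L\subseteq p^kL$ is not justified: the appeals to quasi-compactness of $\Spa(R\tf,R^\circ)$, the open mapping theorem, and the computations of \S\ref{sec:examples} do not apply here (those are about explicit period rings, not arbitrary perfect complexes), and the Cayley--Hamilton heuristic only makes sense for vector bundles, not general perfect complexes. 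Your proposed alternative of covering by perfectoid fields and gluing integral models has the same defect: gluing integral models of perfect complexes with compatible actions is precisely the hard part.

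The paper's approach avoids all of this by a geometric argument. Set $T=\Symv(E^\vee)$, $X=\Spec(T)$, $U=\Spec(T\tf)$. The topological nilpotence hypothesis is used only to control the \emph{support}: one shows that the closure $Z$ in $X$ of the support of $M$ meets $\Spec(T/p)$ only in the zero section (this is where the valuation-ring condition enters, via specialisations). Then one invokes Thomason--Trobaugh \cite[(5.2.2)]{thomason2007higher} to extend $M\oplus M[1]$ from $U$ to a perfect complex $N$ on $X$ with support in $Z$; this single step simultaneously produces the integral model \emph{as a $T$-module} (no coherence issues) and forces local nilpotence of $E^\vee$ on $H^\ast(N\otimes^L\F_p)$ because the mod $p$ support lies in the zero section. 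Perfectness of $N$ over $R$ is \Cref{sec:representations-g-2-general-perfectness-result}(2). The appearance of $M\oplus M[1]$ rather than $M$ is exactly why only the idempotent completion of the essential image is described.
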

We note that the characterisation of the essential image is a local description in terms of the v-topology. We expect that it should be possible to also give such a description for weaker topologies. However, for more general $S$ it is not immediately clear how to define what ``locally nilpotent'' means for an operator on cohomology as this need not be finitely presented in general.

The following proof, relying on \cite{thomason2007higher}, was suggested to us by Peter Scholze.
\begin{proof}
	We set $T:=\Symv(E^\vee)=\mathrm{Sym}^\bullet_R(E^\vee)$.
Recall that the category of $p$-complete objects in $\mathcal{D}(T)$ is equivalent to the category $\mathrm{Mod}_{\Symvw(E^\vee)}(\mathcal{D}(\Spf(R)))$. 	By \Cref{sec:representations-g-representations-of-bg-over-bh} applied with $H=\{1\}$, the category $\mathcal{P}erf(BV^\sharp)$ is therefore equivalent via $\Phi_{\mathrm{Spf}(R)}$ to the full subcategory
	$
	\mathcal{C}\subseteq \mathcal{D}(T)
	$
	of objects $N$ such that $N$ is perfect over $R$ and such that each $\delta\in E^\vee$ acts locally nilpotently on $H^\ast(N\otimes_{\Z_p}^L\F_p)$. Here we use that any such object is already $p$-complete by $p$-completeness of $R$.
	
	 By \Cref{sec:representations-g-2-general-perfectness-result} below, we see that $\mathcal{C}\subseteq \mathcal{P}erf(T)$.
	We can conclude that the functor 
	\[
	\Psi_{\mathrm{Spf}(R)} \colon \mathcal{C}\tf\to \mathcal{P}erf(T\tf)
	\]
	is fully faithful: Indeed, it suffices to check that $\mathcal{P}erf(T)\subseteq \mathcal{P}erf(T\tf)$ is fully faithful. But
	\[
	R\Hom_{T[\frac{1}{p}]}(N_1\tf,N_2\tf)\cong R\Hom_T(N_1,N_2)\tf
	\]
	for any $N_1,N_2\in \mathcal{P}erf(T)$
	by reduction to the case that $N_1=N_2=T$.

	Clearly, each object $M$ in the essential image of $\Psi_{\mathrm{Spf}(R)}$ satisfies the conditions of the statement, i.e., $M$ is perfect over $R\tf$ and the action of each $\delta\in E^\vee$ is topologically nilpotent on the cohomology of $M$ in the sense of the statement.
	As the category of such $M\in \mathcal{P}erf(T\tf)$ is idempotent complete, these assertions extend to the idempotent completion $\mathcal{C}^\prime$ of $\mathcal{C}\tf$. 
	
	Let now $M\in \mathcal{P}erf(T\tf)$ be as in the statement.
	Set $X:=\Spec(T)$ with its open subscheme $U:=\Spec(T\tf)$. Let $Z\subseteq X$ be the closure of the support of $M$ in $X$. 
	By \cite[Proposition (5.2.2)]{thomason2007higher} we can extend $M\oplus M[1]$, considered as a perfect complex on $U$ with support on $Z\cap U$, to a perfect complex $N$ on $X$ with support in $Z$. The topological nilpotence condition ensures:
	\begin{claim*}
	The subscheme $Z\cap \Spec(T/p)$ is contained (as a set) in the zero section. 
	\end{claim*}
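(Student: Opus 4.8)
The plan is to verify the Claim by unwinding what the topological nilpotence hypothesis on $M$ says about the closed subset $Z$ of $\Spec(T)$, working one point of $\Spec(T/p)$ at a time. First I would recall that $Z$ is the closure of the support of $M$ in $X = \Spec(T)$, where $T = \mathcal S(E^\vee) = \mathrm{Sym}^\bullet_R(E^\vee)$, and that $M$ (together with the auxiliary $N$ on $X$) has support contained in $Z$; so it suffices to show that every point $\mathfrak p \in Z$ lying over $p$ (i.e.\ with $p \in \mathfrak p$) contains the augmentation ideal $I = \mathrm{Sym}^{>0}_R(E^\vee)$, equivalently lies in the zero section $Z(I) = \Spec(R) \hookrightarrow \Spec(T)$.

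The key step is to translate "$\mathfrak p \in Z$" into a statement over a valuation ring and then invoke the hypothesis. Since $Z$ is the closure of $\mathrm{Supp}(M)$ and $M$ is a perfect complex over $T\tf$, any point $\mathfrak p \in Z \cap \Spec(T/p)$ is a specialization of a point $\mathfrak q \in \mathrm{Supp}(M) \subseteq \Spec(T\tf)$. By the existence of valuation rings dominating a given specialization (e.g.\ \cite[Tag 00PH]{StacksProjectAuthors2017}), I can choose a valuation ring $S$ with a map $T \to S$ carrying $\mathfrak q$ to the generic point and $\mathfrak p$ to the closed point, and such that $S\tf$ is a non-archimedean field; after $p$-completing $S$ (which does not change the relevant geometry) we land exactly in the setup of the hypothesis: $M \otimes^L_{T\tf} S\tf \neq 0$ because $\mathfrak q \in \mathrm{Supp}(M)$, and $M$ is perfect over $R\tf$, so the hypothesis applies and says each $\delta \in E^\vee$ acts topologically nilpotently on the cohomology of $M \otimes^L_{R\tf} S\tf$. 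Now the $T$-module (equivalently $\Symv(E^\vee)$-module) structure is exactly multiplication by the $\delta$'s; topological nilpotence of $\delta$ on a nonzero finitely generated module over the non-archimedean field $S\tf$ forces the image of $\delta$ in $S\tf$ to lie in the maximal ideal of $S$, hence $\delta \in \mathfrak p$ for all $\delta \in E^\vee$, i.e.\ $I \subseteq \mathfrak p$. Running this over all such $\mathfrak p$ gives $Z \cap \Spec(T/p) \subseteq Z(I) \cap \Spec(T/p)$, which is the zero section intersected with the special fibre.

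I expect the main obstacle to be the bookkeeping in the reduction to a valuation ring: one must produce the map $T \to S$ with the two prescribed points and simultaneously arrange that $S\tf$ is a (non-archimedean) field, so that the hypothesis of \Cref{sec:representations-g-1-isogeny-category-fully-faithful} is literally applicable, and one must check that $p$-completing $S$ is harmless (it does not change $\Spec(S/p)$ nor the closed point, and $M$ being perfect over $R\tf$ means $M\otimes^L_{R\tf}\widehat S\tf$ computes the same cohomology up to completion). A secondary point to handle carefully is the passage between "$\mathfrak p$ in the closure of $\mathrm{Supp}(M)$ in $\Spec(T)$" and "$\mathfrak p$ is a specialization of a point of $\mathrm{Supp}(M) \cap \Spec(T\tf)$" — this uses that $\Spec(T\tf)$ is dense in $Z$ by construction of $Z$ as a closure, together with the fact that in a Noetherian-enough (or merely spectral) scheme the closure of a set is the set of specializations of its points. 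Once these two points are in place, the argument is the short valuative computation sketched above.
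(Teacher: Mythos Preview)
Your argument is correct and is essentially the paper's own: both choose a valuation ring $S$ witnessing the specialization $\mathfrak q \leadsto \mathfrak p$, $p$-complete it, and then use the topological nilpotence hypothesis to conclude that the image of each $\delta$ under $T \to S$ lies in $\mathfrak m_S$, so that $\mathfrak p$ lies in the zero section. The paper phrases the last step slightly differently, by first passing to algebraically closed $S\tf$ and describing $\mathrm{Supp}(M \otimes^L_{R\tf} S\tf)\subseteq \mathbb A^d_{S\tf}$ explicitly as a finite union of points $V(\delta_1-s_1,\ldots,\delta_d-s_d)$ with all $s_i \in \mathfrak m_S$, whose mod-$p$ fibre is visibly the zero section.

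The one elliptical point in your write-up is the sentence ``topological nilpotence of $\delta$ \ldots\ forces the image of $\delta$ in $S\tf$ to lie in the maximal ideal of $S$'': note that $\delta$ acts on $M \otimes^L_{R\tf} S\tf$ as an $S\tf$-linear \emph{endomorphism}, not as multiplication by the scalar $t:=\text{image of }\delta$ under $T \to S$. The missing link is that $\mathfrak q \in \mathrm{Supp}(M)$ forces $t$ to occur as an eigenvalue of this endomorphism on some $H^j(M \otimes^L_{R\tf} S\tf)$ (the localisation at $(\delta_i - t_i)$ is nonzero), and topological nilpotence of an endomorphism of a finite-dimensional vector space over a non-archimedean field bounds all eigenvalues into $\mathfrak m_S$. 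This is precisely what the paper's passage to the algebraic closure makes explicit.
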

	\begin{proof}[Proof of claim:]
	The subset $Z\cap U$ is pro-constructible in $X$ and hence its closure is given by the set of specializations of elements in $Z$. 
	Each specialization $x\in \Spec(T/p)$ of some point in $Z\cap U$ is witnessed by some map $T\to S^\prime$ with $S^\prime$ a valuation ring. Let $S$ be the $p$-adic completion of $S^\prime$. Then $\Spec(S\tf)$ maps still to $Z$, while the closed point of $\Spec(S)$ maps to $x$.
	
	 To see that $x$ lies in the zero section we may replace $R$ by $S$, and we may assume that $S\tf$ is algebraically closed. 
	Then $E^\vee=R\delta_1\oplus\ldots \oplus R\delta_d$ is trivial and the assumptions that $S\tf$ is algebraically closed and that $M$ is perfect over $R\tf$ guarantee that $Z\cap \Spec(T\tf)$ is (as a set) a finite union of closed subsets of the form  $W=V(\delta_1-s_1,\ldots, \delta_d-s_d)$ with $s_1,\ldots, s_d\in S\tf$. The topological nilpotence condition implies that $s_1,\dots,s_d$ in fact lie in the maximal ideal of $S$. 
	But the mod $p$ fiber  of each $W$ is contained in the zero locus of $X$.
	\end{proof}
	We have thus found a perfect complex $N$ on $X$ with support in $Z$ such that $N\tf\cong M\oplus M[1]$. As the mod $p$ fiber of $Z$ is contained in the zero section, the action of each $\delta\in E^\vee$ on $H^\ast(N\otimes_{\Z_p}^L \F_p)$ is locally nilpotent.
	It remains to see that $N$ is 
	perfect over $R$. By $p$-completeness of $R$ and $N$ it suffices to check this for $N\otimes_{R}^LR/p$, cf.\ \cite[Tag 09AW]{StacksProjectAuthors2017}.
	Now we can apply \Cref{sec:representations-g-2-general-perfectness-result} below.
\end{proof}

\begin{lemma}
	\label{sec:representations-g-2-general-perfectness-result}
	Let $R$ be a ring, $E$ a finite projective $R$-module and $T:=\mathrm{Sym}^\bullet_R(E^\vee)$.
	\begin{enumerate}
		\item If $N\in \mathcal{D}(T)$ is perfect over $R$, then $N$ is perfect over $T$.
		\item If the support of $N\in \mathcal{P}erf(T)$ in $\Spec(T)$ is finite over $\Spec(R)$, then $N\in \mathcal{P}erf(R)$. 
	\end{enumerate}
	
\end{lemma}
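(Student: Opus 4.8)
The plan is to treat the two parts separately; in both, note first that the assertion is Zariski-local on $\Spec(R)$ — perfectness over $T$ is local on $\Spec(T)$, while perfectness over $R$ and the condition that $\mathrm{Supp}(N)$ be finite over $\Spec(R)$ are local on $\Spec(R)$ — so we may assume $E$ is free of some rank $d$, i.e.\ $T=R[x_1,\dots,x_d]$.

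For (1) I would argue along the tower $R\subseteq R[x_1]\subseteq R[x_1,x_2]\subseteq\cdots\subseteq T$, which reduces us to the one-variable statement: for any ring $A$, an object $N\in\mathcal D(A[x])$ whose restriction of scalars to $A$ is perfect is perfect over $A[x]$. The key input is the Koszul resolution of the diagonal, $[\,A[x,y]\xrightarrow{x-y}A[x,y]\,]\xrightarrow{\sim}A[x]$ over $A[x]\otimes_A A[y]=A[x,y]$: tensoring it over $A[x,y]$ with $A[x]\otimes_A N$ (where $x$ acts on the polynomial factor and $y$ via the $A[x]$-module structure of $N$) produces a functorial cofibre sequence
\[
A[x]\otimes_A N \xrightarrow{\ x\otimes\mathrm{id}-\mathrm{id}\otimes x\ } A[x]\otimes_A N \longrightarrow N
\]
in $\mathcal D(A[x])$, in which the two copies of $A[x]\otimes_A N$ carry the $A[x]$-module structure coming from the polynomial variable, hence are extended from $\mathcal D(A)$ and therefore perfect over $A[x]$ by base change of perfect complexes. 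Consequently $N$, being the cofibre of a map between perfect $A[x]$-complexes, is perfect over $A[x]$; iterating over the tower gives (1).

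For (2), write $Z:=\mathrm{Supp}(N)$, with reduced subscheme $\Spec(T/I)$; by hypothesis $T/I$ is a finite, hence integral, $R$-algebra, so each $x_i$ satisfies a monic equation over $R$ modulo $I$, giving monic $p_i\in R[t]$ with $g_i:=p_i(x_i)\in I$. Since $N$ is perfect, each $H^j(N)$ is finitely generated with support inside $Z=V(I)$, so each $g_i$ acts nilpotently on each $H^j(N)$; as $N$ is bounded, a standard argument (truncation induction, or: $\mathrm{RHom}_T(N,N)$ is a perfect, hence bounded, $T$-complex on which $g_i$ acts nilpotently) upgrades this to $g_i^k\cdot\mathrm{id}_N=0$ in $\mathcal D(T)$ for some $k$. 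Set $T':=T/(g_1^k,\dots,g_d^k)$; the $g_i^k$ are monic in distinct variables, hence a regular sequence in $T$, so the Koszul complex $\mathcal K:=\mathrm{Kos}_T(g_1^k,\dots,g_d^k)$ resolves $T'$, and $T'$ is a finite free $R$-module. Since $N\otimes_T^L T'\simeq N\otimes_T\mathcal K$ and each $g_i^k$ acts as zero on $N$, an induction on $d$ shows that $N\otimes_T\mathcal K$ is a finite direct sum of shifts of $N$; in particular $N$ is a retract of $N\otimes_T^L T'$. But $N\otimes_T^L T'$ is perfect over $T'$ by base change, hence perfect over $R$ because $T'$ is finite free over $R$, and perfect $R$-complexes are closed under retracts — so $N$ is perfect over $R$, proving (2).

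I expect the main obstacle to be the two bookkeeping points in (2): producing the monic relations without a Noetherian hypothesis (handled by passing to the reduced structure $T/I$ and invoking integrality of $T/I$ over $R$, rather than arguing directly with annihilators of the $H^j(N)$), and the step from ``$g_i$ nilpotent on cohomology'' to ``a power of $g_i$ kills $N$ as a map in $\mathcal D(T)$'', which genuinely uses that $N$ is bounded. Part (1) is essentially formal once the Koszul resolution of the diagonal is in hand.
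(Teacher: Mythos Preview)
Your proof is correct. Part (1) is essentially identical to the paper's argument: both localise to $T=R[x_1,\dots,x_d]$, induct on $d$, and use the cofibre sequence coming from the Koszul resolution of the diagonal $[T\otimes_R T \xrightarrow{x\otimes 1 - 1\otimes x} T\otimes_R T]\simeq T$.

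For part (2), however, you take a genuinely different route. The paper compactifies $\Spec(T)$ inside the projective bundle $\mathbb{P}(E^\vee\oplus R)\to\Spec(R)$, uses finiteness of the support over $\Spec(R)$ to see that it is already closed in $\mathbb{P}(E^\vee\oplus R)$, extends $N$ by zero to a perfect complex $N'$ on the projective bundle, and then invokes the fact that $Rf_\ast$ of a proper map preserves perfect complexes (Stacks \texttt{0B91}) to conclude that $N$, viewed over $R$, is perfect. Your argument instead stays entirely in the affine world: integrality of the reduced support over $R$ produces monic $g_i=p_i(x_i)$, nilpotence on cohomology upgrades to $g_i^k\cdot\mathrm{id}_N=0$ in $\mathcal{D}(T)$, and then the Koszul complex on the regular sequence $(g_1^k,\dots,g_d^k)$ exhibits $N$ as a retract of $N\otimes_T^L T'$ with $T'$ finite free over $R$. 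Your approach is more elementary and self-contained, avoiding both projective geometry and the black-box result on proper pushforward of perfect complexes; the paper's approach is shorter and more conceptual, but imports heavier machinery. Both handle the non-Noetherian setting without difficulty.
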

\begin{proof}
	Both claims are local on $R$, hence we may assume that $E^\vee$ is trivial, i.e., $T\cong R[\delta_1,\ldots, \delta_d]$ is a polynomical ring.
	For (1), using induction on $d$, we can (by replacing $R$ by $R[\delta_1,\ldots, \delta_{d-1}]$) reduce to the case that $d=1$.
	Let $\delta:=\delta_1$. If $N\in \mathcal{D}(T)$ is arbitrary, then we get a fiber sequence
	\[
	N\otimes_R^LT\xrightarrow{\delta\otimes \Id_T-\Id_N\otimes \delta}N\otimes_R^L T\to N
	\]
	as we check by reduction to $N=T$ via colimits. If $N\in \mathcal{P}erf(R)$, then this shows $N\in \mathcal{P}erf(T)$.
	
	For (2), we embed $\Spec(T)$ into the projective bundle 
	\[f\colon \mathbb{P}(E^\vee\oplus R)\to \Spec(R).\]
	Since the support of $N$ is closed in $\mathbb{P}(E^\vee\oplus R)$ (by finiteness of the support over $\Spec(R)$), we can extend $N$ to a perfect complex $N^\prime$ on $\mathbb{P}(E^\vee\oplus R)$ by extending it by zero on the complement of the support using \cite[Tag 08DP]{StacksProjectAuthors2017}.  Then $Rf_\ast(N^\prime)$ is a perfect complex on $R$ by \cite[Tag 0B91]{StacksProjectAuthors2017}, but $Rf_\ast(N^\prime)$ is also just the complex $N$ seen as an $R$-module. This finishes the proof.
      \end{proof}

      \begin{remark}
        \label{sec:isog-categ-perf-1-remark-for-isogeny-category-in-g-pi-case}
        The proof of \Cref{sec:representations-g-1-isogeny-category-fully-faithful} also applies to the group scheme $\Spf(R)\times_{\Spf(\Z_p)}\Gm^\sharp$, and hence to $G_A\times_{\Spf(\O_K)}\Spf(R)$ for $G_A$ as in \S\ref{sec:p-adic-fields-new-identification-of-galois-action-for-p-adic-fields} and some morphism $\O_K\to R$. Similarly to \cite[Theorem 2.5]{analytic_HT}, we thus obtain from this a fully faithful functor
        \[\mathcal{P}erf(\Spf(\O_K)^\HT\times_{\Spf(\O_K)}\Spf(R))\to \mathcal{P}erf(R[\Theta_\pi])\tf\]
        whose essential image can be characterised as follows: After base change $R\to S$ to any $p$-complete valuation ring, the action of $\Theta_\pi^p-E'(\pi)^{p-1}\Theta_\pi$ on the cohomology of $M\otimes_{R[\frac{1}{p}]}^L S\tf$ is topologically nilpotent for the natural topology induced by the non-archimedean field $S\tf$.
      \end{remark}

      \begin{lemma}
        \label{sec:isog-categ-perf-1-multiplication-by-x-on-v-sharp}
        Let $x\in R$. Then multiplication $x\colon V^\sharp\to V^\sharp$ induces a functor
        \[
          x^\ast\colon \mathcal{P}erf(BV^\sharp)\to \mathcal{P}erf(BV^\sharp),
        \]
        which is fully faithful after inverting $x$, and its essential image are those perfect complexes $(M,\theta_M)$ on $BV^\sharp$, such that $(M,\theta_M)=(M,x\theta_M^\prime)$ for some $(M, \theta_M^\prime)\in \mathcal{P}erf(BV^\sharp)$. 
      \end{lemma}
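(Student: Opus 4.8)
The plan is to reduce everything to a statement about modules over the polynomial ring $T := \Symv(E^\vee) = \mathrm{Sym}^\bullet_R(E^\vee)$ via the Fourier-type functor $\Phi_{\Spf(R)}$ of \Cref{sec:representations-g-definition-of-functor-phi}. Recall from \Cref{sec:representations-g-representations-of-bg-over-bh} (applied with $H=\{1\}$), together with \Cref{sec:representations-g-2-general-perfectness-result}(1), that $\Phi_{\Spf(R)}$ identifies $\mathcal{P}erf(BV^\sharp)$ with the full subcategory $\mathcal{C}\subseteq \mathcal{D}(T)$ of complexes that are perfect over $R$ and on whose mod-$p$ cohomology each $\delta\in E^\vee$ acts locally nilpotently; such complexes are automatically perfect over $T$. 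Moreover $\Phi_{\Spf(R)}$ is the identity on underlying $R$-modules, and under it the $T$-action is the adjoint of the degree-one part of the $\O(V^\sharp)$-coaction, i.e. of the canonical Higgs field (\Cref{sec:representations-g-1-canonical-higgs-field-on-gerbes}).

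First I would identify $x^\ast$ in these terms. Since $m_x\colon V^\sharp\to V^\sharp$ is a homomorphism of formal group schemes, $Bm_x\colon BV^\sharp\to BV^\sharp$ is defined and $x^\ast := (Bm_x)^\ast$ is restriction of comodules along the Hopf-algebra endomorphism $m_x^\ast\colon \O(V^\sharp) = \Gamma_R^\bullet(E)^\wedge_p\to \Gamma_R^\bullet(E)^\wedge_p$, which scales $\Gamma_R^n(E)$ by $x^n$ (as $\gamma_n(xe) = x^n\gamma_n(e)$). Hence restricting a coaction along $m_x^\ast$ multiplies its degree-one part — equivalently, via $\Phi_{\Spf(R)}$, the $\delta$-action for $\delta\in E^\vee$, i.e. the canonical Higgs field — by $x$. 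In other words, under $\Phi_{\Spf(R)}$ the functor $x^\ast$ becomes restriction of scalars $\phi_{x,\ast}$ along the $R$-algebra endomorphism $\phi_x\colon T\to T$ determined by $\delta\mapsto x\delta$ on $E^\vee$. From this the two concrete assertions follow quickly: $\phi_{x,\ast}$ does not change the underlying $R$-module and only scales the Higgs field, so it preserves perfectness over $R$ (hence over $T$, by \Cref{sec:representations-g-2-general-perfectness-result}(1)) and the local-nilpotence condition, so $x^\ast$ indeed lands in $\mathcal{P}erf(BV^\sharp)$; and an object of $\mathcal{C}$ lies in the essential image of $\phi_{x,\ast}$ precisely when its $T$-action factors through $\phi_x$, i.e. when its canonical Higgs field has the form $x\,\theta'$ with $(M,\theta')\in\mathcal{P}erf(BV^\sharp)$ — which is the stated description.

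It remains to prove full faithfulness after inverting $x$. For $M,N\in\mathcal{P}erf(BV^\sharp)$, viewed via $\Phi_{\Spf(R)}$ as perfect $T$-complexes, I must show that the natural map
\[
  R\Hom_{T}(M,N)[\tfrac1x] \longrightarrow R\Hom_{T}(\phi_{x,\ast}M,\phi_{x,\ast}N)[\tfrac1x]
\]
is an equivalence. Since $M$ is perfect over $T$, one has $R\Hom_T(M,N)[\tfrac1x]\cong R\Hom_{T[1/x]}(M[\tfrac1x],N[\tfrac1x])$ (reduce to $M=T$), and likewise for the right-hand term; and $(\phi_{x,\ast}M)[\tfrac1x] = \phi'_{x,\ast}(M[\tfrac1x])$ for the induced endomorphism $\phi'_x$ of $T[\tfrac1x] = \mathrm{Sym}^\bullet_{R[1/x]}(E^\vee\otimes_R R[\tfrac1x])$. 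But $\phi'_x$ is an \emph{isomorphism}, with inverse $\delta\mapsto x^{-1}\delta$, so $\phi'_{x,\ast}$ is an equivalence of $\mathcal{D}(T[\tfrac1x])$ and the displayed map is an equivalence. Note no hypothesis on $x$ is needed, since $x$ is a unit in $R[\tfrac1x]$.

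The only genuinely non-formal point is the identification of $x^\ast$ with $\phi_{x,\ast}$ at the level of $T$-modules, which requires unwinding the definition of $\Phi_{\Spf(R)}$ (or of the canonical Higgs field); everything else is a formal consequence of the perfectness results already established and of the observation that inverting $x$ turns the scaling endomorphism $\phi_x$ into an isomorphism. I expect this translation step to be the main obstacle, and once it is in place the essential-image description is essentially tautological and full faithfulness after inverting $x$ is immediate.
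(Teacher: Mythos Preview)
Your proof is correct. The route differs slightly from the paper's: the paper argues directly via \Cref{sec:representations-g-3-cohomology-on-bv-sharp}, computing $R\Hom_{BV^\sharp}(M,N)=R\Gamma(BV^\sharp,M^\vee\otimes N)$ by the Koszul complex $(M^\vee\otimes N)\to (M^\vee\otimes N)\otimes E\to\cdots$, observing that $x^\ast$ multiplies each differential by $x$, and rescaling by $x^i$ in degree $i$ to get an isomorphism after inverting $x$. You instead first pass through $\Phi_{\Spf(R)}$ to $T$-modules and then argue that restriction along $\phi_x$ becomes an equivalence on $T[\tfrac1x]$. These are really the same computation seen from two angles: your $R\Hom_T$-computation unwinds, via the Koszul resolution of $R$ over $T$, to exactly the complex the paper writes down. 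Your packaging has the small advantage that the essential-image description becomes literally the statement that the $T$-action factors through $\phi_x$, whereas the paper just declares it ``clear''; conversely, the paper's argument avoids invoking perfectness over $T$ (and hence \Cref{sec:representations-g-2-general-perfectness-result}) to commute $R\Hom$ with $[\tfrac1x]$, since the explicit Koszul complex is already a bounded complex of finite $R$-modules.
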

      \begin{proof}
        Fully faithfulness follows from \Cref{sec:representations-g-3-cohomology-on-bv-sharp}. The essential image is clear.
      \end{proof}

\subsection{Application to the smoothoid case}
\label{sec:application-ht-stack-smoothoid-case}
Let $X$ be a qcqs smoothoid $p$-adic formal scheme over $\Z_p^\cycl$ with sheaf of $p$-completed differentials $\Omega^1_{X}:=\Omega^1_{X|\Z_p^\cycl}$ from \Cref{def:absolute-diff-of-smoothoid}. Let $(A_0,I_0)$ be the perfect prism associated with $\Z_p^\cycl$, i.e., $A_0/I_0\cong \Z_p^\cycl$. This describes the Breuil--Kisin twist $\{1\}$ on $\Z_p^\cycl$-modules as $I_0/I_0^2\otimes_{\Z_p^\cycl}(-)$.
As in \S\ref{sec:smoothoid-case}, $\mathcal{T}_{X}^\sharp\{1\}:=\mathcal{T}_{X|\Z_p^\cycl}^\sharp\{1\}$ is the $p$-completed PD-envelope of the zero section of the tangent bundle $\mathcal{T}_{X}\{1\}$ of $X$, i.e., locally $X=\Spf(R)$ and
  \[
    \mathcal{T}^\sharp_{X}\{1\}:=\Spf(\Gamma_R^\bullet(\Omega^1_{R}\{-1\})^\wedge_p),\quad \mathcal{T}_{X}\{1\}:=\Spf(\mathcal{S}_p(\Omega^1_{R}\{-1\})),
  \]
  where $\Gamma_R^\bullet $ denotes the PD-algebra, and $\mathcal{S}_p(-)$ the $p$-completed symmetric algebra. The main player of this subsection is the relative classifying stack 
 $B\mathcal{T}^\sharp_{X}\{1\}$.
   We use it to describe complexes on the Hodge--Tate stack in some cases and describe the isogeny category of the category of perfect complexes on it in terms of the generic fiber $\X$ of $X$. Indeed, we can now prove \Cref{sec:smoothoid-case-1-complexes-on-x-ht-smoothoid-case-introduction}:

  \begin{theorem}
    \label{sec:smoothoid-case-1-complexes-on-ht-in-split-case}
  Any section $X\to X^\HT$ of the projection $X^\HT\to X$ induces an isomorphism
  \[
    X^\HT\cong B\mathcal{T}_{X}^\sharp\{1\}
  \]
  of $\mathcal{T}_{X}^\sharp\{1\}$-gerbes.
  In particular, it induces a fully faithful functor
		\[
		\mathcal{D}(X^\HT)\hookrightarrow \mathcal{D}(\mathcal{T}_{X}^\vee\{-1\}).
		\]
		Its essential image is given by complexes $\mathcal{M}$ on $\mathcal{T}_{X}^\vee\{-1\}$ such that locally on any affine open $U:=\Spf(R)\subseteq X$ each $D\in T_{R}\{1\}$ acts nilpotently on $H^\ast(R\Gamma(U,\mathcal{M})\otimes_{R}^LR/p)$.
              \end{theorem}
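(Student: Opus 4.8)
The plan is to combine the local structure of the Hodge--Tate stack in the smoothoid setting with the representation-theoretic equivalence \Cref{sec:representations-g-representations-of-bg-over-bh}. The first step is to show that $\pi\colon X^\HT\to X$ is a gerbe banded by the flat affine formal group scheme $\mathcal{T}_X^\sharp\{1\}$. This is local on $X$ and, since $X^\HT\times_X U\cong U^\HT$ for $U\subseteq X$ open, I may assume $X=\Spf(R)$ with $R$ $p$-completely smooth over a perfectoid $\Z_p^\cycl$-algebra $R_0$. As $R_0$ is perfectoid, $\Spf(R_0)^\HT\to\Spf(R_0)$ is an isomorphism, so the relative Hodge--Tate map of \Cref{sec:crit-fully-faithf-relative-ht-map} applied to $X\to\Spf(R_0)$ is exactly $\pi$ and exhibits $X^\HT$ as a gerbe banded by $\mathcal{T}_{X|R_0}^\sharp\{1\}$. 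By \Cref{def:absolute-diff-of-smoothoid} together with \Cref{sec:fully-faithf-smooth-differentials-are-smoothoid} (which gives $L_{R_0|\Z_p^\cycl}^\wedge=0$) one has $\Omega^1_{X|R_0}=\Omega^1_X$, and the two Breuil--Kisin twists agree because the perfect prism of $R_0$ is the base change of that of $\Z_p^\cycl$; hence $\mathcal{T}_{X|R_0}^\sharp\{1\}=\mathcal{T}_X^\sharp\{1\}$. These identifications are compatible with restriction to opens and glue, so $X^\HT\to X$ is a $\mathcal{T}_X^\sharp\{1\}$-gerbe for general $X$. (Equivalently, $L_{X|\Z_p^\cycl}^\wedge$ is a vector bundle in degree $0$ by \Cref{sec:fully-faithf-smooth-differentials-are-smoothoid}, so the proof of \Cref{sec:crit-fully-faithf-relative-ht-map} applies verbatim to $X\to\Spf(\Z_p^\cycl)$; this is also \cite[Proposition~5.12, Corollary~6.6]{bhatt2022prismatization}.)

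Next I invoke the general fact that a gerbe $\mathcal{G}\to X$ banded by an affine flat group scheme $G$ which admits a global section $s$ is canonically equivalent, as a $G$-gerbe, to $BG=[X/G]$: the section identifies $\underline{\Aut}(s)$ with $G$ and thereby $\mathcal{G}$ with the stack of $G$-torsors. Since $\mathcal{T}_X^\sharp\{1\}$ is commutative — it is the divided power hull of the additive group scheme $\mathcal{T}_X\{1\}$ — there is no subtlety about inner forms of the band. Applying this to the given section $X\to X^\HT$ produces the asserted isomorphism $X^\HT\cong B\mathcal{T}_X^\sharp\{1\}$ of $\mathcal{T}_X^\sharp\{1\}$-gerbes.

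The fully faithful functor and its essential image then follow from applying \Cref{sec:representations-g-representations-of-bg-over-bh} with $H=\{1\}$ and $E:=\Omega^1_X\{-1\}$, so that $G=V^\sharp=\mathcal{T}_X^\sharp\{1\}$, $E^\vee=T_X\{1\}$ and $\mathbb{V}(E^\vee)=\mathcal{T}_X^\vee\{-1\}$; as $\mathcal{T}_X^\vee\{-1\}\to X$ is affine, $\mathrm{Mod}_{\Symvw(E^\vee)}(\mathcal{D}(X))\cong\mathcal{D}(\mathcal{T}_X^\vee\{-1\})$. Because \Cref{sec:representations-g-representations-of-bg-over-bh} is formulated over an affine base and is natural in it, fully faithfulness of $\mathcal{D}(X^\HT)\hookrightarrow\mathcal{D}(\mathcal{T}_X^\vee\{-1\})$ descends to arbitrary $X$ by Zariski descent, and the essential-image condition of \Cref{sec:representations-g-representations-of-bg-over-bh} — local nilpotence of the action of each $\delta\in T_X\{1\}$ on $H^\ast(\mathcal{M}\otimes^L_{\Z_p}\F_p)$ — is visibly Zariski-local and becomes, over an affine $U=\Spf(R)$, the stated requirement that each $D\in T_R\{1\}$ act nilpotently on $H^\ast(R\Gamma(U,\mathcal{M})\otimes^L_R R/p)$; the passage between $\otimes^L_{\Z_p}\F_p$ and $\otimes^L_R R/p$ is harmless by the reduction-modulo-$p$ argument used in the proof of the lemma following \Cref{sec:representations-g-definition-of-functor-phi}.

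The essential content is concentrated in the first step — the identification of $\pi\colon X^\HT\to X$ as a $\mathcal{T}_X^\sharp\{1\}$-gerbe — which rests on \Cref{sec:crit-fully-faithf-relative-ht-map}; granting this, the rest is formal, the only genuine point of care being that $X$ is merely smoothoid, not smooth, over $\Z_p^\cycl$, which is handled by the reduction to a perfectoid base recorded above.
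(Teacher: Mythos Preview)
Your proof is correct and follows essentially the same route as the paper: identify $X^\HT\to X$ as a $\mathcal{T}_X^\sharp\{1\}$-gerbe via \Cref{sec:crit-fully-faithf-relative-ht-map} (equivalently, \cite[Proposition~5.12]{bhatt2022prismatization}), use that a section trivialises a gerbe banded by a commutative group, and then apply \Cref{sec:representations-g-representations-of-bg-over-bh} with $H=\{1\}$, gluing over affines by naturality of the construction in \Cref{sec:representations-g-definition-of-functor-phi}. Your write-up is simply more explicit about the reduction to a perfectoid base and the identification $\mathcal{T}_{X|R_0}^\sharp\{1\}=\mathcal{T}_X^\sharp\{1\}$, but there is no substantive difference.
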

              This generalizes \cite[Corollary 6.6]{bhatt2022prismatization}, which gives the statement for vector bundles.
              \begin{proof}
                By the proof of \cite[Proposition 5.12]{bhatt2022prismatization}, $X^\HT\to X$ is a gerbe for $\mathcal{T}_{X}^\sharp\{1\}$ (note that $X^\HT$ is equivalently the Hodge--Tate stack of the relative prismatization of $X$ over $A_0$ as $\Z_p^\cycl$ is perfectoid). So any splitting $X\to X^\HT$ induces $X^\HT\cong B\mathcal{T}_{X}^\sharp\{1\}$. The second part follows from \Cref{sec:representations-g-representations-of-bg-over-bh} as the functor in \Cref{sec:representations-g-definition-of-functor-phi} is natural in $\Spf(R)$, hence can be glued over affine pieces of $X$.
              \end{proof}
             \begin{remark}
             	The
              convergence condition for Higgs fields appearing in the description of the essential image of \Cref{sec:smoothoid-case-1-complexes-on-ht-in-split-case} also appears in the work of Tsuji in the context of the Higgs site \cite[\S IV.3.6, Thm~IV.3.4.16]{abbes2016p} as well as the work of Tian  \cite[Def.\ 5.9, Thm~5.12]{tian2021finiteness}. In our context, its appearance is explained geometrically by the isomorphism $X^\HT\cong B\mathcal{T}_{X}^\sharp\{1\}$.
              
              Via reduction mod $p$, there is also a relation to the nilpotent Higgs bundles appearing in the mod $p$ Simpson correspondence of Ogus-Vologodsky \cite{ogus_vologodsky}  over $\F_p$.  In particular, they also consider the mod $p$ version of the PD-cotangent sheaf $\mathcal{T}_{X}^\sharp\{1\}$ from \Cref{sec:smoothoid-case-1-complexes-on-ht-in-split-case}, see \cite[\S2.3]{ogus_vologodsky}. 
            \end{remark}

                       Recall that the Hodge--Tate stack does not split in general, but it does when $X$ admits a smooth lift to $A_0$ equipped with a $\delta$-structure, e.g. when $X$ is affine.

              \begin{corollary}
	\label{computation-cohomology-koszul-resolution}
	Let $X=\mathrm{Spf}(R)$ be a smooth $p$-adic formal scheme over $\Z_p^\cycl$. Let $\mathcal{M} \in \mathcal{D}(X^{\mathrm{HT}})$, seen as an object $M\in \widehat{D(R)}$ with an $R$-linear action of $S:= \mathcal{S}_p(T_R \{1\})$ via \Cref{sec:smoothoid-case-1-complexes-on-ht-in-split-case}. Then
	$$
	R\Gamma(X^{\mathrm{HT}}, \mathcal{M}) \cong R\mathrm{Hom}_S(R, M) \cong R\mathrm{Hom}_S( ...\to \wedge^2 (T_{R} \{1\}) \otimes_R S \to T_{R} \{1\} \otimes_R S \to S, M).
	$$
	In particular, this cohomology is computed as cohomology of the Dolbeault complex
	$$
	\mathrm{Dol}(\mathcal{M}):= M \overset{\theta_M} \longrightarrow M \otimes_R \Omega_{R}^1 \{-1\} \overset{\theta_M} \longrightarrow M \otimes_R \Omega_{R}^2\{-2\} \to \dots
	$$ 
\end{corollary}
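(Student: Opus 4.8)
The plan is to reduce the statement to the representation‑theoretic results of \S\ref{sec:representations-g_a}--\S\ref{sec:isog-categ-perf} and then to a standard Koszul computation. First I would split the Hodge--Tate stack: since $X=\Spf(R)$ is affine and smooth over $\Z_p^\cycl$, it admits (as recalled just before the statement) a smooth lift to the perfect prism $A_0$ carrying a $\delta$-structure, so $X^\HT$ splits; fixing a section and invoking \Cref{sec:smoothoid-case-1-complexes-on-ht-in-split-case} identifies $X^\HT\cong B\mathcal{T}_X^\sharp\{1\}=BV^\sharp$ in the notation of \S\ref{sec:representations-g_a}, with $E=\Omega^1_R\{-1\}$, $E^\vee=T_R\{1\}$ and $S=\mathcal{S}_p(E^\vee)=\mathcal{S}_p(T_R\{1\})$. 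Under the functor $\Phi_{\Spf(R)}$ of \Cref{sec:representations-g-definition-of-functor-phi}, the object $\mathcal{M}$ corresponds to $M\in\widehat{D}(R)$ — whose underlying $R$-complex is literally $M$, by the explicit formula \eqref{eq:explicit-descr-Phi} — equipped with the $S$-module structure adjoint to the canonical Higgs field $\theta_M$ of \Cref{sec:representations-g-1-canonical-higgs-field-on-gerbes}.

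Next I would produce the first isomorphism by writing $R\Gamma(X^\HT,\mathcal{M})=R\Hom_{\mathcal{D}(X^\HT)}(\mathcal{O}_{X^\HT},\mathcal{M})$. Under the splitting, $\mathcal{O}_{X^\HT}$ is the trivial representation $R$, whose canonical Higgs field vanishes, so $\Phi_{\Spf(R)}(\mathcal{O}_{X^\HT})\cong R$ with $S$ acting through the augmentation $S\twoheadrightarrow S/S_{>0}=R$. Then full faithfulness of $\Phi_{\Spf(R)}$ (\Cref{sec:representations-g-1-phi-fully-faithful}), together with the fact that $\mathrm{Mod}_{\mathcal{S}_p(E^\vee)}(\mathcal{D}(\Spf(R)))$ embeds fully faithfully into $\mathcal{D}(S)$, gives $R\Gamma(X^\HT,\mathcal{M})\cong R\Hom_S(R,M)$.

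Finally I would compute $R\Hom_S(R,M)$ by resolving $R=S/S_{>0}$ over $S$. Zariski-locally $E^\vee$ is free, so the Koszul complex $\mathrm{Kos}_\bullet=(\cdots\to\wedge^2(T_R\{1\})\otimes_R S\to T_R\{1\}\otimes_R S\to S)$ built from the tautological section $E^\vee\hookrightarrow S_1$ is, after $p$-completion, a finite-free resolution of $R$ over $S$ (exactness can be checked locally, and $p$-completing a bounded complex of finite free modules whose homology is already $p$-complete does not change the homology). Hence $R\Hom_S(R,M)\cong\Hom_S(\mathrm{Kos}_\bullet,M)$, and since $\Hom_S(\wedge^k(T_R\{1\})\otimes_R S,M)=\Hom_R(\wedge^k T_R\{1\},M)=M\otimes_R\Omega^k_R\{-k\}$ and the $S$-action on $M$ restricted to $S_1$ is adjoint to $\theta_M$, the resulting differentials are $m\otimes\omega\mapsto\theta_M(m)\wedge\omega$, which is precisely $\mathrm{Dol}(\mathcal{M})$. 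As an independent check I would note that \Cref{sec:representations-g-3-cohomology-on-bv-sharp}, applied directly to $M\in\mathcal{D}(BV^\sharp)$, already expresses $R\Gamma(BV^\sharp,M)$ as $(M\to M\widehat\otimes_R E\to M\widehat\otimes_R\wedge^2 E\to\cdots)$, whose first arrow is the coaction of $E$, i.e.\ $\theta_M$.

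The main obstacle is purely the bookkeeping of module structures: making sure that $\Phi_{\Spf(R)}$ carries the unit object to $R$ with its augmentation $S$-module structure, and that the $S$-action on a general $M$ is adjoint to the canonical Higgs field $\theta_M$ — both of which I would extract from \eqref{eq:explicit-descr-Phi} and \Cref{sec:representations-g-1-canonical-higgs-field-on-gerbes}. Once that dictionary is fixed, the Koszul computation of $R\Hom_S(R,-)$ and its comparison with the Dolbeault complex are routine.
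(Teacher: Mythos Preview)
Your proposal is correct and follows essentially the same route as the paper: the paper's proof simply says the first isomorphism is full faithfulness of the functor in \Cref{sec:smoothoid-case-1-complexes-on-ht-in-split-case} applied to morphisms from the unit object, and the second is the Koszul resolution of $R$ over $S$. Your account unpacks this a bit more (tracking that $\Phi_{\Spf(R)}$ sends the unit to $R$ with the augmentation $S$-structure, and adding the cross-check via \Cref{sec:representations-g-3-cohomology-on-bv-sharp}), but the argument is the same.
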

\begin{proof}
	The first isomorphism follows from fully faithfulness in \Cref{sec:smoothoid-case-1-complexes-on-ht-in-split-case} applied to morphisms from the unit object on $X^{\mathrm{HT}}$, the second from the Koszul resolution of $R$ as an $S$-module.
\end{proof}

\begin{remark}
\label{remark:recovering-scholze-computation-pushforward}
Let $X$ be a smooth $p$-adic formal scheme over the ring of integers $\mathcal{O}_L$ of a perfectoid field extension $L$ of $\Q_p$ containing $\Q_p^\cycl$, and $\X$ its generic fiber. As a consequence of \Cref{computation-cohomology-koszul-resolution} and \Cref{sec:introduction-2-statement-main-theorem}, we recover Scholze's computation \cite[Proposition 3.23]{scholze2013perfectoid} that for $j\geq 0$,
$$
\Omega_{\X}^j(-j) \cong R^j\nu_\ast \mathcal{O}_{\X_{\mathrm{pro\acute{e}t}}}
$$
where $\nu_\ast : {\X_{\mathrm{pro\acute{e}t}}} \to {\X_{\mathrm{\acute{e}t}}}$ is the natural morphism of sites. The proof below is not very different from the original but explains perhaps more clearly how differentials enter the picture; it is quite similar to Bhatt--Lurie's new proof of Hodge--Tate comparison for prismatic cohomology using $X^\HT$.

We first note that we can replace the pro-\'etale site by the v-site and the Tate twist by a Breuil-Kisin twist. We will still denote by $\nu$ the natural morphism of topoi $\widetilde{\X_v} \to \widetilde{\X_{\mathrm{\acute{e}t}}}$. 
Let $\eta:\X_\et\to X_{\mathrm{Zar}}$ be the natural morphism to the Zariski-site of $X$.

Now by construction
$
\alpha_X^\ast \mathcal{O}_{X^\HT} = \mathcal{O}_{\X_v}
$.
Hence  \Cref{sec:introduction-2-statement-main-theorem} yields by naturality of $\alpha^\ast_X$ a natural isomorphism
\[  R^j(\eta_\ast\circ \nu_\ast) \mathcal{O}_{\X_v}\isomarrow  R^j\pi_\ast \mathcal{O}_{X^\HT}\tf\]
on $X$
where $\pi:X^\HT\to X$ is the structure map.
Since we know a priori from \cite[Lemmas~4.5~and~5.5]{scholze2013p} that $R^j\nu_\ast \mathcal{O}_{\X_v}$ is a finite locally free sheaf, pullback along $\eta$ induces an isomorphism
\[  R^j\nu_\ast \mathcal{O}_{\X_{v}}\to \eta^\ast R^j\pi_\ast \mathcal{O}_{X^\HT}. \]

On the other hand, the fact that $\pi$ is a  $\mathcal{T}_{X}^\sharp\{1\}$-gerbe implies that
\[R^j\pi_\ast \mathcal{O}_{X^\HT}=R^j\pi'_\ast \mathcal O_{B\mathcal{T}_{X}^\sharp\{1\}}\]
for the trivial gerbe $\pi': B\mathcal{T}_{X}^\sharp\{1\}\to X$,
see \cite[($\ast$) in the proof of Corollary 2.7.2.1]{bhatt2022F-gauges}.  When $X$ is affine, \Cref{computation-cohomology-koszul-resolution} then yields an isomorphism 
\[ \eta^\ast R^j\pi'_\ast \mathcal O_{B\mathcal{T}_{X}^\sharp\{1\}}=\Omega_{\X}^j(-j),\]
which is natural in $X$ due to naturality in \Cref{sec:representations-g-representations-of-bg-over-bh}. By gluing, we thus obtain the last isomorphism also for general $X$. All in all, this yields the desired isomorphism $R^j\nu_\ast \mathcal{O}_{\X_v}=\Omega_{\X}^j(-j)$.

Note that for affine $X$ with a lift to $A_0$, we additionally get the splitting of the complex $R\nu_\ast \mathcal{O}_{\X_v}$.  
\end{remark}

In order to describe the isogeny category $\mathcal{P}erf(X^\HT)\tf$, we now define the notions of Higgs perfect complexes and Hitchin-smallness. Recall our notation $\omega:=(\zeta_p-1)^{-1}$.

 \begin{definition}
  \label{sec:appl-Hodge--Tate-definition-higgs-perfect-complex}
	Let $\mathcal{Y}$ be a smoothoid analytic adic space over $\Q_p^\cycl$.\footnote{I.e., $\mathcal{Y}$ is locally for its analytic topology smooth over some perfectoid space over $\Q_p^\cycl$, cf.\ \cite[Definition 2.2.]{heuer-sheafified-paCS}. Similarly to \Cref{sec:fully-faithf-smooth-differentials-are-smoothoid} each smoothoid space has  finite locally free sheaf of differentials $\Omega^1_{\mathcal{Y}}$, cf.\ \cite[Definition 2.10]{heuer-sheafified-paCS}.}
	\begin{enumerate}
		\item  Recall that $T_{\mathcal{Y}}$ is the $\mathcal{O}_{\mathcal{Y}}$-linear dual of $\Omega_{\mathcal{Y}}^1=\Omega_{\mathcal{Y}|\Q_p^\cycl}^1$. 
		A \textit{Higgs perfect complex} on $\mathcal{Y}$ is a complex on the ringed site $(\mathcal{Y},\mathrm{Sym}^\bullet_{\O_{\mathcal{Y}}}(T_{\mathcal{Y}}\{1\}))$ that is already perfect over  $(\mathcal{Y},\O_{\mathcal{Y}})$.
		\item Assume that $\mathcal{Y}=Y^\rig$ is the rigid generic fiber of a smoothoid $p$-adic formal scheme $Y$ over $\Z_p^\cycl$. Then a Higgs perfect complex $M$ on $\mathcal{Y}$ is called \textit{Hitchin--small} if for any point $\xi:\Spa(C,C^+)\to \mathcal{Y}$ valued in a non-archimedean affinoid field $(C,C^+)$, the action of each
		$\delta\in \omega \cdot T_{Y}\{1\}_\xi \subseteq T_{\mathcal{Y}}\{1\}_\xi$
		 on each cohomology group of $M\otimes_{\O_{\mathcal{Y}}}^L C$ is topologically nilpotent.
		 \item More generally, for $z\in \O(\mathcal Y)^\times$, we call a Higgs perfect complex $z$-Hitchin-small if instead $\delta$ is topologically nilpotent for any $\delta\in \frac{\omega}{z}\cdot T_{Y}\{1\}_\xi$.
	\end{enumerate}
	      We let $\mathcal{H}ig_{\mathcal{Y}}$ be the $\infty$-category of Higgs perfect complexes on $\mathcal{Y}$, and $\mathcal{H}ig_{\mathcal{Y}}^{\Hsm}$ its full subcategory of Hitchin-small objects. Similarly, let $\mathcal{H}ig_{\mathcal{Y}}^{z\text{-}\Hsm}$ be the full subcategory of $z$-Hitchin-small objects.
      \end{definition}
      
  \begin{example}
  	\label{rk:hitchin-small-vector-bundles}
  	A Higgs perfect complex for which the underlying $\mathcal{O}_{\mathcal{Y}}$-module is a vector bundle is the same thing as a vector bundle $M$ on $\mathcal{Y}$ together with a Higgs field
  	$
  	\theta_M : M \to M \otimes_{\mathcal{O}_{\mathcal{Y}}} \Omega_{\mathcal{Y}}^1\{-1\}.
  	$
  	It is Hitchin-small if and only if $\delta\circ \theta_M$ is topologically nilpotent for any $\delta\in \omega.T_{Y|\Z_p^\cycl}\{1\}_\xi$. For example, this is the case if $M$ admits a formal model $\mathfrak M\to \mathfrak M\otimes \Omega_{X}^1\{-1\}$ that is $\equiv 0 \bmod (\zeta_p-1)$.
  \end{example}
  \begin{remark}
  	The normalisation by $(\zeta_p-1)$ in the definition of Hitchin smallness is chosen to make it easiest to state the global correspondence in  \S\ref{sec:globalisation}. In particular, it gives the cleanest comparison to Faltings' notion of smallness.
  	We also note that the additional factor of $(\zeta_p-1)$ is \textit{not} explained by the difference between Breuil-Kisin and Tate twists: Indeed, $\omega\cdot T_{Y}\{1\}=\omega^2 \cdot T_{Y}(1)$.
  	 From the perspective of this section, $\omega$-Hitchin smallness would be the more intrinsic notion.
  \end{remark}

  \begin{remark}
    \label{sec:appl-smooth-case-notion-of-hitchin-smallness-is-intrinsic}
    A priori, the notion of Hitchin-smallness in \Cref{sec:appl-Hodge--Tate-definition-higgs-perfect-complex} depends on the formal model $Y$ because the lattices $\Omega^1_{Y/\Z_p^\cycl}\{-1\}_{\xi}\subseteq \Omega^1_{\mathcal{Y}/\Q_p^\cycl}\{-1\}_\xi$ do. However, these lattices are in fact intrinsic to $\mathcal{Y}$ by \cite[\S8]{Bhatt2018}: It suffices to check this when $Y=\Spf(R)$ is affine and small. Then 
    \[
      \Omega_{R/\Z_p^\cycl}^1\{-1\}\cong H^1(L\eta_{(\zeta_p-1)}(R\Gamma(\mathcal{Y}_{\textrm{pro\'et}},\mathcal{O}^+)))
    \]by \cite[Theorem 8.7]{Bhatt2018}, and this is
    is intrinsic to $\mathcal{Y}$, defined as a submodule of $\Omega^1_{R[1/p]/\Q_p^\cycl}\{-1\}\cong H^1(\mathcal{Y}_{\textrm{pro\'et}},\mathcal{O}^+)$ via the natural map $L\eta_{(\zeta_p-1)}R\Gamma(\mathcal{Y}_{\textrm{pro\'et}},\mathcal{O}^+)\to R\Gamma(\mathcal{Y}_{\textrm{pro\'et}},\mathcal{O}^+)$.  
  \end{remark}

By \Cref{sec:representations-g-representations-of-bg-over-bh}, complexes on $B\mathcal{T}^\sharp_{X}\{1\}$ generalize the natural integral version of $\omega$-Hitchin-small Higgs perfect complexes.
Note that we have a natural, fully faithful functor
\[
  \Xi_{X}\tf\colon \mathcal{P}erf(B\mathcal{T}^\sharp_{X}\{1\})\tf\to \mathcal{H}ig_{\mathcal{X}}^{\omega\text{-}\Hsm}.
\]
This functor is not far from being an equivalence:
\begin{proposition}
  \label{sec:appl-Hodge--Tate-hig-sen-via-integral-stack-smoothoid-case}
  \begin{enumerate}
  	\item Suppose that $X$ is affine. Then $\Xi_{X}\tf$ is an equivalence of categories, up to passing to the idempotent completion $\mathcal{P}erf(B\mathcal{T}^\sharp_{X}\{1\})\tf^{\mathrm{idem}}$ of $\mathcal{P}erf(B\mathcal{T}^\sharp_{X}\{1\})\tf$.
  	\item In general, $\Xi_{X}\tf$ induces an equivalence onto $\mathcal{H}ig_{\mathcal{X}}^{\omega\text{-}\Hsm}$ from the global sections of the stackification of the functor on $X_{\mathrm{Zar}}$ defined by $U\mapsto {\mathcal{P}erf(B\mathcal{T}^\sharp_{U}\{1\})\tf}^{\mathrm{idem}}$.
  \end{enumerate}

\end{proposition}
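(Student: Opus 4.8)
The plan is to deduce part (1) essentially verbatim from \Cref{sec:representations-g-1-isogeny-category-fully-faithful}, and to obtain part (2) from part (1) by a stackification argument, using that $\mathcal{H}ig^{\omega\text{-}\Hsm}$ is already a Zariski stack.

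\emph{Part (1).} For $X=\Spf(R)$ affine, $B\mathcal{T}^\sharp_X\{1\}$ is the stack $BV^\sharp$ of \S\ref{sec:isog-categ-perf} for the finite projective module $E:=\Omega^1_R\{-1\}$, so that $V=\mathbb{V}(E)=\mathcal{T}_X\{1\}$, $V^\sharp=\mathcal{T}^\sharp_X\{1\}$, $E^\vee=T_R\{1\}$, and $\Symv(E^\vee)\tf=\mathrm{Sym}^\bullet_{R\tf}(T_{R\tf}\{1\})$. Thus \Cref{sec:representations-g-1-isogeny-category-fully-faithful} directly provides a fully faithful functor $\Psi_{\Spf(R)}\colon \mathcal{P}erf(B\mathcal{T}^\sharp_X\{1\})\tf\to \mathcal{P}erf(\mathrm{Sym}^\bullet_{R\tf}(T_{R\tf}\{1\}))$ whose idempotent completion has essential image the complexes $M$ which are perfect over $R\tf$ and such that, after base change along any $R\to S$ with $S$ a $p$-complete valuation ring, each $\delta\in T_R\{1\}$ acts topologically nilpotently on the cohomology of $M\otimes^L_{R\tf}S\tf$. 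To recognise this category as $\mathcal{H}ig^{\omega\text{-}\Hsm}_{\mathcal{X}}$ I would carry out three routine verifications: first, for the affinoid $\mathcal{X}=\Spa(R\tf,R)$, Tate acyclicity identifies $\mathcal{H}ig_{\mathcal{X}}$ with the category of objects of $\mathcal{D}(\mathrm{Sym}^\bullet_{R\tf}(T_{R\tf}\{1\}))$ whose underlying complex is perfect over $R\tf$ (these being automatically perfect over $\mathrm{Sym}^\bullet_{R\tf}(T_{R\tf}\{1\})$ by \Cref{sec:representations-g-2-general-perfectness-result}(1)); second, a point $\xi\colon \Spa(C,C^+)\to \mathcal{X}$ is precisely a morphism $R\to C^+$ into the $p$-complete valuation ring $C^+$ with $C=C^+\tf$, under which the fibre $T_X\{1\}_\xi$ is identified with $T_R\{1\}\otimes_RC^+$; third, since any basis of $T_R\{1\}$ acts by commuting operators, topological nilpotence of each basis operator on the finite-dimensional cohomology over $C$ forces it for every $C^+$-linear combination, by simultaneous triangulation over $\overline{C}$. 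Together these show that the essential-image condition of \Cref{sec:representations-g-1-isogeny-category-fully-faithful} is exactly $\omega$-Hitchin-smallness; since $\mathcal{H}ig^{\omega\text{-}\Hsm}_{\mathcal{X}}$ is idempotent complete and $\Xi_X\tf$ is (by construction) the corestriction of $\Psi_{\Spf(R)}$, with the $\Symv(E^\vee)$-action being the adjoint of the canonical Higgs field (cf.\ \Cref{sec:representations-g-1-canonical-higgs-field-on-gerbes}), part (1) follows.

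\emph{Part (2).} Here I would first observe that $U\mapsto \mathcal{H}ig^{\omega\text{-}\Hsm}_{U^\rig}$ is a stack for the Zariski topology on $X_{\mathrm{Zar}}$: modules over the sheaf of rings $\mathrm{Sym}^\bullet_{\mathcal{O}}(T\{1\})$ on an adic space satisfy descent, perfectness over the structure sheaf is a local condition, and $\omega$-Hitchin-smallness is a condition on classical points, hence Zariski-local. On the other hand, $\mathcal{F}\colon U\mapsto {\mathcal{P}erf(B\mathcal{T}^\sharp_U\{1\})\tf}^{\mathrm{idem}}$ is a presheaf on $X_{\mathrm{Zar}}$, and $\Xi_{(-)}\tf$, extended to idempotent completions, is a morphism of presheaves $\mathcal{F}\to \big(U\mapsto \mathcal{H}ig^{\omega\text{-}\Hsm}_{U^\rig}\big)$. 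By part (1) this morphism is an equivalence on every affine open, hence on a basis of $X_{\mathrm{Zar}}$; since the target is already a stack, the induced morphism on stackifications $\mathcal{F}^+\to \big(U\mapsto \mathcal{H}ig^{\omega\text{-}\Hsm}_{U^\rig}\big)$ is an equivalence of stacks. Evaluating at $X$ and precomposing with $\mathcal{F}(X)\to \mathcal{F}^+(X)$ then yields the asserted equivalence $\mathcal{F}^+(X)\isomarrow \mathcal{H}ig^{\omega\text{-}\Hsm}_{\mathcal{X}}$, induced by $\Xi_X\tf$; quasi-compactness and quasi-separatedness of $X$ are used only to reduce descent to finite covers, so that inverting $p$ commutes with the relevant limits.

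The main obstacle is the matching of the two formulations of the convergence condition in part (1): \Cref{sec:representations-g-1-isogeny-category-fully-faithful} phrases it via arbitrary base change to $p$-complete valuation rings, whereas $\omega$-Hitchin-smallness is phrased via fibres at classical points of $\mathcal{X}$, and one must be careful about the difference between a base-changed operator and an element of the fibre $T_X\{1\}_\xi$, and about which topology is meant. None of this is deep, but it is the only non-formal point; everything else reduces to \Cref{sec:representations-g-1-isogeny-category-fully-faithful} and descent for $\mathcal{H}ig$.
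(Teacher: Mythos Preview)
Your proposal is correct and follows essentially the same approach as the paper: part (1) is deduced directly from \Cref{sec:representations-g-1-isogeny-category-fully-faithful}, and part (2) follows by observing that $U\mapsto \mathcal{H}ig^{\omega\text{-}\Hsm}_{U^\rig}$ is already a Zariski stack and that $\Xi_{(-)}\tf$ is a natural transformation which is an equivalence on affines by (1). Your write-up is considerably more detailed than the paper's two-line proof, in particular in spelling out the identification of the essential-image condition of \Cref{sec:representations-g-1-isogeny-category-fully-faithful} with $\omega$-Hitchin-smallness; this is indeed the only point requiring any care, and your treatment of it (matching $p$-complete valuation rings $S$ with points $\Spa(C,C^+)\to\mathcal{X}$, and passing from basis elements of $T_R\{1\}$ to arbitrary elements of the fibre) is sound.
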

\begin{proof}
	Part (1) follows from \Cref{sec:representations-g-1-isogeny-category-fully-faithful}. For (2), by descent of perfect complexes on $\X$, the functor $U\mapsto \mathcal{H}ig_{\mathcal{X}}^{\omega-\Hsm}$ on $X_{\mathrm{Zar}}$ is a stack of $\infty$-categories.
	Now $\Xi_{(-)}\tf$ constitutes a natural transformation of functors. By (1) it follows that $\Xi_{X}\tf$  is an equivalence after stackification.
\end{proof}

We can now put everything together and summarise our main results in the smoothoid case so far by way of a derived generalisation of Faltings' ``local $p$-adic Simpson functor'' in our setting:
\begin{theorem}[derived local $p$-adic Simpson functor]\label{t:local-p-adic-Simpson-functor-geometric}
	Let $X$ be a smoothoid formal scheme over $\Z_p^\cycl$. Assume that $X^\HT$ admits a splitting $s:X\to X^\HT$.\footnote{If $X=\Spf(R)$ is affine, such a choice is induced by a prismatic lift $(A,I)$ with $A/I=R$, or by a toric chart.} Then for the generic fibre $\X$ of $X$, there is a fully faithful functor $\LS_s$ from the category of $\omega$-Hitchin-small Higgs perfect complexes on $\X$ into the category of perfect complexes on $\X_v$ which is natural in $s$ and fits into a diagram
	\[\begin{tikzcd}[column sep=0.2cm]
		{\mathcal{H}ig_{\mathcal{X}}^{\omega\text{-}\Hsm}} \arrow[rr,hook,"\LS_s"] &                          & {\calPerf(\X_v)} \\
		& {\calPerf(X^\HT)\tf} \arrow[lu,"{\Xi_X[\frac{1}{p}]}"{yshift=2pt,xshift=-3pt},xshift=-0.3cm,hook'] \arrow[ru,"	\alpha_X^\ast"'{yshift=2pt,xshift=3pt},hook] &   
	\end{tikzcd}\]
    \end{theorem}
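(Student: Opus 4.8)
The statement is essentially a formal consequence of results proved earlier in the paper, so the plan is to assemble the two functors and check compatibility. First I would recall that a splitting $s\colon X\to X^\HT$ exists (e.g.\ from a prismatic lift or a toric chart, cf.\ \cite[Construction 5.2]{bhatt2022prismatization}), and that by \Cref{sec:smoothoid-case-1-complexes-on-ht-in-split-case} it induces an isomorphism $X^\HT\cong B\mathcal{T}^\sharp_{X}\{1\}$ of $\mathcal{T}^\sharp_X\{1\}$-gerbes, hence an equivalence $\mathcal{P}erf(X^\HT)\simeq \mathcal{P}erf(B\mathcal{T}^\sharp_X\{1\})$. Passing to isogeny categories and idempotent completions, \Cref{sec:appl-Hodge--Tate-hig-sen-via-integral-stack-smoothoid-case}(2) identifies ${\mathcal{P}erf(B\mathcal{T}^\sharp_X\{1\})\tf}^{\mathrm{idem}}$ with $\mathcal{H}ig_{\mathcal X}^{\omega\text{-}\Hsm}$ via $\Xi_X\tf$; this is where the Hitchin-smallness condition (with the normalisation $\omega$) enters, matching the local nilpotence condition on the mod $p$ cohomology from \Cref{sec:smoothoid-case-1-complexes-on-ht-in-split-case}. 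Combining, $s$ gives an equivalence $\Xi_X^{-1}\tf\colon \mathcal{H}ig_{\mathcal X}^{\omega\text{-}\Hsm}\isomarrow \mathcal{P}erf(X^\HT)\tf^{\mathrm{idem}}$.

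Next I would define $\LS_s$ as the composite $\alpha_X^\ast\circ \Xi_X^{-1}\tf$, so that the displayed triangle commutes by construction. It remains to check that $\LS_s$ is fully faithful. Here the two inputs are: (a) $\Xi_X^{-1}\tf$ is an equivalence of categories, hence fully faithful, by \Cref{sec:appl-Hodge--Tate-hig-sen-via-integral-stack-smoothoid-case} together with \Cref{sec:smoothoid-case-1-complexes-on-ht-in-split-case}; and (b) $\alpha_X^\ast\colon \mathcal{P}erf(X^\HT)\tf\to \mathcal{P}erf(\X_v)$ is fully faithful by \Cref{sec:tori-over-perfectoid-1-main-theorem-with-perfectoid-base} (the smoothoid case of the main theorem), and this extends to the idempotent completion since $\mathcal{P}erf(\X_v)$ is already idempotent complete (perfect complexes on any ringed site form an idempotent complete $\infty$-category, and full faithfulness is preserved under passing to idempotent completions of the source when the target is idempotent complete). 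The composite of fully faithful functors is fully faithful, giving the claim.

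Finally I would address naturality in $s$: if $s,s'$ are two splittings, both identifications $X^\HT\cong B\mathcal{T}^\sharp_X\{1\}$ differ by an automorphism of the gerbe coming from a section of $\mathcal{T}^\sharp_X\{1\}$ over $X$ (the two splittings differ by such a section by the gerbe structure), and the functor $\alpha_X^\ast$ is defined intrinsically on $\mathcal{P}erf(X^\HT)\tf$, so $\LS_s$ depends on $s$ only through the chosen isomorphism $X^\HT\cong B\mathcal{T}^\sharp_X\{1\}$; the compatibility with the triangle is then automatic. The one genuinely non-trivial ingredient, which is however already established in the excerpt, is the computation of Galois cohomology of the period ring $B_{A,R_\infty}$ underlying \Cref{sec:tori-over-perfectoid-1-main-theorem-with-perfectoid-base} via \Cref{t:Galois-cohom-for-B}; given that, the present statement is purely a matter of composing established equivalences. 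The main obstacle in a fully detailed write-up is therefore only bookkeeping: tracking the idempotent completion carefully (it is needed because $\Xi_X\tf$ is only an equivalence after idempotent completion, by the argument via \cite{thomason2007higher} in \Cref{sec:representations-g-1-isogeny-category-fully-faithful}) and making sure the normalisation factor $\omega$ is consistently carried through the identification $T_{\mathcal X}\{1\}$ versus $\omega\cdot T_X\{1\}$.
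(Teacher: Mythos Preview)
Your proposal is correct and follows essentially the same approach as the paper: define $\LS_s:=\alpha_X^\ast\circ\Xi_X\tf^{-1}$, where $\Xi_X\tf$ is the equivalence obtained by combining \Cref{sec:smoothoid-case-1-complexes-on-ht-in-split-case} and \Cref{sec:appl-Hodge--Tate-hig-sen-via-integral-stack-smoothoid-case}, and $\alpha_X^\ast$ is the fully faithful functor from \Cref{sec:tori-over-perfectoid-1-main-theorem-with-perfectoid-base} extended to the idempotent completion using that $\calPerf(\X_v)$ is already idempotent complete. Your additional remarks on naturality in $s$ and on the bookkeeping of idempotent completions are accurate elaborations of points the paper leaves implicit.
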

\begin{proof}
	If $X$ is affine, then by combining \Cref{sec:smoothoid-case-1-complexes-on-ht-in-split-case} and \Cref{sec:appl-Hodge--Tate-hig-sen-via-integral-stack-smoothoid-case}, we see that $s$ induces a fully faithful functor $\Xi_X[\tfrac{1}{p}]$. After replacing $\calPerf(X^\HT)\tf$ with the stackification of its idempotent completion, its image can be identified with $\mathcal Hig_{\X}^{\omega\text{-}\Hsm}$ according to \Cref{sec:appl-Hodge--Tate-definition-higgs-perfect-complex}.
	
		The fully faithful functor $\alpha_X^\ast$ exists by  \Cref{sec:tori-over-perfectoid-1-main-theorem-with-perfectoid-base}. It can also be extended to the stackification of the idempotent completion of $\calPerf(X^\HT)\tf$ since $\calPerf(\X_v)$ is already idempotent complete.
	We can thus defined $\LS_s$ by setting $\LS_s:=\alpha_X^\ast\circ\Xi_X[\tfrac{1}{p}]^{-1}$.
	
	As this is clearly natural in $(X,s)$, we obtain the general case by gluing.
\end{proof}
We note that this is a generalisation of Faltings' functor (in the case of good reduction) even in the more classical case of coherent sheaves on smooth rigid spaces:
\begin{corollary}\label{c:LS-coherent-modules}
Assume that $X$ is an affine smooth formal scheme with a prismatic lift $(A,I)$ (e.g.\ induced by a toric chart), inducing a splitting $s$ of $X^\HT$, then we obtain a fully faithful functor
\[\mathrm{LS}_s^{\rm coh} : \left\{ \begin{array}{@{}c@{}l}\omega\text{-Hitchin-small coherent}\\\text{Higgs modules on $\X$}\end{array}\right\}\hookrightarrow \left\{ \begin{array}{@{}c@{}l}\text{perfect } \O_{\X_v}\text{-modules}\\\text{on $(\X_v,\O_{\X_v})$}\end{array}\right\}\]
where $\X$ is the rigid generic fibre.
Moreover, for any $\omega$-Hitchin-small coherent Higgs bundle $\mathcal M$,
\[ \mathrm{Dol}(\mathcal M)=R\Gamma(\X_v,\mathrm{LS}_{s}^{\rm coh}(\mathcal M)).\]
\end{corollary}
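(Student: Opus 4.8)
The plan is to deduce \Cref{c:LS-coherent-modules} as a special case of the machinery already set up for perfect complexes. First I would observe that a coherent Higgs module $\mathcal{M}$ on the smooth affinoid rigid space $\X=X^\rig$ is, in particular, a perfect complex over $\mathcal{O}_{\X}$: since $X$ is affine smooth over $\Z_p^\cycl$, the ring $\mathcal{O}(\X)=R\tf$ is regular noetherian, so every finitely generated module has a finite projective resolution, and hence every coherent sheaf with its Higgs field is an object of $\mathcal{H}ig_{\X}$ in the sense of \Cref{sec:appl-Hodge--Tate-definition-higgs-perfect-complex}. The $\omega$-Hitchin-smallness condition for the coherent Higgs module is by definition the same topological nilpotence condition that cuts out $\mathcal{H}ig_{\X}^{\omega\text{-}\Hsm}$. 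Thus $\omega$-Hitchin-small coherent Higgs modules form a full subcategory of $\mathcal{H}ig_{\X}^{\omega\text{-}\Hsm}$, and one simply restricts the functor $\LS_s$ of \Cref{t:local-p-adic-Simpson-functor-geometric} to this subcategory, obtaining a fully faithful functor landing in $\mathcal{P}erf(\X_v)\subseteq\{\O_{\X_v}\text{-modules}\}$. Full faithfulness is inherited from that of $\LS_s$.

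Next I would address the cohomology comparison. By \Cref{t:local-p-adic-Simpson-functor-geometric}, $\LS_s=\alpha_X^\ast\circ \Xi_X[\tfrac1p]^{-1}$, so for a coherent Higgs module $\mathcal{M}$ corresponding under $\Xi_X[\tfrac1p]$ (equivalently under the splitting $s$ and \Cref{sec:smoothoid-case-1-complexes-on-ht-in-split-case}) to an object $\mathcal{E}\in\mathcal{P}erf(X^\HT)\tf$, we have $R\Gamma(\X_v,\LS_s(\mathcal{M}))=R\Gamma(\X_v,\alpha_X^\ast\mathcal{E})$. Now \Cref{sec:form-reduct-prov-1-cohomology-agrees-up-to-p-torsion} (together with the construction of $\alpha_X^\ast$ via the v-site) gives that $R\Gamma(X^\HT,\mathcal{E})\tf\cong R\Gamma(\X_v,\alpha_X^\ast\mathcal{E})$, i.e. the isogeny cohomology on the Hodge--Tate stack agrees with the v-cohomology of the associated v-perfect complex. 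Finally \Cref{computation-cohomology-koszul-resolution} identifies $R\Gamma(X^\HT,\mathcal{E})$ (after inverting $p$) with the Dolbeault complex $\mathrm{Dol}(\mathcal{M})=[\,M\xrightarrow{\theta_M}M\otimes\Omega^1_{\X}\{-1\}\to\cdots\,]$, where $M=\mathcal{O}(\X)\otimes\mathcal{M}$ and $\theta_M$ is the Higgs field. Chaining these identifications yields $\mathrm{Dol}(\mathcal{M})\cong R\Gamma(\X_v,\LS_s^{\rm coh}(\mathcal{M}))$, as claimed. One should be slightly careful here that $\Xi_X[\tfrac1p]$ really is an equivalence without passing to idempotent completion when $X$ is affine with a prismatic lift: this is \Cref{sec:appl-Hodge--Tate-hig-sen-via-integral-stack-smoothoid-case}(1) combined with \Cref{sec:representations-g-1-isogeny-category-fully-faithful}, and one invokes that $\mathcal{H}ig_\X^{\omega\text{-}\Hsm}$, restricted to coherent objects, lies in the essential image on the nose (coherent modules are genuinely perfect, not just summands of perfect complexes, since $R\tf$ is regular).

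The main obstacle I anticipate is bookkeeping the two twists and the normalisation factor $\omega=(\zeta_p-1)^{-1}$ correctly when identifying the Higgs field coming from $s$ via \Cref{sec:smoothoid-case-1-complexes-on-ht-in-split-case} with the Higgs field of the given coherent module, and making sure the Dolbeault complex of \Cref{computation-cohomology-koszul-resolution} (which is written with Breuil--Kisin twists $\Omega^j_{\X}\{-j\}$) matches the convention in \Cref{sec:appl-Hodge--Tate-definition-higgs-perfect-complex}. This is purely a matter of tracking definitions rather than a genuine difficulty: the isomorphism $X^\HT\cong B\mathcal{T}^\sharp_X\{1\}$ from the chosen splitting is what converts $\mathcal{E}$ into the pair $(M,\theta_M)$, and \Cref{computation-cohomology-koszul-resolution} is stated for exactly this data, so the two complexes literally coincide. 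A secondary point worth a sentence is that $\mathrm{Dol}(\mathcal{M})$ should be interpreted as computing $R\Gamma$ on the \emph{analytic} site of $\X$ of the Dolbeault complex of sheaves, and that for an affinoid $\X$ this agrees with the naive complex of global sections since higher coherent cohomology vanishes; this is why the right-hand side $R\Gamma(\X_v,-)$ can be computed either way. I would close by remarking that this recovers, in the good reduction case and for $\omega$-Hitchin-small objects, Faltings' local $p$-adic Simpson correspondence together with its cohomology comparison, now with the smallness hypothesis weakened to a spectral condition on the Higgs field.
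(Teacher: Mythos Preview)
Your proposal is correct and follows essentially the same approach as the paper. The paper's proof is more terse: it observes that regularity of $R\tf$ makes coherent modules the same as perfect complexes concentrated in degree $0$, and then obtains the cohomology comparison ``by comparing $R\Hom(\O,-)$ on both sides'' via \Cref{computation-cohomology-koszul-resolution}. Your route through \Cref{sec:form-reduct-prov-1-cohomology-agrees-up-to-p-torsion} and the factorisation $\LS_s=\alpha_X^\ast\circ\Xi_X\tf^{-1}$ is just an unpacking of that same full-faithfulness argument, since $R\Gamma(\X_v,\LS_s(\mathcal M))=R\Hom_{\calPerf(\X_v)}(\O,\LS_s(\mathcal M))$ and full faithfulness transports this back to $R\Hom$ from the unit on the Higgs side, which \Cref{computation-cohomology-koszul-resolution} identifies with the Dolbeault complex. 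Your digression on idempotent completion is unnecessary here: the functor $\LS_s$ of \Cref{t:local-p-adic-Simpson-functor-geometric} is already defined on all of $\mathcal{H}ig_{\X}^{\omega\text{-}\Hsm}$, so restricting it to coherent objects poses no issue.
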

Here by a perfect $\O_{\X_v}$-module we mean a perfect complex concentrated in degree $0$.
\begin{proof}
	For any affine smooth rigid space $\X=\mathrm{Spa}(A)$, the ring $A$ is regular, hence perfect complexes concentrated in degree $0$ are equivalent to coherent $\O_{\X}$-modules. The cohomological comparison is immediate from \Cref{computation-cohomology-koszul-resolution} by comparing $R\Hom(\O,-)$ on both sides.
\end{proof}

\subsection{Application to the arithmetic case}
\label{sec:application-ht-stack-arithmetic-case}
Let $K$ be a $p$-adic field. Let $X$ be a smooth formal scheme over $X_0:=\Spf(\O_K)$, not necessarily affine.
By \Cref{sec:crit-fully-faithf-relative-ht-map}, the relative Hodge--Tate map
\[
\pi_{X/X_0}\colon X^\HT\to X\times_{X_0} X_0^\HT
\]
is a gerbe under the affine flat group scheme $\mathcal{T}^\sharp_{X|\O_K}\{1\}$ over $X\times_{X_0}X_0^\HT$. Here, $\{1\}$ refers to twisting by the pullback to $X\times_{X_0}X_0^\HT$ of the canonical line bundle $\mathcal{O}_{\Spf(\Z_p)^\HT}\{1\}$, cf.\ \cite[Example 3.5.2]{bhatt2022absolute}.
We denote by $\mathcal{T}^\vee_{X|\O_K}\{-1\}$ the dual geometric vector bundle to $\mathcal{T}_{X|\O_K}\{1\}$ over $X\times_{X_0}X_0^\HT$.

Assume first that $X=\Spf(R)$ and that $R=A/I$ for some prism $(A,I)$. Let
\[
f:=\overline{\rho_A}\colon X=\Spf(A/I)\to X^\HT
\quad \text{and}\quad
g\colon X\to X^\HT\xrightarrow{\pi_{X/X_0}} X\times_{X_0}X_0^\HT
\]
be the induced morphisms. Set $G:=\underline{\mathrm{Aut}}(f)$, $H:=\underline{\mathrm{Aut}}(g)$ as the induced affine $p$-completely flat group schemes over $X$ of automorphisms of $f,g$, and let $G\to H$ be the natural morphism. Then
\[
BG\cong X^\HT,\quad X\times_{X_0} X_0^\HT\cong BH
\]
and there exists a natural exact sequence
\[
1\to \mathcal{T}_{X|\O_K}^\sharp\{1\}\to G\to H\to 1
\]
of group schemes over $X$ (here $(-)\{1\}=I/I^2\otimes_R(-)$ by our choice of prismatic lift of $R$). The conjugation action of $G$ on $\mathcal{T}_{X|\O_K}^\sharp\{1\}$ equals the natural action of $H$ via the Breuil--Kisin twist. 

We can draw the following conclusion.

\begin{theorem}
	\label{sec:appl-Hodge--Tate-structure-of-x-ht-for-some-prismatic-lift}
	Assume $X=\Spf(R)$ with $R=A/I$ for some prism $(A,I)$. This choice defines an isomorphism $X^\HT=BG$ which induces a fully faithful functor
	\[
	\mathcal{D}(X^\HT)\to \mathrm{Mod}_{\mathcal{S}_p(\Omega^{1,\vee}_{X|\O_K}\{1\})}(\mathcal{D}(X\times_{X_0}X_0^\HT))
	\]
	that is natural in $(A,I)$.
	Its essential image is given by objects $M$ such that each $\delta\in \Omega^{1,\vee}_{X|\O_K}\{1\}$ acts locally nilpotently on $H^\ast(M\otimes_{\Z_p}^L \F_p)$. Here we regard $\Omega^{1,\vee}_{X|\O_K}\{1\}\in \mathcal{D}(X\times_{X_0}X_0^\HT)$.
\end{theorem}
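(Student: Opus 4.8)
The plan is to deduce \Cref{sec:appl-Hodge--Tate-structure-of-x-ht-for-some-prismatic-lift} from the general representation-theoretic result \Cref{sec:representations-g-representations-of-bg-over-bh}, applied with $H$ replaced by the stack $X\times_{X_0}X_0^\HT$ (the excerpt's \S\ref{sec:representations-g_a} already allows for $BH$ to be a more general $p$-adic formal stack $\mathcal{Z}$). So the first step is to set up the group-theoretic picture: using \Cref{sec:crit-fully-faithf-relative-ht-map}, $\pi_{X|X_0}\colon X^\HT\to X\times_{X_0}X_0^\HT$ is a gerbe banded by $\mathcal{T}^\sharp_{X|\O_K}\{1\}$, and the choice $R=A/I$ furnishes the section $f=\overline{\rho_A}\colon X\to X^\HT$, hence an identification $X^\HT\cong BG$ with $G=\underline{\Aut}(f)$, compatibly with $X\times_{X_0}X_0^\HT\cong BH$ for $H=\underline{\Aut}(g)$. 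The key point here is the short exact sequence $1\to \mathcal{T}^\sharp_{X|\O_K}\{1\}\to G\to H\to 1$ over $X$, where the Breuil--Kisin twist $(-)\{1\}=I/I^2\otimes_R(-)$ uses the chosen prism, and the $G$-conjugation action on $\mathcal{T}^\sharp_{X|\O_K}\{1\}$ factors through $H$ and agrees with the natural twist action; this is exactly the shape $1\to V^\sharp\to G\to H\to 1$ demanded in \S\ref{sec:representations-g_a} with $V=\mathcal{T}_{X|\O_K}\{1\}$ a geometric vector bundle and $E=T_{X|\O_K}\{1\}$ its fibre module (over the base $X\times_{X_0}X_0^\HT$).

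Next I would invoke \Cref{sec:representations-g-representations-of-bg-over-bh} directly. It produces a fully faithful functor
\[
\Phi_{BH}\colon \mathcal{D}(BG)\to \mathrm{Mod}_{\Symvw(E^\vee)}(\mathcal{D}(BH))
\]
whose essential image consists of those $M$ for which each $\delta\in E^\vee$ acts locally nilpotently on $H^\ast(M\otimes^L_{\Z_p}\F_p)$. Unwinding notation, $BH=X\times_{X_0}X_0^\HT$, $E^\vee=T_{X|\O_K}\{1\}$ viewed as a module over $\mathcal{O}_{X\times_{X_0}X_0^\HT}$, and $\Symvw(E^\vee)=\mathcal{S}_p(T_{X|\O_K}\{1\})$, so $\mathrm{Mod}_{\Symvw(E^\vee)}(\mathcal{D}(BH))\cong \mathcal{D}(\mathcal{T}^\vee_{X|\O_K}\{-1\})$ by affineness of $\mathcal{T}^\vee\{-1\}\to BH$. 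The nilpotence condition reads: each $\delta\in T_{X|\O_K}\{1\}$ (an element of $E^\vee$, since $V=\mathcal{T}_{X|\O_K}\{1\}=\mathbb{V}(\Omega^1_{X|\O_K}\{-1\})$ so $E=\Omega^1_{X|\O_K}\{-1\}$ and $E^\vee=T_{X|\O_K}\{1\}$) acts locally nilpotently on $H^\ast(M\otimes^L_{\Z_p}\F_p)$, which is precisely the asserted description. Finally, naturality in $(A,I)$ follows because $\Phi_{(-)}$ in \Cref{sec:representations-g-definition-of-functor-phi} is constructed naturally in the base $\Spf(R)$ — here in the base stack — and the identifications $BG\cong X^\HT$, $BH\cong X\times_{X_0}X_0^\HT$ are themselves natural in the choice of prism via \Cref{constr:explicit-2-arrow-in-X^prism} and the functoriality of $\rho_{(-)}$.

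The main obstacle is not any deep computation but rather a careful bookkeeping one: verifying that the setup genuinely fits the hypotheses of \S\ref{sec:representations-g_a}. Concretely, one must check (i) that $G$ is $p$-completely flat and affine over $X$ — this is \Cref{sec:form-reduct-prov-1-consequences-of-axioms} combined with \Cref{sec:crit-fully-faithf-relative-ht-map}; (ii) that the relevant square-zero/PD-thickening $V^\sharp=\mathcal{T}^\sharp_{X|\O_K}\{1\}$ is indeed the $p$-completed PD-envelope of the zero section of $V=\mathcal{T}_{X|\O_K}\{1\}=\mathbb{V}(\Omega^1_{X|\O_K}\{-1\})$, which is the definition of $\mathcal{T}^\sharp_{X|\O_K}\{1\}$ recorded in \Cref{sec:crit-fully-faithf-relative-ht-map}; and (iii) that the conjugation action of $G$ on its normal subgroup is through $H$ and coincides with the rescaling action, which is the content of the last sentence of the paragraph preceding the theorem and follows from the argument of \cite[Proposition 5.12]{bhatt2022prismatization}. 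Once these identifications are in place — and they are all either definitional or already cited in the excerpt — the theorem is a formal consequence of \Cref{sec:representations-g-representations-of-bg-over-bh}. The only subtlety worth spelling out is that \S\ref{sec:representations-g_a} is phrased over an affine base $\Spf(R)$ while here the base of the gerbe is the stack $X\times_{X_0}X_0^\HT$; but since the functor $\Phi_{\Spf(R)}$ is natural in $\Spf(R)$, it glues/descends along the faithfully flat atlas $X\to X\times_{X_0}X_0^\HT$ exactly as in the proof of \Cref{sec:representations-g-representations-of-bg-over-bh} for $H\neq\{1\}$, and the essential-image description can be checked after pullback to this atlas because the relevant cohomology is computed by a finite limit.
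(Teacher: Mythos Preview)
Your proposal is correct and follows essentially the same route as the paper: the paper's proof is the single sentence ``This follows from \Cref{sec:representations-g-representations-of-bg-over-bh} and our preceding discussion,'' where the preceding discussion is exactly the setup of the short exact sequence $1\to \mathcal{T}^\sharp_{X|\O_K}\{1\}\to G\to H\to 1$ with $BG\cong X^\HT$ and $BH\cong X\times_{X_0}X_0^\HT$ that you reconstruct. Your additional bookkeeping (identifying $E=\Omega^1_{X|\O_K}\{-1\}$ so $E^\vee=T_{X|\O_K}\{1\}$, and noting that the descent along $X\to X\times_{X_0}X_0^\HT$ is handled exactly as in the $H\neq\{1\}$ case of \Cref{sec:representations-g-representations-of-bg-over-bh}) is accurate and just makes explicit what the paper leaves implicit.
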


\begin{proof}
	Setting $E=\Omega^1$, 
	this follows from \Cref{sec:representations-g-representations-of-bg-over-bh} by our preceding discussion.
\end{proof}
\begin{proof}[Proof of \Cref{sec:introduction-2-description-of-ht-stack}]
This follows from \Cref{sec:appl-Hodge--Tate-structure-of-x-ht-for-some-prismatic-lift} in conjunction with \cite[Theorem~2.5]{analytic_HT}, which says that the essential image of 
\[ \mathcal{D}(X_0^\HT)\hookrightarrow \mathcal{D}(\O_K[\Theta_\pi])\]
is given by objects $M$ for which the action of $\Theta^p_\pi-E^\prime(\pi)^{p-1}\Theta_\pi$ on $H^\ast(M\otimes_{\Z_p}^L \F_p)$ is locally nilpotent.
To see that this implies the analogous description of $\mathcal{D}(X\times_{X_0}X_0^\HT)$, we note that $X\to X_0$ is affine and therefore $\mathcal{D}(X\times_{X_0}X_0^\HT)$ is naturally identified with $R$-module objects in  $\mathcal{D}(X_0^\HT)$.
\end{proof}

The choice of a uniformizer $\pi \in \mathcal{O}_K$ gives rise to an isomorphism $X_0^\HT\cong BG_\pi$. We let $G_\pi$ act on $\mathcal{T}^\sharp_{X|\O_K}\{1\}$ via multiplication by $E^\prime(\pi)$, cf.\ \cite[Theorem 2.5]{analytic_HT}. Then
\[
 \mathcal{Z}_X:= B\mathcal{T}^\sharp_{X|\O_K}\{1\}
\]
is a gerbe over $X\times_{X_0}BG_\pi$. Our next goal is to describe $\mathcal Perf(\mathcal{Z}_X)\tf$, analogously as in \S\ref{sec:application-ht-stack-smoothoid-case}

 To do so, we first define the notion of Higgs--Sen perfect complexes. Let $\mathcal{Y}$ be a smooth rigid space over $K$ (viewed as an adic space). We define the sheaf of (non-commutative) rings on $\mathcal{Y}$
    \[
      \mathcal{HS}_{\mathcal{Y}}:=\mathrm{Sym}^\bullet_{\O_{\mathcal{Y}}}(\Omega^{1,\vee}_{\mathcal{Y}|K}\{1\})[\Theta] \quad \text{such that } [\Theta,\delta]=-\delta \text{ and } [\delta, \delta^\prime]=0 \text{ for any }\delta, \delta^\prime\in \Omega^{1,\vee}_{\mathcal{Y}|K}\{1\}.\] 
    \begin{definition}
      \label{sec:appl-Hodge--Tate-definition-higgs-sen-complexes}
      Let $\mathcal{Y}\to \Spa(K,\O_K)$ be a smooth rigid space over $K$.
      \begin{enumerate}
      \item A Higgs--Sen perfect complex is a complex in the derived category of the ringed site $(\mathcal{Y}, \mathcal{HS}_{\mathcal{Y}})$, which is already perfect over $(\mathcal{Y},\O_{\mathcal{Y}})$.
      \item A Higgs--Sen perfect complex $(M, \theta_M\colon M\to M\otimes_{\O_\mathcal{Y}}\Omega^1_{\mathcal{Y}|K}\{-1\}, \vartheta_M \colon M\to M)$ with $\vartheta_M$ denoting the action of $\Theta$ is called Hitchin-small if for any point $y\colon \Spa(C,C^+)\to\mathcal Y$ valued in a non-archimedean field, the eigenvalues of $\vartheta_M$ are all in $\Z+ \delta_{\mathcal{O}_K/\Z_p}^{-1}\cdot \mathfrak{m}_{\overline{K}}$ where $\delta_{\mathcal{O}_K/\Z_p}^{-1}$ is the inverse different of $\O_K$.
      \end{enumerate}
      We let $\mathcal{H}igSen_\mathcal{Y}$ be the $\infty$-category of Higgs--Sen perfect complexes on $\mathcal{Y}$, and $\mathcal{H}igSen_{\mathcal{Y}}^{\Hsm}$ its full subcategory of Hitchin-small Higgs--Sen perfect complexes.
    \end{definition}
 
Let $e:=E^\prime(\pi)$, then $\delta_{\mathcal{O}_K/\Z_p}=(e)$. For $\Theta_\pi:=e\vartheta_M$, the condition in (2) is equivalent to saying that the action of  $\Theta_\pi^p-e^{p-1}\Theta_\pi$ on $y^\ast M$ is topologically nilpotent  (cf.\ \cite[\S 2.2]{analytic_HT}): This follows from
\[\Theta^p_\pi-e^{p-1}\Theta_\pi=e^p(\vartheta_M^p-\vartheta_M).\]

\begin{remark}
	\label{affinoid-case-andreychev}
	If $\mathcal{Y}=\Spa(B,B^+)$ is affinoid, then by \cite[Theorem 1.4]{andreychev2021pseudocoherent}, $\mathcal Perf(\mathcal{Y})$ is equivalent to $\mathcal Perf({B})$. From here one can deduce that a Higgs--Sen perfect complex on $\mathcal{Y}=\Spa(B,B^+)$ is a perfect complex over $B$ together with an action of the global sections of $\mathcal{HS}_{\mathcal{Y}}$.
\end{remark}
	\begin{remark}
			The name is motivated as follows: When $M$ vector bundle, by \Cref{sec:appl-Hodge--Tate-properties-of-t-modules} below, the Hitchin fibration for Higgs--Sen modules should be defined by sending a triple $(M,\theta_M,\vartheta_M)$ to the characteristic polynomial of $\vartheta_M$. Then a Higgs--Sen vector bundle is Hitchin-small if and only if its image under the Hitchin fibration lands in an open subspace depending only on $\delta_{\mathcal{O}_K/\Z_p}^{-1}$.
	\end{remark}

    First, we analyze the local situation. Fix a uniformizer $\pi$ as before. Assume that $X=\Spf(R)$ is affine of pure dimension $n$. We define a non-commutative ring
    \[
      T:=R[\Theta_\pi, \theta_1, \ldots, \theta_n] \quad\text{with}\quad
      [\Theta_\pi,\theta_i]=-E^\prime(\pi)\theta_i,\quad [\theta_i,\theta_j]=0 \quad 
      \text{for }i,j=1,\ldots, n.
    \]

    \begin{proposition}
      \label{sec:appl-Hodge--Tate-properties-of-t-modules}
      Let $M\in \mathcal{D}(T\tf)$ be such that $M$ is perfect over $R\tf$.
      \begin{enumerate}
        \item Each canonical truncation of $M$ is perfect over $R\tf$.
      \item Each $\theta_i, i=1,\ldots, n$ acts nilpotently on each cohomology object of $M$.
      \item The functor $R\tf\otimes_R(-)$ induces a functor
        \[
         \Xi \colon \{ N\in \mathcal{D}(T),\ \textrm{ perfect over }R \} \to \{ M\in \mathcal{D}(T\tf),\ \textrm{ perfect over }R\tf \} 
       \]
       that is fully faithful after passing to the isogeny category on the left, i.e., inverting $p$.
       \item If $E^\prime(\pi)^{-1}\Theta_\pi$ has all eigenvalues  in $\Z+ \delta_{\mathcal{O}_K/\Z_p}^{-1}\cdot \mathfrak{m}_{\overline{K}}$ on each cohomology object on $M$ (in the sense of \Cref{sec:appl-Hodge--Tate-definition-higgs-sen-complexes}), then $M$ lies in the essential image of $\Xi\tf$ after passing to the idempotent completion on the source.
      \end{enumerate}
      
    \end{proposition}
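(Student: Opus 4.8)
The plan is to treat the four assertions in turn: (1) and (2) are elementary, (3) is a flat base-change statement, and (4) is the substantial one, obtained by dévissage to a single module together with an integral lattice construction.

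For (1): since $R$ is smooth over $\O_K$ of pure relative dimension $n$, the ring $R\tf=R[\tfrac1p]$ is a regular Noetherian affinoid $K$-algebra, and over such a ring a complex is perfect iff it is cohomologically bounded with finitely generated cohomology. As the canonical truncations $\tau^{\leq j}M$ and $\tau^{\geq j}M$ are functorial in $\mathcal D(T\tf)$ and are again cohomologically bounded with finitely generated cohomology over $R\tf$, they are perfect over $R\tf$. For (2): note that $E'(\pi)$ is a nonzero element of $\O_K$ — it generates the different $\delta_{\O_K/\Z_p}$ — hence a unit in $K$, so $\Theta:=E'(\pi)^{-1}\Theta_\pi$ satisfies $[\Theta,\theta_i]=-\theta_i$, which gives $\Theta\theta_i^k=\theta_i^k(\Theta-k)$ for all $k\geq 0$. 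Fix $j$, set $V:=H^j(M)$, a finite $R\tf$-module, and let $r$ bound $\dim_{\kappa(x)}(V\otimes_{R\tf}\kappa(x))$ over all classical closed points $x$ of $\operatorname{Spec}R\tf$. Over $\overline{\kappa(x)}$ decompose $V\otimes\overline{\kappa(x)}$ into generalized $\Theta$-eigenspaces; since $\theta_i$ carries the $\lambda$-eigenspace into the $(\lambda-1)$-eigenspace and there are at most $r$ eigenvalues, $\theta_i^rV\subseteq \mathfrak m_xV$. As $R\tf$ is reduced and Jacobson one has $\bigcap_x\mathfrak m_xV=0$ for the finite module $V$, so $\theta_i^rV=0$.

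For (3): by the (twisted) PBW filtration, $T$ is filtered with associated graded the polynomial ring over $R$ in the $n+1$ variables $\Theta_\pi,\theta_1,\dots,\theta_n$; hence the argument of \Cref{sec:representations-g-2-general-perfectness-result}(1) applies verbatim to show that any $N\in\mathcal D(T)$ perfect over $R$ is already perfect over $T$. Since $T\tf=T[\tfrac1p]$ is flat over $T$, for $N_1$ perfect over $T$ and arbitrary $N_2$ one gets $R\mathrm{Hom}_{T\tf}(N_1\tf,N_2\tf)\cong R\mathrm{Hom}_T(N_1,N_2)\otimes_TT\tf\cong R\mathrm{Hom}_T(N_1,N_2)\tf$, which yields full faithfulness of $\Xi$ after inverting $p$ on the source.

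For (4): I would first reduce to the case $M=V[0]$ for a single finitely presented $R\tf$-module $V$ with $T\tf$-action satisfying the eigenvalue hypothesis. This is legitimate because the essential image of $\Xi\tf$ is a thick subcategory of $\mathcal D(T\tf)$ (it is the image of an exact, fully faithful functor with idempotent-complete source, hence closed under shifts, cones and retracts), the hypothesis passes to each $H^j(M)$ by (1), and $M$ is a finite iterated cone of the $H^j(M)[-j]$. For a single such $V$ it suffices to produce a finitely generated $T$-stable $R$-lattice $N\subseteq V$ with $N\tf=V$, for then $N$ is automatically perfect over $R$ (regular Noetherian) and $\Xi\tf(N)\cong V$. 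By (2) the $\theta_i$ act nilpotently, so adjoining the finitely many $\theta_{i_1}\cdots\theta_{i_s}$-translates to any initial $R$-lattice produces a $\theta_i$-stable one; the hypothesis that $E'(\pi)^{-1}\Theta_\pi$ has eigenvalues in $\Z+\delta_{\O_K/\Z_p}^{-1}\cdot\mathfrak m_{\overline K}$ on each cohomology object is precisely the condition (cf.\ \cite[Theorem 2.5]{analytic_HT}, for the group $\Gm^\sharp$ rescaled by $E'(\pi)$, applied in families over $R$) ensuring that the $\Theta_\pi$-action integrates to a representation of $G_\pi$, equivalently that a $\Theta_\pi$-stable $R$-lattice exists. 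Finally one merges the two: starting from a $\Theta_\pi$-stable lattice, the relation $\Theta\theta_i^k=\theta_i^k(\Theta-k)$ shows that $\theta_i$-saturation preserves $\Theta_\pi$-stability and (by nilpotence) stays finite, producing the required $T$-stable $N$.

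I expect the main obstacle to be this last construction in (4): extracting the $\Theta_\pi$-stable integral lattice from the eigenvalue condition — essentially the Sen-operator integrality of \cite[Theorem 2.5]{analytic_HT} in the relative setting over $R$ — and then checking that the $\theta_i$- and $\Theta_\pi$-lattices can be chosen simultaneously; the dévissage and the reductions around it are routine by comparison.
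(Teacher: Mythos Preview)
Your proposal is largely correct, but there is one real gap and a few places where the paper's route differs.

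\textbf{Gap in (2).} The assertion ``$\bigcap_x\mathfrak m_xV=0$ for the finite module $V$'' is false in general over a reduced Jacobson ring: for $A=k[x]$ and $V=k[x]/(x^2)$ one has $\bigcap_{\mathfrak m}\mathfrak m V=k\cdot\bar x\neq 0$. Your eigenspace-shifting argument does correctly show $\theta_i^r(V\otimes\kappa(x))=0$ for all closed $x$; to conclude, view $V$ as a module over the commutative ring $R\tf[\theta_i]$ (finite over $R\tf$ by Cayley--Hamilton), observe that its support meets each closed fibre only in $\{\theta_i=0\}$, and use that $R\tf[\theta_i]$ is Jacobson to deduce $\mathrm{Supp}(V)\subseteq\{\theta_i=0\}$, whence $\theta_i$ is nilpotent. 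The paper takes a different route: after reducing via noetherian induction to a finite-dimensional vector space over an algebraically closed field, the image of $\langle\Theta_\pi,\theta_1,\dots,\theta_n\rangle$ in $\End(V)$ is a solvable Lie algebra, and Lie's theorem (\cite[\S4.1, Cor.~A]{humphreys2012introduction}) forces its derived subalgebra --- generated by the $\theta_i$ --- to act nilpotently.

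\textbf{Differences in (3) and (4).} For (3), the paper does not pass through ``perfect over $T$'': it uses that $T$ is a Hopf algebra (the enveloping algebra of the evident Lie algebra), so objects perfect over $R$ are dualizable for $\otimes_R^L$ in $\mathcal D(T)$, and one reduces to $R\Hom_T(R,-)$, computed explicitly as the fibre of $\Theta_\pi$ on the Koszul complex for $\theta_1,\dots,\theta_n$; this visibly commutes with $\tf$. Your approach works too, but ``the argument of \Cref{sec:representations-g-2-general-perfectness-result}(1) applies verbatim'' glosses over the need for the Hopf tensor identity to turn the Chevalley--Eilenberg resolution of $R$ into one of $N$. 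For (4), the paper's organisation is simpler than your simultaneous-lattice construction: it first uses (2) to find a finite $T\tf$-filtration of $V$ on whose graded pieces all $\theta_i$ act trivially; on such a piece only a $\Theta_\pi$-stable $R$-lattice is needed, supplied by \Cref{sec:isog-categ-perf-1-remark-for-isogeny-category-in-g-pi-case}, and one then equips it with the trivial $\theta_i$-action. Your merging step via the commutation relation is correct, but the paper's filtration avoids it entirely.
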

    The proof of nilpotence follows an argument of Min--Wang \cite[Remark~4.2]{MinWang22}.
    \begin{proof}
      The first part is implied by regularity of $R$, which itself follows from smoothness of the formal scheme $X$ over $\O_K$.
      For (2), we may assume by (1) that $M$ is concentrated in a single degree. Then the claim is true with $R\tf$ replaced by any noetherian $\Q$-algebra. Namely, it suffices to argue for reduced noetherian $\Q$-algebras, and the noetherian induction reduces to the case of generic points, and then further to some algebraically closed field $F$ over $\Q$. In this case, we get a morphism
  \[
    \Phi\colon \langle \theta_1,\dots,\theta_n,\Theta_\pi\rangle_F/([\Theta_\pi,\theta_i]=-E^\prime(\pi)\theta_i,[\theta_i,\theta_j]=0, i,j=1,\ldots, n)\to \mathrm{End}(M),
  \]
  of $F$-Lie algebras. Let $L$ be the image of $\Phi$. Since $E^\prime(\pi)$ is invertible in $F$, the commutator $[L,L]$ is generated by the images of $\theta_1,\ldots, \theta_n$. Note that $L$ is solvable since $[L,L]$ is abelian as we just saw. This implies that $[L,L]$ acts nilpotently on any finite dimensional representation, cf.\ \cite[\S 4.1, Cor A]{humphreys2012introduction}. Since $M$ is finite dimensional, this proves (2).
  
  We now prove (3). The algebra $T$ is in fact a Hopf algebra over $R$ (it is the universal enveloping algebra of the $R$-version of the Lie algebra appearing in the proof of (2)), and each $N\in \mathcal{D}(T)$, which is perfect over $R$, is dualizable for the induced symmetric monoidal structure
  \[
    -\otimes_R^L-\colon \mathcal{D}(T)\times \mathcal{D}(T)\to \mathcal{D}(T).
  \]
  Now, $R\Hom_T(R,N)$ is calculated by the fiber of the lift of multiplication by $\Theta_\pi$ on $T/(\theta_1,\dots,\theta_n)$ to the (shifted by $n$) Koszul complex for the $\theta_1,\ldots, \theta_n$ on $N$. As this holds similarly also for
  $
    R\Hom_{T[\frac{1}{p}]}(R\tf,N\tf)
  $
  we can conclude fully faithfulness of $\Psi$ on the isogeny categories. (We caution that the lifts of multiplication by $\Theta_\pi$ to the Koszul complexes aren't termwise multiplication by $\Theta_\pi$, but rather more complicated expressions, due to the non-commutative nature of $T$. Whatever they are, they are the same in both cases, which is enough for our argument.)
  For (4), we may by (1) and (3) assume that $M$ is concentrated in degree $0$. By (2), there is a finite filtration of $M$ as a $T\tf$-module such that $\theta_1,\ldots, \theta_n$ act by $0$ on graded pieces. As all these submodules are perfect over $R\tf$ (by regularity), we may assume that $M$ is killed by $\theta_1,\ldots, \theta_n$. By the remark after \Cref{sec:appl-Hodge--Tate-definition-higgs-sen-complexes}, the assumption on the eigenvalues of $E^\prime(\pi)^{-1}\Theta_\pi$ allows us to apply \Cref{sec:isog-categ-perf-1-remark-for-isogeny-category-in-g-pi-case}, which yields a $\Theta_\pi$-stable retract $N$ of a perfect $R$-module such that $M= N\otimes_R R[1/p]$, and equipping this $N$ with the trivial $\theta_1, \ldots, \theta_n$-action does the job.   
\end{proof}

Consider the functor
\[
  \Xi_{X,\pi}\tf\colon \mathcal{P}erf(\mathcal{Z}_X)\tf\to \mathcal{H}igSen_{\X}^{\Hsm}
\]
defined as follows: By \Cref{sec:representations-g-representations-of-bg-over-bh} and the Sen theory from \cite[Theorem 2.5]{analytic_HT} complexes on $\mathcal{Z}_X$ embed fully faithfully in perfect complexes over $(X,\mathcal{HS}_{X,\pi})$ which are already perfect over $(X,\mathcal{O}_X)$, where 
\[
      \mathcal{HS}_{X,\pi}:=\mathrm{Sym}^\bullet_{\O_X}(T_{X|\O_K}\{1\})[\Theta_\pi]
    \]
    such that $[\Theta_\pi,\delta]=-E^\prime(\pi)\delta$ and $[\delta, \delta^\prime]=0$ for any $\delta, \delta^\prime\in T_{X|\O_K}$. The isogeny category of this category itself naturally maps to the category of perfect complexes over $(\X,\mathcal{HS}_{\X,\pi})$ which are already perfect over $(\X,\mathcal{O}_{\X})$, where $\mathcal{HS}_{\X,\pi}$ is the generic fibre of $ \mathcal{HS}_{X,\pi}$ on $\X$. Sending $\Theta_\pi$ to $E^\prime(\pi)\Theta$ defines an isomorphism between $\mathcal{HS}_{\X,\pi}$ and $\mathcal{HS}_{\X}$, giving the desired functor $\Xi_{X,\pi}$.
This functor is not very far from being an equivalence:

\begin{proposition}
  \label{sec:appl-Hodge--Tate-hig-sen-via-integral-stack}
  \begin{enumerate}
  	\item 
  If $X$ is affine, then $\Xi_{X,\pi}\tf$ is an equivalence of categories (up to idempotent completion). 
  
 \item For general $X$, this identifies  
  $\mathcal{H}igSen_{\X}^{\Hsm}$ with the global sections of the stackification of the functor on $X_{\mathrm{Zar}}$ defined by $U\mapsto \mathcal{P}erf({\mathcal{Z}_U})\tf^{\mathrm{idem}}$ where $-^{\mathrm{idem}}$ denotes idempotent completion.
\end{enumerate}
\end{proposition}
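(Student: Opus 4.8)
The plan is to run the argument of \Cref{sec:appl-Hodge--Tate-hig-sen-via-integral-stack-smoothoid-case} in the arithmetic setting, feeding in \Cref{sec:appl-Hodge--Tate-properties-of-t-modules} in place of \Cref{sec:representations-g-1-isogeny-category-fully-faithful}. For part (1) I would first reduce, by passing to a finite clopen decomposition of $X$ (compatible with both sides, which are finite products over such a decomposition), to the case $X=\Spf(R)$ affine of constant relative dimension $n$ over $X_0=\Spf(\O_K)$. Recall from the construction of $\Xi_{X,\pi}$ that \Cref{sec:representations-g-representations-of-bg-over-bh}, applied to the $\mathcal{T}^\sharp_{X|\O_K}\{1\}$-gerbe $\mathcal{Z}_X=B\mathcal{T}^\sharp_{X|\O_K}\{1\}$ over $X\times_{X_0}BG_\pi$, identifies $\mathcal{P}erf(\mathcal{Z}_X)$ fully faithfully with the perfect complexes over $(X,\mathcal{HS}_{X,\pi})$ that are already perfect over $\O_X$; on underlying data these are the $N\in\mathcal{D}(T)$, $T=R[\Theta_\pi,\theta_1,\dots,\theta_n]$, that are perfect over $R$, on which the $\theta_i$ act locally nilpotently mod $p$, and which satisfy the extra condition coming from $BG_\pi$ that $\Theta_\pi^p-E'(\pi)\Theta_\pi$ acts locally nilpotently mod $p$. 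Inverting $p$ and relabelling via the ring isomorphism $\mathcal{HS}_{X,\pi}\cong\mathcal{HS}_{\X}$, $\Theta_\pi\mapsto E'(\pi)\Theta$, the functor $\Xi_{X,\pi}\tf$ becomes the restriction to $\mathcal{P}erf(\mathcal{Z}_X)\tf$ of the functor $\Xi\tf$ of \Cref{sec:appl-Hodge--Tate-properties-of-t-modules}(3), so full faithfulness of $\Xi_{X,\pi}\tf$ is immediate (the embedding of \Cref{sec:representations-g-representations-of-bg-over-bh} stays fully faithful after inverting $p$).

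For essential surjectivity onto $\mathcal{H}igSen^{\Hsm}_{\X}$, let $(M,\theta_M,\vartheta_M)$ be a Hitchin-small Higgs--Sen perfect complex. By \Cref{affinoid-case-andreychev} (i.e.\ \cite{andreychev2021pseudocoherent}) we may regard $M$ as an object of $\mathcal{D}(T\tf)$ that is perfect over $R\tf$, with $\vartheta_M$ acting as $\Theta$. Hitchin-smallness says that the eigenvalues of $\vartheta_M$, equivalently of $E'(\pi)^{-1}\Theta_\pi$, lie in $\Z+\delta_{\O_K/\Z_p}^{-1}\cdot\mathfrak m_{\overline K}$, so \Cref{sec:appl-Hodge--Tate-properties-of-t-modules}(1),(2),(4) produce $N\in\mathcal{D}(T)$ perfect over $R$ with $N\tf\cong M$. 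Inspecting the construction in the proof of \Cref{sec:appl-Hodge--Tate-properties-of-t-modules}(4) — the $\theta_i$ act by $0$ on $N$, and \Cref{sec:isog-categ-perf-1-remark-for-isogeny-category-in-g-pi-case} supplies the $\Theta_\pi$-action so that $\Theta_\pi^p-E'(\pi)\Theta_\pi$ is locally nilpotent mod $p$ — one sees that $N$ lies in the essential image of the embedding of \Cref{sec:representations-g-representations-of-bg-over-bh}, hence defines an object of $\mathcal{P}erf(\mathcal{Z}_X)$ with $\Xi_{X,\pi}\tf(N\tf)\cong M$. This proves part (1); note that, in contrast with the smoothoid case, no passage to idempotent completions is needed, precisely because \Cref{sec:appl-Hodge--Tate-properties-of-t-modules}(4) yields essential surjectivity on the nose.

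Part (2) is then the verbatim analogue of \Cref{sec:appl-Hodge--Tate-hig-sen-via-integral-stack-smoothoid-case}(2). The presheaf $U\mapsto\mathcal{H}igSen^{\Hsm}_{U^\rig}$ on $X_{\mathrm{Zar}}$ is a stack of $\infty$-categories: perfect complexes on rigid spaces satisfy Zariski descent, the sheaf of rings $\mathcal{HS}$ is glued from local data, and Hitchin-smallness is phrased in terms of points $\Spa(C,C^+)\to U^\rig$ and hence is local on $U$. The constructions above are compatible with Zariski localisation on $X$, so $\Xi_{(-),\pi}\tf$ defines a natural transformation from the presheaf $U\mapsto\mathcal{P}erf(\mathcal{Z}_U)\tf$ to $U\mapsto\mathcal{H}igSen^{\Hsm}_{U^\rig}$, which by part (1) is an equivalence on affine $U$. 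Therefore it becomes an equivalence of stacks after stackifying the source, and passing to global sections identifies the sections of that stackification with $\Gamma(X,\mathcal{H}igSen^{\Hsm}_{(-)^\rig})=\mathcal{H}igSen^{\Hsm}_{\X}$.

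I expect the only delicate point to be the bookkeeping of the dictionary between the three non-commutative rings $\mathcal{HS}_{X,\pi}$, $\mathcal{HS}_{\X}$ and $T$, and between the Breuil--Kisin twist $\{1\}$ and the rescaling action of $G_\pi$ by $E'(\pi)$: one has to check that, under this dictionary, the condition ``$\Theta_\pi^p-E'(\pi)\Theta_\pi$ locally nilpotent mod $p$'' carving out $\mathcal{P}erf(\mathcal{Z}_X)$ matches the eigenvalue bound $\Z+\delta_{\O_K/\Z_p}^{-1}\cdot\mathfrak m_{\overline K}$ defining Hitchin-smallness, in both directions, with the inverse different $\delta_{\O_K/\Z_p}$ entering precisely through this comparison. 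All genuine content, however, is already contained in \Cref{sec:appl-Hodge--Tate-properties-of-t-modules}, so this amounts to matching conventions rather than new mathematics.
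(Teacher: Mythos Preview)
Your proposal is correct and follows essentially the same route as the paper: part (1) is deduced from \Cref{sec:appl-Hodge--Tate-properties-of-t-modules}, and part (2) from Zariski descent for perfect complexes together with the affine case. The paper's proof is a terse two sentences; your version unpacks the dictionary between $\mathcal{HS}_{X,\pi}$, $\mathcal{HS}_{\X}$ and $T$ and explicitly checks that the integral model produced by \Cref{sec:appl-Hodge--Tate-properties-of-t-modules}(4) lands in the essential image of \Cref{sec:representations-g-representations-of-bg-over-bh}, which is a point the paper leaves implicit.
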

\begin{proof}
  Part (1) follows from \Cref{sec:appl-Hodge--Tate-properties-of-t-modules}. Indeed, by \Cref{sec:representations-g-representations-of-bg-over-bh} and \cite[Theorem 2.5]{analytic_HT} the category $\mathcal{P}erf(\mathcal{Z}_X)$ embeds into the category $\mathcal{D}(T)$ from above with essential image given by objects $M$ which are perfect over $R$ (and in particular $p$-complete) and such that $\Theta_\pi^p-E'(\pi)^{p-1}\Theta_\pi$ and $\theta_1,\ldots, \theta_n$ act locally nilpotently on $H^\ast(M\otimes_{\Z_p}^L \F_p)$. By \Cref{sec:appl-Hodge--Tate-properties-of-t-modules} the nilpotence condition on the $\theta_i$ is actually automatic and thus the claim follows from \Cref{sec:appl-Hodge--Tate-properties-of-t-modules} by expressing $\mathcal{H}igSen^{\mathrm{H}\text{-}\sm}_{\mathcal{X}}$ similarly as a full subcategory of $\mathcal{D}(T[1/p])$.
  
  By descent of perfect complexes on $\X$ the functor
  $U\mapsto \mathcal{H}igSen_{U^\rig}^{\Hsm}$ on $X_{\mathrm{Zar}}$ is
  a stack of $\infty$-categories. The $\Xi_{(-)}$ constitute a natural
  transformation of functors. It follows from the first part that this
  is an equivalence up to stackification and idempotent completions.
\end{proof}

Like in the geometric case of the last subsection, we can now put our work in the arithmetic case so far together and arrive at the following local $p$-adic Simpson functor in the arithmetic setting:
\begin{theorem}\label{t:local-p-adic-Simpson-functor-arithmetic}
	Let $X$ be an affine smooth $p$-adic formal scheme over $\O_K$. Assume that we are given a prismatic lift $(A,I)$ such that $A/I=R$ (for example induced by a toric chart), inducing a splitting $s$ of $X^\HT \to X$. Then for the generic fibre $\X$ of $X$, there is a fully faithful functor
	\[\begin{tikzcd}[column sep=0.1cm]
		\mathcal{H}igSen_{\X,\pi}^{\Hsm} \arrow[rr,hook,"\LS_{s}"] &                          & {\calPerf(\X_v)} \\
		& {\calPerf(X^\HT)\tf} \arrow[lu,"\Xi_{X,\pi}\tf",hook'] \arrow[ru,"\alpha"',hook] &   
	\end{tikzcd}\]
	from Hitchin-small Higgs-Sen perfect complexes on $\X$ into perfect complexes on $\X_v$, natural in $s$. 
\end{theorem}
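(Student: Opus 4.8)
The plan is to assemble the statement directly from the two ingredients that precede it in this subsection, exactly as was done in the smoothoid case (\Cref{t:local-p-adic-Simpson-functor-geometric}). First I would observe that the prismatic lift $(A,I)$ with $A/I=R$ furnishes, via \S\ref{sec:form-reduct-prov}, the morphism $\eta=\overline{\rho_A}\colon X\to X^\HT$, hence the identification $X^\HT\cong BG$ where $G=G_A=\underline{\Aut}(\eta)$; by \Cref{sec:crit-fully-faithf-relative-ht-map} together with the discussion preceding \Cref{sec:appl-Hodge--Tate-structure-of-x-ht-for-some-prismatic-lift}, $G$ sits in the extension $1\to\mathcal T^\sharp_{X|\O_K}\{1\}\to G\to H\to 1$ with $H=\underline{\Aut}(g)$ the automorphisms of $X\to X\times_{X_0}X_0^\HT$; and a choice of uniformiser $\pi$ identifies $X_0^\HT\cong BG_\pi$ (via \cite[Theorem 2.5]{analytic_HT}), so that after these choices $X^\HT\cong BG$ is a gerbe over $X\times_{X_0}BG_\pi$ banded by $\mathcal T^\sharp_{X|\O_K}\{1\}$, i.e.\ precisely the stack $\mathcal Z_X=B\mathcal T^\sharp_{X|\O_K}\{1\}$ of \S\ref{sec:application-ht-stack-arithmetic-case}, but now \emph{split} because of the section $\eta$. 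Wait — I should be careful here: the point is that a prismatic lift $(A,I)$ already splits the full gerbe $X^\HT\to X$, so $X^\HT\cong B\mathcal T^\sharp_{X|\O_K}\{1\}=\mathcal Z_X$ \emph{as $X$-stacks}; it is this splitting that makes $\Xi_{X,\pi}$ land on the nose rather than merely up to stackification.

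Second, I would invoke \Cref{sec:appl-Hodge--Tate-hig-sen-via-integral-stack}(1): since $X=\Spf(R)$ is affine, $\Xi_{X,\pi}\tf\colon \mathcal Perf(\mathcal Z_X)\tf\to\mathcal HigSen^{\Hsm}_{\X,\pi}$ is an equivalence of $\infty$-categories. Composing the inverse of $\Xi_{X,\pi}\tf$ with the isomorphism $X^\HT\cong\mathcal Z_X$ induced by $(A,I)$, we get an equivalence $\mathcal HigSen^{\Hsm}_{\X,\pi}\xrightarrow{\ \sim\ }\mathcal Perf(X^\HT)\tf$. Third, I would compose with $\alpha_X^\ast\colon\mathcal Perf(X^\HT)\tf\hookrightarrow\mathcal Perf(\X_v)$, which is fully faithful by \Cref{sec:smooth-form-schem-fully-faithfulness-for-arithmetic-base} (this is where the hypothesis that $X$ is smooth over $\O_K$ with $K$ a $p$-adic field is used, reducing to the torus case handled in \S\ref{sec:smooth-form-schem-galois-cohomo-in-arithmetic-case}). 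Setting $\LS_s:=\alpha_X^\ast\circ(\Xi_{X,\pi}\tf)^{-1}$ gives the required fully faithful functor and the commuting triangle. Naturality in $s$ follows from naturality of $\Xi_{X,\pi}\tf$ in the prism $(A,I)$ asserted in \Cref{sec:appl-Hodge--Tate-structure-of-x-ht-for-some-prismatic-lift} together with naturality of $\alpha_X^\ast$.

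The only genuinely non-formal points are bookkeeping ones, so the ``main obstacle'' is really about matching conventions rather than proving something new. The first is verifying that the isomorphism $X^\HT\cong\mathcal Z_X$ coming from the prismatic lift is compatible with the $G_\pi$-action used to define $\mathcal Z_X$: one must check that the $\Gm^\sharp$-factor of $G_A$ acts on $\mathcal T^\sharp_{X|\O_K}\{1\}$ through multiplication by $E'(\pi)$, which is exactly the content built into the definition of $\mathcal Z_X$ and into \Cref{c:explicit-descr-GA-if-Omega_A-finite-free} (the simplification for $\xi=E(u_0)$ a polynomial in $u_0$). The second is tracking the Breuil--Kisin twist $\{1\}=I/I^2\otimes_R(-)$ through the identification $\mathcal{HS}_{X,\pi}\cong\mathcal{HS}_{\X}$ given by $\Theta_\pi\mapsto E'(\pi)\Theta$, so that the Hitchin-smallness condition on the eigenvalues of $\vartheta_M$ (eigenvalues in $\Z+\delta_{\O_K/\Z_p}^{-1}\mathfrak m_{\overline K}$) matches the local nilpotence condition $\Theta_\pi^p-E'(\pi)\Theta_\pi$ on $H^\ast(M\otimes^L_{\Z_p}\F_p)$ appearing in \Cref{sec:introduction-2-description-of-ht-stack} and \Cref{sec:appl-Hodge--Tate-properties-of-t-modules}(4); but this matching is precisely what \Cref{sec:appl-Hodge--Tate-hig-sen-via-integral-stack}(1) already encodes, so nothing further is needed. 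Hence the proof is a short composition of cited results, and I would write it as such.
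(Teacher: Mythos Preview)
Your proposal is correct and follows essentially the same route as the paper: combine \Cref{sec:appl-Hodge--Tate-structure-of-x-ht-for-some-prismatic-lift} and \Cref{sec:appl-Hodge--Tate-hig-sen-via-integral-stack}(1) to obtain the equivalence $\Xi_{X,\pi}\tf$, then compose with the fully faithful $\alpha_X^\ast$ and set $\LS_s:=\alpha_X^\ast\circ(\Xi_{X,\pi}\tf)^{-1}$. Your citation of \Cref{sec:smooth-form-schem-fully-faithfulness-for-arithmetic-base} for $\alpha_X^\ast$ is in fact the appropriate one for the arithmetic setting (the paper cites \Cref{sec:tori-over-perfectoid-1-main-theorem-with-perfectoid-base}, the smoothoid version, which appears to be a slip).
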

\begin{proof}
	We have the equivalence $\Xi_{X,\pi}\tf$ by combining \Cref{sec:appl-Hodge--Tate-structure-of-x-ht-for-some-prismatic-lift} and 
	\Cref{sec:appl-Hodge--Tate-hig-sen-via-integral-stack}.
	The fully faithful functor $\alpha$ comes from \Cref{sec:tori-over-perfectoid-1-main-theorem-with-perfectoid-base}.
	We can thus define $\LS_s$ as $\alpha\circ\Xi_{X,\pi}\tf^{-1}$.
\end{proof}
Exactly as in the geometric case, \Cref{c:LS-coherent-modules}, we can deduce for discrete objects:
\begin{corollary}
	In the situation of \Cref{t:local-p-adic-Simpson-functor-arithmetic}, $\mathrm{LS}_s$ restricts to a fully faithful functor 
	\[\mathrm{LS}_s^{\rm coh}: \left\{ \begin{array}{@{}c@{}l}\text{Hitchin-small coherent}\\\text{Higgs-Sen modules on $\X$}\end{array}\right\}\hookrightarrow \left\{ \begin{array}{@{}c@{}l}\text{perfect }\O_{\X_v}\text{-modules}\\\text{on $(\X_v,\O_{\X_v})$}\end{array}\right\}.\]
\end{corollary}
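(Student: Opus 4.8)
The plan is to run the same argument as for \Cref{c:LS-coherent-modules} in the smoothoid case, the only genuinely new point being to check that the target really consists of $\mathcal O_{\X_v}$-modules (i.e.\ of objects concentrated in degree $0$). First I would note that since $X$ is smooth over the regular ring $\mathcal O_K$, the rigid space $\X=X^\rig$ is regular: on any affinoid open $\Spa(B,B^+)\subseteq\X$ the Tate algebra $B$ is regular noetherian of finite Krull dimension, so every coherent $\mathcal O_\X$-module has a finite resolution by finite projective modules and is in particular a perfect complex. Via \Cref{affinoid-case-andreychev} (applied affinoid-locally and glued) this identifies coherent $\mathcal O_\X$-modules with the full subcategory of $\calPerf(\X)$ of objects concentrated in cohomological degree $0$. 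Hence a Hitchin-small coherent Higgs--Sen module on $\X$ — a coherent $\mathcal O_\X$-module $M$ equipped with $\theta_M,\vartheta_M$ as in \Cref{sec:appl-Hodge--Tate-definition-higgs-sen-complexes} satisfying the eigenvalue condition there — is precisely a Hitchin-small Higgs--Sen perfect complex whose underlying $\mathcal O_\X$-complex lies in degree $0$. These therefore form a full subcategory of $\mathcal{H}igSen_{\X,\pi}^{\Hsm}$, and the restriction $\mathrm{LS}_s^{\rm coh}$ of the fully faithful functor $\mathrm{LS}_s=\alpha_X^\ast\circ\Xi_{X,\pi}\tf^{-1}$ of \Cref{t:local-p-adic-Simpson-functor-arithmetic} to it is again fully faithful.

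It remains to verify that $\mathrm{LS}_s^{\rm coh}(M)=\alpha_X^\ast(\mathcal E)$ is concentrated in degree $0$, where $\mathcal E:=\Xi_{X,\pi}\tf^{-1}(M)\in\calPerf(X^\HT)\tf$. Fix an integral model $\mathcal E'\in\calPerf(X^\HT)$. By the explicit formula \eqref{eq:explicit-descr-Phi}, the functor $\Phi_{(-)}$ underlying $\Xi_{X,\pi}$ (and hence $\Xi_{X,\pi}$ itself, after the Higgs--Sen reformulation, which does not alter the underlying $\mathcal O_\X$-module) is the identity on underlying complexes of modules; consequently the pullback $\eta^\ast\mathcal E'\tf$ along the faithfully flat section $\eta\colon X\to X^\HT$ determined by the prismatic lift is identified with the underlying $\mathcal O_\X$-complex of $M$, and is thus concentrated in degree $0$. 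The assertion on $\alpha_X^\ast(\mathcal E)$ is v-local on $\X$, so I may assume $X$ carries a toric chart; then, as in the proof of \Cref{sec:smooth-form-schem-fully-faithfulness-for-arithmetic-base}, there is a perfectoid pro-\'etale $\Gamma$-cover $\X_\infty\to\X$ with $X_\infty=\Spf(R_\infty)$ and $R\to R_\infty$ $p$-completely faithfully flat. Being a perfect complex on $\X_v$ concentrated in degree $0$ can be checked after restriction to the v-cover $\X_\infty$ (v-descent, together with the fact that an affinoid perfectoid space has no higher coherent cohomology), where the restriction of $\alpha_X^\ast(\mathcal E)$ is, by the construction of $\alpha_X^\ast$, the pullback of $\mathcal E'$ along $\widetilde f\colon X_\infty\to X^\HT$ after inverting $p$. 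Using $\widetilde f\cong\eta\circ f$ with $f\colon X_\infty\to X$, this is $\eta^\ast\mathcal E'\tf\otimes^L_{R\tf}R_\infty\tf$ (up to the $p$-isogeny between $R_\infty$ and $\mathcal O^+(\X_\infty)$), which is concentrated in degree $0$ because $R\tf\to R_\infty\tf$ is flat. Hence $\alpha_X^\ast(\mathcal E)$ is an $\mathcal O_{\X_v}$-module, as required.

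The only real subtlety, and the step I would be most careful about, is the degree-$0$ bookkeeping in the last paragraph: one must be sure that the chain of equivalences defining $\Xi_{X,\pi}\tf$ — namely the embedding $\mathcal D(X^\HT)\hookrightarrow\mathrm{Mod}(\mathcal D(X\times_{X_0}X_0^\HT))$ of \Cref{sec:representations-g-representations-of-bg-over-bh}, the passage to the isogeny category, and the Higgs--Sen repackaging over $\X$ — is compatible with taking underlying $\mathcal O$-modules, which is exactly what \eqref{eq:explicit-descr-Phi} supplies. Everything else is formal: fully faithfulness is inherited from \Cref{t:local-p-adic-Simpson-functor-arithmetic}, and, just as in \Cref{c:LS-coherent-modules} (now using \Cref{computation-cohomology-koszul-resolution} together with \Cref{sec:appl-Hodge--Tate-properties-of-t-modules}), one obtains for free the cohomological comparison $R\Gamma(\X_v,\mathrm{LS}_s^{\rm coh}(M))\cong R\Gamma(X^\HT,\mathcal E)$, computed by the Koszul resolution in the $\theta$-directions followed by the fibre of the Sen operator.
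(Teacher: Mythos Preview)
Your proof is correct and follows the same approach as the paper, which simply says ``Exactly as in the geometric case, \Cref{c:LS-coherent-modules}'' without further elaboration. You have in fact filled in a detail the paper leaves implicit: the verification that $\alpha_X^\ast(\mathcal E)$ lands in degree $0$, which you handle cleanly via the factorisation $\widetilde f\cong\eta\circ f$ and flatness of $R\tf\to R_\infty\tf$.
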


\section{Globalization in terms of square-zero lifts}\label{sec:globalisation}
Let $X$ be a $p$-adic formal scheme. We assume that $X$ is smoothoid and lives over a perfectoid base ring $R_0$, or that $X$ is smooth over the ring of integers of a $p$-adic field. In \Cref{t:local-p-adic-Simpson-functor-geometric}, resp. \Cref{t:local-p-adic-Simpson-functor-arithmetic}, we have constructed a local $p$-adic Simpson functor for perfect complexes under the condition that $X^\HT$ is split. The reason we call this functor ``local'' is that the assumption that $X^\HT \to X$ splits is a restrictive one. It is satisfied when $X$ is affine, but rarely in general. 

The goal of this section is to relax this assumption and thus to construct a ``global'' $p$-adic Simpson correspondence. In the $p$-adic Simpson literature, e.g.\ in  \cite{faltings2005p}\cite{abbes2016p}\cite{wang2021p}, when the base $R_0$ is the ring of integers $\mathcal{O}_C$ of an algebraically closed perfectoid extension of $\Q_p$, this is usually achieved by switching from toric charts to the datum of lifts of $X$ along 
$A_{\inf}(\O_C)/(\ker \theta)^2\to \O_C$. We will now make an analogous switch of perspective in our setting, in terms of $X^\HT$. As we will explain, there is an intrinsic motivation to do so, given by the inherent relation between Cartier--Witt divisors and square-zero extensions. As a special case, this also leads to a geometric reinterpretation of the aforementioned construction of Faltings, Abbes--Gros and Wang, see \S\ref{sec-comparison-with-prev-constructions}.

More generally, for applications to the arithmetic case of smooth formal schemes $X$ over a $p$-adic base $\O_K$, it is beneficial to consider lifts of $X$ to larger square-zero extensions of $\O_C$ inside $A_{\inf}(\O_C)/(\ker \theta)^2\tf$. In a second step, these will allow us to also glue the local correspondences in the arithmetic case. As an application, we get for the base-change $X_{\O_\C}$ a \textit{canonical} global $p$-adic Simpson functor for perfect complexes at the expense of a stricter convergence condition. 

\subsection{Square-zero lifts of perfectoid base rings}
We start by discussing the more general variants of the square-zero thickening $A_{\inf}(\O_C)/\xi^2\to \O_C$ that we need. 
 \begin{definition}\label{def:sq-zero-pushouts}
 	Let $S$ be a $p$-torsion free perfectoid ring with associated perfect prism $(A,I)$. Set
 	$A_2:=A/I^2$,
 	this is a square-zero extension of $S$ by $S\{1\}=I/I^2$.
 	Let $x\in S\tf$ be such that $S\subseteq xS$ and consider the $S$-submodule $xS\{1\}\subseteq S\{1\}\tf$ defined as the image of the $S$-module map $S\{1\}\xrightarrow{\cdot x}S\{1\}\tf$. Then we define a square-zero extension $A_2(x)\to S$ as the pushout
 \[
 \begin{tikzcd}
 	0 \arrow[r] & S\{1\} \arrow[d] \arrow[r] & A_2 \arrow[d] \arrow[r] & S \arrow[d] \arrow[r] & 0 \\
 	0 \arrow[r] & xS\{1\} \arrow[r]          & A_2(x) \arrow[r]         & S \arrow[r]           & 0
 \end{tikzcd}\]
inside $A_2\tf$. Note that this is naturally an $A_2$-algebra. As $S$ is $p$-torsionfree, we have $A_2(1)=A_2$.
\end{definition}

\begin{definition}\label{def:x-lift}
	For any $p$-adic formal scheme $X$ over $S$, an $x$-lift is a flat lift $\tilde{X}$ of $X$ to $A_2(x)$.
\end{definition}

Such lifts arise naturally in our setup: Let $K$ be a $p$-adic field with residue field $k$. Let $C=\widehat{\overline{K}}$. Recall that by henselian lifting, there is then a canonical lift\footnote{This canonical lift can uniquely be characterized by the requirements that it is natural in $K$ and that it is induced by the composition $W(k)\tf=W(\mathcal{O}_K^\flat)\tf\to W(\mathcal{O}_C^\flat)\tf$ if $K=W(k)\tf$ is unramified.} of $K\to C$ to a morphism
\[s:K\to B_{\dR}^+/\xi^2=A_{\inf}(\O_C)/\xi^2\tf.\]
However, the map $s$ does not in general send $\O_K$ into  $A_{\inf}(\O_C)/\xi^2$. Rather, we have the following:

\begin{proposition}\label{p:non-integral-lift-of-O-K-to-A2}
	Let $A_2:=A_\inf(\O_C)/\xi^2$. Then under the canonical lift $s:K\to B_{\dR}^+/\xi^2=A_2\tf$, the  $A_2$-submodule generated by $s(\O_K)$ is precisely $\delta_{\O_K|W(k)}^{-1}\xi A_2+A_2$. In particular, there is a canonical lift
	$s:\O_K\to A_2(e^{-1})$
	where for any uniformizer $\pi \in \mathcal{O}_K$ with minimal polynomial $E$ over $W(k)$, we let $e=E'(\pi)$ be the induced generator of the different ideal $\delta_{\O_K|W(k)}$.
\end{proposition}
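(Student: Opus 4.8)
The plan is to compute $s(\pi)\in A_2\tf$ explicitly for a uniformizer $\pi\in\O_K$ and then do linear algebra modulo $\xi^2$. First I would recall how the canonical lift is built. Since $k$ is perfect, $W(k)$ has a canonical \emph{integral} lift $W(k)\to A_\inf=W(\O_C^\flat)\to A_2$ through the Teichm\"uller embedding $k\hookrightarrow\O_C^\flat$, and this is $s|_{W(k)}$ (it agrees with the given canonical lift of the unramified subfield $K_0=W(k)\tf$, by naturality in $K$). Writing $\O_K=W(k)[\pi]=W(k)[u]/(E(u))$ with $E$ the Eisenstein minimal polynomial of $\pi$ over $W(k)$, extending $s$ to $\O_K$ amounts to choosing a root $\tilde\pi:=s(\pi)\in A_2\tf$ of $E$ lying over $\pi\in C$. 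Such a root exists and is unique because $A_2\tf=B_\dR^+/\xi^2\to C$ is a square-zero thickening and $E'(\pi)=e$ is a nonzero element of $K$, hence a unit of $C$, so a single Newton step lifts the simple root $\pi\in C$ uniquely to $A_2\tf$.

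I would run this Newton step starting from the lift $\pi_1:=$ image of $[\pi^\flat]\in A_\inf$, where $\varphi\colon\mathfrak S\to A_\inf$, $u\mapsto[\pi^\flat]$ is the Breuil--Kisin prism map of \Cref{sec:p-adic-fields-new-identification-of-galois-action-for-p-adic-fields}. As $\varphi$ is a map of prisms, $E([\pi^\flat])$ generates $\ker\theta$, so $E([\pi^\flat])=\xi u$ with $u\in A_\inf^\times$; and $E'([\pi^\flat])$ has image $e\in C^\times$ under $\theta$, hence is a unit in $A_2\tf$ (but \emph{not} in $A_2$ in general, precisely when $e$ fails to be a unit of $\O_C$, which is what makes the statement non-trivial). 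Therefore
\[
\tilde\pi\;=\;\pi_1-\frac{E(\pi_1)}{E'(\pi_1)}\;=\;[\pi^\flat]-\xi v,\qquad v:=\frac{u}{E'([\pi^\flat])}\in(A_2\tf)^\times,\quad\theta(v)\in e^{-1}\O_C^\times,
\]
and $E(\tilde\pi)=0$ since $(E(\pi_1))^2\in\xi^2A_2\tf=0$. Using $\xi^2=0$ one has $\xi v=\xi\,\theta(v)$, so $\tilde\pi-[\pi^\flat]=-\xi\,\theta(v)$ is an $\O_C$-module generator of $e^{-1}\xi\O_C\subseteq\xi C=(\xi\O_C)\tf$.

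Next I would compute the $A_2$-submodule $N:=A_2\cdot s(\O_K)=A_2[\tilde\pi]$. Since $\O_K=\bigoplus_iW(k)\pi^i$ and $W(k)\subseteq A_2$, we have $N=\sum_iA_2\tilde\pi^i$. Expanding modulo $\xi^2$,
\[
\tilde\pi^i=([\pi^\flat]-\xi v)^i\equiv[\pi^\flat]^i-i[\pi^\flat]^{i-1}\xi v,
\]
and $[\pi^\flat]^{i-1}\xi=\pi^{i-1}\xi$ in $A_2\tf$, so each $\tilde\pi^i$ lies in $A_2+e^{-1}\xi\O_C$. As $\xi C$ is killed by $\xi$, hence is an $\O_C$-module, $A_2+e^{-1}\xi\O_C$ is an $A_2$-submodule of $A_2\tf$, and we get $N\subseteq A_2+e^{-1}\xi\O_C$. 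Conversely $\xi v=[\pi^\flat]-\tilde\pi\in N$ because $[\pi^\flat]\in A_2$ and $\tilde\pi\in s(\O_K)$; multiplying by $\O_C\subseteq A_2$ and using that $\xi v$ generates $e^{-1}\xi\O_C$ gives $e^{-1}\xi\O_C\subseteq N$, and with $A_2=A_2\cdot1\subseteq N$ we conclude $N=A_2+e^{-1}\xi\O_C$.

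Finally, for $\O_K=W(k)[\pi]$ totally ramified over $W(k)$ with $E$ Eisenstein, one has $\delta_{\O_K|W(k)}=(E'(\pi))=(e)$, so $e^{-1}\xi\O_C=\delta_{\O_K|W(k)}^{-1}\xi\O_C$: this is the first assertion. Taking $S=\O_C$, $(A,I)=(A_\inf,\xi A_\inf)$ and $x=e^{-1}$ (legitimate as $e\in\O_C$, i.e.\ $S\subseteq xS$) in \Cref{def:sq-zero-pushouts}, the pushout $A_2(e^{-1})$ is by construction the subring $A_2+e^{-1}\xi\O_C\subseteq A_2\tf$, which is exactly $N$; since $s(\O_K)\subseteq N=A_2(e^{-1})$, the map $s$ factors through a ring map $\O_K\to A_2(e^{-1})$, canonical because $N=A_2+\delta_{\O_K|W(k)}^{-1}\ker(A_2\tf\to C)$ is intrinsic and $s|_K$ is intrinsic. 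I do not anticipate a serious obstacle; the only point requiring real care is the input recollection that the Newton step above realizes the canonical lift (equivalently, independence of the auxiliary choices of $\pi$, $\pi^\flat$, $\xi$), which follows from the characterization of the canonical lift as the unique lift of $K$ that is natural in $K$ and matches the unramified case, together with the intrinsic description of $N$ just given; the rest is bookkeeping modulo $\xi^2$.
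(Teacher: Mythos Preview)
Your proposal is correct and follows essentially the same approach as the paper: both compute $s(\pi)$ by starting from the approximation $[\pi^\flat]$ and using that $E(s(\pi))=0$ modulo $\xi^2$ (your Newton step is exactly the paper's computation of the element $c$ with $s(\pi)-[\pi^\flat]=c\xi$), then read off the $A_2$-submodule generated by $s(\O_K)$. Your version is slightly more explicit in expanding $\tilde\pi^i$, whereas the paper just notes that $s(\pi)A_2+A_2$ is already a subalgebra; but the substance is identical.
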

\begin{proof}
	If $K=W(k)$ is unramified, then $s$ is induced by the lift
	$\O_K=W(k)\to W(\O_C^\flat)\to A_2$,
	using that $k\cong \mathcal{O}_K^\flat$.
	So the statement holds in this case. In general, the map $s:K\to A_2\tf$ is therefore uniquely determined by $s(\pi)$, which needs to satisfy $E(s(\pi))=0$.

        Let $(\mathfrak{S}=W(k)[[u]],(E(u)))$ be the Breuil-Kisin prism associated with $\pi\in \mathcal{O}_K$, and consider the morphism of prisms $f\colon \mathfrak S\to A_{\inf}(\O_C)$, $u\mapsto [\pi^\flat]$. Then we have $(\xi)=(f(E(u)))=(E([\pi^\flat]))$ in $A_{\inf}(\O_C)$ by \cite[Lemma 3.5]{Bhatta}, hence there is a unit $v\in A_{\inf}(\O_C)$ such that $v\xi=E([\pi^\flat])$.
	
	Since $\theta(s(\pi))=\pi=\theta([\pi^\flat])$, we can write
	$s(\pi)-[\pi^\flat]=c\xi$
	for some $c\in K$. Then inside $A_2\tf$,
	\[ 0=E([\pi^\flat]+c\xi)=E([\pi^\flat])+E'([\pi^\flat])c\xi=(v+E'([\pi^\flat])c)\xi.\]
	Thus $c\xi=-vE'([\pi^\flat])^{-1}\xi$. Since $v$ is a unit in $A_2$, it follows that
	\[ s(\pi)\cdot A_2+A_2=(c\xi+[\pi^\flat])A_2+A_2=\delta_{K|W(k)}^{-1}\xi A_2+A_2\]
	and the statement follows because this is already a subalgebra of $A_2\tf$.
\end{proof}
\begin{corollary}\label{c:canonical-e-lifts}
	Let $X$ be a $p$-adic formal scheme over $K$. Then for $x:=e^{-1}$, the base-change $X_{\O_C}$ has a canonical Galois-equivariant $x$-lift in the sense of \Cref{def:x-lift} defined by
	\[\widetilde{X}_{\O_C}:=X\times_{\Spf(\O_K)}\Spf(A_2(x)).\]
\end{corollary}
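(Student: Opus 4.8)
The statement is essentially a repackaging of \Cref{p:non-integral-lift-of-O-K-to-A2}, so the plan is short. First I would invoke that proposition: it provides the canonical ($G_K$-equivariant) Fontaine section $K\hookrightarrow B_{\dR}^+$, and after reducing modulo $\xi^2$ a ring map $s\colon \mathcal{O}_K\to A_2(e^{-1})$ lifting the structure map $\mathcal{O}_K\to \mathcal{O}_C$. Here one should first note that $e=E^\prime(\pi)$ generates the different $\delta_{\mathcal{O}_K|W(k)}\subseteq \mathcal{O}_K$, so $e\in\mathcal{O}_C$ and $\mathcal{O}_C\subseteq e^{-1}\mathcal{O}_C$, which is exactly the hypothesis needed in \Cref{def:sq-zero-pushouts} for $A_2(x)$ with $x=e^{-1}$ to be defined. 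Then I would simply set $\widetilde{X}_{\mathcal{O}_C}:=X\times_{\Spf(\mathcal{O}_K)}\Spf(A_2(x))$, the base change being formed along $\Spf(s)$.

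Next I would check that this is an $x$-lift in the sense of \Cref{def:x-lift}. For the reduction: by transitivity of base change and the fact that $s$ lifts $\mathcal{O}_K\to\mathcal{O}_C$ (part of \Cref{p:non-integral-lift-of-O-K-to-A2}), one gets $\widetilde{X}_{\mathcal{O}_C}\times_{\Spf(A_2(x))}\Spf(\mathcal{O}_C)\cong X\times_{\Spf(\mathcal{O}_K)}\Spf(\mathcal{O}_C)=X_{\mathcal{O}_C}$, using that $A_2(x)\to\mathcal{O}_C$ has kernel the square-zero ideal $x\mathcal{O}_C\{1\}$ of \Cref{def:sq-zero-pushouts}. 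For flatness: since $X$ is $p$-completely smooth, hence $p$-completely flat, over $\Spf(\mathcal{O}_K)$, its base change $\widetilde{X}_{\mathcal{O}_C}$ is $p$-completely flat over $\Spf(A_2(x))$; and since $\mathcal{O}_C\{1\}=I/I^2$ is an invertible $\mathcal{O}_C$-module and $\mathcal{O}_C$ is $p$-torsionfree, multiplication by $x$ identifies it with the free rank one module $x\mathcal{O}_C\{1\}$, so $A_2(x)\to\mathcal{O}_C$ is a genuine square-zero extension by a line bundle. This part is routine.

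The only point requiring a moment's thought is the $G_K$-equivariance, and it hinges on one observation: because $\pi\in\mathcal{O}_K\subseteq K$, the element $e=E^\prime(\pi)$, and hence $x=e^{-1}$, lies in $K$ and is therefore fixed by $G_K=\Gal(\overline{K}|K)$. Consequently the natural $G_K$-action on $A_{\inf}(\mathcal{O}_C)$ — which descends to $A_2=A_{\inf}(\mathcal{O}_C)/\xi^2$, extends to $A_2\tf$, and preserves $I=\ker(\theta)$ — also preserves the submodule $x\mathcal{O}_C\{1\}\subseteq \mathcal{O}_C\{1\}\tf$, so it preserves the subring $A_2(x)\subseteq A_2\tf$. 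Since the Fontaine section $K\hookrightarrow B_{\dR}^+$ is $G_K$-equivariant for the trivial action on $K$, so is $s$; hence the $G_K$-action on $\Spf(A_2(x))$ is linear over the trivial action on $\Spf(\mathcal{O}_K)$, and base-changing it along $X\to\Spf(\mathcal{O}_K)$ (with trivial action on the $X$-factor) equips $\widetilde{X}_{\mathcal{O}_C}$ with a semilinear $G_K$-action lying over the natural one on $X_{\mathcal{O}_C}$. Canonicity follows since every step is functorial. The main (and genuinely minor) obstacle is thus just verifying that $x\in K$ is Galois-invariant, so that $A_2(x)$ is a Galois-stable subalgebra of $A_2\tf$ and the construction descends; everything else is formal base-change bookkeeping on top of \Cref{p:non-integral-lift-of-O-K-to-A2}.
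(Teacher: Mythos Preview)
Your proposal is correct; the paper gives no proof of this corollary, treating it as an immediate consequence of \Cref{p:non-integral-lift-of-O-K-to-A2}, and your argument spells out precisely the expected details (the Galois-equivariance via $e\in K^\times$ being fixed, and the base-change bookkeeping). One minor remark: your flatness check invokes smoothness of $X$ over $\O_K$, which is not among the stated hypotheses (the corollary says only ``$p$-adic formal scheme''), though smoothness does hold in every application the paper makes of this result.
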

\subsection{Square-zero lifts of $p$-adic formal schemes}
Fix a $p$-torsionfree perfectoid base ring $R_0$ over $\Z_p$. For simplicity, let us make the harmless additional assumption that $R_0$ contains a primitive $p$-th root of unity $\zeta_p\in R_0$. Let  $S$ be a $p$-torsionfree perfectoid base ring over $R_0$ and fix $(A,I)$ and $x\in S\tf$ as in \Cref{def:sq-zero-pushouts}. In the following, we consider $\mathbb{G}_a$ as an affine formal group over $S$ and let $x\mathbb{G}_a$ be the affine formal group over $S$ with a morphism $\Ga\to x\Ga$, with $x\Ga$ abstractly isomorphic to $\Ga$ such that $\Ga\to x\Ga$ identifies with the multiplication by $x$. In particular, for any $x$ and $p$-torsionfree $S$-algebra $T$ the $T$-valued points of $x\Ga$ are given by $x\O(T)\subseteq \O(T)\tf$. 
\begin{definition}
\label{def:gerb-of-x-lifts}
Let $\mathcal{R}_x$ be the square-zero extension of $\mathbb{G}_a$ by $x\mathbb{G}_a\{1\}[1]$ classifying the obstruction to lifting an $S$-algebra to $A_2(x)$, i.e.\ the $\infty$-sheaf of animated $S$-algebras that sends ($p$-nilpotent) animated $S$-algebras $T$ to the animated square zero extension $T\oplus T\{1\}[1]$ with $S$-algebra structure
    \[
      S\xrightarrow{\delta_x} S\oplus  S\{1\}[1]\to  T\oplus  T\{1\}[1],
    \]
    where the second map is the base change from $S$ to $T$ of the trivial extension of $S$ by $ S\{1\}[1]$, and the first map classifies the square zero extension $A_2(x)$ of $S$ by $S\{1\}[1]$, cf.\ \cite[Section 5.1.9]{cesnavicius2019purity}.
\end{definition}
Let now $X$ be any $p$-adic formal scheme over $S$.
\begin{definition}
Let $\Lft_{X,x}$ be the stack sending any ($p$-nilpotent) $S$-algebra $T$ to the morphisms $\Spf(\mathcal{R}_x(T))\to X$ over $S$. 
\end{definition}
When $x=1$, we just write $\mathcal{R}:=\mathcal{R}_1$ and $\Lft_{X}=\Lft_{X,1}$.
By construction of $\mathcal{R}_x$ we have for any open subscheme $U\subseteq X$ a natural equivalence of groupoids between
    $\Lft_{X,x}(U)$
    and the groupoid of $x$-lifts of $U$ (cf.\ \Cref{def:x-lift}). Indeed, $x$-lifts define a stack for the Zariski topology on open subschemes of $X$, and so does $U\mapsto \Lft_{X,x}(U)$. Hence it suffices to argue for $U=\Spf(T)\subseteq X$ open and affine and to show that the equivalence is natural with respect to restrictions between open subschemes. In the affine case, given an $x$-lift $\tilde{U}$ of $U$ this lift of $T$ to $A_2(x)$ is classified by an $S$-algebra map $\delta_{\tilde{U}}\colon T\to \mathcal{R}_x(T)=T\oplus T\{1\}[1]$ by derived deformation theory. Conversely, each such $S$-algebra morphism $\delta\colon T\to \mathcal{R}_x(T)$ defines an $x$-lift by taking the pullback of the morphism $(\mathrm{Id}_T,0)\colon T\to T\oplus T\{1\}[1]$ (cf.\ \cite[Section 5.1.9]{cesnavicius2019purity}). 
    
    Moreover, for $U=X$ the groupoid $\Lft_{X,x}(X)$ identifies with the fiber of the map
    \[
      \mathrm{Map}_{\O_X}(L_{X/\Z_p},x\cdot \Ga\{1\}[1])\to \mathrm{Map}_{\O_X}(\O_X\otimes_{S}L_{S/\Z_p},x\cdot \Ga\{1\}[1])
    \]
    over $L_{S/\Z_p}\to I(x)[1]$, which classifies the square zero extension $A_2(x)$. Here $x\cdot \Ga\{1\}\cong I(x)\otimes_{S}\O_X$.
\\

Let now $0\neq x\in S\tf$ be such that $S \subset xS$  and let $z\in S$ be such that $z\in S\tf^\times$ and $\omega  xz \in S$ with $\omega=(\zeta_p-1)^{-1}$ as before, e.g.\, $x=1$ and $z=\zeta_p-1$. These assumptions imply that the maps
    \[
      \can\colon \Ga^\sharp\to x \Ga
\quad \text{and} \quad
      x \Ga\xrightarrow{\cdot z} \Ga^\sharp
    \]
    of formal group schemes over $S$ are well-defined due to the natural divided powers on $xz$.
    
    The relation of our discussion to $X^\HT$ is furnished by the following observation of Bhatt-Lurie:
\begin{proposition}\label{p:pushout-def-u}
  Let
    $
      \overline{W}(-):=\mathrm{cone}\big(I_0\otimes_{A_0}W(-)\to W(-)\big).
    $
    This is a square zero extension of $\Ga$ by $\Ga^\sharp\{1\}[1]$ on $p$-nilpotent $S$-algebras. There exists a unit\footnote{A priori $u\in R_0^\times$, but naturality forces $u\in \Z_p^\times$ (e.g.\ consider $\Z_p^\cycl$ with Galois action). Conjecturally, $u=1$.} $u\in \Z_p^\times$ such that the pushout of $\overline{W}(-)$ along
    $
      u\cdot \can\colon \Ga^\sharp\{1\}[1]\to x\Ga\{1\}[1]$
    is the square zero extension $\mathcal{R}_x(-)$ of $\Ga$ by $x\Ga\{1\}[1]$.
\end{proposition}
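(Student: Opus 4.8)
The plan is to reduce the statement to an identification of two square-zero extensions of $\Ga$ by $x\Ga\{1\}[1]$ on $p$-nilpotent $S$-algebras, and then to identify the relevant obstruction classes. Recall that a square-zero extension of $\Ga$ by a module $N[1]$ is classified by a map $\Ga\to \Ga\oplus N[1]$ of animated rings, or equivalently, after unwinding, by a class in the relevant $\mathrm{Ext}$-group together with the $S$-algebra structure; in the cases at hand, since $\Ga=\Spec S[t]$ is free, the extension is determined by the image of $t$ together with the given $S$-algebra structure. So the first step is to make both $\overline{W}(-)$ and $\mathcal{R}_x(-)$ concrete: the extension $\mathcal{R}_x(-)$ is, by \Cref{def:gerb-of-x-lifts}, the pushout along $u\cdot\can$ applied to nothing — rather it is defined directly via $\delta_x\colon S\to S\oplus S\{1\}[1]$ classifying $A_2(x)$; whereas $\overline{W}(-) = \mathrm{cone}(I_0\otimes_{A_0}W(-)\to W(-))$ is, on $S$-points, precisely the cone computing the obstruction to lifting along $A_0/I_0^2\to A_0/I_0 = S$, i.e. along the Cartier--Witt picture that defines $S^\HT\to \Spf(S)$.

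The key point, which is \cite[Remark 3.5.9 / Construction 3.5.something]{bhatt2022absolute} (the comparison between the Cartier--Witt square-zero extension and the cotangent-complex square-zero extension of $S$ by $S\{1\}[1]$), is that the square-zero extension of $S$ classified by $\overline{W}(-)$ agrees, up to a universal unit $u$, with the square-zero extension $A_2 = A_0/I_0^2$ of $S$ by $S\{1\}[1]$. More precisely, both are square-zero extensions of $\Ga$ by $\Ga^\sharp\{1\}[1]$, and the natural map $\Ga^\sharp\to\Ga$ (dually $W[F]\to \Ga$) identifies the PD-thickened versions; the obstruction class of $\overline{W}(-)$ in $\mathrm{Ext}^1_S(L_{S/\Z_p}, S\{1\})$ is $u$ times the class of $A_2$. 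This is where the unit $u\in\Z_p^\times$ enters; the naturality argument in the footnote (test on $\Z_p^\cycl$ with its Galois action, where any $R_0^\times$-ambiguity must be Galois-invariant, hence in $\Z_p^\times$) pins it down to a scalar, and conjecturally $u=1$.

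Given this, the proof would proceed as follows. First I would recall the Bhatt--Lurie identification giving the unit $u$, so that $\overline{W}(-)$ as a square-zero extension of $\Ga$ by $\Ga^\sharp\{1\}[1]$ corresponds to $u$ times the class of $A_2$ (after replacing the cotangent complex of $S$ by $L_{S/\Z_p}\simeq I_0/I_0^2[1] = S\{1\}[1]$, using that $S$ is perfectoid so $L_{S/\Z_p}$ is this shifted invertible module). Second, I would observe that pushout of square-zero extensions along a map of the kernel module corresponds, on obstruction classes, to applying that map; so the pushout of $\overline{W}(-)$ along $u\cdot\can\colon \Ga^\sharp\{1\}[1]\to x\Ga\{1\}[1]$ has obstruction class equal to $u\cdot\can_*$ of $u$ times the class of $A_2$ — but one has to be careful: actually the cleanest bookkeeping is that pushing $\overline{W}$ along $\can$ already lands in the extension classified by $A_2$ (not $A_0/I_0^2$ in divided-power form), because $\can$ is exactly the map realizing the PD-hull as sitting inside $\Ga$; the extra $u$ compensates the Bhatt--Lurie universal constant. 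Then pushing further along $x\Ga\xleftarrow{\can}$... no: the remaining step is that $A_2(x)$ is by \Cref{def:sq-zero-pushouts} precisely the pushout of $A_2$ along $S\{1\}\xrightarrow{\cdot x} xS\{1\}$, which on the level of extension-of-$\Ga$ data is pushout along $\Ga^\sharp\{1\}[1]\to\Ga^\sharp\{1\}[1]\to x\Ga\{1\}[1]$, i.e. exactly $\can$ composed with multiplication-by-$x$ — and $\can\colon\Ga^\sharp\to x\Ga$ is well-defined precisely under the standing hypothesis $S\subseteq xS$ with the divided powers on $x$. Matching these two descriptions of the pushout gives the claim.

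The main obstacle is getting the universal unit $u$ correct and the compatibility of normalizations between: (i) the Cartier--Witt/Witt-vector square-zero extension $\overline{W}$, (ii) the cotangent-complex square-zero extension, and (iii) the explicit extension $A_0/I_0^2\to S$, along with the Breuil--Kisin twist conventions $\{1\}=I_0/I_0^2\otimes_S(-)$. Concretely, one must check that under $\Spf(S)^\HT \cong B\Ga^\sharp\{1\}$ (valid since $S$ is perfectoid, so $\Spf(S)^\HT\to\Spf(S)$ is a gerbe banded by $\mathcal T^\sharp_{S/\Z_p}\{1\}$, which is trivial, hence $\cong B\Ga^\sharp\{1\}$... actually $\mathcal T^\sharp$ vanishes for perfectoid $S$, so $\Spf(S)^\HT\cong\Spf(S)$; the relevant gerbe statement is rather for $\Spf(\Z_p)^\HT\cong B\Gm^\sharp$ and $\overline W$ records the associated square-zero structure) the square-zero extension encoded by $\overline W$ matches $A_2$ up to the scalar $u$ — this is genuinely the content of Bhatt--Lurie's comparison and I would simply cite it, reducing the remaining work to the purely formal pushout-matching in \Cref{def:sq-zero-pushouts} and \Cref{def:gerb-of-x-lifts}, which is a diagram chase. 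The naturality-forces-$u\in\Z_p^\times$ argument is the one subtlety requiring a short separate verification, via the $\Z_p^\cycl$ test case as indicated in the footnote.
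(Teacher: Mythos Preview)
Your proposal is correct and follows essentially the same approach as the paper: reduce to the case $x=1$, which is a result of Bhatt--Lurie, and then observe that $\mathcal{R}_x$ is by construction the pushout of $\mathcal{R}=\mathcal{R}_1$ along $\Ga\{1\}\to x\Ga\{1\}$, so that composing with the $x=1$ identification gives the general statement. The paper's proof is exactly this, stated in two sentences; note that the correct citation is \cite[Construction 5.10, Remark 5.11]{bhatt2022prismatization} rather than \cite{bhatt2022absolute}, and much of your discussion of obstruction classes and normalizations, while not wrong, is already absorbed into that reference and need not be unwound.
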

\begin{proof}
When $x=1$, this is  \cite[Construction 5.10, Remark 5.11]{bhatt2022prismatization}. The general case follows because the natural map $\mathcal{R}(-)\to \mathcal{R}_x(-)$ is clearly the pushout along $\can\colon \Ga\to x \Ga$.
\end{proof}

\begin{proposition}
	 The pushout of the $B\mathcal{T}^\sharp_{X|S}\{1\}$-gerbe $X^\HT\to X$ along \[u\cdot \can\colon \mathcal{T}^\sharp_{X|S}\{1\}\to x\mathcal{T}_{X|S}\{1\}\] is canonically isomorphic to $\Lft_{X,x}$. Here $x\mathcal{T}_{X|S}\{1\}:=\mathcal{T}_{X|S}\otimes_{\Ga} x\Ga\{1\}$.
\end{proposition}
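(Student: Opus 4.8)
The plan is to combine the deformation-theoretic description of $X^\HT$ due to Bhatt--Lurie with \Cref{p:pushout-def-u}. Since $S$ is perfectoid, $X^\HT$ is canonically the Hodge--Tate stack of the prismatization of $X$ relative to the perfect prism $(A_0,I_0)$ with $A_0/I_0=S$, and by the construction of the relative Hodge--Tate stack (see the discussion around \cite[Construction 5.10, Remark 5.11]{bhatt2022prismatization} preceding \Cref{p:pushout-def-u}) this is the stack
\[ \Lft^{\overline W}_X\colon\quad T\longmapsto X(\overline W(T)) \]
on $p$-nilpotent $S$-algebras, with its map to $X$ induced by the square-zero projection $\overline W(T)\to T$ whose kernel is $\Ga^\sharp\{1\}[1](T)$. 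Since $X$ is smoothoid over $S$, hence locally smooth over a perfectoid $S$-algebra, \Cref{sec:fully-faithf-smooth-differentials-are-smoothoid} gives $L_{X|S}=\Omega^1_{X|S}[0]$ with $\Omega^1_{X|S}$ finite locally free. The standard deformation theory of square-zero extensions (as in the discussion following \Cref{def:gerb-of-x-lifts}) then identifies $\Lft^{\overline W}_X\to X$ as a gerbe banded by $\mathrm{Hom}_{\O_X}(\Omega^1_{X|S},\Ga^\sharp\{1\})=\mathcal T_{X|S}\otimes_\Ga\Ga^\sharp\{1\}=\mathcal T^\sharp_{X|S}\{1\}$, recovering the gerbe structure already seen in \Cref{sec:crit-fully-faithf-relative-ht-map}; the same computation with $\mathcal R_x$ in place of $\overline W$ shows that $\Lft_{X,x}\to X$ is a gerbe banded by $\mathcal T_{X|S}\otimes_\Ga x\Ga\{1\}=x\mathcal T_{X|S}\{1\}$.

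Next I would construct the comparison morphism. By \Cref{p:pushout-def-u}, $\mathcal R_x(-)$ is the pushout of the square-zero extension $\overline W(-)$ of $\Ga$ along $u\cdot\can\colon\Ga^\sharp\{1\}[1]\to x\Ga\{1\}[1]$; such a pushout comes equipped with a natural transformation $\overline W(-)\to\mathcal R_x(-)$ of functors valued in animated $S$-algebras, compatible with the projections to $\Ga$ and inducing $u\cdot\can$ on the kernels. Applying $X(-)$ yields a morphism of stacks over $X$
\[ X^\HT\;\cong\;\Lft^{\overline W}_X\longrightarrow \Lft_{X,x}, \]
and, by the naturality of the deformation-theoretic band computation in the previous paragraph, this morphism is equivariant over $X$ for the homomorphism $u\cdot\can\colon\mathcal T^\sharp_{X|S}\{1\}\to x\mathcal T_{X|S}\{1\}$ on bands.

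Finally, a morphism over $X$ from an $\mathcal A$-gerbe to a $\mathcal B$-gerbe that is equivariant for a homomorphism $\phi\colon\mathcal A\to\mathcal B$ factors uniquely as the canonical map to the pushout gerbe $\phi_\ast(\text{the }\mathcal A\text{-gerbe})$ followed by a morphism of $\mathcal B$-gerbes over $X$, and the latter is automatically an equivalence. Applying this to $X^\HT\to\Lft_{X,x}$ with $\phi=u\cdot\can$ identifies $\Lft_{X,x}$ with the pushout of the $\mathcal T^\sharp_{X|S}\{1\}$-gerbe $X^\HT\to X$ along $u\cdot\can$; since all the identifications used are natural in $X$, the resulting isomorphism is canonical. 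The main obstacle is the first paragraph: one must take care that the Bhatt--Lurie identification $X^\HT\cong\Lft^{\overline W}_X$ is used as an equivalence of $\mathcal T^\sharp_{X|S}\{1\}$-gerbes over $X$ --- matching bands, not merely as abstract stacks over $X$ --- and that the band of $\Lft_{X,x}$ is exactly $x\mathcal T_{X|S}\{1\}$ as defined in the statement. Both points reduce to the deformation theory of square-zero extensions once $L_{X|S}$ is concentrated in degree $0$, but the bookkeeping with the Breuil--Kisin twist $\{1\}$ and with the divided powers underlying $\can$ requires attention.
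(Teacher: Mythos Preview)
Your proof is correct and follows essentially the same route as the paper: both identify $\Lft_{X,x}\to X$ as an $x\mathcal T_{X|S}\{1\}$-gerbe via the deformation-theoretic argument of \cite[Proposition~5.12]{bhatt2022prismatization}, produce the natural map $X^\HT\to\Lft_{X,x}$ from the pushout description of $\mathcal R_x$ in \Cref{p:pushout-def-u}, observe that this map is linear over $u\cdot\can$, and then invoke the general fact that any such equivariant morphism of gerbes exhibits the target as the pushout. Your write-up is more explicit (in particular you spell out the identification $X^\HT\cong\Lft^{\overline W}_X$ and the final gerbe-theoretic step), but the argument is the same.
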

\begin{proof}
	This can be seen as in the proof of \cite[Proposition 5.12]{bhatt2022prismatization}. In fact, the same arguments as in \cite[Proposition 5.12]{bhatt2022prismatization} show that $\Lft_{X,x}$ is an $x\mathcal{T}_{X|S}\{1\}$-gerbe over $X$, and visibly the composition $X^\HT\to \Lft_{X}\to \Lft_{X,x}$ is linear over $u\cdot \can\colon \mathcal{T}^\sharp_{X|S}\{1\}\to \mathcal{T}_{X|S}\{1\}\to x\mathcal{T}_{X|S}\{1\}$.
\end{proof}
\begin{definition}
      Let $z_\ast(X^\HT)$ be the pushforward of the $\mathcal{T}^\sharp_{X|S}\{1\}$-gerbe $X^\HT$ along the multiplication map $z\colon \mathcal{T}^\sharp_{X|S}\{1\}\to \mathcal{T}^\sharp_{X|S}\{1\}$. Then there is a diagram
\[\begin{tikzcd}
	{X^\HT} & {\Lft_{X,x}} \\
	& {z_\ast X^\HT}
	\arrow[from=1-1, to=1-2]
	\arrow[from=1-2, to=2-2]
	\arrow[from=1-1, to=2-2]
\end{tikzcd}
\quad \text{which is linear over}\quad \begin{tikzcd}
	{\mathcal{T}^\sharp_{X|S}\{1\}} & {x\cdot \mathcal{T}_{X|S}\{1\}} \\
	& {\mathcal{T}^\sharp_{X|S}\{1\}.}
	\arrow["{u\cdot \can}", from=1-1, to=1-2]
	\arrow["{u^{-1}z}", from=1-2, to=2-2]
	\arrow["z"', from=1-1, to=2-2]
      \end{tikzcd}\]
\end{definition}

    \begin{remark}
      \label{sec:appl-Hodge--Tate-change-of-canonical-higgs-field}
      Let $\mathcal{F}\in \mathcal{D}(z_\ast(X^\HT))$, and let $
        \theta_{\mathcal{F}}\colon \mathcal{F}\to \mathcal{F}\otimes_{\O_X} \Omega^1_{X|S}\{-1\}
     $
      be its canonical Higgs field, cf.\ \Cref{sec:representations-g-1-canonical-higgs-field-on-gerbes}. Let $g\colon X^\HT\to z_\ast(X^\HT)$ be the morphism introduced above. Then the canonical Higgs field of $g^\ast\mathcal{F}$ is the composition
      \[
        g^\ast\mathcal{F}\xrightarrow{g^\ast\theta_{\mathcal{F}}} g^\ast\mathcal{F}\otimes_{\O_X} \Omega^1_{X}\{-1\}\xrightarrow{\Id\otimes z} g^\ast\mathcal{F}\otimes_{\O_X} \Omega^1_{X}\{-1\}.
      \]
      Using that $z\in S\tf^\times$, we can conclude that
      \[
        g^\ast\colon \mathcal{P}erf(z_\ast(X^\HT))\to \mathcal{P}erf(X^\HT)
      \]
      is fully faithful on isogeny categories and (up to stackification via open subsets of $X$) its essential image in (the stackification of) $\mathcal{P}erf(X^\HT)\tf$ is given by those $\mathcal{E}\in \mathcal{P}erf(X^\HT)\tf$ whose canonical Higgs field $\theta_{\mathcal{E}}$ satisfies that $\frac{1}{z}\theta_{\mathcal{E}}$ is topologically nilpotent (in the sense of \Cref{sec:representations-g-1-isogeny-category-fully-faithful}). Indeed, these statements can be checked locally on $X$, and follow from \Cref{sec:isog-categ-perf-1-multiplication-by-x-on-v-sharp} if $X^\HT$ is split. 
    \end{remark}

    \begin{theorem}
      \label{sec:appl-Hodge--Tate-functor-for-x-lift}
      \begin{enumerate}
      \item Each $x$-lift $\tilde{X}$ induces a natural isomorphism 
      of $\mathcal{T}^\sharp_{X}\{1\}$-gerbes over $X$
      \[
        z_\ast(X^\HT)\cong B\mathcal{T}^\sharp_{X}\{1\}.
      \]
      \item  Let
      $\Phi_{\tilde{X},x,z}\colon X^\HT\to z_\ast(X^\HT)\cong B\mathcal{T}^\sharp_{X}\{1\}$ be the induced morphism. Then the pullback
      \[
        \Phi_{\tilde{X},x,z}^\ast\colon \mathcal{P}erf(B\mathcal{T}^\sharp_{X}\{1\})\to \mathcal{P}erf(X^\HT)
      \]
      is linear over $z\colon \mathcal{T}^\sharp_{X}\{1\}\to \mathcal{T}^\sharp_{X}\{1\}$. It
      is fully faithful after inverting $p$. 
      \item The essential image in $\mathcal{P}erf(X^\HT)\tf$ is up to stackification on $X_{\mathrm{Zar}}$ given by those $\mathcal{E}$ whose canonical Higgs field $\theta_{\mathcal{E}}$ satisfies that $\frac{1}{z}\theta_{\mathcal{E}}$ is topologically nilpotent, i.e., locally on $X$ the Higgs field $\frac{1}{z}\theta_{\mathcal{E}}$ is topologically nilpotent for any choice a splitting of $X^\HT$.
      \end{enumerate}
    \end{theorem}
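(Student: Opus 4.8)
The plan is to deduce all three assertions from the two pushout descriptions established just above — namely that $(u\cdot\can)_\ast X^\HT\cong \Lft_{X,x}$ and that $z_\ast(X^\HT)$ sits in the displayed triangle under $X^\HT$ and over $\Lft_{X,x}$ — together with \Cref{sec:appl-Hodge--Tate-change-of-canonical-higgs-field}. For part (1), I would first use that pushforward of gerbes along homomorphisms of the banding group schemes is compatible with composition: the factorisation of the multiplication map $z\colon \mathcal{T}^\sharp_{X|S}\{1\}\to \mathcal{T}^\sharp_{X|S}\{1\}$ as $(u^{-1}z)\circ(u\cdot\can)$ through $x\cdot\mathcal{T}_{X|S}\{1\}$ yields a natural identification $z_\ast(X^\HT)\cong (u^{-1}z)_\ast\big((u\cdot\can)_\ast X^\HT\big)\cong (u^{-1}z)_\ast\Lft_{X,x}$. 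By construction an $x$-lift $\tilde X$ of $X$ is exactly a section $X\to \Lft_{X,x}$ of the $x\cdot\mathcal{T}_{X|S}\{1\}$-gerbe $\Lft_{X,x}\to X$; pushing it forward along $u^{-1}z$ produces a section of $z_\ast(X^\HT)\to X$, which trivialises this $\mathcal{T}^\sharp_{X|S}\{1\}$-gerbe and thereby identifies it, compatibly with the banding, with $B\mathcal{T}^\sharp_{X}\{1\}$ (using $\Omega^1_{X|\Z_p}=\Omega^1_{X|S}$ by \Cref{sec:fully-faithf-smooth-differentials-are-smoothoid}). Naturality in $\tilde X$ is immediate, since every map involved is canonical.

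For part (2), the defining triangle shows that the canonical morphism $g\colon X^\HT\to z_\ast(X^\HT)$ is linear over $z\colon\mathcal{T}^\sharp_{X}\{1\}\to \mathcal{T}^\sharp_{X}\{1\}$, and $\Phi_{\tilde X,x,y}$ is $g$ followed by the isomorphism of part (1), hence is likewise linear over $z$; concretely, by \Cref{sec:appl-Hodge--Tate-change-of-canonical-higgs-field} the pullback $\Phi^\ast_{\tilde X,x,y}$ multiplies the canonical Higgs field by $z$. To see that $\Phi^\ast_{\tilde X,x,y}$ is fully faithful after inverting $p$, I would write it as the composite of the equivalence $\calPerf(B\mathcal{T}^\sharp_{X}\{1\})\xrightarrow{\ \sim\ }\calPerf(z_\ast(X^\HT))$ coming from part (1) with $g^\ast\colon \calPerf(z_\ast(X^\HT))\to\calPerf(X^\HT)$, the latter being fully faithful on isogeny categories by \Cref{sec:appl-Hodge--Tate-change-of-canonical-higgs-field}. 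For part (3), the same remark identifies the essential image of $g^\ast$ on isogeny categories, up to stackification over $X_{\mathrm{Zar}}$, with those $\mathcal{E}$ for which $\tfrac1z\theta_{\mathcal{E}}$ is topologically nilpotent; since the isomorphism of part (1) is one of $\mathcal{T}^\sharp_{X}\{1\}$-gerbes it leaves the canonical Higgs field unchanged, so transporting this description across part (1) gives precisely the stated essential image. Locally on $X$, where $X^\HT$ splits, this last point is exactly \Cref{sec:isog-categ-perf-1-multiplication-by-x-on-v-sharp}.

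The step I expect to cost the most effort is the (purely formal, but fiddly) $2$-categorical bookkeeping underlying part (1): one must check that pushforward of gerbes along a composition of homomorphisms of banding groups is coherently associative, that pushing forward a section along $u^{-1}z$ really produces the section used to trivialise $z_\ast(X^\HT)$, and that the resulting trivialisation is compatible with the banding — all while correctly tracking the unit $u\in\Z_p^\times$ from \Cref{p:pushout-def-u} through the two compositions. Once part (1) is pinned down, parts (2) and (3) reduce to a direct application of \Cref{sec:appl-Hodge--Tate-change-of-canonical-higgs-field}, which itself localises (using a splitting of $X^\HT$) to the elementary \Cref{sec:isog-categ-perf-1-multiplication-by-x-on-v-sharp} about multiplication by $z$ on $BV^\sharp$.
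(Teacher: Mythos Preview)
Your proposal is correct and follows essentially the same approach as the paper: the paper's proof is a terse three-sentence version of exactly what you wrote, citing the map $\Lft_{X,x}\to z_\ast(X^\HT)$ for part (1), \Cref{sec:isog-categ-perf-1-multiplication-by-x-on-v-sharp} for the fully faithfulness in (2), and \Cref{sec:appl-Hodge--Tate-change-of-canonical-higgs-field} for (3). Your version correctly unpacks the gerbe-theoretic details the paper leaves implicit, and your honest assessment that the only real work is the $2$-categorical bookkeeping in part (1) matches the paper's treatment of that step as ``clear''.
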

    \begin{proof}
      The first statement is clear because $\mathcal{L}ft_{X,x}$ maps to $z_\ast(X^\HT)$. The fully faithfulness in (2) follows from \Cref{sec:isog-categ-perf-1-multiplication-by-x-on-v-sharp}. Part (3) follows from \Cref{sec:appl-Hodge--Tate-change-of-canonical-higgs-field}.
    \end{proof}

\subsection{Applications to the global $p$-adic Simpson correspondence, geometric case}
We can finally harvest the fruit of our work.
We first consider the smoothoid case. We assume that the base ring $R_0$ contains a primitive $p$-th root of unity $\zeta_p\in R_0$.  Let $x,y\in R_0\tf^\times$ be such that $|x|\geq 1$ and $|y|\leq |\zeta_p-1|$ and  let $z=y/x$. We are particularly interested in setting $x=1$ and $y=z=\zeta_p-1$. The following proposition is a direct corollary of \Cref{sec:appl-Hodge--Tate-functor-for-x-lift} and for $x=1$ settles \Cref{sec:smoothoid-case-1-complexes-on-the-Hodge--Tate-stack-for-choice-of-lift-introduction}.

    \begin{proposition}
      \label{sec:appl-Hodge--Tate-embedding-from-colimit}
     Let $X$ be a qcqs smoothoid formal scheme over $R_0$ with $x$-lift $\tilde{X}$. Then
      \[
        \Phi^\ast_{\tilde{X},x, z}\colon \mathcal{P}erf(B\mathcal{T}^\sharp_{X}\{1\})\tf\to \mathcal{P}erf(X^\HT)\tf.
      \]
      is fully faithful, with essential image given by those objects $\mathcal{E}$ for which the scaled canonical Higgs field $\tfrac{1}{z}\theta_{\mathcal{E}}$ is topologically nilpotent locally on $X$.
    \end{proposition}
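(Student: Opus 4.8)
The plan is to deduce \Cref{sec:appl-Hodge--Tate-embedding-from-colimit} essentially verbatim from \Cref{sec:appl-Hodge--Tate-functor-for-x-lift}, after recording that the hypotheses of that theorem are met in the smoothoid setting. First I would recall that for a qcqs smoothoid $X$ over the perfectoid base $R_0$, the relative Hodge--Tate map $X^\HT\to X$ is a gerbe banded by $\mathcal{T}^\sharp_{X|R_0}\{1\}=\mathcal{T}^\sharp_X\{1\}$ by \Cref{sec:crit-fully-faithf-relative-ht-map} (equivalently one may use that $X^\HT$ agrees with the Hodge--Tate stack of the relative prismatization of $X$ over the perfect prism $(A_0,I_0)$ attached to $R_0$, since $R_0$ is perfectoid). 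Thus the machinery of \S\ref{sec:globalisation} applies with $S=R_0$. Since $|x|\geq 1$ we have $R_0\subseteq xR_0$, and since $|y|\leq|1-\zeta_p|$, equivalently $\omega xz=\omega y\in R_0$, so that $z=y/x$ satisfies the standing hypothesis that $\can\colon\Ga^\sharp\to x\Ga$ and $x\Ga\xrightarrow{\cdot z}\Ga^\sharp$ are well defined formal group morphisms over $R_0$, using the divided powers on $\omega y$; in particular the constructions of $\Lft_{X,x}$, $z_\ast(X^\HT)$ and the linear diagram preceding \Cref{sec:appl-Hodge--Tate-functor-for-x-lift} are available.

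Next, given the $x$-lift $\tilde X$ (a flat lift of $X$ to $A_2(x)=A_0/I_0^2(x)$ in the sense of \Cref{def:x-lift}), \Cref{sec:appl-Hodge--Tate-functor-for-x-lift}(1) produces a natural isomorphism of $\mathcal{T}^\sharp_X\{1\}$-gerbes $z_\ast(X^\HT)\cong B\mathcal{T}^\sharp_X\{1\}$, and hence the induced morphism $\Phi_{\tilde X,x,y}\colon X^\HT\to z_\ast(X^\HT)\cong B\mathcal{T}^\sharp_X\{1\}$. Part (2) of that theorem says $\Phi_{\tilde X,x,y}^\ast$ is linear over the multiplication map $z\colon\mathcal{T}^\sharp_X\{1\}\to\mathcal{T}^\sharp_X\{1\}$ and fully faithful after inverting $p$; passing to isogeny categories of perfect complexes gives exactly the claimed fully faithful functor $\Phi^\ast_{\tilde X,x,y}\colon\mathcal{P}erf(B\mathcal{T}^\sharp_X\{1\})\tf\to\mathcal{P}erf(X^\HT)\tf$. (One should note that $\Phi^\ast$ does preserve perfect complexes: this is clear locally on $X$ from \Cref{sec:isog-categ-perf-1-multiplication-by-x-on-v-sharp}, since on a splitting chart $\Phi^\ast$ is the functor $x^\ast$ there, which by \Cref{sec:representations-g-3-cohomology-on-bv-sharp} does not change the underlying $\O_X$-module.)

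Finally, the description of the essential image is \Cref{sec:appl-Hodge--Tate-functor-for-x-lift}(3): up to stackification on $X_{\mathrm{Zar}}$, the essential image in $\mathcal{P}erf(X^\HT)\tf$ consists of those $\mathcal{E}$ whose scaled canonical Higgs field $z^{-1}\theta_{\mathcal{E}}$ is topologically nilpotent locally on $X$ (in the sense of \Cref{sec:representations-g-1-isogeny-category-fully-faithful}, i.e.\ after restriction to any splitting chart it satisfies the convergence condition of \Cref{sec:representations-g-1-isogeny-category-fully-faithful}). Here I would invoke \Cref{sec:appl-Hodge--Tate-change-of-canonical-higgs-field} for the computation of the canonical Higgs field under the pushforward $g\colon X^\HT\to z_\ast(X^\HT)$, namely that it multiplies $\theta$ by $z$, which reduces the image description to the split case handled by \Cref{sec:isog-categ-perf-1-multiplication-by-x-on-v-sharp}. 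I do not expect a genuine obstacle here: the only mild point to be careful about is the bookkeeping of the rescaling constant $u\in\Z_p^\times$ of \Cref{p:pushout-def-u}, which only affects the identification by a unit and hence does not change full faithfulness or the nilpotence condition; and the fact that the essential image is only characterized after Zariski stackification, which is already built into the statement.
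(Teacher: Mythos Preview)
Your proposal is correct and follows exactly the paper's approach: the paper's proof is the single sentence ``This is a reformulation of \Cref{sec:appl-Hodge--Tate-functor-for-x-lift},'' and you have simply unpacked why that reformulation is legitimate in the smoothoid setting (checking the gerbe structure via \Cref{sec:crit-fully-faithf-relative-ht-map}, the conditions on $x,y,z$, and the preservation of perfect complexes). Your added details are all reasonable and do not deviate from the intended argument.
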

    \begin{proof}
      This is a reformulation of \Cref{sec:appl-Hodge--Tate-functor-for-x-lift}.
    \end{proof}

We can thus prove \Cref{sec:smoothoid-case-1-derived-local-p-adic-simpson-correspondence-introduction}, regarding the global $p$-adic Simpson correspondence. 
 
\begin{theorem}
  \label{sec:smoothoid-case-1-derived-local-p-adic-simpson-correspondence}
 Let $X$ be a smoothoid $p$-adic formal scheme over $R_0$, with generic fiber $\X$.
  \begin{enumerate}
 \item  Each $x$-lift $\tilde{X}$ of $X$ to $A_2(x)$ induces a fully faithful functor, natural in $\tilde{X}$,
  \[
    \mathrm{S}_{\tilde{X},x,z}\colon \mathcal{H}ig_{\mathcal{X}}^{\omega z\text{-}\Hsm} \to \mathcal{P}erf(\X_v).
  \]
  \item If $X$ is affine with prismatic lift $(A,I)$ inducing a splitting $s$ of $X^\HT$ and $\tilde{X}=\mathrm{Spf}(A/I^2)$, set $y:=\zeta_p-1$ and $z=y/x$, then $\mathrm{S}_{\tilde{X},x,z}$ is the composition of $\mathcal{H}ig_{\mathcal{X}}^{\omega z\text{-}\Hsm}\hookrightarrow \mathcal{H}ig_{\mathcal{X}}^{\omega\text{-}\Hsm}$ with $\LS_s$ from \Cref{t:local-p-adic-Simpson-functor-geometric}.
 \item The essential image of $\mathrm{S}_{\tilde{X},x,z}$ is contained in the essential image of $\alpha_X^\ast$ from \Cref{sec:tori-over-perfectoid-1-main-theorem-with-perfectoid-base}, up to idempotent completion and replacing $X$ by an affine open cover. Conversely, any $\mathcal E$ in the essential image of $\alpha_X^\ast$ admits a canonical Higgs field $\Theta$ with values in $\Omega^1_{\X}\{-1\}$; then $\mathcal E$ lies in the essential image of $\mathrm{S}_{\tilde{X},x,z}$ if and only if $\frac{1}{z}\Theta$ is topologically nilpotent.
 \end{enumerate}
\end{theorem}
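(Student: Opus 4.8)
Looking at Theorem \ref{sec:smoothoid-case-1-derived-local-p-adic-simpson-correspondence}, I need to assemble the functor $\mathrm{S}_{\tilde X,x,y}$ and establish its three properties.

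My plan is as follows. The functor $\mathrm S_{\tilde X,x,y}$ will be defined as the composition
\[
\mathcal{H}ig_{\mathcal X}^{\omega z\text{-}\Hsm}\xrightarrow{\;(\Xi_{(-)}\tf)^{-1}\;}\mathcal{P}erf(B\mathcal{T}^\sharp_{X}\{1\})\tf^{\mathrm{idem}}\xrightarrow{\;\Phi^\ast_{\tilde X,x,y}\;}\mathcal{P}erf(X^\HT)\tf^{\mathrm{idem}}\xrightarrow{\;\alpha_X^\ast\;}\mathcal{P}erf(\X_v).
\]
Here the first arrow is the inverse of the equivalence furnished by \Cref{sec:appl-Hodge--Tate-hig-sen-via-integral-stack-smoothoid-case}, which identifies $\mathcal{H}ig^{\omega\text{-}\Hsm}_{\mathcal X}$ with (the stackification of) the idempotent-completed isogeny category $\mathcal{P}erf(B\mathcal{T}^\sharp_{X}\{1\})\tf$; one rescales the convergence condition by $z$ using that $B\mathcal{T}^\sharp_{X}\{1\}$ describes $\omega$-Hitchin-small complexes on the nose, so precomposing with the rescaling isomorphism $\mathcal{T}^\sharp_X\{1\}\xrightarrow{z}\mathcal{T}^\sharp_X\{1\}$ turns $\omega z$-Hitchin-smallness into $\omega$-Hitchin-smallness (this is exactly the content of \Cref{sec:isog-categ-perf-1-multiplication-by-x-on-v-sharp}, read through the dictionary of \Cref{sec:representations-g-representations-of-bg-over-bh}). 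The second arrow is fully faithful after inverting $p$ by \Cref{sec:appl-Hodge--Tate-embedding-from-colimit}, and the third is fully faithful by \Cref{sec:tori-over-perfectoid-1-main-theorem-with-perfectoid-base} (extended to idempotent completions, using that $\mathcal{P}erf(\X_v)$ is already idempotent complete). Naturality in $\tilde X$ is inherited from naturality of $\Phi^\ast_{\tilde X,x,y}$ in \Cref{sec:appl-Hodge--Tate-functor-for-x-lift}(2), since the other two functors do not depend on $\tilde X$. This gives (1).

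For (2): when $X=\Spf(R)$ is affine with prismatic lift $(A,I)$ and $\tilde X=\Spf(A/I^2)$, the splitting $s$ of $X^\HT$ coming from $(A,I)$ is compatible with the $x$-lift $A/I^2$ (indeed $A_2(x)$ receives $A_2$, and $A/I^2=A_2$ when $x=1$), so the morphism $\Phi_{\tilde X,x,\zeta_p-1}\colon X^\HT\to z_\ast(X^\HT)\cong B\mathcal{T}^\sharp_X\{1\}$ is, after the identification $X^\HT\cong B\mathcal{T}^\sharp_X\{1\}$ induced by $s$, simply the rescaling map $z\colon \mathcal{T}^\sharp_X\{1\}\to \mathcal{T}^\sharp_X\{1\}$ of classifying stacks. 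Unwinding the definitions, $\mathrm S_{\tilde X,x,\zeta_p-1}$ then factors as $\mathcal{H}ig^{z\text{-}\Hsm}_{\mathcal X}\hookrightarrow \mathcal{H}ig^{\omega z\text{-}\Hsm}_{\mathcal X}$ followed by $\alpha_X^\ast\circ\Xi_X[\tfrac1p]^{-1}=\LS_s$ of \Cref{t:local-p-adic-Simpson-functor-geometric}; the inclusion of Higgs categories is exactly the inclusion on the level of convergence conditions, matching the factorization of the rescaling map $z=(\zeta_p-1)/x$ through $x^{-1}$ on tangent bundles. This is a diagram chase once one checks the splitting is the one induced by the lift.

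For (3): the essential image of $\mathrm S_{\tilde X,x,y}$ equals $\alpha_X^\ast$ applied to the essential image of $\Phi^\ast_{\tilde X,x,y}$, which by \Cref{sec:appl-Hodge--Tate-functor-for-x-lift}(3) (equivalently \Cref{sec:appl-Hodge--Tate-embedding-from-colimit}) consists, up to stackification over an affine open cover and idempotent completion, of those $\mathcal E\in\mathcal{P}erf(X^\HT)\tf$ whose canonical Higgs field $\theta_{\mathcal E}$ (from \Cref{sec:representations-g-1-canonical-higgs-field-on-gerbes}) satisfies that $\tfrac1z\theta_{\mathcal E}$ is topologically nilpotent. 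Since $\alpha_X^\ast$ is fully faithful, and since the canonical Higgs field of $\alpha_X^\ast\mathcal E$ on $\X_v$ is by construction the pullback of $\theta_{\mathcal E}$ (the automorphism $\rho$ of \Cref{sec:representations-g-1-canonical-higgs-field-on-gerbes} is compatible with pullback along $\Spf(B^+)\to X^\HT$), the condition transfers verbatim to the $\X_v$-side. The first assertion of (3) — containment in the essential image of $\alpha_X^\ast$ — is then immediate. The main obstacle is the coherence bookkeeping in (2) and (3): making precise that the canonical Higgs field is compatible with all the pullbacks and with the identification $z_\ast(X^\HT)\cong B\mathcal{T}^\sharp_X\{1\}$, and that stackification over $X_{\mathrm{Zar}}$ commutes with forming isogeny and idempotent completions as used above. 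These are not deep but require care; everything else is assembly of results already in hand.
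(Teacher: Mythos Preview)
Your proposal is correct and follows essentially the same approach as the paper: define $\mathrm S_{\tilde X,x,y}$ as the composite $\alpha_X^\ast\circ\Phi^\ast_{\tilde X,x,y}\circ(\Xi_X\tf)^{-1}$, invoking \Cref{sec:appl-Hodge--Tate-hig-sen-via-integral-stack-smoothoid-case}, \Cref{sec:appl-Hodge--Tate-embedding-from-colimit}, and \Cref{sec:tori-over-perfectoid-1-main-theorem-with-perfectoid-base} for the three factors, together with idempotent completeness of $\mathcal{P}erf(\X_v)$. The paper's proof is simply a terser version of what you wrote; one small terminological slip is calling $z\colon \mathcal T^\sharp_X\{1\}\to \mathcal T^\sharp_X\{1\}$ a ``rescaling isomorphism'' (it is not an isomorphism integrally, though it induces an equivalence on isogeny categories via \Cref{sec:isog-categ-perf-1-multiplication-by-x-on-v-sharp}, which is what you actually use).
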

\begin{proof}
	By naturality in $\tilde{X}$, it will suffice to construct $\mathrm{S}_{\tilde{X},x,z}$ locally, and we thus may assume that $X$ is qcqs.
  By \Cref{sec:appl-Hodge--Tate-hig-sen-via-integral-stack-smoothoid-case} and \Cref{sec:tori-over-perfectoid-1-main-theorem-with-perfectoid-base}, it suffices to construct a natural functor
  \[
    \mathcal{P}erf(B\mathcal{T}^\sharp_{X}\{1\})\to \mathcal{P}erf(X^\HT),
  \]
  which is fully faithful on isogeny categories, with the prescribed compatibilities of (2) and (3). Here we use that $\mathcal{P}erf(\X_v)^{\mathrm{idem}}=\mathcal{P}erf(\X_v)$. \Cref{sec:appl-Hodge--Tate-embedding-from-colimit} tells us that we can take $\Phi^\ast_{\tilde{X},x,z}$.
\end{proof}

In what follows, when $X$ has a lift $\tilde{X}$ to $A_2$, we will simply write
$
\mathrm{S}_{\tilde{X}}:= \mathrm{S}_{\tilde{X},1,\zeta_p-1}.
$

\subsection{Applications to the global $p$-adic Simpson correspondence, arithmetic case}
 Let $K$ be a $p$-adic field and $X$ a smooth formal scheme over $X_0:=\Spf(\O_K)$ with rigid generic fibre $\X$.
    The rest of this section is devoted to proving the following, which is \Cref{sec:arithmetic-case-1-derived-local-p-adic-simpson-in-arithmetic-case} in the introduction:

    \begin{theorem}
      \label{sec:arithmetic-case-1-derived-local-p-adic-simpson-in-arithmetic-case-plus-description-of-perf-on-x-ht}
      The choice of a uniformizer $\pi \in \mathcal{O}_K$ gives rise to a fully faithful functor
      \[
       \mathrm{S}_\pi \colon \mathcal{H}igSen_{\X}^{\Hsm} \to \mathcal{P}erf(\X_v).
     \]
    \end{theorem}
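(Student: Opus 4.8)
The plan is to deduce the theorem from the smoothoid global $p$-adic Simpson functor of \Cref{sec:smoothoid-case-1-derived-local-p-adic-simpson-correspondence}, applied after base change along $\mathcal{O}_K\to\mathcal{O}_C$, followed by Galois descent — an approach modelled on Min--Wang \cite{MinWang22}. Write $\Gamma_K:=\Gal(\overline{K}|K)$ and let $Y:=X\times_{\Spf(\mathcal{O}_K)}\Spf(\mathcal{O}_C)$, a smooth, hence smoothoid, formal scheme over $\mathcal{O}_C$ with rigid generic fibre $\X_C$. Setting $e:=E'(\pi)$, which generates the different $\delta_{\mathcal{O}_K|\Z_p}$, \Cref{c:canonical-e-lifts} furnishes a canonical $\Gamma_K$-equivariant $e^{-1}$-lift $\widetilde{Y}=X\times_{\Spf(\mathcal{O}_K)}\Spf(A_2(e^{-1}))$ of $Y$. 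For a suitable choice of the auxiliary rescaling parameters (yielding $z:=(1-\zeta_p)e$), \Cref{sec:smoothoid-case-1-derived-local-p-adic-simpson-correspondence} then produces a fully faithful functor $\mathrm{S}_{\widetilde{Y}}\colon\mathcal{H}ig_{\X_C}^{\omega z\text{-}\Hsm}\to\calPerf((\X_C)_v)$, and since the lift $\widetilde{Y}$, the morphism $\Phi^\ast_{\widetilde{Y}}$ and the functor $\alpha_Y^\ast$ are all natural, this functor carries a canonical $\Gamma_K$-equivariant structure. I would then define $\mathrm{S}_\pi$ by taking (continuous) $\Gamma_K$-fixed points on both sides and precomposing with a descent equivalence on the source.

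On the target side, $\Spa(C,\mathcal{O}_C)\to\Spa(K,\mathcal{O}_K)$ is a v-cover that becomes a pro-(finite \'etale) $\Gamma_K$-torsor on diamonds, so the same holds for $(\X_C)_v\to\X_v$; by v-descent of perfect complexes on perfectoid spaces (\cite[Theorem 2.1]{anschutz2021fourier}) this identifies $\calPerf(\X_v)$ with the continuous homotopy fixed points $\calPerf((\X_C)_v)^{h\Gamma_K}$.

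The heart of the argument is then the identification of $\mathcal{H}igSen_{\X}^{\Hsm}$ with the category of $\Gamma_K$-equivariant objects of $\mathcal{H}ig_{\X_C}^{\omega z\text{-}\Hsm}$. Since the prismatization commutes with fibre products in the base, $(X_{\mathcal{O}_C})^\HT\cong X^\HT\times_{\Spf(\mathcal{O}_K)^\HT}\Spf(\mathcal{O}_C)$, so a $\Gamma_K$-equivariant complex on $Y^\HT$ descends to $X^\HT$ and conversely. Transporting this through \Cref{sec:appl-Hodge--Tate-structure-of-x-ht-for-some-prismatic-lift} (locally, where a prismatic lift of $X$ exists), one finds that a $\Gamma_K$-equivariant Higgs perfect complex on $\X_C$ acquires, upon restricting the action along the arithmetic direction encoded by $\Spf(\mathcal{O}_K)^\HT\cong BG_\pi$, a Sen operator $\vartheta_M$; the relation $g\gamma_ig^{-1}=\gamma_i^{\chi(g)}$ between the arithmetic Galois action and the Kummer directions used in \S\ref{sec:tori-over-p-tori-over-p-adic-fields} produces exactly the commutation $[\vartheta_M,\delta]=-E'(\pi)\delta$ defining $\mathcal{HS}_{\X}$. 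The decisive input, as in \cite[Remark 4.2]{MinWang22}, is that the Higgs field of such a $\Gamma_K$-equivariant object is \emph{automatically} topologically nilpotent — so that $\omega z$-Hitchin-smallness on $\X_C$ is no constraint on the Higgs field — whereas the factor $e^{-1}$ built into $A_2(e^{-1})$ is precisely what converts Hitchin-smallness of the Sen operator (eigenvalues in $\Z+\delta_{\mathcal{O}_K|\Z_p}^{-1}\mathfrak{m}_{\overline{K}}$, cf.\ \Cref{sec:appl-Hodge--Tate-definition-higgs-sen-complexes}) into the hypothesis of \Cref{sec:appl-Hodge--Tate-properties-of-t-modules}(4), hence into membership in the essential image on $\X_C$. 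Granting this equivalence, fully faithfulness of $\mathrm{S}_\pi$ follows from that of $\mathrm{S}_{\widetilde{Y}}$ together with exactness of homotopy fixed points, and naturality in $\pi$ is inherited from the constituent functors.

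The step I expect to be the main obstacle is this last identification: it requires carrying out the Min--Wang descent with full higher coherence in the derived and stacky setting rather than merely for vector bundles, and reconciling all the twists and scalars that appear — $\{1\}$ versus $(1)$, the factor $E'(\pi)$ coming from the $G_\pi$-action on $\mathcal{T}^\sharp_{X|\mathcal{O}_K}\{1\}$, the inverse different $\delta_{\mathcal{O}_K|\Z_p}^{-1}$, and the rescaling parameters — so that the convergence condition cut out on $\Gamma_K$-equivariant Higgs perfect complexes on $\X_C$ matches the definition of $\mathcal{H}igSen_{\X}^{\Hsm}$ on the nose.
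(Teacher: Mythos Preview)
Your strategy is the one the paper follows: base-change to $\mathcal{O}_C$, use the canonical $\Gamma_K$-equivariant $e^{-1}$-lift of \Cref{c:canonical-e-lifts}, apply the smoothoid machinery $\Gamma_K$-equivariantly, and descend via v-descent on the target. The paper also credits Min--Wang for the idea. But the paper organises the argument so as to sidestep precisely the obstacle you flag in your last paragraph.

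Rather than attempting to identify $\mathcal{H}igSen_{\X}^{\Hsm}$ directly with $\Gamma_K$-equivariant objects of $\mathcal{H}ig_{\X_C}^{\omega z\text{-}\Hsm}$ (which, as you say, would require a derived Sen-theoretic descent with all coherences and a delicate matching of convergence conditions), the paper passes through the \emph{integral} stack $\mathcal{Z}_X=B\mathcal{T}^\sharp_{X|\mathcal{O}_K}\{1\}$ over $X\times_{X_0}BG_\pi$. The identification $\mathcal{H}igSen_{\X}^{\Hsm}\simeq\mathcal{P}erf(\mathcal{Z}_X)\tf$ was already established in \Cref{sec:appl-Hodge--Tate-hig-sen-via-integral-stack} via Cartier duality and \Cref{sec:appl-Hodge--Tate-properties-of-t-modules}; this is where the Min--Wang nilpotence argument and the matching of the Sen-operator smallness condition take place, entirely over $K$ and before any base change. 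The descent step then reduces to the much softer \Cref{sec:appl-Hodge--Tate-phi-is-fully-faithful-on-isogeny-categories}: full faithfulness of $\mathcal{P}erf(\mathcal{Z}_X)\tf\to\mathcal{P}erf([\mathcal{Z}_{X_{\mathcal{O}_C}}/\underline{\Gamma}])\tf$, proved by flat base change along $\Spf(\mathcal{O}_C)\to X_0^\HT$ and the same Galois-cohomology mechanism as \Cref{sec:form-reduct-prov-1-fully-faithfulness-for-beta}. One then composes with the $\Gamma$-equivariant $\Phi^\ast_{\widetilde{X}_{\mathcal{O}_C}}$ (the paper takes $y=p$, hence $z=ep$, rather than your $z=(1-\zeta_p)e$; either works, with a final renormalisation by $(ep)^{-1}$, cf.\ \Cref{sec:appl-Hodge--Tate-remark-on-normalization}). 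In short, the stacky intermediate $\mathcal{Z}_X$ absorbs exactly the coherence and twist-bookkeeping you were worried about; note also that you only need full faithfulness of the source comparison, not the equivalence you ask for.
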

    Characterizing the essential image of $\mathrm S_{\pi}$ is more difficult than in \Cref{sec:smoothoid-case-1-derived-local-p-adic-simpson-correspondence}: For $X=\Spa(\O_K)$, we gave a description in \cite[Theorem 1.3]{analytic_HT}. But this does not immediately globalize  because there is in general no canonical Sen operator on perfect complexes on $\X_v$. For coherent modules, we give a description of the essential image of $\mathrm S_{\pi}$  in \cite[Theorem~5.17, Corollary~5.21]{AHLB-companion}.
    
    Towards a proof of \Cref{sec:arithmetic-case-1-derived-local-p-adic-simpson-in-arithmetic-case-plus-description-of-perf-on-x-ht},
    assume first that $X$ is qcqs.
Let $C:=\widehat{\overline{K}}$ be a completed algebraic closure of $K$ with Galois group $\Gamma$. Let
$ X_{\O_C}:=\Spf(\mathcal{O}_C)\times_{X_0} X
$
be the base change of $X$ to $\O_C$ along $\O_K\to \O_C$. Note that
\[
 X_{\O_C}^\HT\cong X^\HT\times_{X_0^\HT} \Spf(\O_C)
\]
with $\Spf(\O_C)\to X_0^\HT$ the canonical lift of $\Spf(\O_C)\to X_0$. Namely, passing to Hodge--Tate stacks commutes with Tor-independent limits and $\Spf(\O_C)^\HT\cong \Spf(\O_C)$.
Set
\[
  \Phi\colon \mathcal{Z}_{X_{\O_C}}:=\Spf(\O_C)\times_{X_0^\HT}\mathcal{Z}_X \to \mathcal{Z}_X,
\]
with $\mathcal{Z}_X$ defined after \Cref{sec:appl-Hodge--Tate-structure-of-x-ht-for-some-prismatic-lift}. This map is $\Gamma$-equivariant for the trivial action on the target.
\begin{lemma}
  \label{sec:appl-Hodge--Tate-phi-is-fully-faithful-on-isogeny-categories}
  The following pullback functor is fully faithful:
  \[
    \Phi^\ast\colon \mathcal{P}erf(\mathcal{Z}_X)\tf\to \mathcal{P}erf([\mathcal{Z}_{X_{\O_C}}/\underline{\Gamma}])\tf.
  \]
\end{lemma}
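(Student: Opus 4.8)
The plan is to mimic the structure of the proof of Proposition~\ref{sec:form-reduct-prov-1-fully-faithfulness-for-beta}, now with $X^\HT$ replaced by $\mathcal{Z}_X=B\mathcal{T}^\sharp_{X|\mathcal O_K}\{1\}$ (a gerbe over $X\times_{X_0}BG_\pi$) and the perfectoid cover replaced by the base change to $\mathcal O_C$. Concretely, it suffices to show that the cofibre of the natural map $\mathcal O_{\mathcal{Z}_X}\to R\rho_\ast\mathcal O_{[\mathcal{Z}_{X_{\mathcal O_C}}/\underline{\Gamma}]}$ is killed by $p^i$ for some $i\geq 1$, where $\rho\colon [\mathcal{Z}_{X_{\mathcal O_C}}/\underline{\Gamma}]\to\mathcal{Z}_X$ is the map induced by $\Phi$; fully faithfulness on isogeny categories then follows exactly as in the proof of Proposition~\ref{sec:form-reduct-prov-1-fully-faithfulness-for-beta}, since perfect complexes are dualizable and $R\Hom$ can be computed via cohomology. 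The computation of $R\rho_\ast$ reduces, by the projection formula and the fact that $\mathcal{Z}_{X_{\mathcal O_C}}\to\mathcal{Z}_X$ is affine (being a base change of $\Spf(\mathcal O_C)\to X_0^\HT$, which is affine), to understanding $R\Gamma(\Gamma,\mathcal O(\mathcal{Z}_{X_{\mathcal O_C}}))$ relative to $\mathcal O(\mathcal{Z}_X)$; more precisely, working locally on $X$ and using that $\mathcal{Z}_X$ is a $\mathcal{T}^\sharp_{X|\mathcal O_K}\{1\}$-gerbe over $X\times_{X_0}BG_\pi$, one is reduced to a statement about $\mathcal O_C$-coefficients versus $\mathcal O_K$-coefficients, i.e.\ essentially to the Galois cohomology computation $\mathcal O_K\to R\Gamma(\Gamma,\mathcal O_C)$ having cofibre killed by a power of $p$ — which is classical (Tate, via normalized traces), cf.\ the references in \Cref{sec:fully-faithf-arithm-remark-on-imperfect-residue-field-case}.

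First I would make the identification $\mathcal{Z}_{X_{\mathcal O_C}}\cong\Spf(\mathcal O_C)\times_{X_0^\HT}\mathcal{Z}_X$ precise, using that passing to Hodge--Tate stacks commutes with Tor-independent base change and $\Spf(\mathcal O_C)^\HT\cong\Spf(\mathcal O_C)$, so that $\mathcal O(\mathcal{Z}_{X_{\mathcal O_C}})\cong\mathcal O(\mathcal{Z}_X)\widehat{\otimes}_{\mathcal O(X_0^\HT)}\mathcal O_C$ after suitable localization; here one uses the explicit presentation of $X_0^\HT\cong BG_\pi$ from \S\ref{sec:p-adic-fields-new-identification-of-galois-action-for-p-adic-fields} and the structure of $\mathcal{Z}_X$ as $B\mathcal{T}^\sharp_{X|\mathcal O_K}\{1\}$ over $X\times_{X_0}BG_\pi$. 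Next, using the \v{C}ech nerve of $\Spf(\mathcal O_C)\to\Spf(\mathcal O_K)$ inside $X_0^\HT$ and the projection formula \Cref{sec:crit-fully-faithf-projection-formula-for-continuous-group-cohomology}, I would reduce to showing that the cofibre of $\mathcal O_K\to R\Gamma(\Gamma,\mathcal O_C)$ is killed by some $p^i$ and that this persists after the relevant faithfully flat base changes over $X\times_{X_0}BG_\pi$ (which are uniformly bounded, so inverting $p$ commutes with the limits computing $R\Hom$). The point is that $\mathcal{T}^\sharp_{X|\mathcal O_K}\{1\}$ and $G_\pi$ are unchanged under $\mathcal O_K\to\mathcal O_C$, so the only $\Gamma$-action is on the $\mathcal O_C$-coefficients, and the cohomology of the gerbe/$G_\pi$ is computed by a finite limit (as in \Cref{sec:representations-g-3-cohomology-on-bv-sharp} and \cite[Proposition 2.7]{analytic_HT}), reducing everything to the coefficient ring.

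The main obstacle is bookkeeping rather than a genuinely new input: one must be careful that the $\Gamma$-action on $\mathcal{Z}_{X_{\mathcal O_C}}$ really is, up to the identifications above, the ``trivial action twisted only through the coefficients $\mathcal O_C$'', so that no extra Sen-operator or Higgs-field contribution enters the Galois cohomology — this is exactly the content of the $\Gamma$-equivariance of $\Phi$ for the trivial action on $\mathcal{Z}_X$, which is built into its definition as a base change of $\Spf(\mathcal O_C)\to X_0^\HT$. Granting that, the key cohomological estimate $\mathcal O_K\to R\Gamma(\Gamma,\mathcal O_C)$ with cofibre killed by a power of $p$ is exactly \cite[Theorem 3.12]{analytic_HT} (or classical Tate theory), so the proof is short: reduce to coefficients, quote Tate, propagate through finite limits and base changes, and conclude fully faithfulness on isogeny categories by dualizability of perfect complexes as in Proposition~\ref{sec:form-reduct-prov-1-fully-faithfulness-for-beta}.
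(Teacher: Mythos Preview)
Your overall strategy --- mimic Proposition~\ref{sec:form-reduct-prov-1-fully-faithfulness-for-beta}, reduce to a statement about the cofibre of $\mathcal{O}_{\mathcal{Z}_X}\to R\rho_\ast\mathcal{O}$, and conclude via dualizability of perfect complexes --- is correct and matches the paper's own (terse) proof. The correct implementation is to observe that $\rho$ is the base change of $\beta_0\colon[\Spf(\mathcal{O}_C)/\underline{\Gamma}]\to X_0^{\HT}$ along the flat map $\mathcal{Z}_X\to X_0^{\HT}$, so flat base change reduces everything to Proposition~\ref{sec:form-reduct-prov-1-pushforward-beta-o} for $X_0=\Spf(\mathcal{O}_K)$.

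However, your identification of the key cohomological input is wrong. You claim the reduction is to $\mathcal{O}_K\to R\Gamma(\Gamma,\mathcal{O}_C)$, i.e.\ classical Tate. But the fibre product defining $\mathcal{Z}_{X_{\mathcal{O}_C}}$ is over $X_0^{\HT}=BG_\pi$, \emph{not} over $\Spf(\mathcal{O}_K)$; in particular $\mathcal{Z}_{X_{\mathcal{O}_C}}\neq\mathcal{Z}_X\times_{\Spf(\mathcal{O}_K)}\Spf(\mathcal{O}_C)$ (they differ by a factor of $BG_{\pi,\mathcal{O}_C}$). Pulling $\Phi_\ast\mathcal{O}$ back along the section $\Spf(\mathcal{O}_K)\to X_0^{\HT}$ gives not $\mathcal{O}_C$ but the PD-algebra $B^+_{\mathcal{O}_C}=\widehat{\bigoplus}_{n\geq 0}\mathcal{O}_C\cdot\tfrac{a^n}{n!}$ of Proposition~\ref{c:Galois-action-on-ZA-p-adic-fields}, with its $\Gamma$-action $\sigma(a)=\chi_{\pi^\flat}(\sigma)a+c(\sigma)\pi z$. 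This action \emph{does} involve a Sen-operator contribution, contrary to what you assert in your ``main obstacle'' paragraph. The required input is therefore Assumption~\ref{sec:form-reduct-prov-1-crucial-assumption-on-galois-cohomology} for $\Spf(\mathcal{O}_K)$, i.e.\ that $\mathcal{O}_K\to R\Gamma(\Gamma,B^+_{\mathcal{O}_C})$ has $p$-power torsion cofibre; this is precisely \cite[Theorem~3.12]{analytic_HT} (which concerns $B_{\mathcal{O}_C}$, not $\mathcal{O}_C$), already invoked in \S\ref{sec:p-adic-fields-new-identification-of-galois-action-for-p-adic-fields} and Proposition~\ref{p:assumption-2.2-satisfied-arithm-case}. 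You cite the right reference but mischaracterize its content.
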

\begin{proof}
	We follow the proof of  \Cref{sec:form-reduct-prov-1-fully-faithfulness-for-beta}
	 combined with flat base-change.
\end{proof}

Next, we want to relate $\mathcal{Z}_{X_{\O_C}}$ and $X_{\O_C}^\HT$. This is possible thanks to \Cref{sec:appl-Hodge--Tate-embedding-from-colimit}. Indeed, consider the canonical $\Gamma$-equivariant $e^{-1}$-lift $\widetilde{X}_{\O_C}\to \Spf(A_2(e^{-1}))$ from \Cref{c:canonical-e-lifts}.
By \Cref{sec:appl-Hodge--Tate-functor-for-x-lift}, applied with $x=e^{-1}$ and $y=p$ (so that $z=ep$), we get a $\Gamma$-equivariant morphism 
\[
 \Phi_{\widetilde{X}_{\O_C}}\colon X^\HT_{\O_C}\to B\mathcal{T}^\sharp_{X_{\O_C}}\{1\}\cong \mathcal{Z}_{X_{\O_C}}
\]over $X\times_{X_0}X_0^\HT$,
which is linear over the multiplication by $ep$ on $B\mathcal{T}^\sharp_{X_{\O_C}}\{1\}$. By \Cref{sec:appl-Hodge--Tate-embedding-from-colimit},
\[
  \Phi^\ast_{\widetilde{X}_{\O_C}}\colon \mathcal{P}erf(\mathcal{Z}_{X_{\O_C}})\tf\to \mathcal{P}erf(X^\HT_{\O_C})\tf
\]
is fully faithful, and one deduces the same for the functor 
on $\Gamma$-equivariant objects
\[
  \mathcal{P}erf([\mathcal{Z}_{X_{\O_C}}/\underline{\Gamma}])\tf\to \mathcal{P}erf([X^\HT_{\O_C}/\underline{\Gamma}])\tf.
\]

\begin{proof}[Proof of \Cref{sec:arithmetic-case-1-derived-local-p-adic-simpson-in-arithmetic-case-plus-description-of-perf-on-x-ht}]
  By naturality in $X$, we may assume that $X$ is qcqs.
  By \Cref{sec:appl-Hodge--Tate-hig-sen-via-integral-stack}, pro-\'etale descent for v-perfect complexes and \Cref{sec:tori-over-perfectoid-1-main-theorem-with-perfectoid-base} it suffices to construct a fully faithful functor
  \[
    \mathcal{P}erf(\mathcal{Z}_X)\tf\to \mathcal{P}erf([X^\HT_{\O_C}/\underline{\Gamma}])\tf.
  \]
  Here, we can take the composition of $\Phi^\ast_{\widetilde{X}_{\O_C}}\circ \Phi^\ast$ with the functor $(M,\theta_M,\Theta_\pi)\mapsto (M,(ep)^{-1}\theta_M, \Theta_\pi)$ on Higgs--Sen perfect complexes. 
\end{proof}

\begin{remark}
  \label{sec:appl-Hodge--Tate-remark-on-normalization}
 The normalization by $(ep)^{-1}$ in this proof  makes the Higgs field of a perfect complex on $\mathcal{Z}_X$ compatible with the canonical Higgs field of the associated v-perfect complex.
\end{remark}

    \subsection{Comparison with previous constructions}\label{sec-comparison-with-prev-constructions}
    Let $X$ be a qcqs smoothoid formal scheme over a perfectoid base $S$ with sheaf of $p$-completed differentials $\Omega^1_{X}$ (cf \Cref{def:absolute-diff-of-smoothoid}) and rigid generic fibre $\X$. Let $(A_0,I_0)$ be the perfect prism associated to $S$, i.e., $A_0/I_0\cong S$. Let $\tilde{X}$ be a flat lift of $X$ to $A_0/I_0^2$. 
 To compare $\mathrm{S}_{\tilde{X}}$ to former constructions, we first make it more explicit. Set $z=y=\zeta_p-1$. Let $\psi:=\Phi_{\tilde{X},1,z}\colon X\to z_\ast(X^\HT)\cong B\mathcal{T}_{X}^\sharp\{1\}$ be the morphism from \Cref{sec:appl-Hodge--Tate-functor-for-x-lift}. 
 \begin{definition}
 The pullback of $\psi_\ast(\O_X)$ along $j:X^\HT\to z_\ast(X^\HT)$ followed by pullback to $\X_v$ defines a ring sheaf 
$\mathcal{B}^+_{\tilde{X}}$
  on $\X_v$ with a Higgs field
  $$
  \Theta_{\mathcal{B}_{\tilde{X}}^+}\colon \mathcal{B}_{\tilde{X}}^+\to \mathcal{B}_{\tilde{X}}^+\otimes_{\O_{\X_v}} \mu^\ast \Omega^1_{X}\{-1\}
  $$
  where $\mu$ is the pullback from the Zariski site of $X$ to the v-site of $\X$. We then set \[\mathcal{B}_{\tilde{X}}:=\mathcal{B}_{\tilde{X}}^+\tf=\alpha_X^{\ast}(j^{\ast}\psi_\ast(\O_X)\tf)\] with its associated Higgs field $\Theta_{\mathcal{B}_{\tilde{X}}}$.
  \end{definition}
  
   \begin{lemma}\label{explicit-formula_ls}
   	Let $\mathcal{M}=(M,\theta_M)\in \mathcal{P}erf(B\mathcal{T}^\sharp_{X}\{1\})$, considered as an object of $\mathcal{H}ig_{\mathcal{X}}^{\Hsm}$  via \Cref{sec:appl-Hodge--Tate-hig-sen-via-integral-stack-smoothoid-case}). Then there is a natural isomorphism
   	\[\mathrm{S}_{\tilde{X}}(\mathcal{M}) \cong \mathrm{Dol}(\mathcal{B}_{\tilde{X}} \otimes_{\mathcal{O}_{\X}}^L \mu^\ast M, \Theta_{\mathcal{B}_{\tilde{X}}} \otimes \mathrm{Id} + \mathrm{Id} \otimes \theta_M).\]
   	In particular, if $M$ is a vector bundle on $\X$, then  
   	\[\mathrm{S}_{\tilde{X}}(\mathcal M) \cong \ker(\mathcal{B}_{\tilde{X}} \otimes_{\mathcal{O}_{\X}} \mu^\ast M \xrightarrow{\Theta_{\mathcal{B}_{\tilde{X}}} \otimes \mathrm{Id} + \mathrm{Id} \otimes \theta_M}  \mathcal{B}_{\tilde{X}} \otimes_{\mathcal{O}_{\X}} \mu^\ast M \otimes_{\mathcal{O}_{\X}}^L \nu^\ast \Omega_{\X}^1\{-1\}).\]
   \end{lemma}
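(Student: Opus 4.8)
The plan is to unwind $\mathrm{S}_{\tilde X}$ and then, for an arbitrary affinoid perfectoid $T=\Spa(B,B^+)$ over $\X$, to compute the value $\mathrm{S}_{\tilde X}(\mathcal M)(T)$ by faithfully flat descent along a suitable torsor. By the construction of $\mathrm{S}_{\tilde X}=\mathrm{S}_{\tilde X,1,\zeta_p-1}$ in \Cref{sec:smoothoid-case-1-derived-local-p-adic-simpson-correspondence} (through \Cref{sec:appl-Hodge--Tate-hig-sen-via-integral-stack-smoothoid-case} and \Cref{sec:appl-Hodge--Tate-embedding-from-colimit}) one has $\mathrm{S}_{\tilde X}(\mathcal M)=\alpha_X^\ast(j^\ast\mathcal M)$, where $j=\Phi_{\tilde X}\colon X^\HT\to z_\ast(X^\HT)\cong B\mathcal T_X^\sharp\{1\}$, and by definition $\mathcal B^+_{\tilde X}$ is obtained from $j^\ast\psi_\ast\O_X\in\mathcal D(X^\HT)$ (with its canonical Higgs field $\Theta_{\mathcal B^+_{\tilde X}}$, cf.\ \Cref{sec:representations-g-1-canonical-higgs-field-on-gerbes}) by pullback to $\X_v$, for $\psi\colon X\to B\mathcal T_X^\sharp\{1\}$ the tautological section. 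Thus $\mathrm{S}_{\tilde X}(\mathcal M)(T)=(j^\ast\mathcal M)|_{\Spf(B^+)}\otimes_{B^+}B$ and likewise $\mathcal B_{\tilde X}(T)=(j^\ast\psi_\ast\O_X)|_{\Spf(B^+)}\otimes_{B^+}B$.

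The key step is to identify $(j^\ast\mathcal M)|_{\Spf(B^+)}$. The composite $\Spf(B^+)\to X^\HT\xrightarrow{\,j\,}B\mathcal T_X^\sharp\{1\}$ classifies a $\mathcal T_X^\sharp\{1\}$-torsor $Q_B$ over $\Spf(B^+)$ with $\O(Q_B)\tf=\mathcal B_{\tilde X}(T)$, on which $\mathcal T_X^\sharp\{1\}$ acts through the rescaling $z$ since $j$ is linear over $z$ by \Cref{sec:appl-Hodge--Tate-functor-for-x-lift}. By faithfully flat descent along $Q_B\to\Spf(B^+)$, the complex $(j^\ast\mathcal M)|_{\Spf(B^+)}$ is the $\mathcal T_X^\sharp\{1\}$-equivariant descent of $M\otimes^L_{\O_X}\O(Q_B)=\mathcal M|_{Q_B}$, endowed with the diagonal comodule structure: the pulled back Higgs field $\theta_M$ of $\mathcal M$ on the first factor and the tautological Higgs field $\Theta_{\mathcal B^+_{\tilde X}}$ on the second. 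Since $\mathcal T_X^\sharp\{1\}$ is the PD-envelope of the zero section of a vector bundle, this descent — equivalently the $\mathcal T_X^\sharp\{1\}$-cohomology of the above comodule — is computed, by the Koszul (Dolbeault) resolution of \Cref{sec:representations-g-3-cohomology-on-bv-sharp} and \Cref{computation-cohomology-koszul-resolution}, by the finite complex $\mathrm{Dol}\big(M\otimes^L_{\O_X}\O(Q_B),\,\Theta_{\mathcal B^+_{\tilde X}}\otimes\Id+\Id\otimes\theta_M\big)$. Inverting $p$, using naturality in $T$ and v-sheafifying then yields the first isomorphism; here the rescaling factor $z=\zeta_p-1$ of \Cref{sec:appl-Hodge--Tate-change-of-canonical-higgs-field} is exactly absorbed by the normalizations built into $\mathcal B^+_{\tilde X}$ and into the Higgs field $\theta_M$ extracted from $\mathcal M$ (cf.\ \Cref{sec:appl-Hodge--Tate-remark-on-normalization}), so that no extra factor survives in the differential.

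For the last statement, when $M$ is a vector bundle on $\X$ the object $\mathcal M$ is a vector bundle on $B\mathcal T_X^\sharp\{1\}$, hence $j^\ast\mathcal M$ is a vector bundle on $X^\HT$ and so $\mathrm{S}_{\tilde X}(\mathcal M)=\alpha_X^\ast(j^\ast\mathcal M)$ is a v-vector bundle on $\X$, in particular concentrated in cohomological degree $0$; together with $\mathcal B_{\tilde X}\otimes^L_{\O_\X}\mu^\ast M=\mathcal B_{\tilde X}\otimes_{\O_\X}\mu^\ast M$ this forces the Dolbeault complex just constructed to be a resolution of its $H^0$, which is the displayed kernel. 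The main obstacle I anticipate is precisely the middle step: identifying the comodule/equivariant structure obtained from $j^\ast\mathcal M$ with the total Higgs differential on $M\otimes\mathcal B_{\tilde X}$, including the bookkeeping of the factor $z$ against the normalization of $\mathcal B^+_{\tilde X}$; the symmetric monoidality and base-change compatibilities of $\alpha_X^\ast$, the descent along the torsor $Q_B$, and the finiteness of the resolution that lets the functors involved commute with totalization should then be routine.
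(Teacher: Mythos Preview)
Your approach is essentially the same as the paper's. Both compute on affinoid perfectoid test objects via the composite $f\colon \Spf(B^+)\to X^\HT\to z_\ast(X^\HT)\cong B\mathcal T_X^\sharp\{1\}$, identify the pullback of $\mathcal M$ as a Dolbeault complex using the Koszul resolution for $B\mathcal T_X^\sharp\{1\}$, and recognise $f_\ast\O$ (equivalently, $\O(Q_B)$ in your torsor language) as the integral period sheaf $\mathcal B^+_{\tilde X}$. The paper phrases the key step as the projection formula $R\Gamma(\mathcal Y,V)\cong R\Gamma(z_\ast(X^\HT),f_\ast\O\otimes\mathcal M)[\tfrac1p]$; your descent along the $\mathcal T_X^\sharp\{1\}$-torsor $Q_B$ is the same thing.

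One point of confusion worth flagging: you introduce the rescaling factor $z=\zeta_p-1$ and then argue it is ``absorbed''. In fact no $z$ appears at this stage. The torsor $Q_B$ is an ordinary $\mathcal T_X^\sharp\{1\}$-torsor (since $z_\ast(X^\HT)$ is a gerbe for the standard group), and both $\theta_M$ and $\Theta_{\mathcal B_{\tilde X}}$ are the canonical Higgs fields for objects living on $z_\ast(X^\HT)\cong B\mathcal T_X^\sharp\{1\}$; the Dolbeault complex is computed there, not on $X^\HT$. The linearity of $j$ over $z$ is only relevant when comparing with the canonical Higgs field on $X^\HT$ (as in \Cref{sec:appl-Hodge--Tate-change-of-canonical-higgs-field}), which is not needed here. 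Your reference to \Cref{sec:appl-Hodge--Tate-remark-on-normalization} is also misplaced, as that concerns the arithmetic normalisation by $(ep)^{-1}$. None of this affects the validity of your argument, only its presentation.
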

   \begin{proof}
   	Let $V:=\mathrm{S}_{\tilde{X}}(\mathcal{M})$.
   	Let $\mathcal{Y}=\mathrm{Spa}(S\tf,S) \to \X$ be an affinoid perfectoid object of the v-site of $\X$. We want to describe the sections of $V$ on $\mathcal{Y}$:
   The induced map $Y:=\mathrm{Spf}(S) \to X$ lifts uniquely to a map $Y \to X^\HT$, which we can compose with the natural map $X^\HT \to z_\ast(X^\HT)$ to get a map $f: Y \to z_\ast(X^\HT)$. The  definition of $V$ and the projection formula for $f$ give
   		\[ R\Gamma(\mathcal{Y},V)\cong R\Gamma(z_\ast(X^\HT),Rf_{\ast}f^{\ast}\O\otimes_{\mathcal{O}_{z_\ast(X^\HT)}} \mathcal M)\tf.\]
   		Since the pullback of $f$ along $\psi: X \to z_\ast(X^\HT)$ is the affine morphism $Y \times_X \mathcal{T}_{X}^\sharp\{1\} \to X$, we have $Rf_\ast f^{\ast}\O=f_{\ast}\O$. Moreover, as $z_\ast(X^\HT)\cong B\mathcal{T}_{X}^\sharp\{1\}$, the cohomology of any object in $\mathcal{D}(z_\ast(X^\HT))$ can be described as Dolbeault cohomology (by the same argument as for \Cref{computation-cohomology-koszul-resolution}). Hence,
		\[
		R\Gamma(\mathcal{Y},V)\cong R\Gamma(X,\mathrm{Dol}(f_\ast \mathcal{O} \otimes_{\mathcal{O}_{z_\ast(X^\HT)}}^L \mathcal{M}))\tf= R\Gamma(\mathcal{Y},\mathrm{Dol}(\mathcal{B}_{\tilde{X}} \otimes_{\mathcal{O}_\X}^L \mu^\ast M, \Theta_{\mathcal{B}_{\tilde{X}}} \otimes \mathrm{Id} + \mathrm{Id} \otimes \theta_M)).\]
		We deduce the desired formula.
		\end{proof}
\begin{definition}
\label{def:faltings-small-higgs-bundles}
A Higgs bundle on $\mathcal X$ is called \textit{Faltings--small} if Zariski-locally on $X$, it admits a model $(\mathfrak{M},\theta_{\mathfrak{M}})$ consisting of a vector bundle $\mathfrak{M}$ on $X$ and a Higgs field $\theta_\mathfrak{M} :\mathfrak{M}\to \mathfrak{M}\otimes \Omega_{X}^1\{-1\}$ such that the reduction of $(\mathfrak{M},\theta_{\mathfrak{M}})$ mod $p^{\alpha}$ is isomorphic to the trivial Higgs bundle for some $\alpha >1/(p-1)$. We denote the category of Faltings--small Higgs bundles on $\X$ by $\Higgs^{\Fsm}(\X)$.
\end{definition}

  \begin{remark}\label{r:Fsm-vs-Hsm}
    	It is clear that any Faltings--small Higgs bundle is Hitchin--small. 
    	We do not currently know if any Hitchin--small bundle is also Faltings--small: the former notion seems more general, for example any nilpotent Higgs bundle is Hitchin--small. But e.g.\ in the affine case, it turns out that Hitchin-small bundles are also Faltings-small, see  \cite[Cor.\ IV.3.6.4]{abbes2016p}. 
    \end{remark}

In \cite{wang2021p}, Wang constructs a fully faithful embedding of the category of Faltings--small Higgs bundles on $\X$ into the category of v-vector bundles on $\X$, which we shall denote by $\mathrm{S}_{\tilde{X}}^W$. Our next goal is to prove that our functor $\mathrm{S}_{\tilde{X}}$ restricted to Faltings--small objects agrees with $\mathrm{S}_{\tilde{X}}^W$. To be able to compare setups, we assume in the following that $S=\Z_p^\cycl.$
\begin{proposition} 
\label{prop:comparison-ls-with-wang}
Let $X$ be a qcqs smoothoid $p$-adic formal scheme over $S$ with generic fiber $\X$ and $\tilde{X}$ a lift as before. Let $\mathcal{M}=(M,\theta_M)\in\Higgs^{\Fsm}(\X)$. Then there is a natural isomorphism
$$
\mathrm{S}_{\tilde{X}}^W(\mathcal{M}) \cong \mathrm{S}_{\tilde{X}}(\mathcal{M}).
$$
\end{proposition}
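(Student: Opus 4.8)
The plan is to reduce the comparison to a statement about period sheaves, namely that Wang's ``small'' period sheaf $\mathcal{B}^W_{\tilde X}$ (built from the torsor of deformations associated to $\tilde X$) agrees, together with its Higgs field, with our sheaf $\mathcal{B}_{\tilde X}$ on $\X_v$ arising from $j^\ast\psi_\ast(\O_X)$. Once this is known, \Cref{explicit-formula_ls} expresses $\mathrm{S}_{\tilde X}(\mathcal M)$ as the kernel (in the vector bundle case) of $\Theta_{\mathcal{B}_{\tilde X}}\otimes\Id+\Id\otimes\theta_M$ on $\mathcal{B}_{\tilde X}\otimes_{\O_\X}\mu^\ast M$, while Wang's functor $\mathrm{S}^W_{\tilde X}$ is defined by exactly the same formula with $\mathcal{B}^W_{\tilde X}$ in place of $\mathcal{B}_{\tilde X}$; so a Higgs-field-compatible identification of the two period sheaves finishes the argument. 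Since both sides are v-sheaves, and being Faltings-small is Zariski-local on $X$, I would first reduce to the case that $X=\Spf(R)$ is affine and admits a toric chart, hence in particular a prismatic lift $(A,I)$ with $A/I=R$, compatible with (and refining) the chosen $A_2$-lift $\tilde X=\Spf(A/I^2)$.

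First I would recall/unwind the definition of $\mathcal{B}_{\tilde X}$: by construction $\psi=\Phi_{\tilde X,1,\zeta_p-1}$ is the morphism $X\to z_\ast(X^\HT)\cong B\mathcal{T}^\sharp_X\{1\}$ from \Cref{sec:appl-Hodge--Tate-functor-for-x-lift}, so $\psi_\ast\O_X$ is the $\O$-algebra of the $\mathcal{T}^\sharp_X\{1\}$-torsor defining the splitting, and $j^\ast\psi_\ast\O_X$ is its pullback along $j\colon X^\HT\to z_\ast(X^\HT)$. Pulling back to $\X_v$ via $\alpha_X^\ast$, for an affinoid perfectoid $Y=\Spa(T\tf,T)\to \X$ the sections of $\mathcal{B}_{\tilde X}^+$ are the global functions on the fibre of $Z_A\to \Spf(R_\infty)$-type torsor, i.e. on $Y\times_{z_\ast(X^\HT)}X$. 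Concretely, using \S\ref{sec:autom-overl} and \S\ref{sec:appl-group-acti}, this torsor is $(\mathcal{T}^\sharp_X\{1\})$-rescaled-by-$(\zeta_p-1)$ and its affine ring is a PD-polynomial algebra over $T$ on a copy of $\Omega^{1}_{X}\{-1\}$, with the tautological derivation as Higgs field. On the other side, Wang's period sheaf is by definition (cf. \cite{wang2021p}, following Abbes--Gros) the v-sheaf associated to the $\tilde X$-deformation torsor: for $Y\to\X$ it records lifts of $Y\to X$ to $A_2(1)=A_2$ compatibly with $\tilde X$, which by the square-zero/Cartier--Witt dictionary (\Cref{p:pushout-def-u} and \Cref{def:gerb-of-x-lifts}) is precisely $\Lft_{X}(T)$ rigidified by $\tilde X$, hence again a PD-polynomial algebra over $T$ on $\Omega^1_X\{-1\}$ with the tautological Higgs field. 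So the comparison amounts to identifying these two explicit torsors together with their Higgs fields and their v-sheaf structures.

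The key steps, in order, are: (1) reduce to $X$ affine with a toric chart and a prismatic lift $(A,I)$ refining $\tilde X$; (2) make $\mathcal{B}_{\tilde X}$ explicit on affinoid perfectoid test objects using the description of $X^\HT\cong B_XG_A$ and the rescaling map $z_\ast$, obtaining a PD-algebra on $\Omega^1_X\{-1\}$ with its tautological Higgs field, exactly as in \S\ref{sec:application-ht-stack-smoothoid-case} (in fact this is what underlies \Cref{explicit-formula_ls}); (3) recall Wang's construction of $\mathcal{B}^W_{\tilde X}$ as the v-sheafification of the $A_2$-deformation torsor of $\tilde X$, and use \Cref{p:pushout-def-u}/\Cref{sec:appl-Hodge--Tate-functor-for-x-lift} to identify $z_\ast(X^\HT)$ with $B\mathcal{T}^\sharp_X\{1\}$ via exactly the datum $\tilde X$, so that $j^\ast\psi_\ast\O_X$ is the $\O$-algebra of the deformation torsor; (4) match the two Higgs fields — both are the tautological derivation of the PD-polynomial algebra — and check the identification is compatible with transition maps in $Y$, i.e. with the v-sheaf structures; (5) feed this into \Cref{explicit-formula_ls} and the analogous formula defining $\mathrm{S}^W_{\tilde X}$ to conclude, using full faithfulness of $\alpha_X^\ast$ (\Cref{sec:tori-over-perfectoid-1-main-theorem-with-perfectoid-base}) to glue from affines. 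The main obstacle I anticipate is step (3)--(4): precisely matching conventions between Wang's torsor-of-deformations formalism and the Bhatt--Lurie square-zero/$\overline{W}(-)$ picture, including the mysterious unit $u\in\Z_p^\times$ of \Cref{p:pushout-def-u} and the normalization by $\zeta_p-1$ — one must check these scalars are absorbed identically on both sides (this is presumably why the statement fixes $S=\Z_p^\cycl$ and restricts to Faltings-small, i.e. locally-trivial-mod-$p^\alpha$, objects, where the period sheaves can be compared after base change and the small-ness forces the relevant series to converge). A clean way to organize step (4) is to note that both $\mathcal{B}_{\tilde X}$ and $\mathcal{B}^W_{\tilde X}$ are, v-locally, filtered colimits of finite free pieces generated by $\Omega^1_X\{-1\}$ with the Higgs field lowering filtration by one, and that a filtered ring map respecting this is determined by its effect on $\Omega^1_X\{-1\}$, which on both sides is the identity by construction.
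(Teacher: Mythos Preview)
Your plan rests on step~(3)--(4), the claim that $\mathcal{B}_{\tilde X}$ and Wang's period sheaf $\O\mathbb{C}^\dagger$ are \emph{isomorphic} as $\O_{\X_v}$-algebras with Higgs field. This is false, and the paper's \Cref{sec:appl-Hodge--Tate-comparison-to-wang-locally-and-explicit} spells out why. Locally, with $E=\Omega^1_R\{-1\}\otimes_R R_\infty$, our $\mathcal{B}^+_{\tilde X}(\X_\infty)$ is the $p$-completed PD-algebra $\Gamma^\bullet_{R_\infty}((1-\zeta_p)E)^\wedge_p$, because $\psi$ is a $\mathcal{T}^\sharp_X\{1\}$-torsor over $z_\ast(X^\HT)$. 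By contrast, Wang's torsor of deformations is a torsor under the \emph{vector group} $\mathcal{T}_X\{1\}$ (this is what $\eta\colon X\to\Lft_X$ is), so $\O\mathbb{C}^{\dagger,+}(\X_\infty)=\varinjlim_{w\to 1}\mathcal{S}_p(wE)$ is an overconvergent \emph{symmetric} algebra. These are different rings after inverting $p$, and moreover the natural map between them (coming from $h\colon\Lft_X\to z_\ast(X^\HT)$) is only Higgs-equivariant up to the unit $u$ of \Cref{p:pushout-def-u}: one has $\Theta_{\O\mathbb{C}^{\dagger,+}}=u^{-1}\Theta_{\mathcal{B}^{\dagger,+}_{\tilde X}}$. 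So there is no identification of period sheaves to feed into \Cref{explicit-formula_ls}.

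The paper's actual argument is more indirect. It introduces overconvergent variants $\mathcal{B}_{\tilde X,w}$ and $\mathcal{B}^\dagger_{\tilde X}$, and observes that all of $\mathcal{B}_{\tilde X,w}$, $\mathcal{B}^\dagger_{\tilde X}$, $\mathcal{B}_{\tilde X}$, $\O\mathbb{C}^\dagger$ inject into the common envelope $\mathcal{C}=\alpha_X^\ast(\eta'_\ast\O_X)[\tfrac1p]$ (coming from a local prismatic splitting $\eta'\colon X\to X^\HT$). Faltings-smallness is then used essentially: it guarantees that $(M,\theta_M)$ descends to $(wz)_\ast X^\HT$ for some $w\in\mathfrak m$, so \Cref{sec:isog-categ-perf-1-multiplication-by-x-on-v-sharp} shows that all the inclusions $\mathcal{B}_{\tilde X,w}\hookrightarrow\mathcal{B}^\dagger_{\tilde X}\hookrightarrow\mathcal{B}_{\tilde X}\hookrightarrow\mathcal{C}$ and $\O\mathbb{C}^\dagger\hookrightarrow\mathcal{C}$ induce \emph{isomorphisms} on the $\mathcal H^0$ of the relevant Dolbeault complexes, even though the period sheaves themselves differ. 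The $u$-twist is absorbed by renormalizing the Higgs field on $M$ when passing from $\mathcal{B}^\dagger_{\tilde X}$ to $\O\mathbb{C}^\dagger$. Your proposal does not have this mechanism, and in particular your remark that Faltings-smallness is ``presumably'' only there for convergence misses its real role.
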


\begin{remark}
Wang's Simpson functor $\mathrm{S}_{\tilde{X}}^W$ also agrees with Faltings' Simpson functor (\cite{faltings2005p}): cf. \cite[Remark 5.5]{wang2021p}. Hence our functor is also compatible with Faltings'.
\end{remark}

To construct $\mathrm{S}_{\tilde{X}}^W$, Wang defines a period sheaf $\O\mathbb{C}^\dagger$ depending on $\tilde{X}$ with a Higgs field\footnote{Compared to \cite{wang2021p}, we consider a Higgs field valued in the Breuil--Kisin twist of the differentials rather than the Tate twist (i.e. we renormalize by $z=\zeta_p-1$) to be in accordance with our conventions in the rest of the paper and in particular our definition of Higgs bundles. We note that this matches up the smallness condition in \Cref{def:faltings-small-higgs-bundles} with \cite[Definition 5.2]{wang2021p} because $p^{\nu_p(\rho_k)}\widehat{\Omega}^1_{\mathfrak{X}}(-1)=\widehat{\Omega}^1_{\mathfrak{X}}\{-1\}$ in the notation of loc.\ cit.\.} 
$$
\Theta_{\O\mathbb{C}^\dagger} : \O\mathbb{C}^\dagger \to \O\mathbb{C}^\dagger \otimes_{\mathcal{O}_{\X}} \nu^\ast \Omega_{\X}^1\{-1\}
$$
(cf.\ \cite[Definition 2.27]{wang2021p}) and then sets
$$
\mathrm{S}_{\tilde{X}}^W(M,\theta_M) \cong \ker(\O\mathbb{C}^\dagger\otimes_{\mathcal{O}_{\X}} \mu^\ast M \xrightarrow{\Theta_{\O\mathbb{C}^\dagger} \otimes \mathrm{Id} + \mathrm{Id} \otimes \theta_M}  \O\mathbb{C}^\dagger \otimes_{\mathcal{O}_{\X}} \mu^\ast M \otimes_{\mathcal{O}_{\X}} \nu^\ast \Omega_{\X}^1\{-1\}).
$$
Comparing with \Cref{explicit-formula_ls}, we see that our task is to compare the period sheaves $\mathcal{B}_{\tilde{X}}$ and $\O\mathbb{C}^\dagger$ and their Higgs fields (thereby providing a geometric description of the latter, in the spirit of \cite[\S II.9]{abbes2016p}).

The morphism 
\[
  \eta\colon X\to \Lft_{X}
\]induced by our lift $\tilde{X}$
is a $\mathcal{T}_{X}\{1\}$-torsor over $\Lft_{X}$. Equivalently, this torsor defines an extension
\[
  0\to \mathcal{O}_{\Lft_{X}}\to E^+_{\Lft_{X}} \to \Omega^1_{X}\{-1\}\otimes_{\O_X} \mathcal{\O}_{\Lft_{X}}\to 0
\]
on $\Lft_{X}$. Pulling back this extension via $u\cdot \can\colon X^\HT\to \Lft_{X}$ yields an extension
\begin{equation}
  \label{sec:appl-Hodge--Tate-wang-extension-on-x-ht}
  0\to \mathcal{O}_{X^\HT}\to E^+_{X^\HT}\to \Omega^1_{X}\{-1\}\otimes_{\O_X} \mathcal{\O}_{X^\HT}\to 0.  
\end{equation}

\begin{proposition}
  \label{sec:appl-Hodge--Tate-comparison-to-wangs-extension}
  The pullback of \eqref{sec:appl-Hodge--Tate-wang-extension-on-x-ht} to $\mathcal{O}^+_{\X_v}$-vector bundles is the twist by $(\zeta_p-1)\mathcal{O}_{\X_v}^+(-1)=\mathcal{O}_{\X_v}^+\{-1\}$ of Wang's ``integral Faltings extension''
  \[
    \textstyle 0\to \frac{1}{(\zeta_p-1)}\mathcal{O}^+_{\X_v}(1)\to \mathcal{E}^+\to  \Omega^1_{X} \otimes_{\mathcal{O}_X} \mathcal{O}^+_{\X_v} \to 0
  \]
  associated to the lift $\tilde{X}$ from \cite[Theorem 2.9]{wang2021p}.
\end{proposition}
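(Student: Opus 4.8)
The strategy is to compare the two extensions through their sheaves of splittings. By construction, the extension $0\to\mathcal{O}_{\Lft_X}\to E^+_{\Lft_X}\to\Omega^1_{X}\{-1\}\otimes_{\mathcal{O}_X}\mathcal{O}_{\Lft_X}\to 0$ on $\Lft_X$ is the one attached to the $\mathcal{T}_{X}\{1\}$-torsor $\eta\colon X\to\Lft_X$ under the standard dictionary between torsors under the vector group $\mathbb{V}(\Omega^1_X\{-1\})=\mathcal{T}_X\{1\}$ and extensions of vector bundles; in particular its sheaf of splittings (local sections of $E^+_{\Lft_X}\to\Omega^1_X\{-1\}\otimes\mathcal{O}_{\Lft_X}$) is canonically identified with $\eta$, a torsor under $\mathcal{H}om_{\mathcal{O}_X}(\Omega^1_X\{-1\},\mathcal{O})=T_X\{1\}$. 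Pulling back along $u\cdot\can\colon X^\HT\to\Lft_X$ and then along the canonical morphisms $\Spf(B^+)\to X^\HT$ for affinoid perfectoid $\Spa(B,B^+)$ over $\X$ yields the extension \eqref{sec:appl-Hodge--Tate-wang-extension-on-x-ht} on $\X_v$, whose sheaf of splittings is the v-sheafification of $\Spa(B,B^+)\mapsto\Spf(B^+)\times_{\Lft_X}X$. Since tensoring an extension of vector bundles by a line bundle does not change the sheaf of splittings, and since an extension of vector bundles with fixed sub- and quotient-bundle is determined up to isomorphism by the isomorphism class of the torsor of its splittings, it suffices to identify this sheaf with Wang's torsor of deformations and to match the two bands via the canonical isomorphism $\xi A_{\inf}(B^+)/\xi^2A_{\inf}(B^+)\cong B^+\{1\}$; the bookkeeping on the kernel and quotient then reads off from $\tfrac{1}{\zeta_p-1}\mathcal{O}^+(1)\otimes(1-\zeta_p)\mathcal{O}^+(-1)\cong\mathcal{O}^+$ and $\Omega^1_X\otimes\mathcal{O}^+\{-1\}=\Omega^1_X\{-1\}\otimes\mathcal{O}^+$, which reconciles the shapes of the two sequences.

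To identify $\Spf(B^+)\times_{\Lft_X}X$ with Wang's torsor of deformations, I would first unwind the definition of $\Lft_X$ together with Bhatt--Lurie's pushout description \Cref{p:pushout-def-u} (in the case $x=1$): a $\Lft_X$-point of $\Spf(B^+)$ is a flat lift of $\Spf(B^+)\to X$ along the square-zero surjection $A_{\inf}(B^+)/\xi^2A_{\inf}(B^+)\to B^+$. Here I use that $\Spf(B^+)$ is perfectoid, so $\Spf(B^+)^\HT\cong\Spf(B^+)$ and its canonical map to $X^\HT$ is $\overline{\rho_{A_{\inf}(B^+)}}$, whose underlying Cartier--Witt divisor is $\xi A_{\inf}(B^+)\to W(B^+)$; tracing this through the identification of \Cref{p:pushout-def-u} shows that the composite $\Spf(B^+)\to X^\HT\xrightarrow{u\cdot\can}\Lft_X$ is the tautological $A_{\inf}(B^+)/\xi^2$-lift. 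Therefore $\Spf(B^+)\times_{\Lft_X}X$ is the torsor of isomorphisms, as lifts of $\Spf(B^+)\to X$, between this tautological lift and the lift induced by the chosen global lift $\tilde X$ over $A_2$ --- and this is exactly the defining description of the torsor of deformations underlying Wang's integral Faltings extension \cite[Theorem 2.9]{wang2021p}. The band of both torsors is $T_X\otimes\xi A_{\inf}(B^+)/\xi^2A_{\inf}(B^+)=T_X\otimes\mathcal{O}^+\{1\}$, so they agree as torsors; the unit $u\in\Z_p^\times$ only rescales this band by a unit, hence does not change the isomorphism class of the resulting extension (and is conjecturally trivial).

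The main obstacle is the second step, specifically making \Cref{p:pushout-def-u} explicit enough at the perfectoid point to verify that the canonical map $\Spf(B^+)=\Spf(B^+)^\HT\to X^\HT\to\Lft_X$ classifies the tautological $A_{\inf}(B^+)/\xi^2$-deformation: this requires unwinding $\overline{W}(-)$, $\mathcal{R}(-)$ and the pushout identification, and carefully tracking the Breuil--Kisin twist $\xi A_{\inf}(B^+)/\xi^2\cong B^+\{1\}$ and the scalar $(1-\zeta_p)$ throughout. As a more computational alternative, one may instead pull everything back along a cover of $X$ by affine opens equipped with toric charts, equivalently prismatic lifts $(A,I)$ with $A/I=R$: on such an open $X^\HT$ splits by \Cref{sec:smoothoid-case-1-complexes-on-x-ht-smoothoid-case-introduction}, both extensions become explicit in the chart coordinates (Wang's being given by the cocycle in \cite[\S2]{wang2021p}), and one matches the two $1$-cocycles valued in $\Omega^1_X\otimes\mathcal{O}^+$ directly before gluing.
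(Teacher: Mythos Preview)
Your proposal is correct and follows essentially the same approach as the paper: both arguments identify the torsor of splittings of the pulled-back extension at a perfectoid test object $\Spf(S)$ with the set of isomorphisms (equivalently, morphisms of lifts $\tilde{R}\to\tilde{S}$) between the canonical $A_{\inf}(S)/\xi^2$-lift of $S$ and the lift coming from $\tilde{X}$, then recognise this as Wang's torsor of deformations. The paper handles what you flag as the ``main obstacle'' in one line by unwinding the definition of $\Lft_X$ directly --- a map $\Spf(S)\to\Lft_X$ is by definition a morphism $R\to\mathcal{R}(S)$, and for perfectoid $S$ the canonical lift $\tilde{S}$ supplies the $\overline{A_0}$-algebra map $S\to\mathcal{R}(S)$ --- so there is no need to pass through $X^\HT$ and \Cref{p:pushout-def-u} explicitly; your alternative local-chart computation is not needed.
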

\begin{proof}
Assume $X=\mathrm{Spf}(R)$ is affine. Let $S$ be a perfectoid ring with a map $\mathrm{Spf}(S) \to X$. By definition, a map $\mathrm{Spf}(S)\to \Lft_{X}$ is a morphism $R\to \mathcal{R}(S)$ of animated rings, which is also $\overline{A_0}$-linear (for the $\overline{A_0}$-linear structure on $\mathcal{R}(S)$ described above). The canonical lift $\tilde{S}$ of $S$ defines an $\overline{A_0}$-morphism $S\to \mathcal{R}(S)$ of animated rings, which we can precompose with $R \to S$. By definition, the pullback of $X\to \Lft_{X}$ to $S$ compares this composition $R \to \mathcal{R}(S)$ with the chosen $\overline{A_0}$-linear morphism $R\to \mathcal{R}(R)$ induced by $\tilde{R}$ (composed with $\mathcal{R}(R)\to \mathcal{R}(S)$). The isomorphisms between these two morphisms $R\to \mathcal{R}(S)$ identify with the maps of lifts $\tilde{R}\to \tilde{S}$. The functor sending $S$ as above to such maps of lifts is by deformation theory a torsor under $\mathrm{Hom}(\Omega^1_{X} \otimes_{\mathcal{O}_X} \mathcal{O}^+_{\X_v}, \mathcal{O}_{\X_v}^+\{1\})$ which is by construction (cf. \cite[p.12]{wang2021p}) the twist by $\mathcal{O}_{\X_v}^+\{-1\}$ of Wang's extension.
\end{proof}

\begin{remark}
\label{relation-with-original-faltings-extension}
If instead $X$ is a smooth $p$-adic formal scheme over $\mathcal{O}_K$, with $K$ a finite, unramified extension of $\Q_p$, then we can also recover from the Hodge--Tate stack $X^\HT$ the Faltings extension of the generic fiber $\X$ of $X$, as originally defined by Faltings\footnote{We thank Peter Scholze for a related discussion.}. Indeed, consider the natural morphism induced by the structure map of the Hodge--Tate stack and functoriality of its construction:
$$
X^\HT \to X \times_{\mathrm{Spf}(\mathcal{O}_K)} \mathrm{Spf}(\mathcal{O}_K)^\HT.
$$
It realizes the source as a $T_{X/\mathcal{O}_K}^\sharp \{1\}$-gerbe over the target. Let $Y$ denote its pushout along the natural map $T_{X/\mathcal{O}_K}^\sharp \{1\} \to T_{X/\mathcal{O}_K} \{1\}$. This gerbe canonically splits: indeed, as $K$ is unramified the first Breuil-Kisin twist has no cohomology on $\mathrm{Spf}(\mathcal{O}_K)^\HT$, so both the relevant $H^2$ and $H^1$ vanish. This canonical splitting makes $X \times_{\mathrm{Spf}(\mathcal{O}_K)} \mathrm{Spf}(\mathcal{O}_K)^\HT$ a $T_{X/\mathcal{O}_K}\{1\}$-torsor over $Y$, which can be pulled back to a $T_{X/\mathcal{O}_K}\{1\}$-torsor over $X^\HT$, corresponding to a class in $H^1(X^\HT,  T_{X/\mathcal{O}_K}\{1\})$. (We slighthly abuse notation by still denoting $T_{X/\mathcal{O}_K}\{1\}$ the pullback of $T_{X/\mathcal{O}_K}\{1\}$ to $X^\HT$.) After further pullback to $\X_v$ and inversion of $p$, this corresponds to an extension
$$
0 \to T_{\X/K} \otimes_{\mathcal{O}_{\X}} \mathcal{O}_{\X_v}(1)  \to \mathcal{E} \to \mathcal{O}_{\X_v} \to 0.
$$
As proved in \cite[II.10.19]{abbes2016p} (which in fact proves a finer, integral, statement), this recovers the Faltings extension by dualizing and twisting by $\mathcal{O}_{\X_v}(1)$. 
\end{remark}

Consider the commutative diagram
\[\begin{tikzcd}
	X \\
	{\Lft_{X}} & {z_\ast(X^\HT)\cong B\mathcal{T}^\sharp_{X}\{1\}}
	\arrow["{h}"', from=2-1, to=2-2]
	\arrow["\eta"', from=1-1, to=2-1]
	\arrow["\psi", from=1-1, to=2-2]
      \end{tikzcd}\]
    with $\eta, \psi$ induced by the chosen lift $\tilde{X}$. Then there exists a natural morphism
    \[
      c\colon h^\ast\psi_\ast(\mathcal{O}_X)\to \eta_\ast(\O_X).
    \]
    We now construct an overconvergent version of $c$.
    For $w\in \mathfrak m_{\Z_p^\cycl}$ consider the commutative diagram
\[\begin{tikzcd}
	X & {\Lft_{X}} & {\Lft_{X,w}} \\
	& {z_\ast(X^\HT)} & {(wz)_\ast(X^\HT)}
	\arrow["{g_w}", from=1-2, to=1-3]
	\arrow["{f_w}", from=2-2, to=2-3]
	\arrow["h", from=1-2, to=2-2]
	\arrow["\eta", from=1-1, to=1-2]
	\arrow["\psi", from=1-1, to=2-2]
	\arrow["{h_w}", from=1-3, to=2-3]
      \end{tikzcd}\] with $\Lft_{X,w}$ the pushforward of $\Lft_{X}$ along $\cdot w:\mathcal{T}_{X}\{1\}\to \mathcal{T}_{X}\{1\}$. Set $\eta_w:=g_w\circ \eta$, and $\psi_w:=f_w\circ \psi$. By \Cref{sec:appl-Hodge--Tate-comparison-to-wangs-extension}, Wang's $\O\mathbb{C}^{\dagger,+}$ is the colimit in sheaves on $\X_v$ of the pullback of the ind-object $"\varinjlim_{w\to 1}" g^\ast_w\eta_{w,\ast}\O_X$ on $\Lft_{X}$ to $\X_v$. By construction, we have natural maps, compatible with $c$,
    \[
      c_w\colon h_w^\ast\psi_{w,\ast}\O_X\to \eta_{w,\ast}\O_X.
    \]

    \begin{definition}
    \label{def:more-period-sheaves}
    For $w\in \mathfrak{m}_{\Z_p^\cycl}$, we let $\mathcal{B}_{\tilde{X},w}^+$ be the pullback of $\psi_{w,\ast}\O_X$ to $\X_v$. We also let
    $$\textstyle
    \mathcal{B}^{\dagger,+}_{\tilde{X}}=\varinjlim_{w\to 1} \mathcal{B}_{\tilde{X},w}^+
    $$ 
    It comes with an injection
    $
     c^{\dagger,+}\colon \mathcal{B}^{\dagger,+}_{\tilde{X}} \to \mathcal{O}\mathbb{C}^{\dagger,+},
    $
    which is not compatible with the Higgs fields on both sides, but satisfies $\Theta_{\mathcal{O}\mathbb{C}^{\dagger,+}}=u^{-1}\Theta_{\mathcal{B}^{\dagger,+}_{\tilde{X}}}$,
 as all $h_w$ are linear over $u^{-1}z:\mathcal{T}_{X}^\sharp \{1\}\to \mathcal{T}^\sharp_{X}\{1\}$ (with $u$ from \Cref{p:pushout-def-u}) and we have renormalized Wang's Higgs field by $z=\zeta_p-1$. Finally, let
 $$
 \mathcal{B}_{\tilde{X},w}=\mathcal{B}_{\tilde{X},w}^+\tf, ~~ \mathcal{B}^{\dagger}_{\tilde{X}}=\mathcal{B}^{\dagger,+}_{\tilde{X}}\tf.
 $$
    \end{definition}
    
\begin{proof}[Proof of \Cref{prop:comparison-ls-with-wang}]
Let $\mathcal{M}=(M,\theta_M)$ be a Faltings--small Higgs bundle on $\X$. Let $\alpha >\tfrac{1}{p-1}$ be as in \Cref{def:faltings-small-higgs-bundles} for $\mathcal{M}$. One sees that $\mathcal{M}$ comes from a vector bundle on $(wz)_\ast X^\HT$ for any $w \in \mathfrak{m}_{\Z_p^\cycl}$ such that $v_p(w)<\alpha-\tfrac{1}{p-1}$. We have natural injective maps
$$
 \mathcal{B}_{\tilde{X},w} \to  \mathcal{B}_{\tilde{X}}^\dagger \to  \mathcal{B}_{\tilde{X}} 
 $$
 inducing natural injective maps (to simplify notation, we omit the index of $\Theta$ in the superscript)
$$
(M\otimes_{\O_X}\mathcal{B}_{\tilde{X},w})^{\Id\otimes \Theta + \tfrac{1}{z}\theta_M\otimes \Id=0} \to (M\otimes_{\O_X}\mathcal{B}^{\dagger}_{\tilde{X}})^{\Id\otimes \Theta + \tfrac{1}{z}\theta_M\otimes \Id=0} \to (M\otimes_{\O_X}\mathcal{B}_{\tilde{X}})^{\Id\otimes \Theta + \tfrac{1}{z}\theta_M\otimes \Id=0}
$$ 
which are isomorphisms by \Cref{sec:isog-categ-perf-1-multiplication-by-x-on-v-sharp}. Therefore, one can replace $\mathcal{B}_{\tilde{X}}$ in \Cref{explicit-formula_ls} by $\mathcal{B}_{\tilde{X}}^\dagger$.
    
 We also have an injective map
 $$
 \mathcal{B}_{\tilde{X}}^\dagger \to \mathcal{O}\mathbb{C}^{\dagger} 
 $$
 induced by the inclusion $c^{\dagger,+}$ of \Cref{def:more-period-sheaves}. This map gives an injection  
 \[(M\otimes_{\O_X}\mathcal{B}^{\dagger}_{\tilde{X}})^{\Id\otimes \Theta + \tfrac{1}{z}\theta_M\otimes \Id=0}\to (M\otimes_{\O_X}\mathcal{O}\mathbb{C}^{\dagger})^{\Id\otimes \Theta + \theta_M\otimes \Id=0},\] by using $\Id\otimes c^{\dagger,+}$ and multiplication by $u^{-1}$. To check that this map is an isomorphism we may argue locally and assume that $\tilde{X}$ is induced by a prismatic lift: all lifts are locally isomorphic and the statement is natural in $\tilde{X}$. If $\eta^\prime\colon X\to X^\HT$ is induced by the prismatic lift, and $j:X^\HT\to z_{\ast}X^\HT$ is the natural map as before, then there is an injection $j^{\ast}\psi_\ast(\mathcal{O}_X)\to \eta_\ast^\prime(\mathcal{O}_X)$. It yields an injection \[\mathcal{B}_{\tilde{X}}\hookrightarrow \mathcal{C}:=\mathcal C^+\tf=\alpha_X^{\ast}(\eta_\ast^\prime(\mathcal{O}_X)\tf),\]
 where $\mathcal{C}^+$ is the pullback of $\eta_\ast^\prime(\mathcal{O}_X)$ to the v-site of the generic fiber $\mathcal X$. By \Cref{sec:isog-categ-perf-1-multiplication-by-x-on-v-sharp} the injection $\mathcal{B}_{\tilde{X}}\to \mathcal{C}$ induces an isomorphism on the $\mathcal{H}^0$ of the Dolbeault complexes for $(M,\theta_M)$. As we have seen above, the same holds for $\mathcal{B}^\dagger_{\tilde{X}}\to \mathcal{C}$. This implies the same statement for the map induced by $\mathcal{O}\mathbb{C}^\dagger\to \mathcal{C}$ on Dolbeault complexes for $(M,\theta_M)$, and thus for the one induced by $\mathcal{B}^\dagger_{\tilde{X}}\to \mathcal{O}\mathbb{C}^\dagger$ because we already know injectivity on the $\mathcal{H}^0$. This finishes the proof that
$\mathrm{S}_{\tilde{X}}^W(\mathcal{M}) \cong \mathrm{S}_{\tilde{X}}(\mathcal{M})$.
\end{proof}

  \begin{remark}
    \label{sec:appl-Hodge--Tate-comparison-to-wang-locally-and-explicit}
    We can make the above formulas for $\mathrm{S}_{\tilde{X}}$ more explicit in the setup of \S\ref{sec:tori-over-geometric}, i.e., $X=\Spf(R)$ for some prism $(A,I)$ over $(A_0,I_0)$, and there is a fixed morphism $(A,I)\to (A_\infty, I_\infty)$. We use the lift $\tilde{X}=\Spf(A/I^2)$ with its natural morphism from $\Spf(A_\infty/I_\infty^2)$. In this situation we evaluate $\mathcal{B}^+_{\tilde{X}}, \mathcal{O}\mathbb{C}^{\dagger,+}$, the pullback $\mathcal{D}^+$ of $\eta_\ast(\O)$ to $\X_v$ and the overconvergent variants on $\X_\infty=X_\infty^\rig$. Comparing $\eta\colon X\to \Lft_{X}$ with the given splitting $\eta^\prime\colon X\to X^\HT$ realises all these rings as subrings of $B^+_{A,R_\infty}$, which itself identifies with $\O(G_{A,R_\infty})$. Note that $B^+_{A,R_\infty}=\mathcal{C}^+(\mathcal{X}_\infty)$ in the notation of the proof of \Cref{prop:comparison-ls-with-wang}. Set $E:=\Omega^1_{R}\{-1\}\otimes_RR_\infty$. Then we have the following:
    \begin{enumerate}
    \item $\O(G_{A,R_\infty})=\Gamma_{R_\infty}^\bullet(E)^\wedge_p$ is the $p$-completed PD-algebra on $E$,
    \item $\mathcal{B}^+_{\tilde{X}}(\X_\infty)=\Gamma_{R_\infty}^\bullet((\zeta_p-1)E)^\wedge_p$,
    \item $\mathcal{D}^+(\X_\infty)=\mathcal{S}_p(E)$ is the $p$-completed symmetric algebra on $E$ considered as an $R_\infty$-module. It contains $\Gamma_{R_\infty}^\bullet((\zeta_p-1)E)^\wedge_p$ because $(\zeta_p-1)$ admits divided powers,
    \item $\mathcal{B}^+_{\tilde{X},w}(\X_\infty)=\Gamma_{R_\infty}^\bullet(w(\zeta_p-1)E)^\wedge_p$ for some $w\in \mathfrak{m}_{\Z_p^\cycl}$,
      \item $\mathcal{B}^{\dagger,+}_{\tilde{X}}(\X_\infty)=\varinjlim_{w\to 1}\Gamma_{R_\infty}^\bullet(w(\zeta_p-1)E)^\wedge_p$,
     \item $\mathcal{O}\mathbb{C}^{\dagger,+}(\X_\infty)=\varinjlim_{w\to 1} \mathcal{S}_p(w E)$, which is contained in $\O(G_{A,R_\infty})$ and contains $\mathcal{B}^{\dagger,+}_{\tilde{X}}(\X_\infty)$.    
     \end{enumerate}
     In summary, the proof of \Cref{prop:comparison-ls-with-wang} now used the following diagram 
     of injections:
\[\begin{tikzcd}[row sep = 0.3cm]
	{(5)} & {(2)} \\
	{(6)} & {(3)} & {(1)}
	\arrow[hook, from=1-1, to=1-2]
	\arrow[hook, from=1-1, to=2-1]
	\arrow[hook, from=2-1, to=2-2]
	\arrow[hook, from=1-2, to=2-3]
	\arrow[hook, from=2-2, to=2-3]
      \end{tikzcd}\]
  \end{remark}


\begin{thebibliography}{10}
	
	\bibitem{abbes2016p}
	A.~Abbes, M.~Gros, and T.~Tsuji.
	\newblock {\em The $p$-adic {S}impson {C}orrespondence (AM-193)}, volume 193.
	\newblock Princeton University Press, 2016.
	
	\bibitem{andre2018conjecture}
	Y.~Andr{\'e}.
	\newblock La conjecture du facteur direct.
	\newblock {\em Publications math{\'e}matiques de l'IH{\'E}S}, 127(1):71--93,
	2018.
	
	\bibitem{andreychev2021pseudocoherent}
	G.~Andreychev.
	\newblock {P}seudocoherent and perfect complexes and vector bundles on analytic
	adic spaces.
	\newblock {\em Preprint, arXiv:2105.12591}, 2021.
	
	\bibitem{analytic_HT}
	J.~Ansch{\"u}tz, B.~Heuer, and A.-C. Le~Bras.
	\newblock v-vector bundles on $p$-adic fields and {S}en theory via the
	{H}odge-{T}ate stack.
	\newblock {\em Preprint, arXiv:2211.08470}, 2022.
	
	
	\bibitem{AHLB-companion}
	J.~Anschütz, B.~Heuer, and A.-C. {Le Bras}.
	\newblock The small $p$-adic {S}impson correspondence in terms of moduli
	spaces.
	\newblock {\em Preprint, arXiv:2312.07554, to appear in Math.\ Res.\ Lett.}
	
	\bibitem{anschutz2021fourier}
	J.~Ansch{\"u}tz and A.-C. Le~Bras.
	\newblock {A} {F}ourier {T}ransform for {B}anach-{C}olmez spaces.
	\newblock {\em Preprint, arXiv:2111.11116, to appear in J. Eur. Math. Soc.},
	2021.
	
	
	\bibitem{berthelot2015notes}
	P.~Berthelot and A.~Ogus.
	\newblock {\em Notes on crystalline cohomology.(MN-21)}, volume~21.
	\newblock Princeton University Press, 2015.
	
	\bibitem{bhatt_lectures_on_prismatic_cohomology}
	B.~Bhatt.
	\newblock {L}ectures on prismatic cohomology.
	\newblock available at
	\url{http://www-personal.umich.edu/~bhattb/teaching/prismatic-columbia/}.
	
	\bibitem{bhatt2022absolute}
	B.~Bhatt and J.~Lurie.
	\newblock {A}bsolute prismatic cohomology.
	\newblock {\em Preprint, arXiv:2201.06120}, 2022.
	
	\bibitem{bhatt2022F-gauges}
	B.~Bhatt and J.~Lurie.
	\newblock Prismatic F-gauges.
	\newblock {\em Lecture notes available at
		\url{https://www.math.ias.edu/~bhatt/teaching/mat549f22/lectures.pdf}}, 2022.
	
	\bibitem{bhatt2022prismatization}
	B.~Bhatt and J.~Lurie.
	\newblock {T}he prismatization of $ p $-adic formal schemes.
	\newblock {\em Preprint, arXiv:2201.06124}, 2022.
	
	\bibitem{Bhatt2018}
	B.~Bhatt, M.~Morrow, and P.~Scholze.
	\newblock {I}ntegral $p$-adic {H}odge theory.
	\newblock {\em Publications math{\'e}matiques de l'IH{\'E}S}, 128(1):219--397,
	2018.
	
	\bibitem{Bhatta}
	B.~Bhatt and P.~Scholze.
	\newblock {Prisms and prismatic cohomology}.
	\newblock {\em Annals of Mathematics}, 196(3):1135 -- 1275, 2022.
	
	\bibitem{BrinonSen}
	O.~Brinon.
	\newblock Une g\'{e}n\'{e}ralisation de la th\'{e}orie de {S}en.
	\newblock {\em Math. Ann.}, 327(4):793--813, 2003.
	
	\bibitem{cesnavicius2019purity}
	K.~\v{C}esnavi\v{c}ius and P.~Scholze.
	\newblock Purity for flat cohomology.
	\newblock {\em Ann. of Math. (2)}, 199(1):51--180, 2024.
	
	\bibitem{clausenscholzecomplex}
	D.~{Clausen} and P.~{Scholze}.
	\newblock {L}ectures on {C}omplex {G}eometry.
	\newblock available at
	\url{https://people.mpim-bonn.mpg.de/scholze/Complex.pdf}.
	
	\bibitem{drinfeld2020prismatization}
	V.~Drinfeld.
	\newblock Prismatization.
	\newblock {\em Selecta Mathematica}, 30(3):49, 2024.
	
	\bibitem{faltings2005p}
	G.~Faltings.
	\newblock A $p$-adic {S}impson correspondence.
	\newblock {\em Advances in Mathematics}, 198(2):847--862, 2005.
	
	\bibitem{gao2020integral}
	H.~Gao.
	\newblock Integral $p$-adic {H}odge theory in the imperfect residue field case.
	\newblock {\em Preprint, arXiv:2007.06879}, 2020.
	
	\bibitem{gao2021padic}
	H.~Gao.
	\newblock On $p$-adic {S}impson and {R}iemann-{H}ilbert correspondences in the
	imperfect residue field case.
	\newblock {\em Preprint, arXiv:2108.07029}, 2021.
	
	\bibitem{he2022sen}
	T.~He.
	\newblock Sen operators and {L}ie algebras arising from {G}alois
	representations over $ p $-adic varieties.
	\newblock {\em Preprint, arXiv:2208.07519}, 2022.
	
	\bibitem{G-torsors-perfectoid-spaces}
	B.~Heuer.
	\newblock {$G$}-torsors on perfectoid spaces.
	\newblock {\em Preprint, arXiv:2207.07623}, 2022.
	
	\bibitem{heuer-sheafified-paCS}
	B.~Heuer.
	\newblock Moduli spaces in $p$-adic non-abelian {H}odge theory.
	\newblock {\em Preprint, arXiv:2207.13819}, 2022.
	
	\bibitem{HWZ}
	B.~Heuer, A.~Werner, and M.~Zhang.
	\newblock $p$-adic {S}impson correspondences for principal bundles in abelian
	settings.
	\newblock 2023.
	\newblock Preprint, arXiv:2308.13456.
	
	\bibitem{humphreys2012introduction}
	J.~E. Humphreys.
	\newblock {\em Introduction to {L}ie algebras and representation theory},
	volume~9 of {\em Graduate Texts in Mathematics}.
	\newblock Springer-Verlag, New York-Berlin, 1978.
	\newblock Second printing, revised.
	
	\bibitem{Hyodo-HT-imperfect-res}
	O.~Hyodo.
	\newblock On the {H}odge-{T}ate decomposition in the imperfect residue field
	case.
	\newblock {\em J. Reine Angew. Math.}, 365:97--113, 1986.
	
	\bibitem{laumon1996transformation}
	G.~Laumon.
	\newblock Transformation de {F}ourier g{\'e}n{\'e}ralis{\'e}e.
	\newblock {\em Preprint, alg-geom/9603004}, 1996.
	
	\bibitem{LiuZhu_RiemannHilbert}
	R.~Liu and X.~Zhu.
	\newblock Rigidity and a {R}iemann-{H}ilbert correspondence for {$p$}-adic
	local systems.
	\newblock {\em Invent. Math.}, 207(1):291--343, 2017.
	
	\bibitem{lurie_spectral_algebraic_geometry}
	J.~Lurie.
	\newblock {S}pectral algebraic geometry.
	\newblock Available at \url{http://www.math.harvard.edu/~lurie/}.
	
	\bibitem{min2021hodge}
	Y.~Min and Y.~Wang.
	\newblock {O}n the {H}odge--{T}ate crystals over {$\mathcal O_K$}.
	\newblock {\em Preprint, arXiv:2112.10140}, 2021.
	
	\bibitem{MinWang22}
	Y.~Min and Y.~Wang.
	\newblock $p$-adic {S}impson correpondence via prismatic crystals.
	\newblock {\em Preprint, arXiv:2201.08030}, 2022.
	
	\bibitem{ogus_vologodsky}
	A.~Ogus and V.~Vologodsky.
	\newblock Nonabelian {H}odge theory in characteristic $p$.
	\newblock {\em Publications math{\'e}matiques}, 106(1):1--138, 2007.
	
	\bibitem{OhkuboSenTheory}
	S.~Ohkubo.
	\newblock A note on {S}en's theory in the imperfect residue field case.
	\newblock {\em Math. Z.}, 269(1-2):261--280, 2011.
	
	\bibitem{camargo2022locally}
	J.~E. Rodriguez~Camargo.
	\newblock Locally analytic completed cohomology of shimura varieties and
	overconvergent {$BGG$} maps.
	\newblock {\em Preprint, arXiv:2205.02016}, 2022.
	
	\bibitem{scholze2013p}
	P.~Scholze.
	\newblock {$p$}-adic {H}odge theory for rigid-analytic varieties.
	\newblock {\em Forum Math. Pi}, 1:e1, 77, 2013.
	
	\bibitem{scholze2013perfectoid}
	P.~Scholze.
	\newblock {\em {P}erfectoid spaces: {A} survey}.
	\newblock Current developments in mathematics 2012. Int. Press, Somerville, MA,
	2013.
	\newblock 193--227.
	
	\bibitem{StacksProjectAuthors2017}
	T.~{Stacks Project Authors}.
	\newblock \textit{{S}tacks {P}roject}.
	\newblock \url{http://stacks.math.columbia.edu}, 2024.
	
	\bibitem{thomason2007higher}
	R.~W. Thomason and T.~Trobaugh.
	\newblock Higher algebraic {K}-theory of schemes and of derived categories.
	\newblock In {\em The {G}rothendieck {F}estschrift}, pages 247--435. Springer,
	2007.
	
	\bibitem{tian2021finiteness}
	Y.~Tian.
	\newblock Finiteness and duality for the cohomology of prismatic crystals.
	\newblock {\em Journal für die reine und angewandte Mathematik (Crelles
		Journal)}, 2023(800):217--257, 2023.
	
	\bibitem{Tsuji-localSimpson}
	T.~Tsuji.
	\newblock Notes on the local {$p$}-adic {S}impson correspondence.
	\newblock {\em Math. Ann.}, 371(1-2):795--881, 2018.
	
	
	\bibitem{wang2021p}
	Y.~Wang.
	\newblock A p-adic {S}impson correspondence for rigid analytic varieties.
	\newblock {\em Algebra \& Number Theory}, 17(8):1453--1499, 2023.
	
	\bibitem{YamauchiSen}
	T.~Yamauchi.
	\newblock A generalization of {S}en-{B}rinon's theory.
	\newblock {\em Manuscripta Math.}, 133(3-4):327--346, 2010.
	
\end{thebibliography}
\end{document}